\definecolor{Red}{cmyk}{0,100,65,0}
\definecolor{Green}{cmyk}{34,0,55,0}
\def\?[#1]{\textbf{[#1]}\marginpar{\Large{\textbf{??}}}}
\def\smallsection#1{\smallskip\noindent\textbf{#1}.}
\newtheorem{theo}{Theorem}
\newtheorem{prop}{Proposition}[section]
\newtheorem{defi}[prop]{Definition}
\newtheorem{lemm}[prop]{Lemma}
\newtheorem{corr}[prop]{Corollary}
\numberwithin{equation}{section}
\newcommand{\lra}{\leftrightarrow}
\newcommand{\mc}{\mathcal}
\newcommand{\rr}{\mathbb{R}}
\newcommand{\nn}{\mathbb{N}}
\newcommand{\cc}{\mathbb{C}}
\newcommand{\hh}{\mathbb{H}}
\newcommand{\zz}{\mathbb{Z}}
\newcommand{\sph}{\mathbb{S}}
\newcommand{\bb}{\mathbb{B}}
\newcommand{\la}{\lambda}
\newcommand{\eps}{\epsilon}
\newcommand{\pl}{\partial}
\newcommand{\x}{\times}
\newcommand{\bbar}{\overline}
\newcommand{\cjd}{\rangle}
\newcommand{\cjg}{\langle}
\newcommand{\demi}{\tfrac{1}{2}}
\newcommand{\ndemi}{\tfrac{n}{2}}
\DeclareMathOperator{\Bd}{Bd}
\DeclareMathOperator{\Eig}{Eig}
\DeclareMathOperator{\Res}{Res}
\DeclareMathOperator{\Id}{Id}
\DeclareMathOperator{\Spec}{Spec}
\DeclareMathOperator{\Div}{Div}
\let\Im=\Imag
\DeclareMathOperator{\Ker}{Ker}
\DeclareMathOperator{\Mult}{Mult}
\DeclareMathOperator{\Pol}{Pol}
\DeclareMathOperator{\PSL}{PSL}
\DeclareMathOperator{\PSO}{PSO}
\let\Re=\Real
\DeclareMathOperator{\Span}{span}
\DeclareMathOperator{\SL}{SL}
\DeclareMathOperator{\SO}{SO}
\DeclareMathOperator{\Vol}{Vol}
\DeclareMathOperator{\WF}{WF}
\def\CI{\mathcal{C}^\infty}
\title[Power spectrum of hyperbolic manifolds]
{Power spectrum of the geodesic flow\\ on hyperbolic manifolds}
\author{Semyon Dyatlov}
\email{dyatlov@math.mit.edu}
\address{Department of Mathematics, Massachusetts Institute of Technology,
Cambridge, MA 02139, USA}
\author{Fr\'ed\'eric Faure}
\email{frederic.faure@ujf-grenoble.fr}
\address{Universit\'e Joseph Fourier, 100, rue des Maths, BP74,
38402 St Martin d'Heres, France}
\author{Colin Guillarmou}
\email{cguillar@dma.ens.fr}
\address{DMA, U.M.R. 8553 CNRS, \'Ecole Normale Superieure, 45 rue d'Ulm,
75230 Paris cedex 05, France}
\begin{document}

\begin{abstract}
We describe the complex poles of the power spectrum of correlations for the geodesic flow on compact hyperbolic manifolds
 in terms of eigenvalues of the Laplacian acting on
certain natural tensor bundles.
These poles are a special case of Pollicott--Ruelle resonances, which can be defined
for general Anosov flows. In our case, resonances are stratified into bands by decay rates.
The proof also gives an explicit relation between resonant states
and eigenstates of the Laplacian.
\end{abstract}

\maketitle

%%%%%%%%%%%%%%%%%%%%%%%%%%%%%%%%%%%%%%%%%%%%%%%%%%%%%%%%%%%%%%%%%%%%%%%%%%%%%%%%
%                                 INTRODUCTION                                 %
%%%%%%%%%%%%%%%%%%%%%%%%%%%%%%%%%%%%%%%%%%%%%%%%%%%%%%%%%%%%%%%%%%%%%%%%%%%%%%%%
\addtocounter{section}{1}
\addcontentsline{toc}{section}{1. Introduction}

In this paper, we consider the characteristic frequencies of correlations,
\begin{equation}
  \label{e:correlation}
\rho_{f,g}(t)=\int_{SM} (f\circ\varphi_{-t}) \cdot \bar g \,d\mu,\quad
f,g\in \CI(SM),
\end{equation}
for the geodesic flow $\varphi_t$ on a compact hyperbolic manifold $M$ of dimension $n+1$
(that is, $M$ has constant sectional curvature $-1$). Here $\varphi_t$ acts on $SM$, the unit tangent
bundle of $M$, and $\mu$ is the natural smooth probability measure. Such $\varphi_t$ are classical examples of \emph{Anosov flows}; for this
family of examples, we are able to prove much
more precise results than in the general Anosov case.

An important question, expanding on the notion of mixing, is the behavior of $\rho_{f,g}(t)$ as $t\to +\infty$.
Following~\cite{ruelle}, we take the \emph{power spectrum}, which in our convention is the Laplace transform
$\hat\rho_{f,g}(\lambda)$ of $\rho_{f,g}$ restricted to $t>0$. The long time behavior of $\rho_{f,g}(t)$ is
related to the properties of the meromorphic extension of $\hat\rho_{f,g}(\lambda)$
to the entire complex plane. The poles of this extension, called \emph{Pollicott--Ruelle} resonances
(see~\cite{pollicott,ruelle,FaSj} and~\eqref{e:actual-def} below), are the complex characteristic frequencies of $\rho_{f,g}$, describing
its decay and oscillation and not depending on $f,g$.

For the case of dimension $n+1=2$, the following connection between resonances and
the spectrum of the Laplacian was announced in~\cite[Section~4]{FaTs2}
(see \cite{FlFo} for a related result and
the remarks below regarding the zeta function techniques).
%%%%%%%%%%%%%%%%%%%%%%%%%%%%%%%%%%%%%%%%%%%%%%%%%%%%%%%%%%%%%%%%%%%%%%%%%%%%%%%%
\begin{theo}\label{t:dim2}
Assume that $M$ is a compact hyperbolic surface ($n=1$) and the spectrum of the positive Laplacian on $M$ is
(see Figure~\ref{f:dim2})
$$
\Spec(\Delta)=\{s_j(1-s_j)\},\quad
s_j\in [0,1]\cup \Big({1\over 2}+i\mathbb R\Big).
$$
Then Pollicott--Ruelle resonances for the geodesic
flow on $SM$ in $\mathbb C\setminus (-1-{1\over 2}\mathbb N_0)$ are
\begin{equation}
  \label{e:dim2}
\lambda_{j,m}=-m-1+s_j,\quad m\in\mathbb N_0.
\end{equation}
\end{theo}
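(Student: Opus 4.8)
The plan is to work on $\Gamma\backslash\mathbb{H}^2$ and use the identification $SM = \Gamma\backslash \mathrm{PSL}(2,\mathbb R)$, under which the geodesic vector field $X$ is the right-invariant vector field generated by the diagonal subgroup. The Pollicott--Ruelle resonances at $\lambda$ are the values for which there exists a nonzero distribution $u$ on $SM$, with wavefront set contained in the unstable codirection $E_u^*$, solving $(X+\lambda)u=0$; this is the anisotropic-Sobolev characterization of resonances of Faure--Sjöstrand, which I take as given. So the first step is to set up the algebraic framework: let $X,U_-,U_+$ be the standard $\mathfrak{sl}_2$ generators (with $U_-$ spanning the unstable, $U_+$ the stable horocycle direction), so that $[X,U_\pm]=\pm U_\pm$ and the Casimir $\Omega = -X^2 - X + U_+ U_-$ descends to a differential operator on $SM$ whose pushforward to $M$ (acting on functions) is the Laplacian $\Delta$, with the dictionary $\Omega = s(1-s)$ matching $\Spec\Delta = \{s_j(1-s_j)\}$.

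Next I would organize resonant states by their order of vanishing along the stable horocycle, i.e. set up the "band structure." Define, for $m\in\mathbb N_0$, the space of resonant states $u$ at $\lambda$ with $U_+^{m+1}u=0$ but (generically) $U_+^m u\neq 0$; these are the states in the $m$-th band. The key computation is the commutation relation: if $(X+\lambda)u=0$ then $(X+\lambda+k)(U_+^k u)=0$, so applying $U_+^m$ to an $m$-th band state produces a first-band ($m=0$) resonant state at the shifted parameter $\lambda+m$. This reduces everything to understanding the first band $\{u : U_+ u = 0,\ (X+\lambda)u = 0\}$. For such $u$, I claim $\Omega u = \lambda(\lambda+1)u$ using $\Omega = -X^2-X+U_+U_-$ together with $U_+u=0$ and $[U_+,U_-] = \pm 2X$ (up to normalization), so $U_+U_-u = [U_+,U_-]u = \mp 2X u = \pm 2\lambda u$ — hence $\Omega u = (-\lambda^2+\lambda)u$ or similar, giving $\lambda(1-\lambda)$ on the nose after fixing signs. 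Writing $\lambda = -s$ matches $s(1-s)\in\Spec\Delta$. The content of the theorem is then: (i) first-band resonant states at $\lambda=-s$ are in bijection with $\Delta$-eigenfunctions with eigenvalue $s(1-s)$ (equivalently with elements of the principal/complementary series appearing in $L^2(SM)$), and (ii) combining with the band shift gives exactly $\lambda_{j,m} = -m - 1 + s_j$, once one checks that $\lambda = -m$ itself (the edge cases, and the topological/constant contributions) falls in the excluded set $-1 - \frac12\mathbb N_0$ or is accounted for separately.

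The identification in (i) is the technical heart. Given a resonant state $u$ on $SM$ with $U_+u=0$, $(X+s)u=0$, and $\WF(u)\subset E_u^*$, I would push it forward (integrate over the $U_+$-orbits, i.e. along stable horocycles — or rather, realize $u$ via a Poisson-type operator) to get a distribution on the boundary $\partial\mathbb{H}^2 = \mathbb S^1$ that is $\Gamma$-equivariant of the appropriate weight, then apply the Poisson transform to produce a $\Delta$-eigenfunction on $M$; conversely, from a $\Delta$-eigenfunction one builds the resonant state by the inverse construction (boundary distribution times a power of a horocyclic coordinate). The crucial points to verify are that the wavefront condition $\WF(u)\subset E_u^*$ is equivalent to the boundary distribution being well-defined (no regularity is lost in the excluded directions) and that the pushforward is neither zero nor loses information — this is where the restriction $\lambda \notin -1 - \frac12\mathbb N_0$ enters, ensuring the relevant Poisson transform is an isomorphism and no resonances are hidden inside the first band beyond those coming from $\Spec\Delta$.

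I expect the main obstacle to be precisely this last point: proving that the map (resonant states) $\to$ ($\Delta$-eigenfunctions) is a bijection, i.e. that there are \emph{no other} resonances than those listed. Surjectivity (eigenfunction $\Rightarrow$ resonant state) is a direct construction; the hard direction is showing every resonant state arises this way and that the band decomposition is exhaustive — one must rule out resonant states not killed by any power of $U_+$, and rule out "extra" first-band states not corresponding to $L^2$ eigenfunctions (these could a priori come from the continuous spectrum or from generalized eigenfunctions). Controlling this requires careful use of the wavefront/anisotropic-Sobolev bound to force the needed decay, and a horocyclic-averaging or representation-theoretic argument (decomposing $L^2(SM)$ into $\mathrm{PSL}(2,\mathbb R)$-irreducibles and tracking which contribute resonances at each $\lambda$) to get exhaustiveness. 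Handling the discrete series / holomorphic pieces and the trivial representation (which should only contribute the excluded points) is the delicate bookkeeping that makes the clean statement \eqref{e:dim2} come out.
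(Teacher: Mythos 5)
Your outline follows the same strategy as the paper (horocyclic band structure, reduction of the first band to equivariant distributions on the boundary circle, scalar Poisson transform), but the way you set up the bands contains a genuine error that makes the argument collapse as written. With your stated conventions ($U_+$ stable, $U_-$ unstable, $[X,U_\pm]=\pm U_\pm$), a resonant state $u$ with $(X+\lambda)u=0$ satisfies $(X+\lambda-k)(U_+^k u)=0$, \emph{not} $(X+\lambda+k)(U_+^k u)=0$: differentiating along the stable direction pushes the spectral parameter further into the left half-plane, where resonances do exist, so nothing forces $U_+^{m+1}u=0$ and the decomposition ``by order of vanishing along the stable horocycle'' is not available. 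Worse, for states with $\WF(u)\subset E_u^*$ the condition $U_+u=0$ is essentially vacuous: if $u\neq 0$ lies in the genuine first band ($U_-u=0$, $(X+\lambda)u=0$), then $U_-U_+u=-2Xu=2\lambda u$, so $U_+u\neq 0$ unless $\lambda=0$; more generally $U_+^m$ is one-to-one on $\Res^0_X(\lambda+m)$ for $\lambda\notin -1-\tfrac{1}{2}\mathbb N_0$ because $U_-^mU_+^m=m!\prod_{j=1}^m(2\lambda+m+j)\Id$ there. Hence your proposed first band $\{U_+u=0\}$ contains (away from the exceptional set) only the constants, and the decomposition produces no resonances beyond $\lambda=0$. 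This is not a relabeling issue one can fix silently: the wavefront condition breaks the symmetry between $U_+$ and $U_-$.

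The correct mechanism is the mirror image. The unstable derivative raises the parameter, $(X+\lambda+m)(U_-^m u)=0$, and since $\tfrac{1}{i}X$ is self-adjoint on $L^2$ there are no resonances with $\Re\lambda>0$, hence $U_-^m u=0$ for $m>-\Re\lambda$. This one-line spectral observation is exactly the ``exhaustiveness'' you propose to obtain by decomposing $L^2(SM)$ into irreducibles --- no representation theory or discrete-series bookkeeping is needed. The bands are then defined by the order of vanishing under $U_-$; the first band $\{U_-u=0\}$ is the one compatible with $\WF(u)\subset E_u^*$ (such $u$ are $\Phi_-^{\lambda}$ times pullbacks by $B_-$), and $U_+^m$ maps the first band at $\lambda+m$ onto the $m$-th band at $\lambda$, the nonvanishing of the displayed product being precisely the source of the excluded set $-1-\tfrac{1}{2}\mathbb N_0$. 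With this correction your Casimir computation should be run with the ordering in which the annihilating operator sits on the right: $(X^2-X+U_+U_-)u=\lambda(\lambda+1)u$ when $U_-u=0$, giving $\Delta f=-\lambda(1+\lambda)f$ for the fiber average $f$, i.e.\ $\lambda=-1+s_j$ in the first band; your $\lambda=-s_j$ describes the same set only because $s_j$ and $1-s_j$ both occur in the parametrization of $\Spec(\Delta)$. The boundary-reduction and Poisson-isomorphism part of your plan, including where the exceptional points enter, is sound and coincides with the paper's argument.
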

%%%%%%%%%%%%%%%%%%%%%%%%%%%%%%%%%%%%%%%%%%%%%%%%%%%%%%%%%%%%%%%%%%%%%%%%%%%%%%%%

%%%%%%%%%%%%%%%%%%%%%%%%%%%%%%%%%%%%%%%%%%%%%%%%%%%%%%%%%%%%%%%%%%%%%%%%%%%%%%%%
\begin{figure}
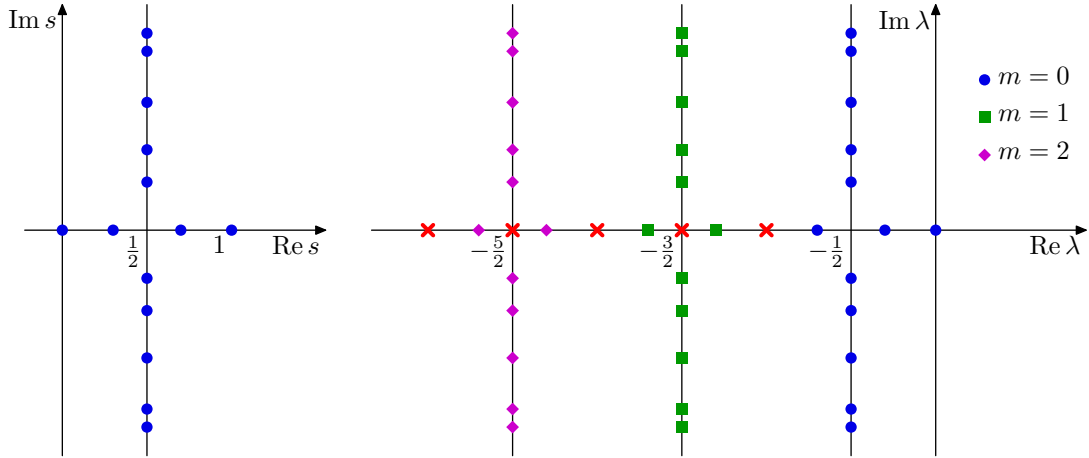

\includegraphics{rrh.8}
\quad
\includegraphics{rrh.9}
\caption{An illustration of Theorem~\ref{t:dim2}, with eigenvalues of the Laplacian
on the left and the resonances of geodesic flow, on the right. The red crosses
mark exceptional points where the theorem does not apply.}
\label{f:dim2}
\end{figure}
%%%%%%%%%%%%%%%%%%%%%%%%%%%%%%%%%%%%%%%%%%%%%%%%%%%%%%%%%%%%%%%%%%%%%%%%%%%%%%%%

\noindent\textbf{Remark}.
We use the Laplace transform (which has poles in the left half-plane) rather than the Fourier transform
as in~\cite{ruelle,FaSj}
to simplify the relation to the parameter $s$ used for Laplacians on hyperbolic manifolds.

Our main result concerns the case of higher dimensions $n+1>2$. The situation is considerably
more involved than in the case of Theorem~\ref{t:dim2}, featuring the
spectrum of the Laplacian on certain tensor bundles.
More precisely, for $\sigma\in\mathbb R$, denote
$$
\Mult_\Delta(\sigma,m):=\dim\Eig^m(\sigma),
$$
where $\Eig^m(\sigma)$, defined in~\eqref{e:eigdef}, is
the space of \emph{trace-free divergence-free symmetric sections} of $\otimes^m T^*M$
satisfying $\Delta f=\sigma f$.
Denote by
$\Mult_R(\lambda)$ the geometric multiplicity of $\lambda$
as a Pollicott--Ruelle resonance of the geodesic flow on $M$ (see Theorem~\ref{t:noalg} and the remarks preceding it for a definition).
%%%%%%%%%%%%%%%%%%%%%%%%%%%%%%%%%%%%%%%%%%%%%%%%%%%%%%%%%%%%%%%%%%%%%%%%%%%%%%%%
\begin{theo}
  \label{t:main}
Let $M$ be a compact hyperbolic manifold of dimension $n+1\geq 2$.
Assume that $\lambda\in\mathbb C\setminus \big(-{n\over 2}-{1\over 2}\mathbb N_0\big)$. Then
for $\lambda\not\in -2\mathbb N$, we have (see Figure~\ref{f:main})
\begin{equation}
  \label{e:main-1}
\Mult_R(\lambda)=\sum_{m\geq 0} \sum_{\ell=0}^{\lfloor m/2 \rfloor} \Mult_\Delta\Big(
-\Big(\lambda+m+{n\over 2}\Big)^2+{n^2\over 4}+m-2\ell, m-2\ell\Big)
\end{equation}
and for $\lambda\in -2\mathbb N$, we have
\begin{equation}
  \label{e:main-2}
\Mult_R(\lambda)= \sum_{m\geq 0\atop m\neq -\lambda} \sum_{\ell=0}^{\lfloor m/2 \rfloor} \Mult_\Delta\Big(
-\Big(\lambda+m+{n\over 2}\Big)^2+{n^2\over 4}+m-2\ell, m-2\ell\Big).
\end{equation}
\end{theo}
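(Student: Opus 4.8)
The plan is to pass to the homogeneous model $SM=\Gamma\backslash G/M$, where $G=\PSO(n+1,1)$ acts on $\mathbb H^{n+1}=G/K$ with $K=\PSO(n+1)$ and $M\cong\SO(n)$ is the centralizer of the geodesic direction in $K$; the geodesic flow is right translation by $\exp(tX)$ for a fixed $X$ in the Lie algebra $\mathfrak g$, which also contains the \emph{horocyclic} elements $U_i^\pm$, $1\le i\le n$, with $[X,U_i^\pm]=\pm U_i^\pm$ and $[U_i^+,U_j^-]\in\mathbb R X\oplus\mathfrak m$, the $X$-component being proportional to $\delta_{ij}$. By the Faure--Sj\"ostrand/Dyatlov--Zworski theory recalled before Theorem~\ref{t:noalg}, $\Mult_R(\lambda)$ is the dimension of the space of resonant states
\[
\mathrm{Res}(\lambda)=\{u\in\mathcal D'(SM):(X+\lambda)u=0,\ \WF(u)\subset E_u^*\},
\]
where $E_u^*\subset T^*(SM)$ is the annihilator of $E_0\oplus E_u$. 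So the whole theorem is a computation of $\dim\mathrm{Res}(\lambda)$.

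\emph{Step 1 (horocyclic ladder).} Given $u\in\mathrm{Res}(\lambda)$, differentiating along the stable horocyclic directions preserves the wavefront bound, since $E_s$ is transverse to $E_u^*$, and the bracket relations give $(X+\lambda+|\alpha|)(U^-)^\alpha u=0$ for every multi-index $\alpha$; hence $(U^-)^\alpha u$ is a resonant state at $\lambda+|\alpha|$. Since there are no resonances with $\Re\lambda>0$, there is a largest $m=m(u)$ with some $(U^-)^\alpha u\neq0$, $|\alpha|=m$. Assembling the length-$m$ components into a section $u_m$ of a natural bundle $\mathcal E_m\to SM$ built from trace-free symmetric $m$-cotensors on the stable bundle (the trace parts cascading down to lower levels), one gets $U_i^-u_m=0$ for all $i$ and $(\mathcal L_X+\lambda+m)u_m=0$: a \emph{first-band, level-$m$} resonant state at parameter $\lambda+m$. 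Conversely, each such first-band state can be integrated back up by repeated application of the unstable operators $U_i^+$ to produce an element of $\mathrm{Res}(\lambda)$, and these two operations are inverse to one another modulo the induced filtration, so that
\[
\Mult_R(\lambda)=\sum_{m\ge0}\dim\mathrm{Res}^1_m(\lambda+m),
\]
$\mathrm{Res}^1_m(\mu)$ denoting the first-band level-$m$ states at parameter $\mu$. The lifting step rests on a ladder identity whose scalar coefficient is a product of affine functions of $\lambda$, and it is exactly at $\lambda\in-2\mathbb N$ and level $m=-\lambda$ that this coefficient vanishes, which produces the correction~\eqref{e:main-2}.

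\emph{Step 2 (Poisson transform) and assembly.} Lifted to $S\mathbb H^{n+1}$, a first-band level-$m$ state at $\mu$ is annihilated by all stable horocyclic operators, hence depends only on the forward endpoint $\nu\in\partial\mathbb H^{n+1}=\mathbb S^n$ and on its tensor value, transforming under a character of the stabilizer of $\nu$; it is therefore a distributional section over $\mathbb S^n$ of the bundle associated with the $M$-representation on trace-free symmetric $m$-cotensors on $\mathbb R^n$. Integrating such a boundary section against the $\mu$-th power of the Poisson kernel and spreading the tensor value over $\mathbb H^{n+1}$ by parallel transport yields a symmetric tensor field on $\mathbb H^{n+1}$; one checks that, for each $k=m-2\ell$ with $0\le\ell\le\lfloor m/2\rfloor$, its trace-free divergence-free projection onto symmetric $k$-tensors is a $\Delta$-eigentensor with eigenvalue $-\mu(\mu+n)+k$, and that $\Gamma$-equivariance makes it descend to $M$. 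A Helgason-type boundary-value theorem together with the explicit Poisson intertwiner shows this correspondence is a bijection off the parameters where the intertwiner has a pole, which is precisely why the set $-\tfrac n2-\tfrac12\mathbb N_0$ is removed. Hence $\dim\mathrm{Res}^1_m(\mu)=\sum_{\ell=0}^{\lfloor m/2\rfloor}\Mult_\Delta\!\big(-\mu(\mu+n)+m-2\ell,\,m-2\ell\big)$; substituting $\mu=\lambda+m$ and using $-\mu(\mu+n)+m-2\ell=-(\lambda+m+\tfrac n2)^2+\tfrac{n^2}4+m-2\ell$ gives \eqref{e:main-1}, and dropping the vanishing level $m=-\lambda$ gives \eqref{e:main-2}.

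The main obstacle is Step 2: building the tensor-valued Poisson transform, showing that its image is exactly the trace-free divergence-free eigentensors with the stated eigenvalue, and --- crucially --- identifying the branching that yields the inner sum over $\ell$, which comes from the $n$-dimensional stable bundle sitting inside the $(n+1)$-dimensional $T^*M$. Proving bijectivity off the exceptional set $-\tfrac n2-\tfrac12\mathbb N_0$, and matching the ladder degeneracy of Step 1 with the special case $\lambda\in-2\mathbb N$, is where the representation-theoretic and microlocal ingredients have to be reconciled carefully.
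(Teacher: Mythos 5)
Your outline follows the same route as the paper: the resonant-state criterion, the band decomposition by powers of the horocyclic operator with the inverse ladder (Lemmas~\ref{l:u-pm-used}, \ref{l:bands-exhibited}, \ref{l:recovery}), the trace decomposition giving the sum over $\ell$ (Lemma~\ref{l:bands2-exhibited}), the reduction to $\Gamma$-equivariant distributions on $\mathbb S^n$ (Lemma~\ref{l:reduced}), and the tensor-valued Poisson isomorphism onto $\Eig^{m-2\ell}$ (Theorem~\ref{t:laplacian}). Two points, however, are treated as available when they are precisely where the work lies.

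First, for $m>0$ there is no quotable Helgason/Oshima--Sekiguchi/van~den~Ban--Schlichtkrull theorem giving bijectivity of the Poisson transform on trace-free symmetric boundary tensors onto trace-free divergence-free eigentensors; only the scalar case is in the literature. The paper has to prove this from scratch: injectivity via the weak boundary expansion of $\mathscr P^-_\lambda w$ with an explicitly nonvanishing leading coefficient (Lemma~\ref{injectiv}), and surjectivity via indicial analysis and weak expansions of divergence-free eigentensors, reduced in the end to the scalar Poisson transform (Lemmas~\ref{asympexp}--\ref{l:vanish2}, Corollary~\ref{corsurj}); this occupies Sections~\ref{s:laplacian}--\ref{s:poisson-is}. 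You flag this as the main obstacle, but as it stands it is an unproved black box, and it is also the step that fixes the exceptional set $\mathcal R_m$ and hence the excluded set $-\tfrac n2-\tfrac12\mathbb N_0$.

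Second, your explanation of the correction at $\lambda\in-2\mathbb N$ is both inaccurate and incomplete. The scalar coefficient in the ladder identity (proof of Lemma~\ref{l:recovery}) vanishes at many integer and half-integer values of $\lambda+m$, not only when $m=-\lambda$; the reason \eqref{e:main-1} survives at the other such points in $(-\tfrac n2,\infty)$ is that the corresponding spaces $\Res^{m-2\ell,0}_{\mathcal X}(\lambda+m)$ are trivial there, by the spectral bounds $\Delta\geq m+n-1$ on trace-free divergence-free $m$-tensors (Lemma~\ref{bottomsp}) and $\Delta\geq 0$ on functions. Moreover, at $\lambda=-m$ the vanishing of the coefficient only shows that your particular lift fails; to obtain \eqref{e:main-2} one must show that no resonant state at $\lambda$ maps onto the band-$m$ space at all. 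The paper does this in Lemma~\ref{l:better}: $\Res^m_{\mathcal X}(0)$ is spanned by $\mathcal I^{m/2}(1)$ (constants, via the $m=0$ Poisson isomorphism), and the adjoint identity $\mathcal U_-^*=-\mathcal V_-$ (Lemma~\ref{l:tough-adjoint}) shows $\mathcal I^{m/2}(1)$ is orthogonal to the image of $\mathcal U_-^m$, so $\mathcal U_-^m$ vanishes identically on $V_{m+1}(-m)$ and that band contributes nothing. Without these two ingredients the dimension count at integer and half-integer $\lambda>-\tfrac n2$ is not established.
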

%%%%%%%%%%%%%%%%%%%%%%%%%%%%%%%%%%%%%%%%%%%%%%%%%%%%%%%%%%%%%%%%%%%%%%%%%%%%%%%%

%%%%%%%%%%%%%%%%%%%%%%%%%%%%%%%%%%%%%%%%%%%%%%%%%%%%%%%%%%%%%%%%%%%%%%%%%%%%%%%%
\begin{figure}
\includegraphics{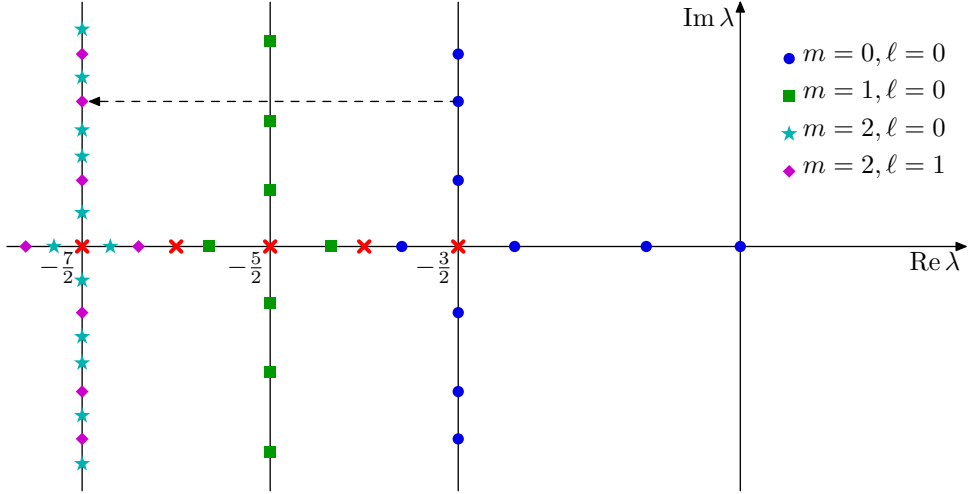}
\caption{An illustration of Theorem~\ref{t:main} for $n=3$. The red crosses
mark exceptional points where the theorem does not apply. Note that the points with
$m=2,\ell=1$ are simply the points with $m=0,\ell=0$ shifted by $-2$ (modulo exceptional points),
as illustrated by the arrow.}
\label{f:main}
\end{figure}
%%%%%%%%%%%%%%%%%%%%%%%%%%%%%%%%%%%%%%%%%%%%%%%%%%%%%%%%%%%%%%%%%%%%%%%%%%%%%%%%

\noindent\textbf{Remarks}. (i) If $\Mult_\Delta\big(
-\big(\lambda+m+{n\over 2}\big)^2+{n^2\over 4}+m-2\ell, m-2\ell\big)>0$, then Lemma~\ref{bottomsp}
and the fact that $\Delta\geq 0$ on functions imply that either $\lambda\in -m-{n\over 2}+i\mathbb R$
or
\begin{equation}
  \label{e:reparts}
\begin{aligned}
\lambda\in [-1-m,\,-m],&\quad\text{if }n=1,\ m>2\ell;\\
\lambda\in [1-n-m,\,-1-m],&\quad\text{if }n>1,\ m>2\ell;\\
\lambda\in [-n-m,\,-m],&\quad\text{if }m=2\ell.
\end{aligned}
\end{equation}
In particular, we confirm that resonances lie in $\{\Re\lambda\leq 0\}$ and
the only resonance on the imaginary axis is $\lambda=0$ with $\Mult_R(0)=1$, corresponding
to $m=\ell=0$. We call the set of resonances corresponding to some $m$
the \emph{$m$th band}. This is a special case of the band structure for general contact Anosov
flows established in the
work of Faure--Tsujii~\cite{FaTs1,FaTs2,FaTs3}.

\noindent (ii) The case $n=1$ fits into Theorem~\ref{t:main} as follows: for $m\geq 2$, the spaces
$\Eig^{m}(\sigma)$ are trivial unless $\sigma$ is an exceptional point
(since the corresponding spaces $\Bd^{m,0}(\lambda)$ of Lemma~\ref{l:reduced} would
have to be trace free sections of a one-dimensional vector bundle), and the spaces
$\Eig^{1}(\sigma+1)$ and $\Eig^0(\sigma)$ are isomorphic as shown
in Appendix~\ref{s:m1}.

\noindent (iii) The band with $m=0$ corresponds to the spectrum of the scalar Laplacian;
the band with $m=1$ corresponds to the spectrum of the Hodge Laplacian on coclosed
1-forms, see Appendix~\ref{s:m1}.

\noindent (iv) As seen from~\eqref{e:main-1}, \eqref{e:main-2}, for $m\geq 2$ the
$m$-th band of resonances contains shifted copies of bands $m-2,m-4,\ldots$
The special case~\eqref{e:main-2} means that the resonance $0$ of the $m=0$
band is not copied to other bands.

\noindent (v) A Weyl law holds for the spaces $\Eig^m(\sigma)$, see Appendix~\ref{s:weyl}.
It implies the following Weyl law for resonances in the $m$-th band:
\begin{equation}
  \label{e:the-weyl}
\sum_{\lambda\in -{n\over 2}-m+i\mathbb [-R,R]} \Mult_R(\lambda)={2^{-n}\pi^{-{n+1\over 2}}\over \Gamma({n+3\over 2})}
\cdot {(m+n-1)!\over m!(n-1)!}\Vol(M) R^{n+1}+\mathcal O(R^n).
\end{equation}
The power $R^{n+1}$ agrees with the Weyl law of~\cite[(5.3)]{FaTs2} and with the earlier
upper bound of~\cite{ddz}.
We also see that if $n>1$, then
each $m$ and $\ell\in [0, {m\over 2}]$ produce a nontrivial contribution to the set of resonances.
The factor $(m+n-1)!\over m!(n-1)!$ is the dimension of the space of homogeneous
polynomials of order $m$ in $n$ variables; it is natural
in light of~\cite[Proposition~5.11]{FaTs1}, which locally reduces resonances to such polynomials.

The proof of Theorem~\ref{t:main} is outlined in Section~\ref{s:overview}.
We use in particular the microlocal method of Faure--Sj\"ostrand~\cite{FaSj}, defining Pollicott--Ruelle resonances
as the points $\lambda\in\mathbb C$ for which the (unbounded nonselfadjoint) operator
\begin{equation}
  \label{e:actual-def}
X+\lambda:\mathcal H^r\to\mathcal H^r,\quad
r>-C_0\Re\lambda,
\end{equation}
is not invertible. Here $X$ is the vector field on $SM$ generating the geodesic flow, so that $\varphi_t=e^{tX}$,
$\mathcal H^r$ is a certain \emph{anisotropic Sobolev space}, and $C_0$ is a fixed constant independent of $r$,
see Section~\ref{s:rr-1} for details. Resonances do not depend on the choice of $r$.
The relation to correlations~\eqref{e:correlation} is given by the formula
$$
\hat\rho_{f,g}(\lambda)=\int_0^\infty e^{-\lambda t}\rho_{f,g}(t)\,dt
=\int_0^\infty e^{-\lambda t}\langle e^{-tX}f,g\rangle\,dt
=\langle (X+\lambda)^{-1}f,g\rangle_{L^2(SM)},
$$
valid for $\Re\lambda>0$ and $f,g\in\mathcal C^\infty(SM)$.
See also Theorem~\ref{t:resexp} below.

We stress that our method provides an \emph{explicit relation between classical and quantum states,\/}
that is between Pollicott--Ruelle resonant states
(elements of the kernel of~\eqref{e:actual-def}) and eigenstates of the Laplacian;
that is, in addition to the poles of $\hat\rho_{f,g}(\lambda)$, we describe its residues.
For instance for the $m=0$ band, if $u(x,\xi)$, $x\in M,\xi\in S_xM$, is a resonant state, then the
corresponding eigenstate of the Laplacian, $f(x)$, is obtained by integration of $u$ along the fibers $S_xM$,
see~\eqref{e:pushy}. On the other hand, to obtain $u$ from $f$ one needs to take the \emph{boundary distribution}
$w$ of $f$, which is a distribution on the conformal boundary $\mathbb S^n$
of the hyperbolic space $\mathbb H^{n+1}$ appearing as the leading coefficient of a weak
asymptotic expansion at $\mathbb S^n$ of the lift of $f$ to $\mathbb H^{n+1}$.
Then $u$ is described by $w$ via an explicit formula, see~\eqref{e:poppy};
this formula features the Poisson kernel $P$ and the map
$B_-:S\mathbb H^{n+1}\to \mathbb S^n$ mapping a tangent vector to the endpoint in negative infinite time of the corresponding
geodesic of $\hh^{n+1}$. The explicit relation can be schematically described as follows:
\begin{center}
\includegraphics{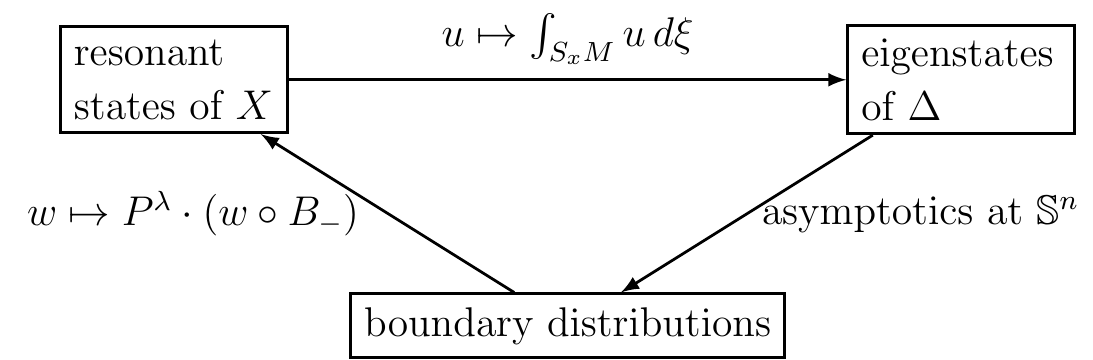}
%\beginpgfgraphicnamed{diagram1}
%\begin{tikzpicture}[arrows=-latex,style=thick]
%\node (res) at (0,0)[draw,text width=.8in]{resonant states of $X$};
%\node (eig) at (8,0)[draw,text width=.8in]{eigenstates of $\Delta$};
%\node (bd) at (4,-2.5)[draw]{boundary distributions};

%\draw (res) -- (eig) node[above,midway] {$u\mapsto \int_{S_xM} u\,d\xi$};
%\draw (bd) -- (res) node[left,midway] {$w\mapsto P^{\lambda} \cdot(w\circ B_-)\ $};
%\draw (eig) -- (bd) node[right,midway] {asymptotics at $\mathbb S^n$};
%\end{tikzpicture}
%\endpgfgraphicnamed
\end{center}
For $m>0$, one needs to also use \emph{horocyclic differential operators}, see Section~\ref{s:overview}.

Theorem~\ref{t:main} used the notion of \emph{geometric multiplicity} of a resonance $\lambda$,
that is, the dimension of the kernel of $X+\lambda$ on $\mathcal H^r$. For nonselfadjoint
problems, it is often more natural to consider the \emph{algebraic multiplicity},
that is, the dimension of the space of elements of $\mathcal H^r$ which are killed
by some power of $X+\lambda$.
%%%%%%%%%%%%%%%%%%%%%%%%%%%%%%%%%%%%%%%%%%%%%%%%%%%%%%%%%%%%%%%%%%%%%%%%%%%%%%%%
\begin{theo}
  \label{t:noalg}
If $\lambda\not \in-{n\over 2}-{1\over 2}\mathbb N_0$, then the algebraic and geometric multiplicities of $\lambda$ as a
Pollicott--Ruelle resonance coincide.
\end{theo}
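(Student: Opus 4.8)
The plan is to show that a generalized resonant state, i.e.\ an element $u\in\mathcal H^r$ with $(X+\lambda)^2 u=0$ but possibly $(X+\lambda)u\neq 0$, cannot exist when $\lambda\notin -\tfrac n2-\tfrac12\mathbb N_0$. Set $v=(X+\lambda)u$, so that $v$ is a genuine resonant state at $\lambda$ and $(X+\lambda)u=v$. By the main construction of the paper (the band structure and the explicit description of resonant states via boundary distributions), $v$ lives in a definite band, say the $m$-th band, and is built from an eigentensor of the Laplacian via the Poisson-type operator appearing in~\eqref{e:poppy}. The key point will be a \emph{normal form} for $X$ near the resonance: after conjugation by the same horocyclic operators used to prove Theorem~\ref{t:main}, the operator $X+\lambda$ acting on the relevant finite-dimensional space of ``band data'' becomes diagonal (scalar) rather than merely triangular, because the shifts between bands in~\eqref{e:main-1} are by distinct amounts $-2\ell$ and the corresponding spectral parameters $-(\lambda+m+\tfrac n2)^2+\tfrac{n^2}4+m-2\ell$ are, for the relevant range of $\lambda$, pairwise distinct. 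Distinctness of these eigenvalues forbids off-diagonal (nilpotent) coupling, hence no Jordan block.

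Concretely, the steps I would carry out are: (1) Reduce to showing that any $u$ with $(X+\lambda)u=v$, $v$ a resonant state, must actually have $v=0$; this is the standard equivalence between ``algebraic multiplicity $=$ geometric multiplicity'' and ``no rank-one Jordan block,'' together with the fact (from Faure--Sj\"ostrand theory, already invoked in~\eqref{e:actual-def}) that the resolvent $(X+\lambda)^{-1}$ is meromorphic with finite-rank poles, so it suffices to rule out the first obstruction. (2) Apply the lift to $\mathbb H^{n+1}$ and the horocyclic operator calculus of Section~\ref{s:overview}: push $u$ and $v$ through the same operators $B_-^*$, $P^{\lambda}$, and the horocyclic derivatives that diagonalize the problem, obtaining from $(X+\lambda)u=v$ an equation $(X+\lambda)\tilde u=\tilde v$ on the model space where $X$ acts on each graded piece by an explicit scalar. (3) On that model, solve the triangular system grade by grade: the equation forces $\tilde u$ to have a component in a band where the scalar action of $X+\lambda$ is invertible (by the genericity hypothesis on $\lambda$), which produces $\tilde u$ explicitly and, crucially, forces the source $\tilde v$ to vanish because the relevant eigenvalue of $\Delta$ on the trace-free divergence-free tensors would have to be a forbidden value excluded by~\eqref{e:reparts} and Lemma~\ref{bottomsp}. (4) Transport the vanishing $\tilde v=0$ back to $v=0$, using injectivity of the Poisson/boundary correspondence on the ranges in question (the same injectivity used in the proof of Theorem~\ref{t:main} to match multiplicities).

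The main obstacle I anticipate is step~(3): making precise the sense in which ``$X+\lambda$ acts as a scalar on each band'' when $u$ itself is only a distributional section in an anisotropic Sobolev space and is \emph{not} annihilated by $X+\lambda$, so it need not lie in a single band a priori. One has to first show that a generalized resonant state still admits the weak boundary asymptotic expansion at $\mathbb S^n$ (the regularity statement underlying~\eqref{e:poppy}), which requires re-running the radial-source estimates of Faure--Sj\"ostrand for the iterated operator $(X+\lambda)^2$ and checking that the extra Jordan term does not spoil the expansion — only shifts it by a logarithmic factor, which one then shows must be absent precisely because of the eigenvalue-distinctness. Once the expansion is available, the band decomposition of $u$ is well defined and the triangular solve goes through; I expect this regularity bookkeeping, rather than any new geometric input, to be the technical heart of the argument.
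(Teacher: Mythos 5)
There is a genuine gap, and it sits exactly where you place the main weight of your argument. A Jordan block at a resonance $\lambda$ is an obstruction \emph{inside} the generalized eigenspace at that single $\lambda$: the dangerous coupling is between a resonant state $v$ and a generalized state $u$ with $(X+\lambda)u=v$, both attached to the \emph{same} value of $\lambda$, the same band $m$, the same $\ell$, and the same eigenvalue of $\Delta$. Distinctness of the spectral parameters $-(\lambda+m+\tfrac n2)^2+\tfrac{n^2}4+m-2\ell$ across different bands only rules out nilpotent coupling between \emph{different} eigenvalues, which is never possible anyway; it says nothing about a nilpotent part within one band. Concretely, in your step (3) the "triangular solve" breaks down precisely on the band carrying $v$, where $X+\lambda$ acts by zero rather than invertibly, and no forbidden value of the Laplacian eigenvalue is produced: the would-be Jordan partner lives at exactly the same (allowed) eigenvalue as $v$. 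So the diagonalization-by-distinctness mechanism does not prove the absence of Jordan blocks; note that for general Anosov flows Jordan blocks genuinely can occur, so some quantitative input special to the hyperbolic setting is unavoidable. Your anticipated technical work (boundary expansions and radial estimates for $(X+\lambda)^2$, ruling out logarithmic terms) is also not carried out, but even granting it, the core step would still be missing.

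The paper's actual mechanism is different and supplies exactly the missing nondegeneracy: it proves a pairing formula (Lemma~\ref{l:the-pairing}) showing that for a resonant state $u$ and a coresonant state $u^*$ attached to the same $(m,\ell)$ and boundary data $w,w^*$, one has $\langle u,u^*\rangle_{L^2(SM)}=c_{m\ell}(\lambda)\,\langle \mathscr P^-_{\lambda+m}w,\mathscr P^+_{\lambda+m}w^*\rangle_{L^2(M)}$ with an explicit constant $c_{m\ell}(\lambda)\neq 0$ for $\lambda\notin-\tfrac n2-\tfrac12\mathbb N_0$, together with orthogonality across distinct $(m,\ell)$. Combined with Theorem~\ref{t:laplacian} (the Poisson operator is an isomorphism) and the nondegeneracy of the $L^2(M)$ pairing on the Laplacian eigenspaces, this makes the bilinear pairing $\Res_X(\lambda)\times\Res_{X^*}(\lambda)\to\mathbb C$ of~\eqref{e:inner-product} nondegenerate. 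Then the theorem is immediate: if $(X+\lambda)^2\tilde u=0$ and $u=(X+\lambda)\tilde u$, then $u\in\Res_X(\lambda)$ lies in the range of $X+\lambda$ on $\mathcal H^r$, hence $\langle u,u^*\rangle=0$ for every coresonant state $u^*$, and nondegeneracy forces $u=0$. No regularity theory for generalized resonant states and no band decomposition of $\tilde u$ is needed. If you want to salvage your approach, the substance you would have to add is precisely an analogue of this pairing computation (or some other quantitative statement ruling out nilpotency \emph{within} a band); the eigenvalue-distinctness observation cannot substitute for it.
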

%%%%%%%%%%%%%%%%%%%%%%%%%%%%%%%%%%%%%%%%%%%%%%%%%%%%%%%%%%%%%%%%%%%%%%%%%%%%%%%%
Theorem~\ref{t:noalg} relies on a \emph{pairing formula} (Lemma~\ref{l:the-pairing}), which states that
$$
\langle u,u^*\rangle_{L^2(SM)} = F_{m,\ell}(\la)\langle f,f^*\rangle_{L^2(M;\otimes^{m-2\ell} T^*M)},
$$
where $u$ is a resonant state at some resonance $\lambda$ corresponding to some $m,\ell$ in Theorem~\ref{t:main},
$u^*$ is a coresonant state (that is, an element of the kernel of the adjoint of $(X+\lambda)$),
$f,f^*$ are the corresponding eigenstates of the Laplacian, and $F_{m,\ell}(\la)$ is an explicit
function. Here $\langle u,u^*\rangle_{L^2}$ refers to the integral
$\int u\,\overline{u^*}$, which is well-defined despite the fact that
neither $u$ nor $u^*$ lie in $L^2$, see~\eqref{e:inner-product}.
This pairing formula is of independent interest as a step towards understanding
the high frequency behavior of resonant states and attempting to prove \emph{quantum ergodicity of resonant states}
in the present setting. Anantharaman--Zelditch~\cite{an-ze} obtained the pairing formula
in dimension 2 and studied concentration of Patterson--Sullivan distributions, which are directly related
to resonant states; see also~\cite{HHS}.

To motivate the study of Pollicott--Ruelle resonances, we also apply to our setting
the following \emph{resonance expansion} proved 
by Tsujii~\cite[Corollary~1.2]{Ts0} and Nonnenmacher--Zworski~\cite[Corollary~5]{NoZw2}:
%%%%%%%%%%%%%%%%%%%%%%%%%%%%%%%%%%%%%%%%%%%%%%%%%%%%%%%%%%%%%%%%%%%%%%%%%%%%%%%%
\begin{theo}
  \label{t:resexp} 
Fix $\varepsilon>0$. Then for $N$ large enough and $f,g$ in the Sobolev space $H^N(SM)$,
\begin{equation}
  \label{e:resexp}
\rho_{f,g}(t)=\int f\,d\mu\int g\,d\mu+\sum_{\lambda\in (-{n\over 2},0)} \sum_{k=1}^{\Mult_R(\lambda)} e^{\lambda t}
\langle f, u^*_{\lambda,k}\rangle_{L^2} \langle u_{\lambda,k}, g\rangle_{L^2}+\mathcal O_{f,g}(e^{-({n\over 2}-\varepsilon)t})
\end{equation}
where $u_{\lambda,k}$ is any basis of the space of resonant
states associated to $\lambda$ and $u^*_{\lambda,k}$ is the dual basis of the space of coresonant states
(so that $\sum_k u_{\lambda,k}\otimes_{L^2} u^*_{\lambda,k}$ is the spectral projector of $-X$
at $\lambda$).
\end{theo}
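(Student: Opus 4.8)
The plan is to deduce Theorem~\ref{t:resexp} from the general resonance expansions of Tsujii~\cite[Corollary~1.2]{Ts0} and Nonnenmacher--Zworski~\cite[Corollary~5]{NoZw2}; what remains is to verify their hypotheses in the present setting, using Theorems~\ref{t:main} and~\ref{t:noalg} to pin down the resonances in the strip $\{\Re\lambda>-{n\over2}+\varepsilon\}$. First, the geodesic flow on $SM$ is a contact Anosov flow --- the contact form being the pullback to $SM\cong S^*M$ of the canonical one-form on $T^*M$ --- so the microlocal framework of~\cite{FaSj} applies: $(X+\lambda)^{-1}:\mathcal H^r\to\mathcal H^r$ is meromorphic with finite-rank poles, it coincides with the meromorphic continuation of $\hat\rho_{f,g}$ as recalled above, and one has polynomial-in-$|\Im\lambda|$ bounds on it in the strip $\{\Re\lambda\ge -{n\over2}+\varepsilon\}$ (for $r>C_0({n\over2}-\varepsilon)$ as in~\eqref{e:actual-def}), away from fixed-size neighborhoods of the poles.

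Next I read off the resonances with $\Re\lambda>-{n\over2}+\varepsilon$ from Theorem~\ref{t:main} and Remark~(i). Since $\Delta$ has discrete spectrum on each bundle $\Eig^{m-2\ell}$, this set is \emph{finite}; moreover, by~\eqref{e:reparts}, each such resonance is real and lies in $(-{n\over2},0)$, except for $\lambda=0$, which is simple (the $m=\ell=0$ contribution). The width ${n\over2}$ is essentially optimal: the $m=0$ band already accumulates infinitely many resonances on the line $\Re\lambda=-{n\over2}$ (from the eigenvalues $\sigma\ge {n^2\over4}$ of the scalar Laplacian), so one cannot push the contour past it. Finally, Theorem~\ref{t:noalg} guarantees that each of these finitely many resonances $\lambda_0$ is semisimple, so $(X+\lambda)^{-1}=-(\lambda-\lambda_0)^{-1}\Pi_{\lambda_0}+(\text{holomorphic near }\lambda_0)$ with $\Pi_{\lambda_0}=\sum_k u_{\lambda_0,k}\otimes_{L^2}u^*_{\lambda_0,k}$ of finite rank and no nilpotent part; in particular each resonance will contribute a pure exponential $e^{\lambda_0 t}$ and not $t^je^{\lambda_0 t}$.

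With these facts the Tauberian/contour-deformation argument of~\cite{Ts0,NoZw2} goes through: for $N=N(\varepsilon)$ large enough that $H^N(SM)$ embeds continuously into $\mathcal H^r$ and into its $L^2$-dual, and $f,g\in H^N(SM)$, one writes $\rho_{f,g}(t)=\frac1{2\pi i}\int_{\Re\lambda=\lambda_1}e^{\lambda t}\langle (X+\lambda)^{-1}f,g\rangle_{L^2(SM)}\,d\lambda$ for $\Re\lambda_1>0$, shifts the contour to $\{\Re\lambda=-{n\over2}+\varepsilon\}$ while collecting the residues of the finitely many poles in between, and bounds the shifted integral by $\mathcal O_{f,g}(e^{-({n\over2}-\varepsilon)t})$ from the strip resolvent estimates (after the usual integrations by parts in $\lambda$). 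The residue at $\lambda_0=0$ is evaluated directly: the resonant state is the constant $1$, the coresonant state is the smooth invariant probability measure $\mu$ --- again a constant, since $X$ preserves $\mu$ --- and with the normalization $\Pi_0 h=\big(\int_{SM}h\,d\mu\big)\,1$ this reproduces the leading term of~\eqref{e:resexp}, consistently with the mixing limit of $\rho_{f,g}$.

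The genuinely new input is entirely in the second paragraph. The one external ingredient not proved here is the family of polynomial resolvent bounds in the strip $\{\Re\lambda\ge-{n\over2}+\varepsilon\}$; it can be cited from the general microlocal theory of Anosov flows, or alternatively deduced in our situation from the explicit formula for $(X+\lambda)^{-1}$ in terms of the Laplacian obtained in the proof of Theorem~\ref{t:main}. The only point requiring mild care is to make the dependence $N=N(\varepsilon)$ explicit via the embeddings $H^N(SM)\hookrightarrow\mathcal H^r$ and $H^N(SM)\hookrightarrow(\mathcal H^r)'$ with $r>C_0({n\over2}-\varepsilon)$; the remaining ingredients --- finiteness and location of resonances in the strip, semisimplicity, and the value of the $\lambda=0$ residue --- are immediate from Theorems~\ref{t:main} and~\ref{t:noalg}.
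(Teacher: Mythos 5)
Your proposal is correct and follows essentially the same route as the paper: the expansion with remainder $\mathcal O(e^{-({n\over 2}-\varepsilon)t})$ is imported from Tsujii~\cite[Corollary~1.2]{Ts0} and Nonnenmacher--Zworski~\cite[Corollary~5]{NoZw2} (which is also where the resolvent bounds and contour-shift argument you sketch live), while Theorem~\ref{t:main} with Remark~(i) locates the finitely many real resonances in $(-{n\over 2},0]$ and Theorem~\ref{t:noalg} supplies semisimplicity, hence no $t^j e^{\lambda t}$ terms and the existence of the dual basis of coresonant states. Your identification of the $\lambda=0$ residue with $\int f\,d\mu\int g\,d\mu$ matches the paper's normalization.
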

%%%%%%%%%%%%%%%%%%%%%%%%%%%%%%%%%%%%%%%%%%%%%%%%%%%%%%%%%%%%%%%%%%%%%%%%%%%%%%%%
Here we use Theorem~\ref{t:noalg} to see that there are no powers of $t$ in the expansion
and that there exists the dual basis of coresonant states to a basis of resonant states.

Combined with Theorem~\ref{t:main}, the expansion~\eqref{e:resexp}
in particular gives the optimal exponent in the decay of correlations
in terms of the small eigenvalues of the Laplacian; more precisely, the difference between
$\rho_{f,g}(t)$ and the product of the integrals of $f$ and $g$ is
$\mathcal O(e^{-\nu_0 t})$, where
$$
\nu_0=\min_{0\leq m<{n\over 2}}\min\{\nu+m \mid \nu\in (0,{\textstyle {n\over 2}-m}),\ \nu(n-\nu)+m\in \Spec^m(\Delta)\},
$$
or $\mathcal O(e^{-({n\over 2}-\varepsilon)t})$ for each $\varepsilon>0$ if the set above is empty.
Here $\Spec^m(\Delta)$ denotes the spectrum of the Laplacian
on trace-free divergence-free symmetric tensors of order $m$.
Using~\eqref{e:reparts}, we see that in fact one has $\nu \in [1,{n\over 2}-m)$
for $m>0$.

In order to go beyond the $\mathcal O(e^{-({n\over 2}-\varepsilon)t})$ remainder in~\eqref{e:resexp},
one would need to handle the infinitely many resonances in the $m=0$ band. This is thought
to be impossible in the general context of scattering theory, as the scattering resolvent can grow
exponentially near the bands; however, there exist cases such as Kerr--de Sitter black holes
where a resonance expansion with infinitely many terms holds, see~\cite{bo-ha,zeeman}.
The case of black holes is somewhat similar to the one considered here because in both cases
the trapped set is normally hyperbolic, see~\cite{kdsu} and~\cite{FaTs3}.
What is more, one can try to prove a resonance expansion with remainder
$\mathcal O(e^{-({n\over 2}+1-\varepsilon)t})$ where the sum over resonances in the first
band is replaced by $\langle(\Pi_0 f)\circ\varphi^{-t},g\rangle$ and $\Pi_0$ is the
projector onto the space of resonant states with $m=0$, having the microlocal structure
of a Fourier integral operator~-- see~\cite{kdsu} for a similar result in the context of
black holes.

%%%%%%%%%%%%%%%%%%%%%%%%%%%%%%%%%%%%%%%%%%%%%%%%%%%%%%%%%%%%%%%%%%%%%%%%%%%%%%%%
\smallsection{Previous results}
In the constant curvature setting in dimension $n+1=2$, the spectrum of the geodesic flow on $L^2$ was studied by Fomin--Gelfand using representation theory~\cite{FoGe}.
An exponential rate of mixing was proved by Ratner~\cite{Ra} and it was extended to higher dimensions by Moore~\cite{Mo}.
In variable negative curvature for surfaces
and more generally for Anosov flows with stable/unstable jointly non-integrable foliations, exponential decay of correlations was first shown by Dolgopyat~\cite{dolgopyat} and then by Liverani for contact flows~\cite{liverani}. The
work of Tsujii~\cite{Ts0,tsujii} established the asymptotic size of the resonance free strip and the
work of Nonnenmacher--Zworski~\cite{NoZw2} extended this result to general normally hyperbolic trapped sets.
Faure--Tsujii~\cite{FaTs1,FaTs2,FaTs3} established the band structure for general smooth contact Anosov flows
and proved an asymptotic for the number of resonances in the first band.

In dimension 2, the study of resonant states in the first band ($m=0$), that is distributions
which lie in the spectrum of $X$ and are annihilated by the horocylic vector field $U_-$
appears already in the works of Guillemin~\cite[Lecture~3]{guillemin}
and Zelditch~\cite{zelditch}, both using the representation theory of $\PSL(2;\mathbb R)$,
albeit without explicitly interpreting them as Pollicott--Ruelle resonant states. A more general
study of the elements in the kernel of $U_-$ was performed by Flaminio--Forni~\cite{FlFo}.

An alternative approach to resonances
involves the \emph{Selberg and Ruelle zeta functions}. 
The singularities (zeros and poles) of the Ruelle zeta function
correspond to
Pollicott--Ruelle resonances on differential forms
(see~\cite{fried,fried2}, \cite{GLP}, and~\cite{DyZw}), while the singularities of the Selberg zeta function correspond
to eigenvalues of the Laplacian.
The Ruelle and Selberg zeta functions
are closely related, see~\cite[Section 5.1, Figure~1]{leboeuf} and~\cite[(1.2)]{DyZw}
in dimension 2 and \cite{fried} and~\cite[Proposition~3.4]{BuOlBook} in arbitrary dimensions.
However, the Ruelle zeta function does not recover all resonances on functions, due to cancellations
with singularities coming from differential forms of different orders.
For example, \cite[Theorem~3.7]{Ju} describes the spectral singularities of the Ruelle zeta function for $n=3$ in terms
of the spectrum of the Laplacian on functions and 1-forms, which is much smaller than the
set obtained in Theorem~\ref{t:main}.

The book of Juhl~\cite{Ju}
and the works of Bunke--Olbrich~\cite{BuOlBook,BuOl1,BuOl2,BuOl3} study Ruelle and Selberg zeta functions
corresponding to various representations of the orthogonal group. They also consider general locally
symmetric spaces and address the question of what happens at the exceptional points (which in our case
are contained in $-{n\over 2}-{1\over 2}\mathbb N_0$), relating the behavior of the zeta
functions at these points to topological invariants. It is possible that the results~\cite{Ju,BuOlBook,BuOl1,BuOl2,BuOl3} together
with an appropriate representation theoretic calculation recover our description of resonances,
even though no explicit description featuring the spectrum
of the Laplacian on trace-free divergence-free symmetric tensors as in~\eqref{e:main-1}, \eqref{e:main-2}
seems to be available in the literature.
The direct spectral approach used in this paper,
unlike the zeta function techniques, gives an explicit relation between resonant
states and eigenstates of the Laplacian (see the remarks following~\eqref{e:actual-def})
and is a step towards a more quantitative understanding of decay of correlations.

An essential component of our work is the analysis of the correspondence
between eigenstates of the Laplacian on $\mathbb H^{n+1}$ and distributions on the conformal infinity $\mathbb S^n$.
In the scalar case, such a correspondence for hyperfunctions on $\mathbb S^n$ is due to Helgason~\cite{He1,He} (see also Minemura~\cite{Mi}); the correspondence between tempered eigenfunctions of $\Delta$ and distributions (instead of hyperfunctions) was shown by Oshima--Sekiguchi~\cite{OsSe} and Van Der Ban--Schlichtkrull~\cite{VdBSc} (see also Grellier--Otal~\cite{GrOt}).
Olbrich~\cite{olbrich} studied Poisson transforms on general homogeneous vector bundles,
which include the bundles of tensors used in the present paper.
The question of regularity of equivariant distributions on $\sph^n$ by certain Kleinian groups
of isometries of $\mathbb H^{n+1}$ (geometrically finite groups) is interesting since it tells the regularity of resonant states for the flow; precise regularity
was studied by Otal \cite{Ot} in the 2-dimensional co-compact case, Grellier--Otal \cite{GrOt} in higher dimensions, and Bunke--Olbrich \cite{BuOl2} for geometrically finite groups. In dimension $2$, the correspondence between the eigenfunctions of the Laplacian on the hyperbolic plane and distributions on the conformal boundary $\mathbb{S}^1$ appeared in Pollicott~\cite{Po} and Bunke--Olbrich \cite{BuOl1}, it is also an important tool in the theory developed by Bunke--Olbrich \cite{BuOl3} to study Selberg zeta functions on convex co-compact hyperbolic manifolds (see also the book of Juhl \cite{Ju} in the compact setting). These distributions on the conformal boundary $\sph^n$, of Patterson--Sullivan type, are also the central object of the recent work of Anantharaman--Zelditch \cite{an-ze, an-ze2} studying quantum ergodicity on hyperbolic compact surfaces;
a generalization to higher rank locally symmetric spaces was provided by Hansen--Hilgert--Schr\"oder~\cite{HHS}.

%%%%%%%%%%%%%%%%%%%%%%%%%%%%%%%%%%%%%%%%%%%%%%%%%%%%%%%%%%%%%%%%%%%%%%%%%%%%%%%%
%                                  SECTION 2                                   %
%%%%%%%%%%%%%%%%%%%%%%%%%%%%%%%%%%%%%%%%%%%%%%%%%%%%%%%%%%%%%%%%%%%%%%%%%%%%%%%%
\section{Outline and structure}
\label{s:overview}

In this section, we give the ideas of the proof of Theorem~\ref{t:main}, first in dimension 2 and then
in higher dimensions, and describe the structure of the paper. 

%%%%%%%%%%%%%%%%%%%%%%%%%%%%%%%%%%%%%%%%%%%%%%%%%%%%%%%%%%%%%%%%%%%%%%%%%%%%%%%%
\subsection{Dimension 2}
\label{s:o-dim2}

We start by using the following criterion (Lemma~\ref{l:criterion}):
$\lambda\in\mathbb C$ is a Pollicott--Ruelle resonance if and only if the
space
$$
\Res_X(\lambda):=\{u\in\mathcal D'(SM)\mid (X+\lambda) u=0,\
\WF(u)\subset E_u^*\}
$$
is nontrivial. Here $\mathcal D'(SM)$ denotes the space of \emph{distributions}
on $M$ (see~\cite{ho1}), $\WF(u)\subset T^*(SM)$ is the \emph{wavefront set}
of $u$ (see~\cite[Chapter 8]{ho1}), and $E_u^*\subset T^*(SM)$ is the dual
unstable foliation described in~\eqref{e:dual-decomposition}. It is more convenient
to use the condition $\WF(u)\subset E_u^*$ rather than $u\in\mathcal H^r$ because
this condition is invariant under differential operators of any order.

The key tools for the proof are the \emph{horocyclic vector fields} $U_\pm$
on $SM$, pictured on Figure~\ref{f:horo}(a) below. To define them,
we represent $M=\Gamma\backslash \mathbf H^2$, where $\mathbf H^2=\{z\in\mathbb C\mid\Im z>0\}$ is the hyperbolic
plane and $\Gamma\subset \PSL(2;\mathbb R)$ is a co-compact Fuchsian group of isometries acting
by M\"obius transformations.
(See Appendix~\ref{s:dim2} for the relation of the notation we use in dimension 2,
based on the half-plane model of the hyperbolic space, to the notation used elsewhere in the paper
which is based on the hyperboloid model.) Then $SM$ is covered by
$S\mathbf H^2$, which is isomorphic to the group $G:=\PSL(2;\mathbb R)$ 
by the map $\gamma\in G\mapsto (\gamma(i),d\gamma(i)\cdot i)$.
Consider the left invariant vector fields on $G$ corresponding to the following
elements of its Lie algebra:
\begin{equation}
  \label{e:u-pm-dim-2}
X=\begin{pmatrix} {1\over 2}&0\\0&-{1\over 2}\end{pmatrix},\quad
U_+=\begin{pmatrix} 0&1\\0&0\end{pmatrix},\quad
U_-=\begin{pmatrix} 0&0\\1&0\end{pmatrix},
\end{equation}
then $X,U_\pm$ descend to vector fields on $SM$, with $X$ becoming the generator
of the geodesic flow. We have the commutation relations
\begin{equation}
  \label{e:comm-rel-dim-2}
[X,U_\pm]=\pm U_\pm,\quad
[U_+,U_-]=2X.
\end{equation}
For each $\lambda$ and $m\in\mathbb N_0$, define the spaces
$$
V_m(\lambda):=\{u\in \mathcal D'(SM)\mid (X+\lambda) u=0,\
U_-^m u=0,\
\WF(u)\subset E_u^*\},
$$
and put
$$
\Res^0_X(\lambda):=V_1(\lambda).
$$
By~\eqref{e:comm-rel-dim-2},
$U_-^m(\Res_X(\lambda))\subset \Res_X(\lambda+m)$.
Since there are no Pollicott--Ruelle resonances in the right half-plane, we conclude
that
$$
\Res_X(\lambda)=V_m(\lambda)\quad\text{for }m>-\Re\lambda.
$$
We now use the diagram (writing $\Id=U_\pm ^0$, $U_\pm=U_\pm^1$
for uniformity of notation)
\begin{center}
\includegraphics{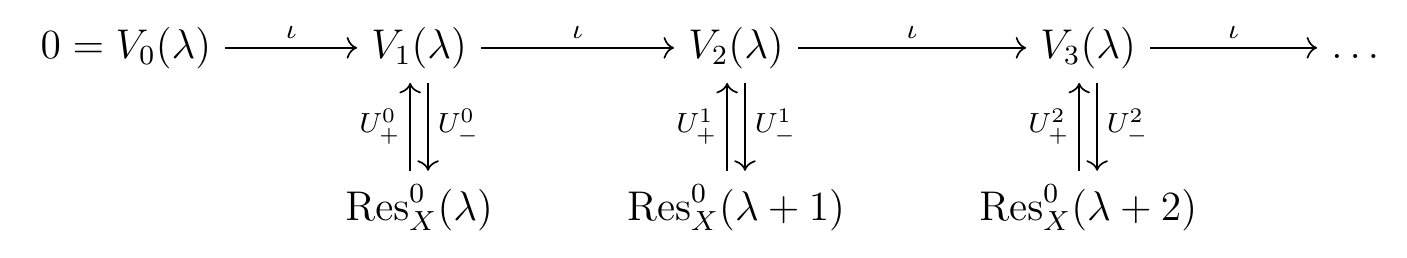}
%\beginpgfgraphicnamed{diagram2}
%\begin{tikzcd}
%0=V_0(\lambda)\arrow{r}{\iota}
%&V_1(\lambda)\arrow{r}{\iota}\arrow[xshift=0.5ex]{d}{U_-^0}
%&V_2(\lambda)\arrow{r}{\iota}\arrow[xshift=0.5ex]{d}{U_-^1}
%&V_3(\lambda)\arrow{r}{\iota}\arrow[xshift=0.5ex]{d}{U_-^2}
%&\dots\\
%&\Res^0_X(\lambda)\arrow[xshift=-0.5ex]{u}{U_+^0}
%&\Res^0_X(\lambda+1)\arrow[xshift=-0.5ex]{u}{U_+^1}
%&\Res^0_X(\lambda+2)\arrow[xshift=-0.5ex]{u}{U_+^2}
%&
%\end{tikzcd}
%\endpgfgraphicnamed
\end{center}
where $\iota$ denotes the inclusion maps and unless $\lambda\in -1-{1\over 2}\mathbb N_0$, we have
$$
V_{m+1}(\lambda)=V_m(\lambda)\oplus U_+^m(\Res^0_X(\lambda+m)),
$$
and $U_+^m$ is one-to-one on $\Res^0_X(\lambda+m)$; indeed, using~\eqref{e:comm-rel-dim-2} we calculate
$$
U_-^m U_+^m= m!\bigg(\prod_{j=1}^m (2\lambda+m+j)\bigg)\Id\quad\text{on }\Res^0_X(\lambda+m)
$$
and the coefficient above is nonzero when $\lambda\notin -1-{1\over 2}\mathbb N_0$. We then see that
$$
\Res_X(\lambda)=\bigoplus_{m\geq 0} U_+^m(\Res^0_X(\lambda+m)).
$$
It remains to describe the space of resonant states in the first band,
$$
\Res^0_X(\lambda)=\{u\in\mathcal D'(SM)\mid (X+\lambda) u=0,\ U_-u=0,\
\WF(u)\subset E_u^*\}.
$$
We can remove the condition $\WF(u)\subset E_u^*$ as it follows from the other two,
see the remark following Lemma~\ref{l:reduced}. We claim that the pushforward map
\begin{equation}
  \label{e:pushy}
u\in\Res^0_X(\lambda)\mapsto  f(x):=\int_{S_x M} u(x,\xi)\,dS(\xi)
\end{equation}
is an isomorphism from $\Res^0_X(\lambda)$ onto $\Eig(-\lambda(1+\lambda))$,
where $\Eig(\sigma)=\{u\in \CI(M)\mid \Delta u=\sigma u\}$; this would finish the proof.
In other words, the eigenstate of the Laplacian corresponding to $u$ is obtained by integrating
$u$ over the fibers of $SM$.

To show that~\eqref{e:pushy} is an isomorphism, we reduce
the elements of $\Res^0_X(\lambda)$ to the conformal boundary $\mathbb S^1$
of the ball model $\mathbb B^2$ of the hyperbolic space as follows:
\begin{equation}
  \label{e:poppy}
\Res^0_X(\lambda)=\{P(y,B_-(y,\xi))^\lambda w(B_-(y,\xi))\mid w\in\Bd(\lambda)\},
\end{equation}
where $P(y,\nu)$ is the Poisson kernel:
$P(y,\nu)={1-|y|^2\over |y-\nu|^2}$, $y\in\mathbb B^2$, $\nu\in\mathbb S^1$;
$B_-:S\mathbb B^2\to\mathbb S^1$ maps $(y,\xi)$ to the limiting point of the geodesic
$\varphi_t(y,\xi)$ as $t\to-\infty$, see Figure~\ref{f:horo}(a); and
$\Bd(\lambda)\subset\mathcal D'(\mathbb S^1)$ is the space of distributions
satisfying certain equivariance property with respect to $\Gamma$. Here we lifted
$\Res_X^0(\lambda)$ to distributions on $S\mathbb H^2$
and used the fact that the map $B_-$ is invariant under both $X$ and $U_-$;
see Lemma~\ref{l:reduced} for details.

It remains to show that the map $w\mapsto f$ defined via~\eqref{e:pushy} and~\eqref{e:poppy}
is an isomorphism from $\Bd(\lambda)$ to $\Eig(-\lambda(1+\lambda))$. This map is given by
(see Lemma~\ref{l:poisson-1})
\begin{equation}
  \label{e:peasy}
f(y)=\mathscr P_\lambda^- w(y):=\int_{\mathbb S^1}P(y,\nu)^{1+\lambda}w(\nu)\,dS(\nu)
\end{equation}
and is the Poisson operator for the (scalar) Laplacian
corresponding to the eigenvalue $s(1-s)$, $s=1+\lambda$. This Poisson operator
is known to be an isomorphism for $\lambda\notin -1-\mathbb N$, see the remark following Theorem~\ref{t:laplacian}
in Section~\ref{s:rr-2}, finishing the proof.

%%%%%%%%%%%%%%%%%%%%%%%%%%%%%%%%%%%%%%%%%%%%%%%%%%%%%%%%%%%%%%%%%%%%%%%%%%%%%%%%
\subsection{Higher dimensions}

In higher dimensions, the situation is made considerably more difficult by the fact
we can no longer define the vector fields $U_\pm$ on $SM$. To get around this problem,
we remark that in dimension $2$, $U_-u$ is the derivative of $u$ along a certain canonical
vector in the one-dimensional \emph{unstable foliation} $E_u\subset T(SM)$
and similarly $U_+u$ is the derivative along an element of the stable foliation $E_s$.
(See Section~\ref{s:horocycl}.) In dimension $n+1>2$, the foliations $E_u,E_s$
are $n$-dimensional and one cannot trivialize them. However, each of these foliations
is canonically parametrized by the following vector bundle $\mathcal E$ over $SM$:
$$
\mathcal E(x,\xi)=\{\eta\in T_x M\mid \eta\perp \xi\},\quad
(x,\xi)\in SM.
$$
This makes it possible to define \emph{horocyclic operators}
$$
\mathcal U_\pm^m:\mathcal D'(SM)\to\mathcal D'(SM;\otimes^m_S\mathcal E^*),
$$
where $\otimes^m_S$ stands for the $m$-th symmetric tensor power, and we have the diagram
\begin{center}
\includegraphics{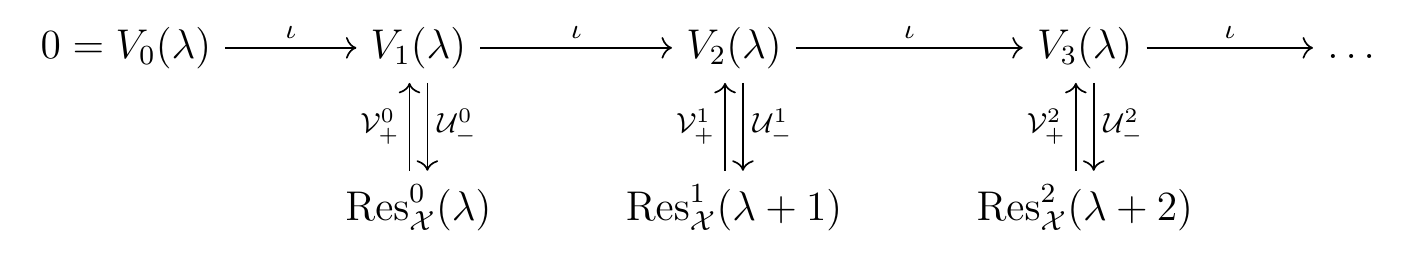}
%\beginpgfgraphicnamed{diagram3}
%\begin{tikzcd}
%0=V_0(\lambda)\arrow{r}{\iota}
%&V_1(\lambda)\arrow{r}{\iota}\arrow[xshift=0.5ex]{d}{\mathcal U_-^0}
%&V_2(\lambda)\arrow{r}{\iota}\arrow[xshift=0.5ex]{d}{\mathcal U_-^1}
%&V_3(\lambda)\arrow{r}{\iota}\arrow[xshift=0.5ex]{d}{\mathcal U_-^2}
%&\dots\\
%&\Res^0_{\mathcal X}(\lambda)\arrow[xshift=-0.5ex]{u}{\mathcal V_+^0}
%&\Res^1_{\mathcal X}(\lambda+1)\arrow[xshift=-0.5ex]{u}{\mathcal V_+^1}
%&\Res^2_{\mathcal X}(\lambda+2)\arrow[xshift=-0.5ex]{u}{\mathcal V_+^2}
%&
%\end{tikzcd}
%\endpgfgraphicnamed
\end{center}
where $\mathcal V_+^m=(-1)^m(\mathcal U_+^m)^*$ and
we put for a certain extension $\mathcal X$ of $X$ to $\otimes^m_S\mathcal E^*$
$$
\begin{gathered}
V_m(\lambda):=\{u\in\mathcal D'(SM)\mid (X+\lambda) u=0,\ \mathcal U_-^m u=0,\
\WF(u)\subset E_u^*\},\\
\Res^m_{\mathcal X}(\lambda):=\{v\in \mathcal D'(SM;\otimes^m_S\mathcal E^*)\mid (\mathcal X+\lambda)v=0,\
\mathcal U_- v=0,\
\WF(v)\subset E_u^*\}.
\end{gathered}
$$
Similarly to dimension 2, we reduce the problem to understanding the spaces
$\Res^m_{\mathcal X}(\lambda)$, and an operator similar to~\eqref{e:pushy} maps
these spaces to eigenspaces of the Laplacian on divergence-free symmetric tensors.
However, to make this statement precise, we have to further decompose
$\Res^m_{\mathcal X}(\lambda)$ into terms coming from traceless
tensors of degrees $m,m-2,m-4,\dots$, explaining the appearance of the parameter $\ell$
in the theorem. (Here the trace of a symmetric tensor of order $m$ is the result of contracting
two of its indices with the metric, yielding a tensor of order $m-2$.) The procedure of reducing
elements of $\Res^m_{\mathcal X}(\lambda)$ to the conformal boundary $\mathbb S^n$
is also made more difficult since the boundary distributions $w$
are now sections of $\otimes^m_S(T^*\mathbb S^n)$.

A significant part of the paper
is dedicated to proving that the higher-dimensional analog of~\eqref{e:peasy}
on symmetric tensors is indeed an isomorphism between appropriate spaces.
To show that the Poisson operator $\mathscr P^-_\lambda$ is injective,
we prove a weak
expansion of $f(y)\in \CI(\mathbb B^{n+1})$ in powers of $1-|y|$ as $y\in\mathbb B^{n+1}$
approaches the conformal boundary $\mathbb S^n$; since $w$ appears as the coefficient in one
of the terms of the expansion, $\mathscr P^-_\lambda w=0$ implies $w=0$.
To show the surjectivity of $\mathscr P^-_\lambda$, we prove that the lift to $\mathbb H^{n+1}$
of every trace-free divergence-free eigenstate $f$ of the Laplacian admits
a weak expansion at the conformal boundary (this requires
a fine analysis of the Laplacian and divergence operators on symmetric tensors);
putting $w$ to be the coefficient next to
one of the terms of this expansion, we can prove that $f=\mathscr P^-_\lambda w$.

%%%%%%%%%%%%%%%%%%%%%%%%%%%%%%%%%%%%%%%%%%%%%%%%%%%%%%%%%%%%%%%%%%%%%%%%%%%%%%%%
\subsection{Structure of the paper}

\begin{itemize}
\item In Section~\ref{s:geomhyp}, we study in detail the geometry of the
hyperbolic space $\mathbb H^{n+1}$, which is the covering space of $M$;
\item in Section~\ref{s:horror}, we introduce and study the horocyclic operators;
\item in Section~\ref{s:rr}, we prove Theorems~\ref{t:main} and~\ref{t:noalg}, modulo
properties of the Poisson operator;
\item in Sections~\ref{s:laplacian} and~\ref{s:poisson-is}, we show the injectivity
and the surjectivity of the Poisson operator;
\item Appendix~\ref{s:technical} contains several technical lemmas;
\item Appendix~\ref{s:dim2} shows how the discussion of Section~\ref{s:o-dim2}
fits into the framework of the remainder of the paper;
\item Appendix~\ref{s:weylie} shows a Weyl law for divergence free symmetric tensors and
relates the $m=1$ case to the Hodge Laplacian.
\end{itemize}

%%%%%%%%%%%%%%%%%%%%%%%%%%%%%%%%%%%%%%%%%%%%%%%%%%%%%%%%%%%%%%%%%%%%%%%%%%%%%%%%
%                                  SECTION 3                                   %
%%%%%%%%%%%%%%%%%%%%%%%%%%%%%%%%%%%%%%%%%%%%%%%%%%%%%%%%%%%%%%%%%%%%%%%%%%%%%%%%
\section{Geometry of the hyperbolic space}
\label{s:geomhyp}

In this section, we review the structure of the hyperbolic space and its geodesic flow and introduce
various objects to be used later, including:
\begin{itemize}
\item the isometry group $G$ of the hyperbolic space and its Lie algebra, including the
horocyclic vector fields $U^\pm_i$ (Section~\ref{s:group});
\item the stable/unstable foliations $E_s,E_u$ (Section~\ref{s:geodesic});
\item the conformal compactification of the hyperbolic space, the maps
$B_\pm$, the coefficients $\Phi_\pm$, and the Poisson kernel (Section~\ref{s:confinf});
\item parallel transport to conformal infinity and the maps $\mathcal A_\pm$ (Section~\ref{s:E}).
\end{itemize}

%%%%%%%%%%%%%%%%%%%%%%%%%%%%%%%%%%%%%%%%%%%%%%%%%%%%%%%%%%%%%%%%%%%%%%%%%%%%%%%%
\subsection{Models of the hyperbolic space}
\label{s:modeles}
 
Consider the Minkowski space $\mathbb R^{1,n+1}$ with the Lorentzian metric
$$
g_M=dx_0^2-\sum_{j=1}^{n+1}dx_j^2.
$$
The corresponding scalar product is denoted $\langle\cdot,\cdot\rangle_M$.
We denote by $e_0,\dots,e_{n+1}$ the canonical basis of $\mathbb R^{1,n+1}$.

The hyperbolic space of dimension $n+1$ is defined to be one sheet of
the two-sheeted hyperboloid
$$
\mathbb H^{n+1}:=\{x\in\mathbb R^{1,n+1}\mid \langle x,x\rangle_M=1,\ x_0>0\}
$$
equipped with the Riemannian metric
$$
g_H:=-g_M|_{T\mathbb H^{n+1}}.
$$
We denote the unit tangent bundle of $\mathbb H^{n+1}$ by
\begin{equation}
  \label{e:shn}
S\mathbb H^{n+1}:=\{(x,\xi)\mid x\in\mathbb H^{n+1},\
\xi\in\mathbb R^{1,n+1},\
\langle \xi,\xi\rangle_M=-1,\
\langle x,\xi\rangle_M=0\}.
\end{equation}
Another model of the hyperbolic space is the unit ball $\mathbb{B}^{n+1}=\{y\in \rr^{n+1}; |y|<1\}$,
which is identified with $\mathbb H^{n+1}\subset \mathbb R^{1,n+1}$ via
the map (here $x=(x_0,x')\in\mathbb R\times\mathbb R^{n+1}$)
\begin{equation}
  \label{defpsi}
\psi : \hh^{n+1} \to \bb^{n+1}, \quad  \psi(x)=\frac{x'}{x_0+1},\quad
\psi^{-1}(y)={1\over 1-|y|^2}(1+|y|^2,2y).
\end{equation}
and the metric $g_H$ pulls back to the following metric on $\mathbb{B}^{n+1}$:
\begin{equation}
  \label{defgH}
(\psi^{-1})^*g_H=\frac{4\, dy^2}{(1-|y|^2)^2}.
\end{equation}
We will also use the upper half-space model $\mathbb U^{n+1}=\mathbb R_{z_0}^+\times\mathbb R_{z}^n$
with the metric
\begin{equation}
  \label{e:umetric}
(\psi^{-1}\psi_1^{-1})^*g_H={dz_0^2+dz^2\over z_0^2},
\end{equation}
where the diffeomorphism $\psi_1:\mathbb B^{n+1}\to\mathbb U^{n+1}$ is given by
(here $y=(y_1,y')\in\mathbb R\times\mathbb R^n$)
\begin{equation}
  \label{e:udiffeo}
\psi_1(y_1,y')={(1-|y|^2,2y')\over 1+|y|^2-2y_1},\quad
\psi_1^{-1}(z_0,z)={(z_0^2+|z|^2-1,2z)\over (1+z_0)^2+|z|^2}.
\end{equation}

%%%%%%%%%%%%%%%%%%%%%%%%%%%%%%%%%%%%%%%%%%%%%%%%%%%%%%%%%%%%%%%%%%%%%%%%%%%%%%%%
\subsection{Isometry group}
\label{s:group}

We consider the group
$$
G=\PSO(1,n+1)\subset \SL(n+2;\mathbb R)
$$
of all linear transformations of $\mathbb R^{1,n+1}$ preserving the Minkowski metric,
the orientation, and the sign of $x_0$ on timelike vectors.
For $x\in\mathbb R^{1,n+1}$ and $\gamma\in G$, denote by $\gamma\cdot x$
the result of multiplying $x$ by the matrix $\gamma$.
The group $G$ is exactly the group of orientation preserving isometries of $\hh^{n+1}$;
under the identification~\eqref{defpsi}, it corresponds to the group of direct M\"obius transformations of $\rr^{n+1}$ preserving the unit ball. 

The Lie algebra of $G$ is spanned by the matrices
\begin{equation}\label{liealgebraofG} 
X= E_{0,1}+E_{1,0}, \quad A_k= E_{0,k}+E_{k,0}, \quad R_{i,j}=E_{i,j}-E_{j,i}
\end{equation} 
for $i,j\geq 1$ and $k\geq 2$, where $E_{i,j}$ is the elementary matrix if $0\leq i,j\leq n+1$
(that is, $E_{i,j}e_k=\delta_{jk}e_i$). Denote for $i=1,\dots, n$
\begin{equation}\label{liealgebraofG2} 
U_i^+:= -A_{i+1}-R_{1,i+1}, \quad U^-_i:=-A_{i+1}+R_{1,i+1}
\end{equation}
and observe that $X,U_i^+,U_i^-,R_{i+1,j+1}$ (for $1\leq i<j\leq n$) also form a basis.
Henceforth we identify elements of the Lie algebra of $G$ with left invariant vector fields
on $G$.

We have the commutator relations (for $1\leq i,j,k\leq n$ and $i\neq j$)
\begin{equation}
  \label{e:comm-rel}
\begin{gathered}\relax
[X,U_i^\pm]=\pm U_i^\pm,\quad
[U_i^\pm,U_j^\pm]=0,\quad
[U_i^+,U_i^-]=2X,\quad
[U_i^\pm,U_j^\mp]=2R_{i+1,j+1},\\
[R_{i+1,j+1},X]=0,\quad [R_{i+1,j+1},U_k^\pm]=\delta_{jk}U_i^\pm-\delta_{ik}U_j^\pm.
\end{gathered}
\end{equation}
The Lie algebra elements $U_i^\pm$ are very important in our argument since they generate
horocylic flows, see Section~\ref{s:horocycl}.
The flows of $U^1_\pm$ in the case $n=1$ are shown in Figure~\ref{f:horo}(a);
for $n>1$, the flows of $U^j_\pm$ do not descend to $S\mathbb H^{n+1}$.

The group $G$ acts on $\mathbb H^{n+1}$ transitively, with the isotropy group
of $e_0\in\mathbb H^{n+1}$ isomorphic to $\SO(n+1)$. It also acts
transitively on the unit tangent bundle $S\mathbb H^{n+1}$,
by the rule $\gamma.(x,\xi)=(\gamma\cdot x,\gamma\cdot\xi)$,
with the isotropy group of $(e_0,e_1)\in S\mathbb H^{n+1}$ being
\begin{equation}
  \label{e:defH}
H=\{\gamma\in G\mid \gamma\cdot e_0=e_0,\ \gamma\cdot e_1=e_1\}\simeq \SO(n).
\end{equation}
Note that $H$
is the connected Lie subgroup of $G$ with Lie algebra spanned by
$R_{i+1,j+1}$ for $1\leq i,j\leq n$.
We can then write $S\mathbb H^{n+1}\simeq G/H$, where the projection
$\pi_S:G\to S\mathbb H^{n+1}$ is given by
\begin{equation}
  \label{e:piS}
\pi_S(\gamma)=(\gamma\cdot e_0,\gamma\cdot e_1)\in S\mathbb H^{n+1},\quad
\gamma\in G.
\end{equation}

%%%%%%%%%%%%%%%%%%%%%%%%%%%%%%%%%%%%%%%%%%%%%%%%%%%%%%%%%%%%%%%%%%%%%%%%%%%%%%%%
\begin{figure}
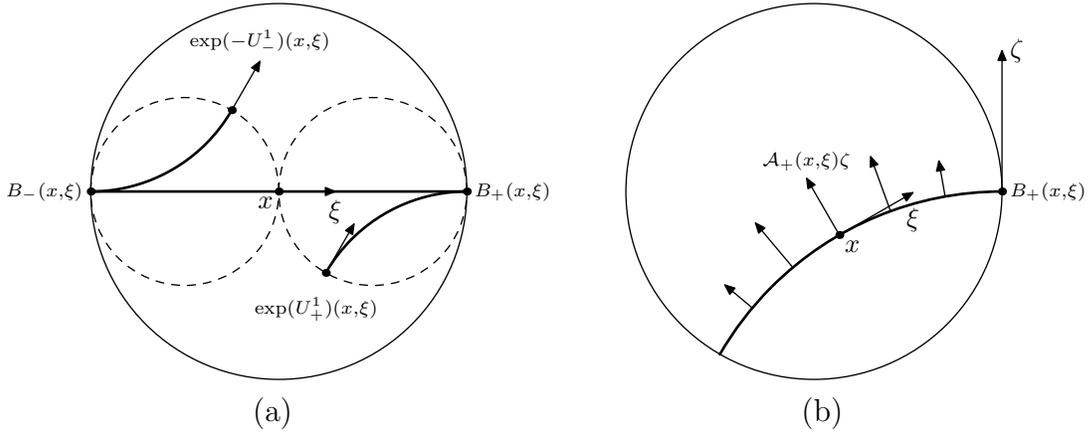

\includegraphics{rrh.5}
\qquad
\includegraphics{rrh.6}
\hbox to\hsize{\hss\quad (a)\hss\hss (b)\quad\hss}
\caption{(a) The horocyclic flows $\exp(\pm U^\pm_1)$ in dimension $n+1=2$,
pulled back to the ball model by the map $\psi$ from~\eqref{defpsi}. The thick lines
are geodesics and the dashed lines are horocycles.
(b) The map $\mathcal A_+$ and the parallel transport of an element of $\mathcal E$
along a geodesic.}
\label{f:horo}
\end{figure}
%%%%%%%%%%%%%%%%%%%%%%%%%%%%%%%%%%%%%%%%%%%%%%%%%%%%%%%%%%%%%%%%%%%%%%%%%%%%%%%%

%%%%%%%%%%%%%%%%%%%%%%%%%%%%%%%%%%%%%%%%%%%%%%%%%%%%%%%%%%%%%%%%%%%%%%%%%%%%%%%%
\subsection{Geodesic flow}
\label{s:geodesic}

The geodesic flow,
$$
\varphi_t:S\mathbb H^{n+1}\to S\mathbb H^{n+1},\quad
t\in\mathbb R,
$$
is given in the parametrization~\eqref{e:shn} by
\begin{equation}\label{formulaflow}
\varphi_t(x,\xi)=(x\cosh t+\xi\sinh t,x\sinh t+\xi\cosh t).
\end{equation} 
We note that, with the projection $\pi_S:G\to S\mathbb H^{n+1}$
defined in~\eqref{e:piS},
$$
\varphi_t(\pi_S(\gamma))=\pi_S(\gamma\exp(tX)),
$$
where $X$ is defined in~\eqref{liealgebraofG}. This means that the generator
of the geodesic flow can be obtained by pushing forward the left invariant field on $G$
generated by $X$ by the map $\pi_S$ (which is possible since $X$ is invariant
under right multiplications by elements of the subgroup $H$ defined in~\eqref{e:defH}). By abuse of notation,
we then denote by $X$ also the generator of the geodesic flow on $S\mathbb H^{n+1}$:
\begin{equation}
  \label{e:defX}
X=\xi\cdot \partial_x+x\cdot \partial_\xi.
\end{equation}
We now provide the stable/unstable decomposition for the geodesic flow, demonstrating that it is hyperbolic
(and thus the flow on a compact quotient by a discrete group will be Anosov).
For $\rho=(x,\xi)\in S\mathbb H^{n+1}$, the tangent space $T_\rho(S\mathbb H^{n+1})$ can be written as
$$
T_\rho(S\mathbb H^{n+1})=\{(v_x,v_\xi)\in(\mathbb R^{1,n+1})^2\mid
\langle x,v_x\rangle_M
=\langle \xi,v_\xi\rangle_M
=\langle x,v_\xi\rangle_M
+\langle \xi,v_x\rangle_M=0
\}.
$$
The differential of the geodesic flow acts by
$$
d\varphi_t(\rho)\cdot (v_x,v_\xi)=(v_x\cosh t+v_\xi\sinh t,v_x\sinh t+v_\xi\cosh t).
$$
We have $T_\rho(S\mathbb H^{n+1})=E^0(\rho)\oplus \widetilde T_\rho(S\mathbb H^{n+1})$,
where $E^0(\rho):=\mathbb R X$ is the flow direction and
$$
\widetilde T_\rho(S\mathbb H^{n+1})
=\{(v_x,v_\xi)\in(\mathbb R^{1,n+1})^2\mid
\langle x,v_x\rangle_M
=\langle x,v_\xi\rangle_M
=\langle \xi,v_x\rangle_M
=\langle \xi,v_\xi\rangle_M=0\},
$$
and this splitting is invariant under $d\varphi_t$.
A natural norm on $\widetilde T_\rho(S\mathbb H^{n+1})$ is given by the formula
\begin{equation}
  \label{e:natnorm}
|(v_x,v_\xi)|^2:=-\langle v_x,v_x\rangle_M-\langle v_\xi,v_\xi\rangle_M,
\end{equation}
using the fact that $v_x,v_\xi$ are Minkowski orthogonal to the timelike vector $x$
and thus must be spacelike or zero. Note that this norm is invariant under the action of $G$.

We now define the \emph{stable/unstable decomposition}
$\widetilde T_\rho (S\mathbb H^{n+1})
=E_s(\rho)\oplus E_u(\rho)$, where
\begin{equation}
  \label{e:stable-unstable}
\begin{gathered}
E_s(\rho):=\{(v,-v)\mid \langle x,v\rangle_M=\langle\xi,v\rangle_M=0\},\\
E_u(\rho):=\{(v,v)\mid \langle x,v\rangle_M=\langle\xi,v\rangle_M=0\}.
\end{gathered}
\end{equation}
Then $T_\rho(S\mathbb H^{n+1})=E_0(\rho)\oplus E_s(\rho)\oplus E_u(\rho)$, this splitting is invariant under
$\varphi_t$ and under the action of $G$, and, using the norm from~\eqref{e:natnorm},
$$
|d\varphi_t(\rho)\cdot w|=e^{-t}|w|,\ w\in E_s(\rho);\quad
|d\varphi_t(\rho)\cdot w|=e^{t}|w|,\ w\in E_u(\rho).
$$
Finally, we remark that the vector subbundles $E_s$ and $E_u$ are spanned by the left-invariant
vector fields $U^+_1,\dots, U^+_n$ and $U^-_1,\dots,U^-_n$ from~\eqref{liealgebraofG2}
in the sense that
$$
\pi_S^* E_s=\Span(U^+_1,\dots,U^+_n)\oplus\mathfrak h,\quad
\pi_S^* E_u=\Span(U^-_1,\dots,U^-_n)\oplus\mathfrak h.
$$
Here $\pi_S^* E_s:=\{(\gamma,w)\in TG\mid (\pi_S(\gamma),d\pi_S(\gamma)\cdot w)\in E_s\}$
and $\pi_S^* E_u$ is defined similarly;
$\mathfrak h$ is the left translation of the Lie algebra of $H$,
or equivalently the kernel of $d\pi_S$.
Note that while the individual vector fields $U^\pm_1,\dots,U^\pm_n$ are not invariant under
right multiplications by elements of $H$ in dimensions $n+1>2$ (and thus do not descend to
vector fields on $S\mathbb H^{n+1}$ by the map $\pi_S$), their spans are invariant under
$H$ by~\eqref{e:comm-rel}.

The dual decomposition, used in the construction of Pollicott--Ruelle resonances, is
\begin{equation}
  \label{e:dual-decomposition}
T^*_\rho (S\mathbb H^{n+1})=E_0^*(\rho)\oplus E_s^*(\rho)\oplus E_u^*(\rho),
\end{equation}
where $E_0^*(\rho),E_s^*(\rho),E_u^*(\rho)$ are dual to
$E_0(\rho),E_u(\rho),E_s(\rho)$ in the original decomposition
(that is, for instance $E_s^*(\rho)$ consists of all covectors annihilating
$E_0(\rho)\oplus E_s(\rho)$).
The switching of the roles of $E_s$ and $E_u$ is due to the fact
that the flow on the cotangent bundle is $(d\varphi_t^{-1})^*$.

%%%%%%%%%%%%%%%%%%%%%%%%%%%%%%%%%%%%%%%%%%%%%%%%%%%%%%%%%%%%%%%%%%%%%%%%%%%%%%%%
\subsection{Conformal infinity}
\label{s:confinf}

The metric~\eqref{defgH} in the ball model $\mathbb B^{n+1}$ is conformally compact;
namely the metric $(1-|y|^2)^2(\psi^{-1})^*g_H$ continues smoothly to the closure
$\overline {\mathbb B^{n+1}}$, which we call the \emph{conformal compactification}
of $\mathbb H^{n+1}$; note that $\mathbb H^{n+1}$ embeds into the interior
of $\overline {\mathbb B^{n+1}}$ by the map~\eqref{defpsi}.
The boundary $\mathbb S^n=\partial\overline{\mathbb B^{n+1}}$, endowed with the standard
metric on the sphere, is called \emph{conformal infinity}.
On the hyperboloid model, it is natural to associate to a point at conformal infinity
$\nu\in\mathbb S^n$ the lightlike ray $\{(s,s\nu)\mid s>0\}\subset\mathbb R^{1,n+1}$;
note that this ray is asymptotic to the curve
$\{(\sqrt{1+s^2},s\nu)\mid s>0\}\subset\mathbb H^{n+1}$, which
converges to $\nu$ in $\overline{\mathbb B^{n+1}}$.

Take $(x,\xi)\in S\mathbb H^{n+1}$. Then $\langle x\pm\xi,x\pm\xi\rangle_M=0$
and $x_0\pm\xi_0>0$, therefore we can write
$$
x\pm \xi=\Phi_\pm(x,\xi)(1,B_\pm(x,\xi)),$$ 
for some maps
\begin{equation}\label{PhiandB}
\Phi_\pm:S\hh^{n+1}\to \mathbb R^+,\quad
B_\pm:S\hh^{n+1}\to\mathbb S^n.
\end{equation}
Then $B_\pm(x,\xi)$ is the limit as $t\to\pm\infty$ of the $x$-projection of the geodesic $\varphi_t(x,\xi)$
in $\overline{\mathbb{B}^{n+1}}$:
$$
B_\pm(x,\xi)=\lim_{t\to \pm\infty}\pi(\varphi_t(x,\xi)),\quad
\pi:S\mathbb H^{n+1}\to\mathbb H^{n+1}.
$$
Note that this implies that for $X$ defined in~\eqref{e:defX},
$dB_\pm\cdot X=0$ since $B_\pm(\varphi_s(x,\xi))=B_\pm(x,\xi)$ for all $s\in\rr$. 
Moreover, since 
$\Phi_\pm(\varphi_t(x,\xi))=e^{\pm t}(x_0+\xi_0)=e^{\pm t}\Phi_\pm(x,\xi)$ from \eqref{formulaflow}, we find
\begin{equation}\label{invarofPhi}
X\Phi_\pm=\pm \Phi_\pm.
\end{equation}
For $(x,\nu)\in\mathbb H^{n+1}\times\mathbb S^n$ (in the hyperboloid model), define the function
\begin{equation}\label{defofP}
P(x,\nu)=(x_0-x'\cdot\nu)^{-1}=(\langle x,(1,\nu)\rangle_M)^{-1}, \quad \textrm{ if }x=(x_0,x')\in \hh^{n+1}.
\end{equation}
Note that $P(x,\nu)>0$ everywhere, and in the Poincar\'e ball model $\mathbb{B}^{n+1}$, we have 
\begin{equation}
\label{e:poisson-y}
P(\psi^{-1}(y),\nu)=\frac{1-|y|^2}{|y-\nu|^2}, \quad y\in \mathbb{B}^{n+1}
\end{equation}
which is the usual Poisson kernel. Here $\psi$ is defined in~\eqref{defpsi}.

For $(x,\nu)\in\mathbb H^{n+1}\times\mathbb S^n$, there exist
unique $\xi_\pm\in S_x\mathbb H^{n+1}$ such that $B_\pm(x,\xi_\pm)=\nu$: these are given by 
\begin{equation}\label{defxi+}
\xi_\pm(x,\nu)=\mp x\pm P(x,\nu)(1,\nu)
\end{equation}
and the following formula holds
\begin{equation}\label{Phiofxi}
\Phi_\pm(x,\xi_\pm(x,\nu))=P(x,\nu).
\end{equation}
Notice that the equation $B_\pm(x,\xi_\pm(x,\nu))=\nu$ implies that $B_\pm$ are submersions. 
The map $\nu\to\xi_\pm(x,\nu)$ is conformal with the standard choice of metrics
on $\mathbb S^n$ and $S_x\mathbb H^{n+1}$; in fact,
for $\zeta_1,\zeta_2\in T_\nu \mathbb S^n$, 
\begin{equation}\label{conformaldxi}
\langle \partial_\nu \xi_\pm(x,\nu)\cdot\zeta_1,\partial_\nu\xi_\pm(x,\nu)\cdot\zeta_2\rangle_{M}
=- P(x,\nu)^{2} \langle \zeta_1,\zeta_2\rangle_{\mathbb R^{n+1}}.
\end{equation}

Using that $\langle x+\xi,x-\xi\rangle_M=2$, we see that
\begin{equation}
  \label{e:relation}
\Phi_+(x,\xi)\Phi_-(x,\xi)(1-B_+(x,\xi)\cdot B_-(x,\xi))=2.
\end{equation}
One can parametrize $S\mathbb H^{n+1}$ by
\begin{equation}
  \label{e:otherpar}
(\nu_-,\nu_+,s)=\bigg(B_-(x,\xi),B_+(x,\xi),{1\over 2}\log{\Phi_+(x,\xi)\over \Phi_-(x,\xi)}\bigg)\in (\mathbb S^n\times\mathbb S^n)_\Delta\times\mathbb R,
\end{equation}
where $(\mathbb S^n\times\mathbb S^n)_\Delta$ is $\mathbb S^n\times\mathbb S^n$ minus the diagonal.
In fact, the geodesic $\gamma(t)=\varphi_t(x,\xi)$
goes from $\nu_-$ to $\nu_+$ in $\overline{\mathbb B^{n+1}}$ and
$\gamma(-s)$ is the point of $\gamma$ closest to $e_0\in\mathbb H^{n+1}$ (corresponding to $0\in\mathbb B^{n+1}$).
In the parametrization~\eqref{e:otherpar}, the geodesic flow $\varphi_t$ is simply 
$$
(\nu_-,\nu_+,s)\mapsto (\nu_-,\nu_+,s+t).
$$
%The inverse to~\eqref{e:otherpar} is
%$$
%(x,\xi)={e^s(1,\nu_+,1,\nu_+)+e^{-s}(1,\nu_-,-1,-\nu_-)\over |\nu_+-\nu_-|},\quad
%|\nu_+-\nu_-|=\sqrt{2(1-\nu_+\cdot\nu_-)}.
%$$
%The Jacobian of the change of variables $(\nu_-,\nu_+,s)\mapsto (x,\xi)$ is
%\begin{equation}
%  \label{e:otherpar-J}
%(1-\nu_+\cdot\nu_-)^{-n}=2^{n}|\nu_+-\nu_-|^{-2n}.
%\end{equation}
%We also define the map $\Psi:\mathbb H^{n+1}\times (\mathbb S^n\times\mathbb S^n)_{\Delta}\to S\mathbb H^{n+1}$ by
%\begin{equation}
%  \label{e:Psi-def}
%\Psi(x,\nu_-,\nu_+)={P(x,\nu_+)(1,\nu_+,1,\nu_+)+P(x,\nu_-)(1,\nu_-,-1,-\nu_-)
%\over |\nu_+-\nu_-|\sqrt{P(x,\nu_-)P(x,\nu_+)}}.
%\end{equation}
%Then $\Psi(x,\nu_-,\nu_+)$ is closest point to $x$ on the geodesic going from $\nu_-$ to $\nu_+$.
%We calculate
%$$
%B_\pm(\Psi(x,\nu_-,\nu_+))=\nu_\pm,\quad
%\Phi_\pm(\Psi(x,\nu_-,\nu_+))=\sqrt{2P(x,\nu_\pm)\over P(x,\nu_\mp)(1-\nu_+\cdot\nu_-)}.
%$$
%Therefore, in the parametrization~\eqref{e:otherpar}, we have
%$$
%\Psi:(x,\nu_-,\nu_+)\mapsto \bigg(\nu_-,\nu_+, {1\over 2}\log{P(x,\nu_+)\over P(x,\nu_-)}\bigg).
%$$

We finally remark that the stable/unstable subspaces of the cotangent bundle
$E_s^*,E_u^*\subset T^*(S\mathbb H^{n+1})$, defined in~\eqref{e:dual-decomposition},
are in fact the conormal bundles of the fibers of the maps $B_\pm$:
\begin{equation}\label{E*}
E_s^*(\rho)=N^*(B_+^{-1}(B_+(\rho))),\quad
E_u^*(\rho)=N^*(B_-^{-1}(B_-(\rho))),\quad
\rho\in S\mathbb H^{n+1}.
\end{equation}
This is equivalent to saying that the fibers of $B_+$ integrate (i.e. are tangent to) the
subbundle $E_0\oplus E_s\subset T(S\mathbb H^{n+1})$,
while the fibers of $B_-$ integrate the subbundle $E_0\oplus E_u$.
To see the latter statement, for say $B_+$, it is enough to note that
$dB_+\cdot X=0$ and differentiation along vectors in $E_s$ annihilates
the function $x+\xi$ and thus the map $B_+$; therefore,
the kernel of $dB_+$ contains $E_0\oplus E_s$, and this containment
is an equality since the dimensions of both spaces are equal to $n+1$.

%%%%%%%%%%%%%%%%%%%%%%%%%%%%%%%%%%%%%%%%%%%%%%%%%%%%%%%%%%%%%%%%%%%%%%%%%%%%%%%%
\subsection{Action of \texorpdfstring{$G$}{G} on the conformal infinity}
\label{s:actionG}

For $\gamma\in G$ and $\nu\in\mathbb S^n$, $\gamma\cdot (1,\nu)$ is a lightlike vector
with positive zeroth component.
We can then define
$N_\gamma(\nu)>0$, $L_\gamma(\nu)\in\mathbb S^n$ by
\begin{equation}
  \label{e:defLgamma}
\gamma\cdot (1,\nu)=N_\gamma(\nu)(1,L_\gamma(\nu)).
\end{equation}
The map $L_\gamma$ gives the action of $G$ on the conformal infinity $\mathbb S^n=\partial\overline{\mathbb B^{n+1}}$.
This action is transitive and the isotropy groups of $\pm e_1\in\mathbb S^n$ are given by
\begin{equation}
  \label{e:H-pm}
H_\pm=\{\gamma\in G\mid\exists s>0: \gamma\cdot (e_0\pm e_1)=s(e_0\pm e_1)\}.
\end{equation}
The isotropy groups $H_\pm$ are the connected subgroups of $G$ with the Lie algebras generated by $R_{i+1,j+1}$ for $1\leq i<j\leq n$, $X$, and $U^\pm_i$ for $1\leq i\leq n$. To see that $H_\pm$ are connected, for $n=1$ we can check
directly that every $\gamma\in H_\pm$ can be written as
a product $e^{tX}e^{sU^\pm_1}$ for some $t,s\in\mathbb R$,
and for $n>1$ we can use the fact that $\mathbb S^n\simeq G/H_\pm$ is simply connected
and $G$ is connected, and the homotopy long exact sequence of a fibration.

The differentials of $N_\gamma$ and $L_\gamma$ (in $\nu$) can be written as
$$
dN_\gamma(\nu)\cdot\zeta=\langle dx_0,\gamma\cdot (0,\zeta)\rangle,\quad
(0,dL_\gamma(\nu)\cdot\zeta)={\gamma\cdot (0,\zeta)-(dN_\gamma(\nu)\cdot\zeta)(1,L_\gamma(\nu))\over N_\gamma(\nu)},
$$
here $\zeta\in T_\nu\mathbb S^n$. We see that the map $\nu\mapsto L_\gamma(\nu)$ is conformal
with respect to the standard metric on $\mathbb S^n$, in fact
for $\zeta_1,\zeta_2\in T_\nu\mathbb S^n$,
$$
\langle dL_\gamma(\nu)\cdot\zeta_1,dL_\gamma(\nu)\cdot\zeta_2\rangle_{\mathbb R^{n+1}}
=N_\gamma(\nu)^{-2} \langle \zeta_1,\zeta_2\rangle_{\mathbb R^{n+1}}.
$$
The maps $B_\pm:S\mathbb H^{n+1}\to \mathbb S^n$ are equivariant under the action of $G$:
$$
B_\pm(\gamma.(x,\xi))=L_\gamma(B_\pm(x,\xi)).
$$
Moreover, the functions $\Phi_\pm(x,\xi)$ and $P(x,\nu)$ enjoy the following properties:
\begin{equation}\label{changeofPhi}
\Phi_\pm(\gamma.(x,\xi))=N_\gamma(B_\pm(x,\xi))\Phi_\pm(x,\xi),\quad
P(\gamma\cdot x,L_\gamma(\nu))=N_\gamma(\nu)P(x,\nu).
\end{equation}

%%%%%%%%%%%%%%%%%%%%%%%%%%%%%%%%%%%%%%%%%%%%%%%%%%%%%%%%%%%%%%%%%%%%%%%%%%%%%%%%
\subsection{The bundle $\mc{E}$ and parallel transport to the conformal infinity}
\label{s:E}

Consider the vector bundle $\mathcal E$ over $S\mathbb H^{n+1}$ defined as follows:
\[
\mc{E}=\{(x,\xi,v)\in S\mathbb H^{n+1}\x T_{x}\hh^{n+1}\mid g_H(\xi,v)=0 \},
\]
i.e. the fibers $\mathcal E(x,\xi)$ consist of all tangent
vectors in $T_x\mathbb H^{n+1}$ orthogonal to $\xi$; equivalently, $\mc{E}(x,\xi)$
consists of all vectors in $\mathbb R^{1,n+1}$ orthogonal to $x$ and $\xi$ with respect to the Minkowski
inner product. Note that $G$ naturally acts on $\mathcal E$, by putting
$\gamma.(x,\xi,v):=(\gamma\cdot x,\gamma\cdot\xi,\gamma\cdot v)$.

The bundle $\mathcal E$ is invariant under parallel transport along geodesics.
Therefore, one can consider the first order differential operator
\begin{equation}
  \label{e:XX-def}
\mathcal X:\CI(S\mathbb H^{n+1};\mathcal E)\to \CI(S\mathbb H^{n+1};\mathcal E)
\end{equation}
which is the generator of parallel transport, namely if $\mathbf v$ is a section of $\mathcal E$
and $(x,\xi)\in S\mathbb H^{n+1}$, then $\mathcal X\mathbf v(x,\xi)$ is the covariant derivative
at $t=0$ of the vector field $\mathbf v(t):=\mathbf v(\varphi_t(x,\xi))$ on the geodesic
$\varphi_t(x,\xi)$. Note that $\mathcal E(\varphi_t(x,\xi))$ is independent of $t$ 
as a subspace of $\mathbb R^{1,n+1}$, and under this embedding, $\mathcal X$ just acts as $X$ on each
coordinate of $v$ in $\mathbb R^{1,n+1}$. The operator ${1\over i}\mathcal X$ is a symmetric operator with respect
to the standard volume form on $S\mathbb H^{n+1}$ and the inner product on $\mathcal E$ inherited
from $T\mathbb H^{n+1}$.

We now consider parallel transport of vectors along geodesics going off to infinity. Let
$(x,\xi)\in S\mathbb H^{n+1}$ and $v\in T_x\mathbb H^{n+1}$. We let
$(x(t),\xi(t))=\varphi_t(x,\xi)$ be the corresponding geodesic
and $v(t)\in T_{x(t)} \mathbb H^{n+1}$ be the parallel transport of $v$ along this
geodesic. We embed $v(t)$ into the unit ball model $\mathbb B^{n+1}$ by defining
$$
w(t)=d\psi(x(t))\cdot v(t)\in \mathbb R^{n+1},
$$
where $\psi$ is defined in~\eqref{defpsi}. Then $w(t)$ converges to 0 as $t\to\pm\infty$,
but the limits $\lim_{t\to\pm \infty}x_0(t) w(t)$ are nonzero for nonzero $v$; we call the transformation
mapping $v$ to these limits the \emph{transport to conformal infinity} as $t\to \pm\infty$.
More precisely, if
$$
v=c\xi+u,\quad
u\in\mathcal E(x,\xi),
$$
then we calculate
\begin{equation}
  \label{e:pmlimit}
\lim_{t\to \pm\infty} x_0(t)w(t)=\pm cB_\pm(x,\xi)+u'-u_0B_\pm(x,\xi),
\end{equation}
where $B_\pm(x,\xi)\in\mathbb S^n$ is defined in Section~\ref{s:confinf}. We will in particular use the inverse of
the map $\mathcal E(x,\xi)\ni u\mapsto u'-u_0B_\pm(x,\xi)\in T_{B_\pm(x,\xi)}\mathbb S^n$: for $(x,\xi)\in S\mathbb H^{n+1}$
and $\zeta\in T_{B_\pm(x,\xi)}\mathbb S^n$, define (see Figure~\ref{f:horo}(b))
\begin{equation}
  \label{e:Adef}
\mathcal A_\pm(x,\xi)\zeta
= (0,\zeta)-\langle (0,\zeta),x\rangle_M (x\pm \xi)
=\pm\frac{\pl_\nu \xi_\pm(x,B_\pm(x,\xi))\cdot\zeta}{P(x,B_\pm(x,\xi))}\in\mathcal E(x,\xi).
\end{equation}
Here $\xi_\pm$ is defined in~\eqref{defxi+}. Note that by~\eqref{conformaldxi}, $\mathcal A_\pm$
is an isometry:
\begin{equation}
\label{Apmiso}
|\mathcal A_\pm(x,\xi)\zeta|_{g_H}
=|\zeta|_{\mathbb R^n},\quad
\zeta\in T_{B_\pm(x,\xi)}\mathbb S^n.
\end{equation}
Also, $\mathcal A_\pm$ is equivariant under the action of $G$:
\begin{equation}\label{equivarianceA}
\mathcal A_\pm(\gamma\cdot x,\gamma\cdot \xi)\cdot dL_\gamma(B_\pm(x,\xi))\cdot \zeta=
N_\gamma(B_\pm(x,\xi))^{-1}\,\gamma\cdot (\mathcal A_\pm(x,\xi)\zeta).
\end{equation}

We now write the limits~\eqref{e:pmlimit} in terms of the $0$-tangent bundle
of Mazzeo--Melrose~\cite{MM}. Consider the boundary defining function
$\rho_0:=2(1-|y|)/(1+|y|)$ on $\overline{\mathbb B^{n+1}}$; note that in the hyperboloid model,
with the map $\psi$ defined in~\eqref{defpsi},
\begin{equation}
  \label{e:rho}
\rho_0(\psi(x))=2{\sqrt{x_0+1}-\sqrt{x_0-1}\over \sqrt{x_0+1}+\sqrt{x_0-1}}=x_0^{-1}+\mathcal O(x_0^{-2})
\quad\text{as }x_0\to\infty.
\end{equation}
The hyperbolic metric can be written near the boundary as $g_H=(d\rho_0^2+h_{\rho_0})/\rho_0^2$ 
with $h_{\rho_0}$ a smooth family of metrics on $\sph^n$ and $h_0=d\theta^2$ is the canonical metric on the sphere (with curvature $1$).

Define the $0$-tangent bundle
${^0}T\overline{\mathbb{B}^{n+1}}$ to be the smooth bundle over $\overline{\mathbb B^{n+1}}$ whose smooth sections are the elements of the Lie algebra $\mc{V}_0(\overline{\mathbb B^{n+1}})$ of smooth vectors fields vanishing at $\sph^n=\overline{\mathbb B^{n+1}}\cap \{\rho_0=0\}$; near the boundary, this algebra
 is locally spanned over $\CI(\overline{\mathbb{B}^{n+1}})$ by the vector fields $\rho_0 \pl_{\rho_0},\rho_0\pl_{\theta_1},\dots,\rho_0\pl_{\theta_n}$ if $\theta_i$ are local coordinates on $\sph^n$.
Note that there is a natural map ${^0}T\bbar{\mathbb{B}^{n+1}}\to T\bbar{\mathbb{B}^{n+1}}$ which is an isomorphism when restricted to the interior $\bb^{n+1}$.
We denote by ${^0}T^*\overline{\bb^{n+1}}$  the dual bundle to ${^0}T\overline{\mathbb{B}^{n+1}}$, generated locally near $\rho_0=0$ by the covectors $d\rho_0/\rho_0,d\theta_1/\rho_0,\dots,d\theta_n/\rho_0$. Note that $T^*\overline{\bb^{n+1}}$ naturally embeds into
${^0}T^*\overline{\bb^{n+1}}$ and this embedding is an isomorphism in the interior. The metric
$g_H$ is a smooth non-degenerate positive definite quadratic form on ${^0}T\overline{\mathbb B^{n+1}}$, that is
$g_H\in \CI(\overline{\mathbb{B}^{n+1}}; \otimes_S^2({^0}T^*\overline{\mathbb{B}^{n+1}}))$, where $\otimes_S^2$ denotes the space of symmetric 2-tensors. We refer the reader to~\cite{MM} for further details (in particular, for an explanation of why
0-bundles are smooth vector bundles); see also~\cite[\S2.2]{melro} for the similar $b$-setting.

We can then interpret~\eqref{e:pmlimit} as follows: for each $(y,\eta)\in S\mathbb B^{n+1}$
and each $w\in T_y\mathbb B^{n+1}$, the parallel transport $w(t)$ of $w$
along the geodesic $\varphi_t(y,\eta)$ (this geodesic extends smoothly to a curve on $\mathbb B^{n+1}$,
as it is part of a line or a circle) has limits as $t\to \pm\infty$ in the 0-tangent bundle
$^0T\overline{\mathbb B^{n+1}}$. In fact (see~\cite[Appendix~A]{GMP}),
the parallel transport 
\[
\tau(y',y): {^0}T_y{\mathbb{B}^{n+1}}\to {^0}T_{y'}{\mathbb{B}^{n+1}}
\] 
from $y$ to $y'\in\mathbb{B}^{n+1}$ along the geodesic starting at $y$ and ending at $y'$ extends smoothly to the boundary 
$(y,y')\in \overline{\mathbb{B}^{n+1}}\x\overline{\mathbb{B}^{n+1}}\setminus {\rm diag}(\sph^n\x\sph^n)$ 
as an endomorphism ${^0}T_y\overline{\mathbb{B}^{n+1}}\to {^0}T_{y'}\overline{\mathbb{B}^{n+1}}$,
where ${\rm diag}(\sph^n\x\sph^n)$ denotes the diagonal in the boundary; this parallel
transport is an isometry with respect to $g_H$.
The same properties hold for parallel transport of covectors in ${^0}T^*\overline{\mathbb{B}^{n+1}}$,
using the duality provided by the metric $g_H$. An explicit relation to the maps $\mathcal A_\pm$ is given by the following formula:
\begin{equation}
  \label{e:ptarsk}
\mathcal A_\pm(x,\xi)\cdot \zeta=d\psi(x)^{-1}\cdot \tau(\psi(x),B_\pm(x,\xi))\cdot (\rho_0\zeta),
\end{equation}
where $\rho_0\zeta\in {}^0T_{B_\pm(x,\xi)}\overline{\mathbb B^{n+1}}$ is tangent to the
conformal boundary $\mathbb S^n$.

%%%%%%%%%%%%%%%%%%%%%%%%%%%%%%%%%%%%%%%%%%%%%%%%%%%%%%%%%%%%%%%%%%%%%%%%%%%%%%%%
%                                  SECTION 4                                   %
%%%%%%%%%%%%%%%%%%%%%%%%%%%%%%%%%%%%%%%%%%%%%%%%%%%%%%%%%%%%%%%%%%%%%%%%%%%%%%%%
\section{Horocyclic operators}
\label{s:horror}

In this section, we build on the results of Section~\ref{s:geomhyp} to construct horocyclic operators
$\mathcal U_\pm:\mathcal D'(S\mathbb H^{n+1};\otimes^j \mathcal E^*)\to\mathcal D'(S\mathbb H^{n+1};\otimes^{j+1}\mathcal E^*)$.

%%%%%%%%%%%%%%%%%%%%%%%%%%%%%%%%%%%%%%%%%%%%%%%%%%%%%%%%%%%%%%%%%%%%%%%%%%%%%%%%
\subsection{Symmetric tensors}
\label{symtens}

In this subsection, we assume that $E$ is a vector space of finite dimension $N$, equipped with an inner product $g_E$, and let 
$E^*$ denote the dual space, which has a scalar product induced by $g_E$ (also denoted $g_E$). 
(In what follows, we shall take either $E=\mc{E}(x,\xi)$ or $E=T_x\hh^{n+1}$ for some $(x,\xi)\in S\hh^{n+1}$, and the scalar product $g_E$ in both case is given by the hyperbolic metric $g_H$ on those vector spaces.) In this section, we will work
with tensor powers of $E^*$, but the constructions apply to tensor powers of $E$ by swapping $E$ with $E^*$.

We introduce some notation for finite sequences to simplify the calculations below.
Denote by $\mathscr A^m$ the space of all sequences $K=k_1\dots k_m$ with $1\leq k_\ell\leq N$.
For $k_1\dots k_m\in\mathscr A^m$, $j_1\dots j_r\in \mathscr A^r$, and
a sequence of distinct numbers
$1\leq \ell_1,\dots,\ell_r\leq m$, denote by
$$
\{\ell_1\to j_1,\dots,\ell_r\to j_r\} K\in \mathscr A^m
$$
the result of replacing the $\ell_p$th element of $K$ by $j_p$,
for all $p$. We can also replace some of $j_p$ by blank space, which means that the corresponding
indices are removed from $K$.
 
For $m\geq 0$ denote by $\otimes^m E^*$ the $m$th tensor power of $E^*$ and 
by $\otimes_S^m E^*$ the subset of those tensors which are symmetric, i.e. $u\in \otimes_S^m E^*$ if
$u(v_{\sigma(1)},\dots v_{\sigma(m)})=u(v_1,\dots,v_m)$ for all  $\sigma\in \Pi_m$ and all 
$v_1,\dots,v_m\in E$, where $\Pi_m$ is the permutation group of $\{1,\dots,m\}$.  
There is a natural linear projection $\mc{S}: \otimes^m E^*\to\otimes^m_SE^*$  defined by
\begin{equation}\label{symmet} 
\mc{S}(\eta^*_{1}\otimes \dots \otimes \eta^*_{m})=\frac{1}{m!}\sum_{\sigma\in\Pi_m}\eta^*_{\sigma(1)}
\otimes \dots \otimes \eta^*_{\sigma(m)}, \quad \eta^*_{k}\in E^*
\end{equation}
The metric $g_E$ induces a scalar product on $\otimes^m E^*$ as follows
\[ \cjg v^*_{1}\otimes \dots \otimes v^*_{m},w^*_{1}\otimes \dots \otimes w^*_{m}\cjd_{g_E}=\prod_{j=1}^m  \cjg v_j^*,w_j^*\cjd_{g_E}, \quad w_{i}^*,v_{i}^*\in E^*. 
\]
The operator $\mc{S}$ is self-adjoint and thus an orthogonal projection with respect to this scalar product. 

Using the metric $g_E$, one can decompose the vector space $\otimes_S^m$ as follows.  
Let $(\mathbf e_i)_{i=1}^{N}$ be an orthonormal basis of $E$ for the metric $g_E$ and 
$(\mathbf{e}_i^*)$ be the dual basis.
First of all, introduce the trace map
$\mathcal T:\otimes^m E^*\to\otimes^{m-2} E^*$ contracting the first two indices by the metric:
for $v_i\in E$, define 
\begin{equation}\label{tracedef}
\mathcal T(u)(v_1,\dots,v_{m-2}):=\sum_{i=1}^{N}u(\mathbf e_i,\mathbf e_i,v_1,\dots,v_{m-2})
\end{equation}
(the result is independent of the choice of the basis). For $m< 2$, we define
$\mathcal T$ to be zero on $\otimes^m E^*$.
Note that $\mc{T}$ maps $\otimes_S^{m+2}E^*$ onto 
$\otimes^m_SE^*$. Set
$$
\mathbf{e}^*_K:=\mathbf e^*_{k_1}\otimes\dots\otimes \mathbf{e}^*_{k_m}\in \otimes^m E^*,\quad
K=k_1\dots k_m\in \mathscr{A}^{m}.
$$
Then
$$
\mathcal T\Big(\sum_{K\in\mathscr A^{m+2}}f_K \mathbf e^*_K\Big)
=\sum_{K\in\mathscr A^m}\sum_{q\in\mathscr A} f_{qqK}\mathbf e_K^*.
$$
 The adjoint of $\mc{T}:\otimes_S^{m+2}E^*\to \otimes_S^mE^*$ with respect to the scalar product $g_E$ 
 is given by the map  $u\mapsto \mc{S}(g_E\otimes u)$. To simplify computations,
we define a scaled version of it: let  
$\mathcal I:\otimes^m_SE^*\to\otimes^{m+2}_SE^*$ be defined by 
\begin{equation}
  \label{e:defI}
\mc{I}(u)=\frac{(m+2)(m+1)}{2}\mc{S}(g_{E}\otimes u)
={(m+2)(m+1)\over 2} \mathcal T^*(u).
\end{equation}
Then
$$
\mathcal I\Big(\sum_{K\in\mathscr A^m} f_K\mathbf e^*_K\Big)=\sum_{K\in\mathscr A^{m+2}}\sum_{\ell,r=1\atop \ell<r}^{m+2}
\delta_{k_\ell k_r}f_{\{\ell\to,r\to\}K}\mathbf e^*_K.
$$
Note that for $u\in \otimes^m_SE^*$,
\begin{equation}
  \label{e:trart}
\mathcal T(\mathcal I u)=(2m+N)u+\mathcal I(\mathcal T u).
\end{equation}

By~\eqref{e:defI} and~\eqref{e:trart},
the homomorphism $\mathcal T\mathcal I:\otimes_S^m E^*\to\otimes_S^mE^*$
is positive definite and thus
an isomorphism. Therefore, for $u\in \otimes^m_S E^*$, we can decompose
$u=u_1+\mathcal I(u_2)$, where $u_1\in\otimes^m_S E^*$ satisfies $\mathcal T(u_1)=0$
and $u_2=(\mathcal T\mathcal I)^{-1}\mathcal Tu\in\otimes^{m-2}_S E^*$. Iterating this process, we can decompose any 
$u\in \otimes_S^mE^*$ into 
\begin{equation}\label{decompositionoftensors}
u=\sum_{r=0}^{\lfloor m/2 \rfloor} \mathcal I^r(u_r),\quad
u_r\in \otimes_S^{m-2r}E^*,\quad
\mathcal T(u_r)=0,
\end{equation}
with $u_r$ determined uniquely by $u$.

Another operation on tensors which will be used is the interior product: if $v\in E$ and $u\in \otimes_S^m E^*$, 
we denote by $\iota_v(u)\in \otimes_S^{m-1} E^*$ the interior product of $u$ by $v$ given by 
\[ \iota_vu(v_1,\dots,v_{m-1}):=u(v,v_1,\dots,v_{m-1}).\] 
If $v^*\in E^*$, we denote $\iota_{v^*}u$ for the tensor $\iota_vu$ by $g_E(v,\cdot)=v^*$.
 
We conclude this section with a correspondence which will be useful in certain calculations later. 
There is a linear
isomorphism between $\otimes_S^m E^*$ and the space $\Pol^m(E)$ of homogeneous polynomials of degree 
$m$ on $E$: to a tensor $u\in \otimes_S^m E^*$ we associate the function on $E$ given by 
$x\to P_u(x):=u(x,\dots,x)$. If we write $x=\sum_{i=1}^Nx_i\mathbf{e}_i$ in a given orthonormal 
basis then
$$
P_{\mathcal S(e^*_K)}(x)=\prod_{j=1}^m x_{k_j},\quad
K=k_1\dots k_m\in\mathscr A^m.
$$
The flat Laplacian associated to $g_E$ is given by $\Delta_E=-\sum_{i=1}^N\pl_{x_i}^2$ in the coordinates induced by the basis $(\mathbf{e}_i)$.
Then it is direct to see that 
\begin{equation}\label{Trace=Lap}
\Delta_E P_u(x)=-m(m-1)P_{\mc{T}(u)}(x),\quad
u\in \otimes^m_SE^*.
\end{equation}
which means that the trace corresponds to applying the Laplacian (see \cite[Lemma 2.4]{DaSh}). In particular, trace-free symmetric tensors of order $m$ correspond to homogeneous harmonic polynomials, and thus restrict to spherical harmonics on the sphere $|x|_{g_E}=1$ of $E$.
We also have
\begin{equation}
  \label{e:morestuff}
P_{\mathcal I(u)}(x)={(m+2)(m+1)\over 2}\, |x|^2 P_u(x),\quad
u\in\otimes^m_SE^*.
\end{equation}

%%%%%%%%%%%%%%%%%%%%%%%%%%%%%%%%%%%%%%%%%%%%%%%%%%%%%%%%%%%%%%%%%%%%%%%%%%%%%%%%
\subsection{Horocyclic operators}
\label{s:horocycl}

We now consider the left-invariant vector fields $X$, $U_\pm^i$, $R_{i+1,j+1}$ on the isometry group $G$,
identified with the elements of the Lie algebra of $G$ introduced in~\eqref{liealgebraofG}, \eqref{liealgebraofG2}.
Recall that $G$ acts on $S\mathbb H^{n+1}$ transitively with the isotropy group $H\simeq \SO(n)$
and this action gives rise to the projection $\pi_S:G\to S\mathbb H^{n+1}$~-- see~\eqref{e:piS}.
Note that, with the maps $\Phi_\pm:S\mathbb H^{n+1}\to \mathbb R^+,B_\pm:S\mathbb H^{n+1}\to\mathbb S^n$ defined in~\eqref{PhiandB}, we have
$$
B_\pm(\pi_S(\gamma))=L_\gamma(\pm e_1),\quad
\Phi_\pm(\pi_S(\gamma))=N_\gamma(\pm e_1),\quad
\quad\gamma\in G,
$$
where $N_\gamma:\mathbb S^n\to\mathbb R^+,L_\gamma:\mathbb S^n\to\mathbb S^n$ are defined in~\eqref{e:defLgamma}.
Since $H_\pm$, the isotropy group of $\pm e_1$ under the action $L_\gamma$, contains $X,U_i^\pm$ in its Lie
algebra (see~\eqref{e:H-pm} and Figure~\ref{f:horo}(a)), we find
\begin{equation}
\label{e:isot}
d(B_\pm\circ\pi_S)\cdot U^\pm_i=0, \quad d(B_\pm\circ\pi_S)\cdot X=0.
\end{equation}
We also calculate
\begin{equation}
  \label{e:isot2}
d(\Phi_\pm\circ\pi_S)\cdot U_i^\pm =0.
\end{equation}
Define the differential operator on $G$
$$
U^\pm_K:=U^\pm_{k_1}\dots U^\pm_{k_m},\quad
K=k_1\dots k_m\in\mathscr A^m.
$$
Note that the order in which $k_1,\dots,k_m$ are listed does not matter by~\eqref{e:comm-rel}.
Moreover, by~\eqref{e:comm-rel}
\begin{equation}
  \label{e:rot-condition-2}
[R_{i+1,j+1},U^\pm_K]=\sum_{\ell=1}^m (\delta_{jk_\ell} U^\pm_{\{\ell\to i\}K}
-\delta_{ik_\ell}U^\pm_{\{\ell\to j\}K}).
\end{equation}
Since $H$ is generated by the vector fields $R_{i+1,j+1}$, we see that
in dimensions $n+1>2$ the horocyclic vector fields $U^\pm_i$,
and more generally the operators $U^\pm_K$, are not invariant
under right multiplication by elements of $H$ and therefore do not descend to differential
operators on $S\mathbb H^{n+1}$~-- in other words, if $u\in \mathcal D'(S\mathbb H^{n+1})$,
then $U^\pm_K (\pi_S^* u)\in \mathcal D'(G)$ is not in the image of $\pi_S^*$.

However, in this section we will show how to differentiate distributions on $S\mathbb H^{n+1}$
along the horocyclic vector fields, resulting in sections of the vector bundle $\mathcal E$
introduced in Section~\ref{s:E} and its tensor powers. First of all, we note that by~\eqref{e:stable-unstable},
the stable and unstable bundles $E_s(x,\xi)$ and $E_u(x,\xi)$ are canonically
isomorphic to $\mathcal E(x,\xi)$ by the maps 
$$
\theta_+:\mathcal E(x,\xi)\to E_s(x,\xi),\
\theta_-:\mathcal E(x,\xi)\to E_u(x,\xi),\quad
\theta_\pm(v)=(-v,\pm v).
$$
For $u\in\mathcal D'(S\mathbb H^{n+1})$, we then define the horocyclic derivatives
$\mathcal U_\pm u\in \mathcal D'(S\mathbb H^{n+1};\mathcal E^*)$ by restricting the
differential $du\in \mathcal D'(S\mathbb H^{n+1};T^*(S\mathbb H^{n+1}))$ to 
the stable/unstable foliations and pulling it back by $\theta_\pm$:
\begin{equation}
  \label{e:Udef1}
\mathcal U_\pm u(x,\xi):=du(x,\xi)\circ \theta_\pm\in\mathcal E^*(x,\xi).
\end{equation}
To relate $\mathcal U_\pm$ to the vector fields $U^\pm_i$ on the group $G$, consider the
orthonormal frame $\mathbf e_1^*,\dots,\mathbf e_n^*$ of the bundle $\pi_S^*\mathcal E^*$ over $G$ defined by
$$
\mathbf e_j^*(\gamma):=\gamma^{-*}(e_{j+1}^*)\in\mathcal E^*(\pi_S(\gamma)).
$$
where the $e_j^*=dx_j$ form the dual basis to the canonical basis $(e_j)_{j=0,\dots,n+1}$ of $\rr^{1,n+1}$, and
$\gamma^{-*}=(\gamma^{-1})^*:(\mathbb R^{1,n+1})^*\to(\mathbb R^{1,n+1})^*$.
More generally, we can define the orthonormal frame $\mathbf e^*_K$ of $\pi_S^*(\otimes^m \mathcal E^*)$ by
$$
\mathbf e^*_K:=\mathbf e_{k_1}^*\otimes\dots\otimes \mathbf e_{k_m}^*,\quad
K=k_1\dots k_m\in \mathscr A^m.
$$
We compute for $u\in\mathcal D'(S\mathbb H^{n+1})$, $du(\pi_S(\gamma))\cdot \theta_\pm (\gamma(e_{j+1}))
=U_j^\pm (\pi_S^*u)(\gamma)$ and thus
\begin{equation}
  \label{e:Udef2}
\pi_S^*(\mathcal U_\pm u)=\sum_{j=1}^n U_j^\pm (\pi_S^* u) \mathbf e_j^*.
\end{equation}
We next use the formula~\eqref{e:Udef2} to define $\mathcal U_\pm$ as an operator
\begin{equation}
  \label{e:Ustate}
\mathcal U_\pm:\mathcal D'(S\mathbb H^{n+1};\otimes^m \mathcal E^*)\to\mathcal D'(S\mathbb H^{n+1};\otimes^{m+1}\mathcal E^*)
\end{equation}
as follows: for $u\in\mathcal D'(S\mathbb H^{n+1};\otimes^m\mathcal E^*)$, define
$\mathcal U_\pm u$ by
\begin{equation}
  \label{e:Udef3}
\pi_S^*(\mathcal U_\pm u)=\sum_{r=1}^n\sum_{K\in\mathscr A^m} (U_r^\pm u_K)\mathbf e^*_{r K},\quad
\pi_S^*u=\sum_{K\in\mathscr A^m}u_K\mathbf e_K^*.
\end{equation}
This definition makes sense (that is, the right-hand side of the first formula in~\eqref{e:Udef3}
lies in the image of $\pi_S^*$) since a section
$$
f=\sum_{K\in\mathscr A^m}f_K\mathbf e^*_K\in \mathcal D'(G;\pi_S^*(\otimes^m\mathcal E^*)),\quad
f_K\in\mathcal D'(G)
$$
lies in the image of $\pi_S^*$ if and only if
$R_{i+1,j+1}f=0$ for $1\leq i<j\leq n$ (the differentiation is well-defined since the
fibers of $\pi_S^*(\otimes^m\mathcal E^*)$ are the same along each integral curve of $R_{i+1,j+1}$),
and this translates to
\begin{equation}
  \label{e:rot-condition}
R_{i+1,j+1}f_K=\sum_{\ell=1}^m (\delta_{j k_\ell}
f_{\{\ell\to i\}K}-\delta_{i k_\ell} f_{\{\ell\to j\}K}),\quad
1\leq i<j\leq n,\
K\in\mathscr A^m;
\end{equation}
to show~\eqref{e:rot-condition} for $f_{r K}=U_r^\pm u_K$, we use~\eqref{e:comm-rel}:
$$
\begin{gathered}
R_{i+1,j+1}f_{rK}=[R_{i+1,j+1},U_r^\pm] u_K+U_r^\pm R_{i+1,j+1}u_K\\
=\delta_{jr}U_i^\pm u_K-\delta_{ir}U_j^\pm u_K+
\sum_{\ell=1}^m \delta_{jk_\ell} U_r^\pm u_{\{\ell\to i\}K}-\delta_{ik_\ell}U_r^\pm u_{\{\ell\to j\}K}.
\end{gathered}
$$

To interpret the operator~\eqref{e:Ustate} in terms of the stable/unstable foliations in a manner
similar to~\eqref{e:Udef1}, consider the connection $\nabla^S$ on the bundle
$\mathcal E$ over $S\mathbb H^{n+1}$ defined as follows: for $(x,\xi)\in S\mathbb H^{n+1}$,
$(v,w)\in T_{(x,\xi)}(S\mathbb H^{n+1})$, and $u\in \mathcal D'(S\mathbb H^{n+1};\mathcal E)$,
let $\nabla^S_{(v,w)}u(x,\xi)$ be the orthogonal projection of $\nabla^{\mathbb R^{1,n+1}}_{(v,w)}u(x,\xi)$
onto $\mathcal E(x,\xi)\subset \mathbb R^{1,n+1}$, where $\nabla^{\mathbb R^{1,n+1}}$ is the
canonical connection on the trivial bundle $S\mathbb H^{n+1}\times \mathbb R^{1,n+1}$
over $S\mathbb H^{n+1}$ (corresponding to differentiating the coordinates
of $u$ in $\mathbb R^{1,n+1}$). Then $\nabla^S$ naturally induces a connection on $\otimes^m\mathcal E^*$,
also denoted $\nabla^S$, and we have for $v,v_1,\dots,v_m\in \mathcal E(x,\xi)$
and $u\in \mathcal D'(S\mathbb H^{n+1};\otimes^m\mathcal E^*)$,
\begin{equation}
  \label{e:Udef4}
\mathcal U_\pm u(x,\xi)(v,v_1,\dots,v_m)=(\nabla^S_{\theta_\pm(v)}u)(v_1,\dots,v_m).
\end{equation}
Indeed, if $\gamma(t)=\gamma(0)e^{tU^\pm_j}$ is an integral curve of $U^\pm_j$ on $G$, then
$\gamma(t) e_2,\dots,\gamma(t)e_{n+1}$ form a parallel frame of $\mathcal E$ over the curve $(x(t),\xi(t))=\pi_S(\gamma(t))$
with respect to $\nabla^S$, since the covariant derivative of $\gamma(t)e_k$ in $t$ with respect
to $\nabla^{\mathbb R^{1,n+1}}$ is simply $\gamma(t)U^\pm_j e_k$; by~\eqref{liealgebraofG2}
this is a linear combination of $x(t)=\gamma(t)e_0$ and $\xi(t)=\gamma(t)e_1$ and thus
$\nabla^S_t (\gamma(t)e_k)=0$.

Note also that the operator $\mathcal X$ defined in~\eqref{e:XX-def} can be interpreted
as the covariant derivative on $\mathcal E$ along the generator $X$ of the geodesic flow
by the connection $\nabla^S$. One can naturally generalize $\mathcal X$ to a first order differential operator
\begin{equation}
  \label{e:XX-def-2}
\mathcal X:\mathcal D'(S\mathbb H^{n+1};\otimes^m\mathcal E^*)\to \mathcal D'(S\mathbb H^{n+1};\otimes^m\mathcal E^*)
\end{equation}
and ${1\over i}\mathcal X$ is still symmetric with respect to the natural measure on $S\mathbb H^{n+1}$ and the
inner product on $\otimes^m\mathcal E^*$ induced by the Minkowski metric. A characterization of $X$ in terms
of the frame $\mathbf e_K^*$ is given by
\begin{equation}
  \label{e:XX-def-3}
\pi_S^*(\mathcal X u)=\sum_{K\in\mathscr A^m} (Xu_K)\mathbf e_K^*,\quad
\pi_S^*u=\sum_{K\in\mathscr A^m} u_K\mathbf e_K^*.
\end{equation}
It follows from~\eqref{e:comm-rel} that
for $u\in\mathcal D'(S\mathbb H^{n+1};\otimes^m \mathcal E^*)$,
\begin{equation}
  \label{e:comm-hor}
\mathcal X\mathcal U_\pm u-\mathcal U_\pm \mathcal Xu=\pm \mathcal U_\pm u.
\end{equation}
We also observe that, since $[U^\pm_i,U^\pm_j]=0$, for each scalar distribution $u\in\mathcal D'(S\mathbb H^{n+1})$
and $m\in\nn$, we have $\mathcal U_\pm^m u\in\mathcal D'(S\mathbb H^{n+1};\otimes_S^m\mathcal E^*)$,
where $\otimes^m_S\mathcal E^*\subset\otimes^m\mathcal E^*$
denotes the space of all symmetric cotensors of order $m$. Inversion of the operator $\mathcal U_\pm^m$
is the topic of the next subsection. We conclude with the following lemma describing how the operator
$\mathcal U_\pm^m$ acts on distributions invariant under the left action of an element of $G$:
%%%%%%%%%%%%%%%%%%%%%%%%%%%%%%%%%%%%%%%%%%%%%%%%%%%%%%%%%%%%%%%%%%%%%%%%%%%%%%%%
\begin{lemm}
  \label{l:u-pm-gamma}
Let $\gamma\in G$ and $u\in\mathcal D'(S\mathbb H^{n+1})$. Assume also that
$u$ is invariant under left multiplications by $\gamma$, namely $u(\gamma.(x,\xi))=u(x,\xi)$
for all\/%
\footnote{Strictly speaking, this statement should be formulated
in terms of the pullback of the distribution $u$ by the map $(x,\xi)\mapsto \gamma.(x,\xi)$.}
$(x,\xi)\in S\mathbb H^{n+1}$.
Then $v=\mathcal U_\pm^m u$ is equivariant under left
multiplication by $\gamma$ in the following sense:
\begin{equation}
  \label{e:v-gamma}
v(\gamma.(x,\xi))=\gamma.v(x,\xi),
\end{equation}
where the action of $\gamma$ on $\otimes_S^m\mathcal E^*$ is naturally induced by its action
on $\mathcal E$ (by taking inverse transposes), which in turn comes from the action of $\gamma$ on $\mathbb R^{1,n+1}$.
\end{lemm}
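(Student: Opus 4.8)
The plan is to reduce the equivariance statement for $v = \mathcal{U}_\pm^m u$ to the already-established equivariance of the horocyclic construction at the level of the group $G$, using the pullback $\pi_S^*$. First I would unwind what left-multiplication by $\gamma$ means downstairs: if $u \in \mathcal{D}'(S\mathbb{H}^{n+1})$ satisfies $u(\gamma.(x,\xi)) = u(x,\xi)$, then its pullback $\tilde u := \pi_S^* u \in \mathcal{D}'(G)$ satisfies $\tilde u(\gamma \gamma') = \tilde u(\gamma')$ for all $\gamma' \in G$, i.e. $\tilde u$ is invariant under left translation $L_\gamma$ on $G$. The point of working on $G$ is that the operators $U^\pm_j$ are \emph{left-invariant} vector fields, hence commute with left translations $L_\gamma^*$; consequently $U^\pm_K \tilde u$ is again left-$\gamma$-invariant as a scalar distribution on $G$, for every multi-index $K$. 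By the defining formula~\eqref{e:Udef3}, $\pi_S^*(\mathcal{U}_\pm^m u) = \sum_{K \in \mathscr{A}^m} (U^\pm_K \tilde u)\, \mathbf{e}^*_K$, so the only thing that can produce nontrivial $\gamma$-dependence on the left-hand side is the frame $\mathbf{e}^*_K$.

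Next I would track the transformation of the frame. By definition $\mathbf{e}^*_j(\gamma') = \gamma'^{-*}(e_{j+1}^*) \in \mathcal{E}^*(\pi_S(\gamma'))$, so $\mathbf{e}^*_j(\gamma\gamma') = (\gamma\gamma')^{-*}(e_{j+1}^*) = \gamma^{-*}\big(\gamma'^{-*}(e_{j+1}^*)\big) = \gamma^{-*}\big(\mathbf{e}^*_j(\gamma')\big)$, and similarly $\mathbf{e}^*_K(\gamma\gamma') = \gamma^{-*}(\mathbf{e}^*_K(\gamma'))$ under the induced action on $\otimes^m \mathcal{E}^*$ (inverse transpose of the $\mathbb{R}^{1,n+1}$-action). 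Combining this with the $L_\gamma$-invariance of each coefficient $U^\pm_K \tilde u$, I get, for all $\gamma' \in G$,
\begin{equation*}
\pi_S^*(\mathcal{U}_\pm^m u)(\gamma\gamma') = \sum_{K}(U^\pm_K \tilde u)(\gamma\gamma')\,\mathbf{e}^*_K(\gamma\gamma') = \sum_K (U^\pm_K \tilde u)(\gamma')\,\gamma^{-*}\big(\mathbf{e}^*_K(\gamma')\big) = \gamma.\big(\pi_S^*(\mathcal{U}_\pm^m u)(\gamma')\big),
\end{equation*}
where $\gamma.$ denotes the induced action on $\otimes^m_S\mathcal{E}^*$. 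Pushing this identity back down through $\pi_S$ — i.e. noting that $\pi_S(\gamma\gamma') = \gamma.\pi_S(\gamma')$ and that $\pi_S$ is a surjective submersion, so every $(x,\xi)$ and its $\gamma$-translate are hit — yields exactly $v(\gamma.(x,\xi)) = \gamma.v(x,\xi)$, which is~\eqref{e:v-gamma}. Finally I would remark that $v$ indeed lands in $\mathcal{D}'(S\mathbb{H}^{n+1};\otimes^m_S\mathcal{E}^*)$ with symmetric values, as already observed before the lemma from $[U^\pm_i,U^\pm_j]=0$, so the statement is meaningful.

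The main obstacle is purely bookkeeping with distributions rather than functions: strictly speaking $u(\gamma.(x,\xi))=u(x,\xi)$ and $U^\pm_K\tilde u$ being ``$L_\gamma$-invariant'' must be phrased via pullbacks of distributions under the diffeomorphisms $(x,\xi)\mapsto\gamma.(x,\xi)$ and $\gamma'\mapsto\gamma\gamma'$, and one must check that $\pi_S^*$ intertwines these pullbacks and that differentiation by the left-invariant fields $U^\pm_j$ commutes with $L_\gamma^*$ at the level of distributions (which it does, since left and right translations on $G$ commute and $U^\pm_j$ is by construction invariant under the right translations defining it as a left-invariant field). Once this functorial calculus is set up — exactly as in the footnote's caveat — the computation above goes through verbatim, the algebraic content being entirely the cocycle identity $\mathbf{e}^*_K(\gamma\gamma')=\gamma^{-*}\mathbf{e}^*_K(\gamma')$ together with left-invariance of $U^\pm_K$.
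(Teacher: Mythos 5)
Your proposal is correct and follows essentially the same route as the paper: pull back via $\pi_S^*$, use the left-invariance of the fields $U^\pm_K$ together with the left-$\gamma$-invariance of $u\circ\pi_S$ to keep the coefficients unchanged, and conclude with the cocycle identity $\mathbf e_K^*(\gamma\gamma')=\gamma.\mathbf e_K^*(\gamma')$. Your extra remarks on the distributional bookkeeping only spell out what the paper's footnote tacitly assumes, so there is nothing to add.
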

%%%%%%%%%%%%%%%%%%%%%%%%%%%%%%%%%%%%%%%%%%%%%%%%%%%%%%%%%%%%%%%%%%%%%%%%%%%%%%%%
\begin{proof}
We have for $\gamma'\in G$,
$$
\mathcal U_\pm^m u(\pi_S(\gamma'))=\sum_{K\in\mathscr A^m}
(U_K^\pm (u\circ\pi_S)(\gamma'))\mathbf e_K^*(\gamma').
$$
Therefore, since $U^\pm_j$ are left invariant vector fields on $G$,
$$
\mathcal U_\pm^m u(\gamma.\pi_S(\gamma'))
=\mathcal U_\pm^m u(\pi_S(\gamma\gamma'))
=\sum_{K\in\mathscr A^m} (U_K^\pm (u\circ \pi_S)(\gamma'))\mathbf e_K^*(\gamma\gamma').
$$
It remains to note that $\mathbf e_K^*(\gamma\gamma')=\gamma.\mathbf e_K^*(\gamma')$.
\end{proof}
%%%%%%%%%%%%%%%%%%%%%%%%%%%%%%%%%%%%%%%%%%%%%%%%%%%%%%%%%%%%%%%%%%%%%%%%%%%%%%%%

%%%%%%%%%%%%%%%%%%%%%%%%%%%%%%%%%%%%%%%%%%%%%%%%%%%%%%%%%%%%%%%%%%%%%%%%%%%%%%%%
\subsection{Inverting horocyclic operators}
\label{s:hor-inv}
In this subsection, we will show that distributions $v\in \mathcal D'(S\mathbb H^{n+1};\otimes_S^m \mathcal E^*)$ 
satisfying certain conditions are in fact in the image of $\mc{U}_\pm^m$ acting on 
$\mathcal D'(S\mathbb H^{n+1})$. This is an important step in our construction of Pollicott--Ruelle resonances,
as it will make it possible to recover a scalar resonant state corresponding to a resonance
in the $m$th band. More precisely, we prove
%%%%%%%%%%%%%%%%%%%%%%%%%%%%%%%%%%%%%%%%%%%%%%%%%%%%%%%%%%%%%%%%%%%%%%%%%%%%%%%%
\begin{lemm}
  \label{l:recovery}
Assume that $v\in\mathcal D'(S\mathbb H^{n+1};\otimes^m_S\mathcal E^*)$ satisfies
$\mathcal U_\pm v=0$, and $\mathcal Xv=\pm \lambda v$ for $\lambda\not\in {1\over 2}\mathbb Z$. Then
there exists $u\in\mathcal D'(S\mathbb H^{n+1})$
such that $\mathcal U_\pm^m u=v$ and
$Xu=\pm(\lambda-m) u$. Moreover, if $v$ is equivariant under left multiplication by some $\gamma\in G$ in the sense
of~\eqref{e:v-gamma}, then $u$ is invariant under left multiplication by $\gamma$.
\end{lemm}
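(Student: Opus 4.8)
The plan is to invert $\mathcal U_\pm^{m}$ by an explicit divergence operator, after separating the trace parts of $v$; I will treat the $+$ case ($\mathcal U_+v=0$, $\mathcal Xv=\lambda v$, seeking $u$ with $\mathcal U_+^{m}u=v$ and $Xu=(\lambda-m)u$), the $-$ case being identical with $U^+_i$ and $U^-_i$ interchanged (this swap is a Lie algebra automorphism sending $X$ to $-X$, realized e.g.\ by $(x,\xi)\mapsto(x,-\xi)$). First I would pull back by $\pi_S$: writing $\pi_S^*v=\sum_{K\in\mathscr A^m}v_K\mathbf e^*_K$, the hypotheses say $U^+_rv_K=0$ and $Xv_K=\lambda v_K$ for all $r,K$, while $(v_K)$ is symmetric and obeys the rotation relations~\eqref{e:rot-condition}. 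Then I would use~\eqref{decompositionoftensors} to write $v=\sum_{\ell=0}^{\lfloor m/2\rfloor}\mathcal I^{\ell}(v_\ell)$ with $v_\ell$ trace-free of order $m-2\ell$. Since $\mathcal U_\pm$ and $\mathcal X$ act only on the coefficients in the orthonormal frame (see~\eqref{e:Udef3} and~\eqref{e:XX-def-3}) whereas $\mathcal I,\mathcal T$ are constant-coefficient algebraic operations in that frame, each $\mathcal I^{\ell}(v_\ell)$, and hence each $v_\ell$, still satisfies $\mathcal U_+(\mathcal I^{\ell}v_\ell)=0$ and $\mathcal X(\mathcal I^{\ell}v_\ell)=\lambda\,\mathcal I^{\ell}v_\ell$ (one uses that $\mathcal U_+v=0$ forces $\mathcal U_+(\mathcal T^{j}v)=0$).

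\textbf{The candidate.} Let $\mathcal V_-$ be the divergence lowering the tensor order by one, normalized so that on $G$ it reads $\mathcal V_-w=\sum_r U^-_r\,\iota_{\mathbf e_r}w$; equivalently $\mathcal V_-^{\,m}=(-1)^m(\mathcal U_-^{\,m})^{*}$, so $\mathcal V_-^{\,m}$ carries $\otimes^m_S\mathcal E^*$-valued distributions to scalar distributions. I would then set
$$
u:=\sum_{\ell=0}^{\lfloor m/2\rfloor}\frac{1}{\mu_\ell(\lambda)}\,\mathcal V_-^{\,m}\!\big(\mathcal I^{\ell}v_\ell\big),
$$
with constants $\mu_\ell(\lambda)$ produced in the next step. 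Being geometrically defined on $S\mathbb H^{n+1}$ (it is the adjoint of the operator $\mathcal U_-^{\,m}$ of~\eqref{e:Udef4}), $\mathcal V_-^{\,m}$ outputs a genuine scalar distribution; on the level of $G$ this is the statement that $\mathcal V_-^{\,m}$ preserves the equivariance condition~\eqref{e:rot-condition}, which follows from~\eqref{e:rot-condition-2}.

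\textbf{The key identity.} The crux will be to show that $\mathcal U_+^{\,m}\mathcal V_-^{\,m}$ preserves the trace grading and, on $\{w:\mathcal U_+w=0,\ \mathcal Xw=\lambda w\}$, acts as multiplication by a scalar $\mu_\ell(\lambda)$ on the $\ell$-th graded piece $\mathcal I^{\ell}(\text{trace-free of order }m-2\ell)$. On $G$ this reduces to evaluating $\sum_{K\in\mathscr A^m}U^+_J U^-_K\,w_K$ for a length-$m$ sequence $J$: I would normal-order $U^+_J U^-_K$ using only~\eqref{e:comm-rel}, discard the terms with a surviving $U^+_r$ on the right (these die against $U^+_rw_K=0$), and reduce the remaining fully contracted terms—products of factors $2X$ and $2R_{\bullet}$—by means of $Xw_K=\lambda w_K$ and the rotation relations~\eqref{e:rot-condition}, using the symmetry and the prescribed trace type of $w$, until everything collapses to $\mu_\ell(\lambda)\,w_J$. (One gets $\mu_0=2(\lambda+n-1)$ for $m=1$; for $m=2$, $\mu_0=4(2\lambda+2n-3)(\lambda+n-1)$ on the traceless part and $\mu_1=4(2\lambda+n-3)(\lambda-1)$ on the pure-trace part; in general $\mu_\ell(\lambda)$ is a product of $m$ factors linear in $\lambda$, each with its zero in $\tfrac12\mathbb Z$—this is exactly why $\lambda\notin\tfrac12\mathbb Z$ is assumed.) Granting this, $\mathcal U_+^{\,m}u=\sum_\ell\mu_\ell(\lambda)^{-1}\mathcal U_+^{\,m}\mathcal V_-^{\,m}(\mathcal I^{\ell}v_\ell)=\sum_\ell\mathcal I^{\ell}v_\ell=v$. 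I would run the normal ordering as an induction on $m$, peeling off one $U^+$ and one $U^-$ at a time; keeping track of the rotation generators at every stage and verifying that they collapse to scalars is where the real work lies.

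\textbf{Conclusion.} Taking adjoints in~\eqref{e:comm-hor} gives $[\mathcal X,\mathcal V_-]=-\mathcal V_-$, hence $\mathcal X\mathcal V_-^{\,m}=\mathcal V_-^{\,m}\mathcal X-m\mathcal V_-^{\,m}$; applied to the sections $\mathcal I^{\ell}v_\ell$ of $\mathcal X$-eigenvalue $\lambda$ this yields $\mathcal X u=(\lambda-m)u$, that is $Xu=(\lambda-m)u$ since $u$ is scalar. Finally, $u$ is assembled from $v$ by $G$-equivariant operations only—the trace decomposition, the algebraic maps $\mathcal I^{\ell}$ built from the $G$-invariant metric on $\mathcal E$, and $\mathcal V_-^{\,m}$ (the adjoint of the $G$-equivariant operator $\mathcal U_-^{\,m}$, cf.\ Lemma~\ref{l:u-pm-gamma})—together with scalar factors; so if $v$ satisfies~\eqref{e:v-gamma} for some $\gamma\in G$, then so does $u$, which for an order-$0$ distribution is precisely invariance under left multiplication by $\gamma$.
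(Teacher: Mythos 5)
Your candidate is in fact the same as the paper's: because the fields $U^-_q$ all commute, contracting the $m$ slots of $\mathcal I^{\ell}(v_\ell)$ with $U^-$'s shows that $\mathcal V_-^{\,m}(\mathcal I^{\ell}v_\ell)$ equals a positive combinatorial constant times $(-\Delta_-)^{\ell}\mathcal V_-^{\,m-2\ell}v_\ell$, where $\Delta_-=-\mathcal V_-\mathcal U_-$ is the horocyclic Laplacian the paper uses; your reduction of the trace decomposition to pieces still killed by $\mathcal U_+$, the eigenvalue statement via $[\mathcal X,\mathcal V_-]=-\mathcal V_-$ (cf.\ \eqref{e:comm-hor}, \eqref{e:comm-lap}), and the equivariance argument are all fine. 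The genuine gap is the ``key identity'': you assert, but do not prove, that $\mathcal U_+^{\,m}\mathcal V_-^{\,m}$ acts on $\{w:\ \mathcal U_+w=0,\ \mathcal Xw=\lambda w\}$ diagonally in the trace grading with explicit nonzero scalars $\mu_\ell(\lambda)$. That identity is the entire technical content of the lemma: in the paper it is Lemmas~\ref{l:long-product}, \ref{l:penible} and \ref{l:moins-penible}, several pages of normal ordering in which the rotation generators produced by $[U^+_p,U^-_q]=2\delta_{pq}X+2R_{p+1,q+1}$ (see \eqref{e:comm-rel}) must be eliminated using the equivariance relations \eqref{e:rot-condition} together with the symmetry and trace-freeness of $v$ (this is where terms such as $\sum_q f_{q(\{\ell\to q\}K)J}$ drop out). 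Saying that the induction ``collapses to scalars'' and that ``this is where the real work lies'' does not substitute for carrying it out, and without it one has neither the diagonality nor the nonvanishing of $\mu_\ell(\lambda)$ for $\lambda\notin\tfrac12\mathbb Z$, which is the whole point of the lemma.

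Moreover, your sample constants suggest the computation was not actually performed: for $m=1$ your $\mu_0=2(\lambda+n-1)$ is correct, but for $m=2$ the correct scalars (obtainable from Lemmas~\ref{l:long-product} and \ref{l:moins-penible}, or by a short direct computation with \eqref{e:comm-rel}) are $8(\lambda+n-1)(\lambda+n)$ on the trace-free part and $4\lambda(2\lambda+n-2)$ on the pure-trace part, not $4(2\lambda+2n-3)(\lambda+n-1)$ and $4(2\lambda+n-3)(\lambda-1)$. The qualitative claim that $\mu_\ell(\lambda)$ is a product of linear factors with roots in $\tfrac12\mathbb Z$ is right (and matches the coefficient $2^m(m-2r)!\prod_j(\lambda+j)\prod_j(\lambda-j)\prod_j(2\lambda+n-2j)$ appearing at the end of the paper's proof), but to have a proof you must establish the analogue of the paper's commutator lemmas, not merely announce the outcome of the normal ordering.
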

%%%%%%%%%%%%%%%%%%%%%%%%%%%%%%%%%%%%%%%%%%%%%%%%%%%%%%%%%%%%%%%%%%%%%%%%%%%%%%%%
The proof of Lemma~\ref{l:recovery} is modeled on the following well-known
formula recovering a homogeneous polynomial of degree $m$
from its coefficients: given constants $a_\alpha$ for each multiindex $\alpha$ of length $m$,
we have
\begin{equation}
  \label{e:recovery-moral}
\partial^\beta_x \sum_{|\alpha|=m} {1\over \alpha!}x^\alpha a_\alpha =a_\beta,\quad
|\beta|=m.
\end{equation}
The formula recovering $u$ from $v$ in Lemma~\ref{l:recovery} is morally similar to~\eqref{e:recovery-moral},
with $U^\pm_j$ taking the role of $\partial_{x_j}$, the condition $\mathcal U_\pm v=0$ corresponding to
$a_\alpha$ being constants, and $U^\mp_j$ taking the role of the multiplication operators $x_j$. However,
the commutation structure of $U^\pm_j$, given by~\eqref{e:comm-rel}, is more involved than that of $\partial_{x_j}$
and $x_j$ and in particular it involves the vector field $X$, explaining the need for the condition $\mathcal Xv=\pm\lambda v$
(which is satisfied by resonant states).

To prove Lemma~\ref{l:recovery}, we define the operator 
$$
\mathcal V_\pm:\mathcal D'(S\mathbb H^{n+1};\otimes^{m+1} \mathcal E^*)\to \mathcal D'(S\mathbb H^{n+1};\otimes^m\mathcal E^*),\quad
\mathcal V_\pm:=\mathcal T\mathcal U_\pm,
$$
where $\mathcal T$ is defined in Section~\ref{symtens}. Then by~\eqref{e:Udef3}
$$
\pi_S^*(\mathcal V_\pm u)=\sum_{K\in\mathscr A^m}\sum_{q\in\mathscr A}(U^\pm_q u_{qK})\mathbf e^*_K,\quad
u=\sum_{K\in\mathscr A^{m+1}} u_K\mathbf e^*_K.
$$
For later use, we record the following fact:
%%%%%%%%%%%%%%%%%%%%%%%%%%%%%%%%%%%%%%%%%%%%%%%%%%%%%%%%%%%%%%%%%%%%%%%%%%%%%%%%
\begin{lemm}
  \label{l:tough-adjoint}
$\mathcal U_\pm^*=-\mathcal V_\pm$,
where the adjoint is understood in the formal sense.
\end{lemm}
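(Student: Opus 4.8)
The plan is to verify the adjoint relation by a direct computation using the local frame $\mathbf e_K^*$ over $G$ and the definitions~\eqref{e:Udef3} and the formula for $\mathcal V_\pm=\mathcal T\mathcal U_\pm$ recorded just above the statement. Concretely, I would pick test sections $u\in\CI_c(S\mathbb H^{n+1};\otimes^m\mathcal E^*)$ and $w\in\CI_c(S\mathbb H^{n+1};\otimes^{m+1}\mathcal E^*)$, write them in the frame as $\pi_S^*u=\sum_{K\in\mathscr A^m}u_K\mathbf e_K^*$ and $\pi_S^*w=\sum_{L\in\mathscr A^{m+1}}w_L\mathbf e_L^*$, and compute $\langle\mathcal U_\pm u,w\rangle_{L^2}$ and $\langle u,\mathcal V_\pm w\rangle_{L^2}$ using the inner product on $\otimes^\bullet\mathcal E^*$ induced by the Minkowski metric, together with the fact that $\mathbf e_1^*,\dots,\mathbf e_n^*$ is an \emph{orthonormal} frame. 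By~\eqref{e:Udef3}, $\langle\mathcal U_\pm u,w\rangle_{L^2}=\sum_{r=1}^n\sum_{K\in\mathscr A^m}\int (U_r^\pm u_K)\,\overline{w_{rK}}\,d\mu$, while $\langle u,\mathcal V_\pm w\rangle_{L^2}=\sum_{K\in\mathscr A^m}\sum_{q\in\mathscr A}\int u_K\,\overline{U_q^\pm w_{qK}}\,d\mu$, so the identity $\mathcal U_\pm^*=-\mathcal V_\pm$ reduces to the statement that each $U_r^\pm$ is formally anti-self-adjoint with respect to the natural measure $d\mu$ on $S\mathbb H^{n+1}$ (equivalently, pulled back, on $G$).

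So the real content is an integration-by-parts fact: $\int_G (U_r^\pm h_1)\,\overline{h_2}\,d\mu=-\int_G h_1\,\overline{U_r^\pm h_2}\,d\mu$ for compactly supported $h_1,h_2$. The clean way to see this is that $U_r^\pm$ is a left-invariant vector field on the unimodular group $G=\PSO(1,n+1)$, and the measure $d\mu$ on $S\mathbb H^{n+1}=G/H$ lifts (locally, or after integrating out $H$) to the bi-invariant Haar measure on $G$; left-invariant vector fields annihilate Haar measure and thus are formally anti-self-adjoint. Alternatively, and perhaps more in keeping with the paper's style, one can invoke the already-stated fact that ${1\over i}\mathcal X$ is symmetric together with the observation that the argument for $\mathcal X$ in terms of~\eqref{e:XX-def-3} is structurally identical: the only input is that the vector field ($X$ there, $U_r^\pm$ here) preserves the volume form, which for $U_r^\pm$ follows because $\operatorname{div}(U_r^\pm)=0$ — the horocyclic flows are measure-preserving, being unipotent one-parameter subgroups acting on $G/H$ with the invariant measure. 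I would state this as a short lemma or simply cite the unimodularity of $G$.

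A minor bookkeeping point to handle carefully: the coefficient functions $w_{rK}$ and $U_q^\pm w_{qK}$ must be genuine scalar distributions on $S\mathbb H^{n+1}$ pulled back to $G$, so that the pairings written above make sense; this is exactly the constraint~\eqref{e:rot-condition}, which is preserved under $\mathcal U_\pm$ and $\mathcal V_\pm$ as already checked in Section~\ref{s:horocycl}, so there is nothing new to prove there. One also checks that the contraction in $\mathcal T$ (summing $f_{qqK}$ over $q$) is precisely what matches the index $r=q$ appearing in both $\langle\mathcal U_\pm u,w\rangle$ and $\langle u,\mathcal V_\pm w\rangle$, using orthonormality of the frame so that $\langle\mathbf e_{rK}^*,\mathbf e_{qL}^*\rangle=\delta_{rq}\delta_{KL}$ (up to the symmetrization conventions, which only affect both sides equally).

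I expect no serious obstacle; the only thing requiring a line of care is the anti-self-adjointness of $U_r^\pm$, i.e.\ that the lift of the natural measure to $G$ is Haar measure and that $U_r^\pm$ is divergence-free. Everything else is a transcription of the frame formulas already in place. If one wanted to avoid Lie-group language entirely, one could instead compute $\operatorname{div}_\mu(U_r^\pm)$ directly in the parametrization~\eqref{e:otherpar} of $S\mathbb H^{n+1}$, but invoking unimodularity of $G=\PSO(1,n+1)$ is cleaner and is the route I would take.
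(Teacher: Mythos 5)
Your proposal is correct, and it shares the paper's first step: by orthonormality of the frame $\mathbf e^*_K$ and the formula defining $\mathcal V_\pm$, the pointwise identity
$\langle \mathcal U_\pm u,w\rangle+\langle u,\mathcal V_\pm w\rangle=\sum_{q}U^\pm_q\big(\sum_K u_K\overline{w_{qK}}\big)$
reduces everything to showing that the integral of this first-order term vanishes. Where you diverge from the paper is in how that vanishing is established. You work upstairs on $G$ and invoke unimodularity of $G=\PSO(1,n+1)$: left-invariant vector fields generate right translations, which preserve bi-invariant Haar measure, so each $U^\pm_q$ is divergence-free and integrates by parts with no extra term; compactness of $H$ lets you pass between the Haar integral on $G$ and the Liouville integral on $S\mathbb H^{n+1}$ (the total integrand, though not each summand, is $H$-invariant). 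The paper instead stays downstairs: it observes that $w\mapsto\int\mathcal V_\pm w$ must be given by pairing with a fixed smooth section $\eta_\pm$ (the adjoint of $\mathcal V_\pm$ applied to the constant $1$), which is $G$-equivariant and hence of constant norm, and then uses the commutation $\mathcal V_\pm\mathcal X=(X\mp1)\mathcal V_\pm$ together with $\int Xf=0$ to force $\mathcal X\eta_\pm=\pm\eta_\pm$, so $X|\eta_\pm|^2=\pm2|\eta_\pm|^2$, which is incompatible with constancy unless $\eta_\pm=0$. Your route is the more classical and arguably more elementary one (a one-line appeal to Haar measure), while the paper's is self-contained in the flow formalism and never mentions Haar measure or unimodularity. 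One small caution: your parenthetical that the horocyclic flows act on $G/H$ is not accurate for $n>1$ (right multiplication by $\exp(tU^\pm_q)$ does not descend to $G/H$, which is exactly why the $U^\pm_q$ are not vector fields on $S\mathbb H^{n+1}$); this does not affect your argument, since the integration by parts is genuinely performed on $G$, but the justification should be phrased there rather than on the quotient.
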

%%%%%%%%%%%%%%%%%%%%%%%%%%%%%%%%%%%%%%%%%%%%%%%%%%%%%%%%%%%%%%%%%%%%%%%%%%%%%%%%
\begin{proof}
If
$u\in \CI_0(S\mathbb H^{n+1};\otimes^m \mathcal E^*)$,
$v\in \CI(S\mathbb H^{n+1};\otimes^{m+1} \mathcal E^*)$
and $u_K$, $v_J$ are the coordinates of $\pi_S^*u$ and $\pi_S^*v$ in the bases
$(\mathbf e^*_K)_{K\in\mathscr A^m}$ and $(\mathbf e^*_J)_{J\in\mathscr A^{m+1}}$, then by~\eqref{e:Udef3},
we compute the following pointwise identity on $S\mathbb H^{n+1}$:
$$
\langle \mathcal U_\pm u,\bar v\rangle+\langle u,\overline{\mathcal  V_\pm v}\rangle
=\mathcal V_\pm w,\quad
w\in\CI_0(S\mathbb H^{n+1};\mathcal E^*),\quad
\pi_S^*w=\sum_{K\in\mathscr A^m\atop q\in\mathscr A} u_K\overline{v_{qK}}\,\mathbf e_q^*.
$$
It remains to show that for each $w$, the integral of $\mathcal V_\pm w$ is equal to zero. Since $\mathcal V_\pm$ is
a differential operator of order 1, we must have
$$
\int_{S\mathbb H^{n+1}}\mathcal V_\pm w=\int_{S\mathbb H^{n+1}} \langle w,\eta_\pm\rangle
$$
for all $w$ and some $\eta_\pm\in\CI(S\mathbb H^{n+1};\mathcal E^*)$ independent of $w$. Then $\eta_\pm$ is
equivariant under the action of the isometry group $G$ and in particular, $|\eta_\pm|$ is a constant
function on $S\mathbb H^{n+1}$. Moreover, using that $\int Xf=0$ for all $f\in \CI_0(S\mathbb H^{n+1})$
and $\mathcal V_\pm(\mathcal Xw)=(X\mp 1)\mathcal V_\pm w$, we get for all $w\in\CI_0$,
$$
\mp\int_{S\mathbb H^{n+1}} \langle w,\eta_\pm\rangle=\int_{S\mathbb H^{n+1}} \mathcal V_\pm(\mathcal Xw)
=-\int_{S\mathbb H^{n+1}} \langle w,\mathcal X\eta_\pm\rangle.
$$
This implies that $\mathcal X\eta_\pm=\pm\eta_\pm$ and in particular
$$
X|\eta_\pm|^2=2\langle \mathcal X\eta_\pm,\eta_\pm\rangle=\pm 2|\eta_\pm|^2.
$$
Since $|\eta_\pm|^2$ is a constant function, this implies $\eta_\pm=0$, finishing the proof.
\end{proof}
%%%%%%%%%%%%%%%%%%%%%%%%%%%%%%%%%%%%%%%%%%%%%%%%%%%%%%%%%%%%%%%%%%%%%%%%%%%%%%%%

To construct $u$ from $v$ in Lemma~\ref{l:recovery}, we first handle the case when $\mathcal T(v)=0$; this condition is automatically
satisfied when $m\leq 1$.
%%%%%%%%%%%%%%%%%%%%%%%%%%%%%%%%%%%%%%%%%%%%%%%%%%%%%%%%%%%%%%%%%%%%%%%%%%%%%%%%
\begin{lemm}
  \label{l:long-product}
Assume that $v\in\mathcal D'(S\mathbb H^{n+1};\otimes^m_S\mathcal E^*)$ and
$\mathcal U_\pm v=0$, $\mathcal T(v)=0$.
Define $u=\mathcal V_\mp^m v\in\mathcal D'(S\mathbb H^{n+1})$. Then
\begin{equation}
  \label{e:whale}
\mathcal U_\pm^m u=2^m m!\bigg(\prod_{\ell=n-1}^{n+m-2} (\ell\pm \mathcal X)\bigg )v.
\end{equation}
\end{lemm}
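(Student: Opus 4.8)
The plan is to prove \eqref{e:whale} by induction on $m$, using the commutation relations \eqref{e:comm-rel} between the $U_i^\pm$ and $X$ (transported to $S\mathbb H^{n+1}$ via \eqref{e:Udef3}, \eqref{e:XX-def-3}, \eqref{e:comm-hor}). The key computation is an identity expressing $\mathcal U_\pm \mathcal V_\mp$ acting on a section of $\otimes^k\mathcal E^*$ in terms of $\mathcal V_\mp\mathcal U_\pm$, the identity, the trace $\mathcal T$, and the operator $\mathcal I$ (or rather the relevant ``lower order'' symmetrization maps), with coefficients involving $\mathcal X$ and the tensor degree $k$. Concretely, working in the frame $\mathbf e_K^*$ on $G$ and using $[U_i^+,U_j^-]=2R_{i+1,j+1}$, $[U_i^+,U_i^-]=2X$, together with the rotation-invariance relation \eqref{e:rot-condition} that characterizes sections pulled back from $S\mathbb H^{n+1}$, one obtains a pointwise operator identity of the schematic shape
$$
\mathcal U_\pm \mathcal V_\mp = \mathcal V_\mp \mathcal U_\pm + 2(c_k \pm \mathcal X)\,\Id + (\text{terms involving }\mathcal T,\ \text{producing }\mathcal I\text{-type corrections}),
$$
where $c_k$ is an explicit constant depending on $k$ and $n$ (this is where $n-1,\dots,n+m-2$ will come from). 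The hypothesis $\mathcal T(v)=0$, propagated appropriately, is exactly what kills the correction terms so that the clean product formula survives.

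First I would set up notation: let $v_j := \mathcal V_\mp^{m-j} v \in \mathcal D'(S\mathbb H^{n+1};\otimes^j_S\mathcal E^*)$ for $0\le j\le m$, so $v_m = v$ and $v_0 = u$. I would show by downward induction that each $v_j$ is still symmetric and trace-free (using that $\mathcal V_\mp = \mathcal T\mathcal U_\mp$ and that $\mathcal U_\mp$ of a scalar is automatically symmetric; the trace-free property of the intermediate $v_j$ requires checking that $\mathcal T$ commutes suitably with $\mathcal U_\mp$ modulo lower-degree terms, or alternatively deriving it from $\mathcal U_\pm v = 0$ and $\mathcal T v = 0$ via the commutation relations). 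I would also track the eigenvalue of $\mathcal X$: since $\mathcal V_\mp$ lowers tensor degree by one and intertwines $\mathcal X \mp 1$ with $\mathcal X$ (from \eqref{e:comm-hor}, $\mathcal X\mathcal U_\mp - \mathcal U_\mp\mathcal X = \mp\mathcal U_\mp$, hence $\mathcal X\mathcal V_\mp = \mathcal V_\mp(\mathcal X \pm 1)$... I would pin down the correct sign), we get $\mathcal X v_j = \pm(\lambda - (m-j))\,v_j$ if we assume the hypothesis $\mathcal X v = \pm\lambda v$ — though note that for \emph{this} lemma no eigenvalue hypothesis on $v$ is imposed, so I should instead keep $\mathcal X$ as an operator and let it appear in the final formula, which is exactly what \eqref{e:whale} does.

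Then I would run the induction: assuming $\mathcal U_\pm^{k} (\mathcal V_\mp^{k} v') = 2^k k! \prod_{\ell=n-1}^{n+k-2}(\ell\pm\mathcal X)\, v'$ for trace-free $\mathcal U_\pm$-closed sections $v'$ of degree $\le$ something, I apply $\mathcal U_\pm$ once more to $\mathcal V_\mp^{k+1} v = \mathcal V_\mp^{k}(\mathcal V_\mp v)$, commute the outermost $\mathcal U_\pm$ inward past the $\mathcal V_\mp$'s using the schematic identity above, and collect: each pass picks up a factor $2(c \pm \mathcal X)$ for the appropriate constant, the trace-correction terms vanish by trace-freeness, and the $k!$ becomes $(k+1)!$ from the combinatorics of how many ways the new $\mathcal U_\pm$ can ``land.'' The main obstacle I anticipate is bookkeeping the correction terms precisely — verifying that the $\mathcal I$-type / trace terms generated when commuting $\mathcal U_\pm$ past $\mathcal V_\mp = \mathcal T\mathcal U_\mp$ genuinely cancel against each other or are annihilated by the inductively-maintained trace-free condition, and nailing down the arithmetic that turns the constants $c_k$ into the consecutive integers $n-1, n, \dots, n+m-2$. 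This is a careful but routine index manipulation in the $\mathscr A^m$ notation of Section~\ref{symtens}, combined with repeated use of \eqref{e:comm-rel}, \eqref{e:rot-condition-2}, and \eqref{e:trart}.
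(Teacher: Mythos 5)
Your overall strategy (commute the $\mathcal U_\pm$'s through the string of $\mathcal V_\mp$'s, with the commutators producing the factors $2(\pm\mathcal X+\mathrm{const})$ and the trace/rotation corrections killed by symmetry and trace-freeness) is the right one and is essentially how the paper computes. But the induction scheme as you set it up has a genuine gap: you want to apply the inductive hypothesis ``$\mathcal U_\pm^k(\mathcal V_\mp^k v')=2^kk!\prod(\ell\pm\mathcal X)v'$ for trace-free, $\mathcal U_\pm$-closed $v'$'' to $v'=\mathcal V_\mp v$, but $\mathcal V_\mp v$ is \emph{not} $\mathcal U_\pm$-closed. Indeed, a direct computation with \eqref{e:comm-rel} and \eqref{e:rot-condition} (this is the $r=0$ case of the paper's~\eqref{e:chimp}) gives
\begin{equation*}
\mathcal U_\pm\mathcal V_\mp v=2\big(\pm\mathcal X+n+m-2\big)v\neq 0 ,
\end{equation*}
which is exactly the factor the lemma is designed to collect, so the hypothesis $\mathcal U_\pm v'=0$ fails at the very first intermediate stage and the induction cannot close in the form you state. (Symmetry and trace-freeness, by contrast, \emph{are} inherited by $\mathcal V_\mp^j v$, since $\mathcal T$ contracts indices untouched by the new differentiation; it is only the $\mathcal U_\pm$-closedness that breaks.)

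The fix, which is what the paper does, is to induct on a \emph{two-parameter} statement in which the hypotheses $\mathcal U_\pm v=0$, $\mathcal T v=0$ are only ever imposed on the original degree-$m$ tensor $v$: one pushes a single $U^\pm_p$ through the whole string $U^\mp_{k_1}\cdots U^\mp_{k_r}$ until it hits the coefficients of $v$ and dies, and the accumulated commutators give (after evaluating the rotation terms on the pulled-back coefficients via \eqref{e:rot-condition-2}, \eqref{e:rot-condition} and using symmetry and $\mathcal T v=0$ of the original $v$) the identity~\eqref{e:chimp}, whose constant $\pm X+n+m-r-2$ depends on both the length $r$ of the partial string and the original degree $m$. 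Iterating that identity in $r$, and then repeating for each of the $m$ outer derivatives, produces the factor $2^m m!$ and the consecutive constants $n-1,\dots,n+m-2$. So your ``schematic identity'' is morally correct, but the induction must be carried at the level of these partial strings attached to the fixed $v$, not by re-applying the same lemma to $\mathcal V_\mp v$; also note that $\mathcal U_\pm w$ is not symmetric in its first two indices for general $w$, so the identification of the term $\sum_q U^\mp_q U^\pm_p w_{qK}$ with $\mathcal V_\mp\mathcal U_\pm w$ in your schematic formula needs the same index bookkeeping rather than a clean operator identity.
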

%%%%%%%%%%%%%%%%%%%%%%%%%%%%%%%%%%%%%%%%%%%%%%%%%%%%%%%%%%%%%%%%%%%%%%%%%%%%%%%%
\begin{proof}
Assume that
$$
\pi_S^* v=\sum_{K\in\mathscr A^m} f_K \mathbf e^*_K,\quad
f_K\in\mathcal D'(G).
$$
Then
$$
\pi_S^*u=\sum_{K\in\mathscr A^m}  U^\mp_K f_K,\quad
\pi_S^*(\mathcal U^m_\pm u)=\sum_{K,J\in\mathscr A^m}  U^\pm_J  U^\mp_K f_K \mathbf e^*_J.
$$
For $0\leq r<m$, $J\in\mathscr A^{m-1-r}$, and $p\in\mathscr A$, we have by~\eqref{e:comm-rel}
$$
\sum_{K\in\mathscr A^r\atop q\in\mathscr A} [U^\pm_p,U^\mp_q]U^\mp_K f_{qKJ}
=\pm 2X\sum_{K\in\mathscr A^r}U^\mp_K f_{pKJ}+2\sum_{K\in\mathscr A^r\atop q\in\mathscr A} R_{p+1,q+1}U^\mp_K f_{qKJ}.
$$
To compute the second term on the right-hand side,
we commute $R_{p+1,q+1}$ with $U^\mp_K$ by~\eqref{e:rot-condition-2} and use~\eqref{e:rot-condition}
to get
$$
\begin{gathered}
\sum_{K\in\mathscr A^r\atop q\in\mathscr A} R_{p+1,q+1}U^\mp_K f_{qKJ}
=\sum_{K\in\mathscr A^r\atop q\in\mathscr A}\bigg( \sum_{\ell=1}^{r}
(\delta_{qk_\ell}U^\mp_{\{\ell\to p\}K} f_{qKJ}
-\delta_{pk_\ell}U^\mp_{\{\ell\to q\}K} f_{qKJ})\\
+U^\mp_K f_{pKJ}-\delta_{pq}U^\mp_K f_{qKJ}
+\sum_{\ell=1}^r (\delta_{qk_\ell}U^\mp_Kf_{q(\{\ell\to p\}K)J}
-\delta_{pk_\ell}U^\mp_K f_{q(\{\ell\to q\}K)J})\\
+\sum_{\ell=1}^{m-1-r} (\delta_{qj_\ell}U^\mp_K f_{qK(\{\ell\to p\}J)}
-\delta_{pj_\ell} U^\mp_K f_{qK(\{\ell\to q\}J)})
\bigg).
\end{gathered}
$$
Since $v$ is symmetric and $\mathcal T(v)=0$, the expressions
$\sum_{K\in\mathscr A^r,\,q\in\mathscr A}\delta_{qk_\ell}U^\mp_{\{\ell\to p\}K}f_{qKJ}$,
$\sum_{q\in\mathscr A}f_{q(\{\ell\to q\} K)J}$, and $\sum_{q\in\mathscr A}f_{qK(\{\ell\to q\}J)}$ are zero.
Further using the symmetry of $v$, we find
$$
\sum_{K\in\mathscr A^r\atop q\in\mathscr A}R_{p+1,q+1}U^\mp_K f_{qKJ}
=(n+m-r-2)\sum_{K\in\mathscr A^r} U^\mp_K f_{pKJ}.
$$
and thus
\begin{equation}
  \label{e:gorilla}
\sum_{K\in\mathscr A^r\atop q\in\mathscr A}[U^\pm_p,U^\mp_q]U^\mp_K f_{qKJ}=2
\sum_{K\in\mathscr A^r} U^\mp_K (\pm X+n+m-2r-2) f_{pKJ}.
\end{equation}
Then, using that $\mathcal U_\pm v=0$, we find
\begin{equation}
  \label{e:chimp}
\begin{split}
\sum_{K\in\mathscr A^{r+1}} U^\pm_pU^\mp_K f_{KJ}&= 
\sum_{K\in\mathscr A^r\atop q\in\mathscr A}\sum_{\ell=1}^{r+1} U^\mp_{k_\ell\dots k_r}
[U^\pm_p,U^\mp_q] U^\mp_{k_1\dots k_{\ell-1}} f_{qKJ}\\
&=   2\sum_{K\in\mathscr A^r}\sum_{\ell=1}^{r+1}U^\mp_K(\pm X+n+m-2\ell)f_{pKJ}\\
&=    2(r+1)\sum_{K\in\mathscr A^r}U^\mp_K(\pm X+n+m-r-2)f_{pKJ}.
\end{split}
\end{equation}
By iterating \eqref{e:chimp} we obtain (using also that $v$ is symmetric)
for $J\in\mathscr A^m$,
\[\begin{split}
U^\pm_J\sum_{K\in\mathscr A^m}U^\mp_K f_K&=   2m\,U^\pm_{j_1\dots j_{m-1}}
\sum_{K\in\mathscr A^{m-1}}U_K^\mp(\pm X+n-1)f_{Kj_m}\\
&=  4m(m-1)\,U^\pm_{j_1\dots j_{m-2}}
\sum_{K\in\mathscr{A}^{m-2}}U_K^\mp(\pm X+n)(\pm X+n-1)f_{Kj_{m-1}j_m}\\
&=   \dots \\
&= 2^m m! \prod_{\ell=n-1}^{n+m-2}(\pm X+\ell)f_{J}
\end{split}\]
which achieves the proof.
\end{proof}
%%%%%%%%%%%%%%%%%%%%%%%%%%%%%%%%%%%%%%%%%%%%%%%%%%%%%%%%%%%%%%%%%%%%%%%%%%%%%%%%
To handle the case $\mathcal T (v)\neq 0$, define also the horocyclic Laplacians
$$
\Delta_\pm:=-\mathcal T\mathcal U_\pm^2=-\mathcal V_\pm\mathcal U_\pm:\mathcal D'(S\mathbb H^{n+1})\to\mathcal D'(S\mathbb H^{n+1}),
$$
so that for $u\in\mathcal D'(S\mathbb H^{n+1})$,
$$
\pi_S^*\Delta_\pm u=-\sum_{q=1}^n U^\pm_q U^\pm_q (\pi_S^*u).
$$
Note that, by the commutation relation~\eqref{e:comm-rel}, 
\begin{equation}
  \label{e:comm-lap}
[X,\Delta_\pm]=\pm 2\Delta_\pm.
\end{equation}
Also, by Lemma~\ref{l:tough-adjoint}, $\Delta_\pm$ are symmetric operators.
%%%%%%%%%%%%%%%%%%%%%%%%%%%%%%%%%%%%%%%%%%%%%%%%%%%%%%%%%%%%%%%%%%%%%%%%%%%%%%%%
\begin{lemm}
  \label{l:penible}
Assume that $u\in\mathcal D'(S\mathbb H^{n+1})$ and $\mathcal U_\pm^{m+1} u=0$.
Then
$$
\mathcal U_\pm^{m+2}\Delta_\mp u=-4(\mathcal X\mp m)(2\mathcal X\pm (n-2))\mathcal I(\mathcal U_\pm^m u)-4\mathcal I^2(\mathcal T(\mathcal U_\pm^m u)).
$$
\end{lemm}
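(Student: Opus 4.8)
The plan is to reduce the identity to a normal-ordering computation on the group $G$, in the spirit of the proof of Lemma~\ref{l:long-product}. Write $f=\pi_S^*u\in\mathcal D'(G)$. By~\eqref{e:rot-condition} with $m=0$ we have $R_{i+1,j+1}f=0$, and the hypothesis $\mathcal U_\pm^{m+1}u=0$ is equivalent to $U_K^\pm f=0$ for every $K\in\mathscr A^{m+1}$ (the order of the factors being irrelevant by~\eqref{e:comm-rel}). Since $\Delta_\mp=-\mathcal V_\mp\mathcal U_\mp$, i.e.\ $\pi_S^*(\Delta_\mp u)=-\sum_q U_q^\mp U_q^\mp f$, formula~\eqref{e:Udef3} gives
$$
\pi_S^*\big(\mathcal U_\pm^{m+2}\Delta_\mp u\big)=-\sum_{J\in\mathscr A^{m+2}}\Big(\sum_{q=1}^n U_J^\pm U_q^\mp U_q^\mp f\Big)\,\mathbf e_J^*,
$$
so the whole statement comes down to normal-ordering $U_J^\pm U_q^\mp U_q^\mp$, i.e.\ moving the two lowering operators to the right.

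I would do this one raising operator at a time, using $[U_i^\pm,U_j^\mp]=\pm 2\delta_{ij}X+2R_{i+1,j+1}$, $[X,U_i^\pm]=\pm U_i^\pm$ and $[R_{i+1,j+1},U_k^\pm]=\delta_{jk}U_i^\pm-\delta_{ik}U_j^\pm$ from~\eqref{e:comm-rel} (and~\eqref{e:rot-condition-2} for $U^\pm_K$). The key bookkeeping device is that $U_J^\pm U_q^\mp U_q^\mp$ has total ``raising minus lowering'' weight $m$, while any product of $m+1$ raising operators kills $f$; hence after full normal ordering every surviving term is of the form $(\text{polynomial in }X)\,U_{J'}^\pm f$ with $|J'|=m$, the $R$-generators produced along the way being pushed all the way to the right and annihilated by $R_{i+1,j+1}f=0$, at the cost of index shuffles. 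Concretely, each surviving term is obtained by choosing two slots $\ell<r$ of $J$ whose raising operators are paired against $U_q^\mp U_q^\mp$: that pair contributes $\big(\pm 2\delta_{j_\ell q}X+2R_{j_\ell+1,q+1}\big)\big(\pm 2\delta_{j_r q}X+2R_{j_r+1,q+1}\big)$, summed over $q$, multiplying $U_{J''}^\pm f=(\pi_S^*\mathcal U_\pm^m u)_{J''}$ where $J''$ is $J$ with slots $\ell,r$ removed.

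It then remains to collect. The ``$XX$''-part of such a pair is $4\delta_{j_\ell j_r}$ times a quadratic polynomial in $X$ applied to $(\pi_S^*\mathcal U_\pm^m u)_{J''}$; commuting the $X$-weight back through the $m$ remaining raising operators by~\eqref{e:comm-hor} turns the polynomial into $(\mathcal X\mp m)(2\mathcal X\pm(n-2))$, and summing over the pairs $\ell<r$ reproduces exactly the frame formula for $\mathcal I$ written after~\eqref{e:defI}; together with the sign of $\Delta_\mp=-\mathcal V_\mp\mathcal U_\mp$ and the two factors $2$, this gives the term $-4(\mathcal X\mp m)(2\mathcal X\pm(n-2))\mathcal I(\mathcal U_\pm^m u)$. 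The remaining terms (those with at least one $R_{j+1,q+1}$) are commuted to the right past the $m$ raising operators, producing further Kronecker deltas via~\eqref{e:comm-rel} and then dying on $f$; using the symmetry of $\mathcal U_\pm^m u$, identities~\eqref{e:rot-condition} and~\eqref{e:trart}, and the sum over $q$, these reorganize into the double trace $-4\mathcal I^2(\mathcal T(\mathcal U_\pm^m u))$. A useful check is $m=0$: then $\mathcal T u=0$ and the computation collapses to the threefold commutation $\mathcal U_\pm^2\Delta_\mp u=-4\mathcal X(2\mathcal X\pm(n-2))\mathcal I(u)$, which one verifies directly.

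The main obstacle is exactly the $R$-term bookkeeping: checking that the rotation-generator commutators, once pushed onto $f$, rearrange the Kronecker deltas into precisely $\mathcal I^2\mathcal T$ (rather than some other combination of double traces or interior products) and that the numerical coefficient comes out as the same $-4$ that multiplies $\mathcal I(\mathcal U_\pm^m u)$. This is where the symmetry and the partial trace-freeness of the summands, together with careful use of~\eqref{e:rot-condition}, \eqref{e:rot-condition-2} and~\eqref{e:trart}, carry the weight of the argument; the $XX$-part, by contrast, is essentially the calculation already appearing in the proof of Lemma~\ref{l:long-product}.
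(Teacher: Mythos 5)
Your plan is essentially the paper's own proof: both work upstairs on $G$ in the frame $\mathbf e_K^*$, use the hypothesis to kill every raising string of length $m+1$ adjacent to $f=\pi_S^*u$ (the paper packages your weight-count as the double-commutator identity $\sum_q U^\pm_K U^\mp_q U^\mp_q u=\sum_q[[U^\pm_K,U^\mp_q],U^\mp_q]u$), use $R_{i+1,j+1}f=0$, and collect the surviving length-$m$ raising strings into $\mathcal I(\mathcal U_\pm^m u)$ and $\mathcal I^2(\mathcal T(\mathcal U_\pm^m u))$, recovering the factor $(\mathcal X\mp m)(2\mathcal X\pm(n-2))$ by commuting $X$ back through the remaining raising operators exactly as you describe. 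Two cautions for when you do the bookkeeping: the $U^\mp_q$'s must be commuted to the \emph{left} (your ``to the right'' is a slip, since they already sit next to $f$), and your ``choose two slots $\ell<r$'' description omits the terms where the second $U^\mp_q$ hits the $X$ or $R$ produced by the first commutator~--- summed over $q$ these give $\sum_q[\delta_{qk_\ell}(\pm X+m+1)+R_{k_\ell+1,q+1},U^\mp_q]=(n-2)U^\mp_{k_\ell}$, which is precisely where the $(n-2)$ in the final coefficient comes from, so these cross terms cannot be absorbed into ``$R$ dies on $f$'' and must be tracked alongside the index shuffles.
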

%%%%%%%%%%%%%%%%%%%%%%%%%%%%%%%%%%%%%%%%%%%%%%%%%%%%%%%%%%%%%%%%%%%%%%%%%%%%%%%%
\begin{proof}
We have
$$
\pi_S^*(\mathcal U_\pm^{m+2}\Delta_\mp u)=
-\sum_{K\in\mathscr A^{m+2}\atop q\in\mathscr A} U^\pm_K U^\mp_q U^\mp_q u\, \mathbf e^*_{K}.
$$
Using~\eqref{e:comm-rel}, we compute for $K\in\mathscr A^{m+2}$ and $q\in\mathscr A$,
$$
\begin{gathered}\relax
[U^\pm_K,U^\mp_q]=\sum_{\ell=1}^{m+2} U^\pm_{k_1\dots k_{\ell-1}} [U^\pm_{k_\ell}, U^\mp_q] U^\pm_{k_{\ell+1}\dots k_{m+2}}\\
=2\sum_{\ell=1}^{m+2}\big( \delta_{q k_\ell}U^\pm_{\{\ell\to\}K}(\pm X+m-\ell+2)
+U^\pm_{k_1\dots k_{\ell-1}}R_{k_\ell+1,q+1}U^\pm_{k_{\ell+1}\dots k_{m+2}}\big)\\
=2\sum_{\ell=1}^{m+2}\bigg( U^\pm_{\{\ell\to\}K}\big(\delta_{q k_\ell}(\pm X+m-\ell+2)+R_{k_\ell+1,q+1}\big)
+\sum_{r=\ell+1}^{m+2}(\delta_{qk_r}U^\pm_{\{r\to\}K}-
\delta_{k_\ell k_r}U^\pm_{\{\ell\to,r\to q\}K})\bigg)\\
=2\sum_{\ell=1}^{m+2}\bigg(U^\pm_{\{\ell\to\}K}\big(\delta_{qk_\ell}(\pm X+m+1)+R_{k_\ell+1,q+1}\big)
-\sum_{r=\ell+1}^{m+2}\delta_{k_\ell k_r}U^\pm_{\{\ell\to ,r\to q\}K}\bigg).
\end{gathered}
$$
Since $\mathcal U_\pm^{m+1}u=0$, for $K\in\mathscr A^{m+2}$ and $q\in\mathscr A$ we have $U^\pm_Ku=[U_K^\pm,U^\mp_q]u=0$ and thus
$$
U^\pm_K U^\mp_q U^\mp_q u=[[U_K^\pm,U_q^\mp],U^\mp_q]u.
$$
We calculate
$$
\sum_{q\in\mathscr A}[\delta_{qk_\ell}(\pm X+m+1)+R_{k_\ell+1,q+1},U_q^\mp]=(n-2)U_{k_\ell}^\mp
$$
and thus for $K\in\mathscr A^{m+2}$,
$$
\begin{gathered}
\sum_{q\in\mathscr A} U^\pm_K U^\mp_q U^\mp_qu=2\sum_{\ell=1}^{m+2}\bigg(
[U^\pm_{\{\ell\to\}K},U^\mp_{k_\ell}](\pm X+m+n-1)\\
-\sum_{r=\ell+1}^{m+2}\delta_{k_\ell k_r}\sum_{q\in\mathscr A}[U^\pm_{\{\ell\to,r\to q\}K},U^\mp_q]
\bigg)u.
\end{gathered}
$$
Now, for $K\in\mathscr A^{m+2}$,
$$
\begin{gathered}
\sum_{\ell=1}^{m+2}[U^\pm_{\{\ell\to\}K},U^\mp_{k_\ell}](\pm X+m+n-1)u
=2\sum_{\ell,s=1\atop \ell\neq s}^{m+2}\bigg(\delta_{k_\ell k_s}U^\pm_{\{\ell\to,s\to\}K}(\pm X+m)\\
-\sum_{r=s+1\atop r\neq \ell}^{m+2}\delta_{k_sk_r}U^\pm_{\{s\to,r\to\}K}
\bigg)(\pm X+m+n-1)u\\
=2\sum_{\ell,r=1\atop \ell<r}^{m+2} \delta_{k_\ell k_r}U^\pm_{\{\ell\to,r\to\}K}
(\pm 2X+m)(\pm X+m+n-1)u.
\end{gathered}
$$
Furthermore, we have for $K\in\mathscr A^m$,
$$
\begin{gathered}
\sum_{q\in\mathscr A}[U^\pm_{qK},U^\mp_q]u=
2U^\pm_K\big((m+n)(\pm X+m)-m\big)u
-2\sum_{q\in\mathscr A}\sum_{s,p=1\atop s<p}^m \delta_{k_sk_p}U^\pm_{qq\{s\to,p\to\}K}u
\end{gathered}
$$
We finally compute
$$
\begin{gathered}
\sum_{q\in\mathscr A} U^\pm_K U^\mp_q U^\mp_qu=4\sum_{\ell,r=1\atop \ell<r}^{m+2}
\delta_{k_\ell k_r} U^\pm_{\{\ell\to,r\to\}K}X(2X\pm (n+2m-2))u\\
+4\sum_{q\in\mathscr A} \sum_{\ell,r=1\atop \ell<r}^{m+2}
\sum_{s,p=1\atop s<p;\, \{s,p\}\cap \{\ell,r\}=\emptyset}^{m+2}
\delta_{k_\ell k_r}\delta_{k_sk_p}U^\pm_{qq\{\ell\to,r\to,s\to,p\to\}K}u,
\end{gathered}
$$
which finishes the proof.
\end{proof}
%%%%%%%%%%%%%%%%%%%%%%%%%%%%%%%%%%%%%%%%%%%%%%%%%%%%%%%%%%%%%%%%%%%%%%%%%%%%%%%%
Arguing by induction using~\eqref{e:trart} and applying Lemma~\ref{l:penible} to $\Delta_\mp^ru$, we get
%%%%%%%%%%%%%%%%%%%%%%%%%%%%%%%%%%%%%%%%%%%%%%%%%%%%%%%%%%%%%%%%%%%%%%%%%%%%%%%%
\begin{lemm}
  \label{l:moins-penible}
Assume that $u\in\mathcal D'(S\mathbb H^{n+1})$ and $\mathcal U^{m+1}_\pm u=0$, $\mathcal T(\mathcal U^m_\pm u)=0$. Then for
each $r\geq 0$,
$$
\mathcal U_\pm^{m+2r}\Delta_\mp^r u=(-1)^r2^{2r}\bigg(\prod_{j=0}^{r-1} (\mathcal X\mp (m+j))\bigg)\bigg(\prod_{j=1}^r (2\mathcal X\pm (n-2j))\bigg)\mathcal I^r (\mathcal U^m_\pm u).
$$
Moreover, for $r\geq 1$
$$
\begin{gathered}
\mathcal T(\mathcal U_\pm^{m+2r}\Delta_\mp^ru)=(-1)^r2^{2r}r(n+2m+2r-2)\\\cdot\bigg(\prod_{j=0}^{r-1} (\mathcal X\mp (m+j))\bigg)\bigg(\prod_{j=1}^r (2\mathcal X\pm (n-2j))\bigg)
\mathcal I^{r-1}(\mathcal U^m_\pm u).
\end{gathered}
$$
\end{lemm}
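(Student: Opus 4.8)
The plan is a double induction on $r$: an outer induction establishing an auxiliary vanishing statement that makes it legitimate to invoke Lemma~\ref{l:penible} at each stage, and an inner induction carrying the two displayed identities simultaneously, the only non-formal input being one elementary polynomial identity in $\mathcal X$. Abbreviate $P_r(\mathcal X):=\big(\prod_{j=0}^{r-1}(\mathcal X\mp(m+j))\big)\big(\prod_{j=1}^{r}(2\mathcal X\pm(n-2j))\big)$, so that the two claims read $\mathcal U_\pm^{m+2r}\Delta_\mp^r u=(-1)^r2^{2r}P_r(\mathcal X)\,\mathcal I^r(\mathcal U_\pm^m u)$ and, for $r\geq1$, $\mathcal T(\mathcal U_\pm^{m+2r}\Delta_\mp^r u)=(-1)^r2^{2r}\,r(n+2m+2r-2)\,P_r(\mathcal X)\,\mathcal I^{r-1}(\mathcal U_\pm^m u)$. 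I would use freely that $\mathcal X$ commutes with the fibrewise maps $\mathcal T$ and $\mathcal I$ (the metric defining them being parallel for $\nabla^S$), and that iterating~\eqref{e:trart} yields $\mathcal T\mathcal I^s w=s(n+2m+2s-2)\,\mathcal I^{s-1}w$ whenever $\mathcal Tw=0$.

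The first step is to show, by induction on $r$, that $\mathcal U_\pm^{m+2r+1}\Delta_\mp^r u=0$ for every $r\geq0$. The case $r=0$ is the hypothesis $\mathcal U_\pm^{m+1}u=0$. For the inductive step, apply Lemma~\ref{l:penible} to the scalar distribution $\Delta_\mp^r u$ with the integer $m+2r+1$ in place of $m$: the required hypothesis $\mathcal U_\pm^{m+2r+2}\Delta_\mp^r u=0$ follows by applying a further $\mathcal U_\pm$ to the inductive assumption, and the right-hand side of the conclusion vanishes because each of its two terms contains the factor $\mathcal U_\pm^{m+2r+1}\Delta_\mp^r u=0$; hence $\mathcal U_\pm^{m+2(r+1)+1}\Delta_\mp^{r+1}u=0$.

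With this in hand I would run the main induction on $r$, deducing the trace identity from the first identity at each level. For $r=0$ the first identity is trivial (empty products, $\mathcal I^0=\Id$), and the hypothesis $\mathcal T(\mathcal U_\pm^m u)=0$ takes care of the trace term at the first step of the recursion. For the passage $r\to r+1$, apply Lemma~\ref{l:penible} to $\Delta_\mp^r u$ with parameter $m+2r$ (legitimate by the first step), which gives
$$
\mathcal U_\pm^{m+2r+2}\Delta_\mp^{r+1}u=-4(\mathcal X\mp(m+2r))(2\mathcal X\pm(n-2))\,\mathcal I\big(\mathcal U_\pm^{m+2r}\Delta_\mp^r u\big)-4\,\mathcal I^2\mathcal T\big(\mathcal U_\pm^{m+2r}\Delta_\mp^r u\big).
$$
Substituting the two inductive hypotheses (for $r=0$ the trace term is $0$ directly) and commuting $\mathcal I$ past the polynomials in $\mathcal X$, the right-hand side equals
$$
(-1)^{r+1}2^{2r+2}\Big[(\mathcal X\mp(m+2r))(2\mathcal X\pm(n-2))+r(n+2m+2r-2)\Big]P_r(\mathcal X)\,\mathcal I^{r+1}(\mathcal U_\pm^m u).
$$
It then suffices to verify the elementary identity $(\mathcal X\mp(m+2r))(2\mathcal X\pm(n-2))+r(n+2m+2r-2)=(\mathcal X\mp(m+r))(2\mathcal X\pm(n-2(r+1)))$, which turns the bracket times $P_r(\mathcal X)$ into $P_{r+1}(\mathcal X)$ and gives the first identity at level $r+1$; applying $\mathcal T$ to that, commuting it past $P_{r+1}(\mathcal X)$, and using $\mathcal T\mathcal I^{r+1}(\mathcal U_\pm^m u)=(r+1)(n+2m+2r)\,\mathcal I^{r}(\mathcal U_\pm^m u)$ then gives the trace identity at level $r+1$.

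I expect the only real difficulty to be bookkeeping: one must track the exact arguments of the linear factors through the recursion — the step from the factor $\mathcal X\mp(m+2r)$ produced by Lemma~\ref{l:penible} to the factor $\mathcal X\mp(m+r)$ appearing in $P_{r+1}(\mathcal X)$ is precisely where the extra trace term $r(n+2m+2r-2)$ is absorbed — and one must make sure Lemma~\ref{l:penible} is invoked only with parameters for which its hypothesis has already been verified, which is the sole purpose of the auxiliary vanishing statement.
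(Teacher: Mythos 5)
Your proof is correct and follows essentially the same route as the paper: an induction on $r$ applying Lemma~\ref{l:penible} to $\Delta_\mp^r u$ and using the trace relation~\eqref{e:trart}, which is exactly the paper's (sketched) argument. Your auxiliary vanishing statement $\mathcal U_\pm^{m+2r+1}\Delta_\mp^r u=0$, the iterated trace formula, and the factor identity $(\mathcal X\mp(m+2r))(2\mathcal X\pm(n-2))+r(n+2m+2r-2)=(\mathcal X\mp(m+r))(2\mathcal X\pm(n-2r-2))$ all check out, so the details you supply are a faithful completion of that argument.
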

%%%%%%%%%%%%%%%%%%%%%%%%%%%%%%%%%%%%%%%%%%%%%%%%%%%%%%%%%%%%%%%%%%%%%%%%%%%%%%%%
We are now ready to finish the proof of Lemma \ref{l:recovery}.
Following \eqref{decompositionoftensors}, we decompose $v$ as $v=\sum_{r=0}^{\lfloor m/2\rfloor}\mc{I}^r(v_r)$  with 
$v_r\in \mc{D'}(S\hh^{n+1}; \otimes_S^{m-2r}\mathcal E^*)$ and $\mathcal T(v_r)=0$.
Since $X$ commutes with $\mathcal T$ and $\mathcal I$, we find
$X v_r=\pm\lambda v_r$. Moreover, since $\mathcal U_\pm v=0$, we have $\mathcal U_\pm v_r=0$.
Put
$$
u_r:=(-\Delta_\mp)^r \mathcal V_\mp^{m-2r} v_r\in\mathcal D'(S\mathbb H^{n+1}).
$$
By Lemma~\ref{l:long-product} (applied to $v_r$) and Lemma~\ref{l:moins-penible} (applied to $\mathcal V_\mp^{m-2r}v_r$
and $m$ replaced by $m-2r$),
$$
\begin{gathered}
\mathcal U_\pm^m u_r
=2^{2r}\bigg(\prod_{j=0}^{r-1}(\lambda-(m-2r+j))\bigg)\bigg(\prod_{j=1}^r(2\lambda+n-2j)\bigg)\mathcal I^r(\mathcal U^{m-2r}_\pm
\mathcal V_\mp^{m-2r}v_r)
\\=2^m(m-2r)!\bigg(\prod_{j=n-1}^{n+m-2r-2}(\lambda+j)\bigg)
\bigg(\prod_{j=m-2r}^{m-r-1}(\lambda- j)\bigg)
\bigg(\prod_{j=1}^r(2\lambda + n-2j)\bigg)\mathcal I^r(v_r).
\end{gathered}
$$
Since $\lambda\not\in{1\over 2}\mathbb Z$, we see that
$v=\mathcal U_\pm^m u$, where $u$ is a linear combination of $u_0,\dots,u_{\lfloor m/2\rfloor}$.
The relation $Xu=\pm(\lambda- m)u$ follows immediately from~\eqref{e:comm-hor} and~\eqref{e:comm-lap}.
Finally, the equivariance property under $G$ follows similarly to Lemma~\ref{l:u-pm-gamma}.

%%%%%%%%%%%%%%%%%%%%%%%%%%%%%%%%%%%%%%%%%%%%%%%%%%%%%%%%%%%%%%%%%%%%%%%%%%%%%%%%
\subsection{Reduction to the conformal boundary}
\label{s:reduction}

We now describe the tensors $v\in\mathcal D'(S\mathbb H^{n+1};\otimes^m_S\mathcal E^*)$
that satisfy $\mathcal U_\pm v=0$ and $Xv=0$ via symmetric tensors on the conformal boundary $\mathbb S^n$.
For that we define the operators
$$
\mathcal Q_\pm:\mathcal D'(\mathbb S^n;\otimes^m (T^*\mathbb S^n))\to
\mathcal D'(S\mathbb H^{n+1};\otimes^m\mathcal E^*)
$$
by the following formula: if $w\in \CI(\mathbb S^{n};\otimes^m(T^*\mathbb S^n))$, 
we set for $\eta_i\in\mc{E}(x,\xi)$
\begin{equation}\label{defofQpm}
\mc{Q}_\pm w(x,\xi)(\eta_1,\dots,\eta_m):=(w\circ B_\pm(x,\xi))(\mc{A}_\pm^{-1}(x,\xi)\eta_1,\dots,\mc{A}_\pm^{-1}(x,\xi)\eta_m)
\end{equation}
where $\mathcal A_\pm(x,\xi):T_{B_\pm(x,\xi)}\mathbb S^n\to \mathcal E(x,\xi)$ is the parallel transport defined 
in~\eqref{e:Adef}, and we see that the operator \eqref{defofQpm}Ê extends continuously to $\mc{D}'(\mathbb S^n;\otimes^m_S (T^*\mathbb S^n))$ since the map $B_\pm:S\mathbb H^{n+1}\to \mathbb S^n$ defined in~\eqref{PhiandB} is a submersion, see~\cite[Theorem~6.1.2]{ho1}; the 
result can be written as $\mc{Q}_\pm w= (\otimes^m (\mc{A}_\pm^{-1})^T).w\circ B_\pm$ where $T$ means transpose. 
%%%%%%%%%%%%%%%%%%%%%%%%%%%%%%%%%%%%%%%%%%%%%%%%%%%%%%%%%%%%%%%%%%%%%%%%%%%%%%%%
\begin{lemm}\label{corresp_avec_bord}
The operator $\mathcal Q_\pm$ is a linear isomorphism from
$\mathcal D'(\mathbb S^n;\otimes^m_S(T^*\mathbb S^n))$ onto
the space
\begin{equation}
  \label{e:cab}
\{v\in\mathcal D'(S\mathbb H^{n+1};\otimes_S^m\mathcal E^*)\mid
\mathcal U_\pm v=0,\ \mathcal Xv=0\}.
\end{equation}
\end{lemm}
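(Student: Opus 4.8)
The plan is to identify $\mathcal{Q}_\pm$ with the pull-back operator along the submersion $B_\pm\colon S\hh^{n+1}\to\sph^n$, transported through a bundle isomorphism built from $\mathcal{A}_\pm$. Indeed, by the isometry property \eqref{Apmiso}, $\mathcal{A}_\pm$ is a smooth bundle isomorphism from the pull-back bundle $B_\pm^*(T\sph^n)$ onto $\mathcal{E}$ over $S\hh^{n+1}$; taking inverse transposes and $m$-th symmetric tensor powers gives a bundle isomorphism $\Psi_\pm\colon\otimes_S^m\mathcal{E}^*\to B_\pm^*(\otimes_S^m(T^*\sph^n))$, and formula \eqref{defofQpm} says precisely that $\Psi_\pm\circ\mathcal{Q}_\pm=B_\pm^*$ on smooth sections, hence on all distributional sections (pull-back along a submersion extends continuously to $\mathcal{D}'$ by \cite[Theorem~6.1.2]{ho1}, and $\Psi_\pm$ is a smooth bundle map). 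Thus the lemma reduces to: (a) $B_\pm^*$ maps $\mathcal{D}'(\sph^n;\otimes_S^m(T^*\sph^n))$ isomorphically onto the space of sections of $B_\pm^*(\otimes_S^m(T^*\sph^n))$ that are flat along the fibres of $B_\pm$ (i.e. locally of the form $B_\pm^*(\text{section on }\sph^n)$); and (b) $\Psi_\pm$ carries \eqref{e:cab} onto that same space. Statement (a) is standard: $B_\pm$ is a submersion with connected fibres $B_\pm^{-1}(\nu)\simeq H_\pm/H$, so locally it is a coordinate projection and a distributional section with vanishing derivatives in the fibre variables is a pull-back, the local pull-backs gluing globally since the fibres are connected; injectivity of $B_\pm^*$ follows from surjectivity of $B_\pm$.

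For (b), recall from the end of Section~\ref{s:confinf} (see \eqref{E*}) that the fibres of $B_+$ (resp.\ $B_-$) are tangent to $E_0\oplus E_s$ (resp.\ $E_0\oplus E_u$), which by the maps $\theta_\pm$ and $E_0=\rr X$ is the span of $X$ and $\theta_\pm(\mathcal{E})$. Hence, by \eqref{e:Udef4} and the fact that $\mathcal{X}=\nabla^S_X$, a section $v$ lies in \eqref{e:cab} exactly when $\nabla^S_Y v=0$ for every $Y$ tangent to the fibres of $B_\pm$. Therefore (b) is equivalent to the claim that $\mathcal{A}_\pm$ intertwines $\nabla^S$ with the tautological flat partial connection on $B_\pm^*(T\sph^n)$ along $\ker dB_\pm$; concretely, that for each local vector field $\zeta_0$ on $\sph^n$ the frame $(x,\xi)\mapsto\mathcal{A}_\pm(x,\xi)\,\zeta_0(B_\pm(x,\xi))$ of $\mathcal{E}$ is $\nabla^S$-parallel along the fibres of $B_\pm$.

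To prove this parallel-frame claim I would lift to $G$, where it becomes the assertion that the components of $\pi_S^*(\mathcal{Q}_\pm w)$ in the orthonormal frame $\mathbf{e}_K^*$ are killed by the left-invariant fields $X$ and $U_i^\pm$ (which, modulo $\ker d\pi_S$, span the fibre directions of $B_\pm$), yielding $\mathcal{X}(\mathcal{Q}_\pm w)=0$ and $\mathcal{U}_\pm(\mathcal{Q}_\pm w)=0$ via \eqref{e:XX-def-3}, \eqref{e:Udef3}. Using the equivariance relations \eqref{changeofPhi}, \eqref{equivarianceA}, the $K$-component of $\pi_S^*(\mathcal{Q}_\pm w)$ at $\gamma$ equals $N_\gamma(\pm e_1)^m$ times a fixed component of the pulled-back tensor $L_\gamma^*w$ at $\pm e_1$; since the one-parameter subgroups generated by $X$ and by $U_i^\pm$ fix $\pm e_1\in\sph^n$, leave $N_\gamma(\pm e_1)$ invariant under $U_i^\pm$ by \eqref{e:isot2}, and have differential at $\pm e_1$ equal to the corresponding scalar factor times the identity (a direct computation from \eqref{liealgebraofG2} and \eqref{e:defLgamma}: the geodesic subgroup acts there by pure dilation, the horocyclic subgroups parabolically), all $t$-dependence cancels. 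Equivalently, by $G$-equivariance it suffices to check parallelism at the base point $(e_0,e_1)$, where by \eqref{e:Adef} $\mathcal{A}_\pm$ is the identity embedding $T_{\pm e_1}\sph^n\hookrightarrow\mathcal{E}(e_0,e_1)=\Span(e_2,\dots,e_{n+1})$, the fibre of $B_\pm$ is $\pi_S(H_\pm)$, and the computation following \eqref{e:Udef4} shows that the resulting constant frame is $\nabla^S$-parallel along the flows of $X$ and of the $U_i^\pm$, which together generate $\pi_S(H_\pm)$.

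Combining (a) and (b) proves that $\mathcal{Q}_\pm$ is a linear isomorphism onto \eqref{e:cab}. One can also exhibit the inverse concretely, matching the heuristic ``reduction to the conformal boundary'': fixing $x_0\in\hh^{n+1}$, the section $s_\pm\colon\nu\mapsto(x_0,\xi_\pm(x_0,\nu))$ of $B_\pm$ (see \eqref{defxi+}) is transverse to the fibres of $B_\pm$, and since $\WF(\mathcal{Q}_\pm w)$ lies in the conormal bundle of those fibres (H\"ormander's pull-back theorem), the restriction $s_\pm^*(\mathcal{Q}_\pm w)$ is well defined and $\bigl(\otimes^m\mathcal{A}_\pm(s_\pm(\cdot))^T\bigr)\circ s_\pm^*$ inverts $\mathcal{Q}_\pm$. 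The only step with genuine content is the parallel-frame claim in (b) — this is where the explicit geometry of $\mathcal{A}_\pm$ and the identity \eqref{E*} enter, and I expect it to be the main obstacle; everything else is bookkeeping on $G$ together with standard facts about pull-back of distributions by submersions.
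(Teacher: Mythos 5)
Your proposal is correct and follows essentially the same route as the paper: there too one transports $v$ by $\otimes^m\mathcal A_\pm^T$ to reduce to pull-back along the submersion $B_\pm$, lifts to $G$, and uses \eqref{e:isot}, the equivariance \eqref{equivarianceA}, and the identification of the fibres of $B_\pm$ with the right $H_\pm$-cosets (so that sections killed by $X$, $U_i^\pm$, $R_{i+1,j+1}$ are pull-backs by $B_\pm$). The only difference is organizational: the paper verifies $\mathcal U_\pm(\mathcal Q_\pm w)=0$ and $\mathcal X(\mathcal Q_\pm w)=0$ by a direct computation of the components in the frame $\mathbf e^*_K$ using the explicit formula \eqref{e:Adef}, rather than isolating your parallel-frame intertwining claim and reducing it to the base point by equivariance.
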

%%%%%%%%%%%%%%%%%%%%%%%%%%%%%%%%%%%%%%%%%%%%%%%%%%%%%%%%%%%%%%%%%%%%%%%%%%%%%%%%
\begin{proof}
It is clear that $\mathcal Q_\pm$ is injective. Next, we show that the image of $\mathcal Q_\pm$
is contained in~\eqref{e:cab}. For that it suffices to show that for
$w\in \CI(\mathbb S^n;\otimes_S^m(T^*\mathbb S^n))$, we have $\mathcal U_\pm (\mathcal Q_\pm w)=0$
and $\mathcal X(\mathcal Q_\pm w)=0$. We prove the first statement, the second one is established similarly.
Let $\gamma\in G$, $w_1,\dots,w_m\in \CI(\mathbb S^n;T\mathbb S^n)$, and 
$w_i^*=\cjg w_i, \cdot\cjd_{g_{\mathbb S^n}}$ be the duals through the metric. Then 
\[
\begin{gathered}
\mathcal Q_\pm (w^*_1\otimes\dots\otimes w^*_m)(\pi_S(\gamma))
=\\
\sum_{k_1,\dots,k_m=1}^n \Big(  \prod_{j=1}^m(w^*_j\circ B_\pm\circ \pi_S(\gamma))(\mc{A}^{-1}_\pm(\pi_S(\gamma))\gamma\cdot e_{k_j+1})\Big)\mathbf e_K^*(\gamma)=\\
(-1)^m\sum_{k_1,\dots,k_m=1}^n \Big(  \prod_{j=1}^m\cjg (\mc{A}_\pm.w_j\circ B_\pm)\circ \pi_S(\gamma), 
\gamma\cdot e_{k_j+1}\cjd_{M}\Big)\mathbf e_K^*(\gamma)
\end{gathered}
\]
where we have used \eqref{Apmiso} in the second identity. Now we have from \eqref{e:Adef}
\[\mc{A}_\pm(\pi_S(\gamma))\zeta=(0,\zeta)-\cjg (0,\zeta),\gamma\cdot e_0\cjd_M\gamma(e_0+e_1) 
\]
thus 
\[\mathcal Q_\pm (w^*_1\otimes\dots\otimes w^*_m)(\pi_S(\gamma))
=\sum_{k_1,\dots,k_m=1}^n \Big(  \prod_{j=1}^m\cjg (0,-w_j(B_\pm(\pi_S(\gamma)))), 
\gamma\cdot e_{k_j+1}\cjd_M\Big)\mathbf e_K^*(\gamma).
\]
Since $d(B_\pm\circ \pi_S)\cdot U^\pm_\ell=0$ by~\eqref{e:isot} and
$U^\pm_\ell (\gamma\cdot e_{k_j+1})=\gamma\cdot U^\pm_\ell \cdot e_{k_j+1}$ is a
multiple of $\gamma\cdot(e_0\pm e_1)=\Phi_\pm(\pi_S(\gamma))(1,B_\pm(\pi_S(\gamma)))$, we see that
$\mathcal U_\pm (\mathcal Q_\pm w)=0$ for all $w$.

It remains to show that for $v$ in~\eqref{e:cab}, we have
$v=\mathcal Q_\pm(w)$ for some $w$. For that, define
$$
\tilde v=(\otimes^m\mathcal A_\pm^{T})\cdot v\in \mathcal D'(S\mathbb H^{n+1};B_\pm^*(\otimes_S^m T^*\mathbb S^n))
$$
where $\mc{A}_\pm^T$ denotes the tranpose of $\mc{A}_\pm$.
Then $\mathcal U_\pm v=0$, $\mathcal Xv=0$ imply that $U^\pm_\ell (\pi_S^*\tilde v)=0$
and $X\tilde v=0$ (where to define differentiation we embed $T^*\mathbb S^n$ into $\mathbb R^{n+1}$).
Additionally, $R_{i+1,j+1}(\pi_S^*\tilde v)=0$,
therefore $\pi_S^*\tilde v$ is constant on the right cosets
of the subgroup $H_\pm\subset G$ defined in~\eqref{e:H-pm}. Since
$(B_\pm\circ\pi_S)^{-1}(B_\pm\circ\pi_S(\gamma))=\gamma H_\pm$, we see
that $\tilde v$ is the pull-back under $B_\pm$ of some $w\in\mathcal D'(\mathbb S^n;\otimes_S^m T^*\mathbb S^n)$,
and it follows that $v=\mathcal Q_\pm(w)$.
\end{proof}
In fact, using~\eqref{e:Adef} and
the expression of $\xi_\pm(x,\nu)$  in \eqref{defxi+} in terms of Poisson kernel,
it is not difficult to show that $\mc{Q}_\pm(w)$ belongs to a smaller space of \emph{tempered}
distributions: in the ball model, this can be described as the dual space to the Frechet space
of smooth sections of $\otimes^m ({^0}S\overline{\mathbb{B}^{n+1}})$ over $\overline{\mathbb{B}^{n+1}}$
which vanish to infinite order at the conformal boundary $\sph^n=\pl\overline{\mathbb{B}^{n+1}}$.

We finally give a useful criterion for invariance of $\mathcal Q_\pm(w)$ under the left action
of an element of $G$:
%%%%%%%%%%%%%%%%%%%%%%%%%%%%%%%%%%%%%%%%%%%%%%%%%%%%%%%%%%%%%%%%%%%%%%%%%%%%%%%%
\begin{lemm}
  \label{l:q-pm-gamma}
Take $\gamma\in G$ and let $w\in \mathcal D'(\mathbb S^n;\otimes^m_S(T^*\mathbb S^n))$. 	Take
$s\in\mathbb C$ and define $v=\Phi_\pm^s \mathcal Q_\pm(w)$. Then $v$ is equivariant under left multiplication
by $\gamma$, in the sense of~\eqref{e:v-gamma}, if and only if $w$ satisfies the condition
\begin{equation}
  \label{e:w-gamma}
L_\gamma^* w(\nu)=N_\gamma(\nu)^{-s-m} w(\nu),\quad
\nu\in\mathbb S^n.
\end{equation}
Here $L_\gamma(\nu)\in\mathbb S^n$ and $N_\gamma(\nu)>0$ are defined in~\eqref{e:defLgamma}.
\end{lemm}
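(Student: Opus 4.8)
The plan is to unwind the definition of $v=\Phi_\pm^s\mathcal Q_\pm(w)$ and compare $v(\gamma.(x,\xi))$ with $\gamma.v(x,\xi)$ head-on, feeding in the equivariance of the three ingredients $B_\pm$, $\Phi_\pm$, $\mathcal A_\pm$ under $G$ established in Section~\ref{s:actionG} and Section~\ref{s:E}. Since both conditions in the statement depend continuously (in the weak topology) on $w$, and $\mathcal Q_\pm$ together with multiplication by the smooth positive function $\Phi_\pm^s$ are continuous on distributions, I would first reduce to the case $w\in\CI(\mathbb S^n;\otimes^m_S(T^*\mathbb S^n))$ and handle the general case by density.

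For smooth $w$, fix $(x,\xi)\in S\mathbb H^{n+1}$, set $\nu=B_\pm(x,\xi)$, and evaluate $v(\gamma.(x,\xi))$ on an arbitrary tuple $\eta_1',\dots,\eta_m'\in\mathcal E(\gamma.(x,\xi))=\gamma\cdot\mathcal E(x,\xi)$. By~\eqref{defofQpm} this equals $\Phi_\pm(\gamma.(x,\xi))^s\,w(B_\pm(\gamma.(x,\xi)))\big(\mathcal A_\pm(\gamma.(x,\xi))^{-1}\eta_1',\dots,\mathcal A_\pm(\gamma.(x,\xi))^{-1}\eta_m'\big)$. Now I would substitute $B_\pm(\gamma.(x,\xi))=L_\gamma(\nu)$, $\Phi_\pm(\gamma.(x,\xi))^s=N_\gamma(\nu)^s\Phi_\pm(x,\xi)^s$ from~\eqref{changeofPhi}, and the inverted form of the equivariance relation~\eqref{equivarianceA}, namely $\mathcal A_\pm(\gamma.(x,\xi))^{-1}=N_\gamma(\nu)\,dL_\gamma(\nu)\circ\mathcal A_\pm(x,\xi)^{-1}\circ\gamma^{-1}$. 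Collecting the $N_\gamma(\nu)$ factors (one from $\Phi_\pm^s$, one from each of the $m$ copies of $\mathcal A_\pm^{-1}$) and recognizing $w(L_\gamma(\nu))\circ(dL_\gamma(\nu))^{\otimes m}=(L_\gamma^*w)(\nu)$, one obtains
$$
v(\gamma.(x,\xi))(\eta_1',\dots,\eta_m')=N_\gamma(\nu)^{s+m}\,\Phi_\pm(x,\xi)^s\,(L_\gamma^*w)(\nu)\big(\mathcal A_\pm(x,\xi)^{-1}\gamma^{-1}\eta_1',\dots,\mathcal A_\pm(x,\xi)^{-1}\gamma^{-1}\eta_m'\big).
$$
On the other hand, by the definition of the $G$-action on $\otimes^m_S\mathcal E^*$ as the inverse transpose of the action on $\mathcal E$ (as in Lemma~\ref{l:u-pm-gamma}), $(\gamma.v(x,\xi))(\eta_1',\dots,\eta_m')=v(x,\xi)(\gamma^{-1}\eta_1',\dots,\gamma^{-1}\eta_m')$, which by~\eqref{defofQpm} equals $\Phi_\pm(x,\xi)^s\,w(\nu)\big(\mathcal A_\pm(x,\xi)^{-1}\gamma^{-1}\eta_1',\dots,\mathcal A_\pm(x,\xi)^{-1}\gamma^{-1}\eta_m'\big)$.

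To conclude, I would note that as the $\eta_i'$ range over all of $\mathcal E(\gamma.(x,\xi))$, the vectors $\mathcal A_\pm(x,\xi)^{-1}\gamma^{-1}\eta_i'$ range over all of $T_\nu\mathbb S^n$; hence $v(\gamma.(x,\xi))=\gamma.v(x,\xi)$ for all $(x,\xi)$ is equivalent to $N_\gamma(\nu)^{s+m}(L_\gamma^*w)(\nu)=w(\nu)$ for every $\nu$ in the image of $B_\pm$, and since $B_\pm$ is surjective (indeed a submersion onto $\mathbb S^n$) this is exactly~\eqref{e:w-gamma}. The only genuinely delicate point I expect is the bookkeeping of transposes in inverting~\eqref{equivarianceA} and getting the power count $N_\gamma^{s+m}$ right; the reduction from distributional to smooth $w$ is routine given the continuity of $\mathcal Q_\pm$ recorded after~\eqref{defofQpm}.
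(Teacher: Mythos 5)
Your proposal is correct and is essentially the paper's own proof, which is exactly the direct calculation from~\eqref{changeofPhi} and~\eqref{equivarianceA} that you carry out, with the power count $N_\gamma^{s+m}$ coming out as you describe. The only loose point is the density reduction for the ``only if'' direction with distributional $w$: it is cleaner to note that your pointwise computation is really the operator identity $\gamma^*\big(\Phi_\pm^s\mathcal Q_\pm(w)\big)=\Phi_\pm^s\mathcal Q_\pm\big(N_\gamma^{s+m}L_\gamma^*w\big)$, which extends to $\mathcal D'$ by continuity, after which injectivity of $\mathcal Q_\pm$ (Lemma~\ref{corresp_avec_bord}) gives the equivalence for all distributional $w$.
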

%%%%%%%%%%%%%%%%%%%%%%%%%%%%%%%%%%%%%%%%%%%%%%%%%%%%%%%%%%%%%%%%%%%%%%%%%%%%%%%%
\begin{proof}
The lemma follows by a direct calculation from~\eqref{changeofPhi} and~\eqref{equivarianceA}.
\end{proof}
%%%%%%%%%%%%%%%%%%%%%%%%%%%%%%%%%%%%%%%%%%%%%%%%%%%%%%%%%%%%%%%%%%%%%%%%%%%%%%%%

%%%%%%%%%%%%%%%%%%%%%%%%%%%%%%%%%%%%%%%%%%%%%%%%%%%%%%%%%%%%%%%%%%%%%%%%%%%%%%%%
%                                  SECTION 5                                   %
%%%%%%%%%%%%%%%%%%%%%%%%%%%%%%%%%%%%%%%%%%%%%%%%%%%%%%%%%%%%%%%%%%%%%%%%%%%%%%%%
\section{Pollicott--Ruelle resonances}
\label{s:rr}

In this section, we first recall the results of Butterley--Liverani~\cite{BuLi} and Faure--Sj\"ostrand \cite{FaSj} on the Pollicott--Ruelle resonances for Anosov flows.
We next state several useful microlocal properties of these resonances and prove Theorem~\ref{t:main}, modulo properties
of Poisson kernels (Lemma~\ref{l:laplacian} and Theorem~\ref{t:laplacian}) which will be proved in Sections~\ref{s:laplacian} and~\ref{s:poisson-is}. Finally, we prove a pairing formula for resonances
and Theorem~\ref{t:noalg}.

%%%%%%%%%%%%%%%%%%%%%%%%%%%%%%%%%%%%%%%%%%%%%%%%%%%%%%%%%%%%%%%%%%%%%%%%%%%%%%%%
\subsection{Definition and properties}
\label{s:rr-1}

We follow the presentation of \cite{FaSj}; a more recent treatment using different technical tools
is also given in~\cite{DyZw}. We refer the reader to these two papers for the necessary notions of microlocal analysis.

Let $\mathcal M$ be a smooth compact manifold of dimension $2n+1$ and
$\varphi_t=e^{tX}$ be an Anosov flow on $\mathcal M$, generated by a smooth vector field $X$.
(In our case, $\mathcal M=SM$, $M=\Gamma\backslash \mathbb H^{n+1}$, and $\varphi_t$ is the geodesic flow~-- see
Section~\ref{s:rr-2}.) The Anosov property
is defined as follows: there exists a continuous splitting
\begin{equation}
  \label{e:anosov}
T_y \mathcal M=E_0(y)\oplus E_u(y)\oplus E_s(y),\quad
y\in\mathcal M;\quad
E_0(y):=\mathbb RX(y),
\end{equation}
invariant under $d\varphi_t$ and such that the stable/unstable subbundles $E_s,E_u\subset T\mathcal M$ satisfy for some
fixed smooth norm $|\cdot|$ on the fibers of $T\mathcal M$ and some constants $C$ and $\theta>0$,
\begin{equation}
  \label{e:anosov2}
\begin{gathered}
|d\varphi_t(y)v|\leq Ce^{-\theta t}|v|,\quad v\in E_s(y);\\
|d\varphi_{-t}(y)v|\leq Ce^{-\theta t}|v|,\quad v\in E_u(y).
\end{gathered}
\end{equation}
We make an additional assumption that $\mathcal M$ is equipped with a smooth measure $\mu$ which is invariant under
$\varphi_t$, that is, $\mathcal L_X \mu=0$.

We will use the dual decomposition to~\eqref{e:anosov}, given by
\begin{equation}
  \label{e:anosov-star}
T_y^*\mathcal M=E_0^*(y)\oplus E_u^*(y)\oplus E_s^*(y),\quad
y\in\mathcal M,
\end{equation}
where $E_0^*,E_u^*,E_s^*$ are dual to $E_0,E_s,E_u$ respectively (note that
$E_u,E_s$ are switched places), so for example $E_u^*(y)$ consists of covectors annihilating
$E_0(y)\oplus E_u(y)$.

Following~\cite[(1.24)]{FaSj}, we now consider for each $r\geq 0$ an \emph{anisotropic Sobolev space}
$$
\mathcal H^r(\mathcal M),\quad
\CI(\mathcal M)\subset \mathcal H^r(\mathcal M)\subset \mathcal D'(\mathcal M).
$$
Here we put $u:=-r,s:=r$ in~\cite[Lemma~1.2]{FaSj}. Microlocally near $E_u^*$, the space $\mathcal H^r$ is equivalent
to the Sobolev space $H^{-r}$, in the sense that for each pseudodifferential operator $A$ of order 0 whose wavefront set
is contained in a small enough conic neighborhood of $E_u^*$, the operator $A$ is bounded
$\mathcal H^r\to H^{-r}$ and $H^{-r}\to \mathcal H^r$. Similarly, microlocally near $E_s^*$, the space
$\mathcal H^r$ is equivalent to the Sobolev space $H^r$. We also have $\mathcal H^0=L^2$.
The first order differential operator $X$ admits a unique closed unbounded extension from $\CI$
to $\mathcal H^r$, see~\cite[Lemma~A.1]{FaSj}.

The following theorem, defining Pollicott--Ruelle resonances associated to $\varphi_t$, is due to Faure and Sj\"ostrand
\cite[Theorems~1.4 and~1.5]{FaSj}; see also~\cite[Section~3.2]{DyZw}.
\begin{theo}
  \label{t:ruelle}
Fix $r\geq 0$. Then the closed unbounded operator
$$
-X:\mathcal H^r(\mathcal M)\to\mathcal H^r(\mathcal M)
$$
has discrete spectrum in the region $\{\Re \lambda >-r/C_0\}$, for some constant $C_0$ independent
of $r$. The eigenvalues of $-X$ on $\mathcal H^r$, called \emph{Pollicott--Ruelle resonances}, and taken with
multiplicities, do not depend on the choice of $r$ as long as they lie in the appropriate region.
\end{theo}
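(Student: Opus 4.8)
The plan is to prove this by the pseudodifferential/Fredholm method of Faure--Sj\"ostrand~\cite{FaSj} (see also~\cite[\S3.2]{DyZw}): conjugate $X$ by an elliptic pseudodifferential operator of \emph{variable order} to an operator on $L^2(\mathcal M)$, show that $X+\lambda$ becomes a holomorphic family of Fredholm operators of index $0$ on $\{\Re\lambda>-r/C_0\}$, observe that it is invertible for $\Re\lambda$ large, and then apply the meromorphic Fredholm theorem.

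First I would recall the construction of $\mathcal H^r$. Let $p$ be the principal symbol of $\tfrac1i X$ on $T^*\mathcal M$ and let $e^{tH_p}$ be the associated lifted flow, covering $\varphi_t$ on the base. The conic sets $E_u^*$ and $E_s^*$ from~\eqref{e:anosov-star} are invariant under $e^{tH_p}$, and by~\eqref{e:anosov2} wavefront sets of $e^{-tX}u$ are carried into any fixed conic neighbourhood of $E_u^*$ as $t\to+\infty$ and of $E_s^*$ as $t\to-\infty$, while all other half-trajectories escape any fixed conic neighbourhood of $E_0^*$. One then builds an \emph{escape function} $m\in\CI(T^*\mathcal M\setminus 0)$, homogeneous of degree $0$, with $-r\le m\le r$, $m=-r$ near $E_u^*$, $m=+r$ near $E_s^*$, and $H_pm\le 0$ everywhere, $H_pm$ bounded away from $0$ outside any fixed conic neighbourhood of $E_0^*\cup E_u^*\cup E_s^*$. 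Setting $G:=\Op(m\log\langle\eta\rangle)$ (order $0+$) and $A:=e^{G}$ (elliptic of variable order $m$), the space $\mathcal H^r:=\{u\in\mathcal D'(\mathcal M)\mid Au\in L^2\}$ is, as asserted in the excerpt, microlocally $H^{-r}$ near $E_u^*$, $H^{r}$ near $E_s^*$, and $L^2$ away from $E_u^*\cup E_s^*$; in particular $\mathcal H^r\subset H^{-r}$, so $\mathcal H^r\hookrightarrow H^{-N}$ is compact for $N>r$.

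Next I would conjugate. The operator $P_\lambda:=A(X+\lambda)A^{-1}$ acts on $L^2$ and equals $X+\lambda$ plus a pseudodifferential term whose symbol is, to leading order, $(\log\langle\eta\rangle)\,H_pm$; since $\mathcal L_X\mu=0$ gives $X^*=-X$ on $L^2$, one obtains $\Re\langle P_\lambda u,u\rangle_{L^2}\le-(\Re\lambda)\|u\|_{L^2}^2+\langle\Op(b)u,u\rangle+\text{(l.o.t.)}$ with $b\approx(\log\langle\eta\rangle)\,H_pm\le0$ outside a conic neighbourhood of $E_0^*\cup E_u^*\cup E_s^*$. Combining the elliptic estimate where $p\ne0$, real-principal-type propagation along $e^{tH_p}$, Melrose-type radial estimates at $E_u^*$ and $E_s^*$ (this is where the sign and the size $|m|=r$ of $m$ at the radial sets are used), and a sharp G\aa rding argument, one gets, for $\Re\lambda>-r/C_0$ with $C_0$ fixed by the expansion/contraction rates in the radial estimates and hence independent of $r$, the a priori bound
$$
\|u\|_{\mathcal H^r}\le C\|(X+\lambda)u\|_{\mathcal H^r}+C\|u\|_{H^{-N}},
$$
together with its analogue for the formal adjoint on the dual space. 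By the compact embedding above this makes $X+\lambda:\{u\in\mathcal H^r\mid Xu\in\mathcal H^r\}\to\mathcal H^r$ Fredholm of index $0$, holomorphically in $\lambda$ on $\{\Re\lambda>-r/C_0\}$. For $\Re\lambda$ large the identity $\Re\langle(X+\lambda)u,u\rangle_{L^2}=(\Re\lambda)\|u\|_{L^2}^2$ (again from $X^*=-X$) shows $X+\lambda$ is invertible on $L^2$, and a perturbative argument transfers this to $\mathcal H^r$; hence by the meromorphic Fredholm theorem $(X+\lambda)^{-1}$ continues meromorphically to $\{\Re\lambda>-r/C_0\}$ with finite-rank principal parts, which is exactly the statement that $-X$ has discrete spectrum there with finite multiplicities. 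Independence of $r$ follows since for $\Re\lambda$ large $(X+\lambda)^{-1}$ is given by the $r$-independent formula $\int_0^\infty e^{-\lambda t}e^{-tX}\,dt$; the meromorphic continuations constructed for $r_1<r_2$ therefore agree on $\{\Re\lambda>-r_1/C_0\}$ (they coincide on the dense subspace $\CI(\mathcal M)$, with values in $\mathcal D'(\mathcal M)$), so their poles and Laurent coefficients, which record the multiplicities, coincide as well.

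The main obstacle is the pair of radial-point estimates at $E_u^*$ and $E_s^*$: there the transversal hyperbolicity of the lifted flow contributes no propagation gain in the fibers, so one must use precisely that the space is microlocally $H^{-r}$ near $E_u^*$ and $H^{r}$ near $E_s^*$ to close the estimate, and the sharp threshold $\Re\lambda>-r/C_0$ is exactly the linear inequality relating these orders $\mp r$ to $\Re\lambda$ in Melrose's radial lemma. Once these estimates are in hand, the remaining ingredients --- the escape-function construction, real-principal-type propagation, and the meromorphic Fredholm theory --- are standard.
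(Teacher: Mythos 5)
Your proposal is correct and follows essentially the same route as the source the paper relies on: Theorem~\ref{t:ruelle} is not proved in the paper but quoted from Faure--Sj\"ostrand~\cite{FaSj} (see also~\cite[Section~3.2]{DyZw}), whose argument is exactly the one you sketch — escape function and variable-order anisotropic space, conjugation to $L^2$, radial/propagation estimates giving a Fredholm family of index $0$ on $\{\Re\lambda>-r/C_0\}$, invertibility for $\Re\lambda$ large via antisymmetry of $X$, meromorphic Fredholm theory, and independence of $r$ from the common formula $(X+\lambda)^{-1}=\int_0^\infty e^{-\lambda t}e^{-tX}\,dt$ in the right half-plane.
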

We have the following criterion for Pollicott--Ruelle resonances which does not use the $\mathcal H^r$ spaces
explicitly:
%%%%%%%%%%%%%%%%%%%%%%%%%%%%%%%%%%%%%%%%%%%%%%%%%%%%%%%%%%%%%%%%%%%%%%%%%%%%%%%%
\begin{lemm}
  \label{l:criterion}
A number $\lambda\in\mathbb C$ is a Pollicott--Ruelle resonance of $X$ if and only the space
\begin{equation}
  \label{e:resonances}
\Res_X(\lambda):=
\{u\in\mathcal D'(\mathcal M)\mid (X+\lambda)u=0,\ \WF(u)\subset E_u^*\}
\end{equation}
is nontrivial. Here $\WF$ denotes the wavefront set, see for instance~\cite[Definition~1.6]{FaSj}.
The elements of~$\Res_X(\lambda)$ are called \textbf{resonant states} associated to $\lambda$ and
the dimension of this space is called \textbf{geometric multiplicity} of $\lambda$.
\end{lemm}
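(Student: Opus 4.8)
The plan is to identify $\Res_X(\lambda)$ with the kernel of $X+\lambda$ acting on the anisotropic spaces $\mathcal H^r(\mathcal M)$ from Theorem~\ref{t:ruelle}, for $r$ large. The two ingredients are the microlocal description of $\mathcal H^r$ recalled above (equivalent to $H^{-r}$ near $E_u^*$, to $H^r$ near $E_s^*$, to $L^2$ away from the characteristic set) and two standard microlocal facts for the solution of $(X+\lambda)u=0$: Hörmander's propagation of singularities and the radial estimates of~\cite{FaSj} (see also~\cite{DyZw} and~\cite[Chapter~8]{ho1}).

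For the direction ``$\Res_X(\lambda)\neq\{0\}\Rightarrow\lambda$ is a resonance'', take $0\neq u\in\mathcal D'(\mathcal M)$ with $(X+\lambda)u=0$ and $\WF(u)\subset E_u^*$. Since $\mathcal M$ is compact, $u\in H^{-N}(\mathcal M)$ for some $N$. The condition $\WF(u)\subset E_u^*$ means $u$ is microlocally smooth away from any conic neighborhood of $E_u^*$, where $\mathcal H^r$ imposes only a finite Sobolev bound; and near $E_u^*$ the space $\mathcal H^r$ is equivalent to $H^{-r}$, which contains $u$ as soon as $r\geq N$. Hence $u\in\mathcal H^r$ for $r$ large; enlarging $r$ so that also $\Re\lambda>-r/C_0$, the relation $Xu=-\lambda u\in\mathcal H^r$ puts $u$ in the domain of the closed extension of $X$ on $\mathcal H^r$ (by~\cite[Lemma~A.1]{FaSj}), so $\lambda$ is an eigenvalue of $-X$ on $\mathcal H^r$, i.e.\ a Pollicott--Ruelle resonance by Theorem~\ref{t:ruelle}.

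For the converse, let $\lambda$ be a resonance, fix $r$ large with $\Re\lambda>-r/C_0$, and pick $0\neq u\in\mathcal H^r$ with $(X+\lambda)u=0$; it remains to prove $\WF(u)\subset E_u^*$. The operator $\tfrac1i(X+\lambda)$ has real principal symbol $p(y,\xi)=\langle\xi,X(y)\rangle$, with characteristic set $p^{-1}(0)=(E_s^*\oplus E_u^*)\setminus 0$ and Hamilton flow on it equal to the cotangent lift of $\varphi_t$. By propagation of singularities, $\WF(u)$ is a closed conic subset of $(E_s^*\oplus E_u^*)\setminus 0$ invariant under this flow. By the hyperbolicity estimates~\eqref{e:anosov2}, under the cotangent flow the $E_s^*$-component of a covector grows and the $E_u^*$-component decays in backward time, so the backward orbit of any characteristic $(y_0,\xi_0)$ with $\xi_0\notin E_u^*$ accumulates in the cosphere bundle at $E_s^*$. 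Consequently, once we know $\WF(u)\cap E_s^*=\emptyset$, closedness and flow-invariance of $\WF(u)$ force $\WF(u)\subset E_u^*$.

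The remaining point, $\WF(u)\cap E_s^*=\emptyset$, is the heart of the matter and the step I expect to be the main obstacle. Here $E_s^*$ is a radial source for the Hamilton flow of $p$, and near $E_s^*$ the space $\mathcal H^r$ coincides with $H^r$ with $r$ large, in particular above the threshold regularity fixed by $\Re\lambda$ and the expansion rate $\theta$; the radial source estimate of~\cite{FaSj} (see also~\cite{DyZw}) then applies to the solution $u$ of $(X+\lambda)u=0$ and gives $\WF(u)\cap E_s^*=\emptyset$. This is precisely what the anisotropic weight defining $\mathcal H^r$ — positive near $E_s^*$, negative near $E_u^*$ — is designed for; everything else is soft. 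Combining the two directions shows in fact that $\Res_X(\lambda)$ equals $\ker(X+\lambda)$ on $\mathcal H^r$ for all $r$ sufficiently large, consistent with the $r$-independence in Theorem~\ref{t:ruelle} and justifying the definition of geometric multiplicity.
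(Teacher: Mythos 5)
Your proof is correct and follows essentially the same route as the paper: identify $\Res_X(\lambda)$ with $\ker(X+\lambda)$ on $\mathcal H^r$ for $r$ large, using that $\mathcal H^r$ is microlocally $H^{-r}$ near $E_u^*$ (and smooth elsewhere on $\WF(u)$) for one direction. The only difference is that for the wavefront containment $\WF(u)\subset E_u^*$ of eigenstates the paper simply cites~\cite[Theorem~1.7]{FaSj}, whereas you sketch its proof via propagation of singularities and the radial source estimate at $E_s^*$ — a correct rendering of the cited result.
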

%%%%%%%%%%%%%%%%%%%%%%%%%%%%%%%%%%%%%%%%%%%%%%%%%%%%%%%%%%%%%%%%%%%%%%%%%%%%%%%%
\begin{proof}
Assume first that $\lambda$ is a Pollicott--Ruelle resonance. Take $r>0$ such that $\Re\lambda > - r/C_0$. Then
$\lambda$ is an eigenvalue of $-X$ on $\mathcal H^r$, which implies that there exists
nonzero $u\in \mathcal H^r$ such that $(X+\lambda) u=0$. By~\cite[Theorem~1.7]{FaSj}, we have
$\WF(u)\subset E_u^*$, thus $u$ lies in~\eqref{e:resonances}.

Assume now that $u\in\mathcal D'(\mathcal M)$ is a nonzero element of~\eqref{e:resonances}. For large enough $r$,
we have $\Re\lambda>-r/C_0$ and $u\in H^{-r}(\mathcal M)$. Since $\WF(u)\subset E_u^*$
and $\mathcal H^r$ is equivalent to $H^{-r}$ microlocally near $E_u^*$, we have
$u\in\mathcal H^r$. Together with the identity $(X+\lambda) u$, this shows that
$\lambda$ is an eigenvalue of $-X$ on $\mathcal H^r$ and thus a Pollicott--Ruelle resonance.
\end{proof}
%%%%%%%%%%%%%%%%%%%%%%%%%%%%%%%%%%%%%%%%%%%%%%%%%%%%%%%%%%%%%%%%%%%%%%%%%%%%%%%%
For each $\lambda$ with $\Re\lambda>-r/C_0$, the operator $X+\lambda:\mathcal H^r\to\mathcal H^r$ is Fredholm
of index zero on its domain;
this follows from the proof of Theorem~\ref{t:ruelle}. Therefore, $\dim\Res_X(\lambda)$ is equal to the dimension
of the kernel of the adjoint operator $X^*+\bar\lambda$ on the $L^2$ dual of $\mathcal H^r$, which we denote by
$\mathcal H^{-r}$. Since ${1\over i}X$ is symmetric on $L^2$, we see that $\Res_X(\lambda)$ has the same dimension
as the following space of \emph{coresonant states}
at $\lambda$:
\begin{equation}
  \label{e:resonances*}
\Res_{X^*}(\lambda):=\{u\in\mathcal D'(\mathcal M)\mid (X-\bar\lambda) u=0,\
\WF(u)\subset E_s^*\}.
\end{equation}
The main difference of~\eqref{e:resonances*} from~\eqref{e:resonances} is that the subbundle $E_s^*$ is used instead of $E_u^*$;
this can be justified by applying Lemma~\ref{l:criterion} to the vector field $-X$ instead of $X$, since the roles of the stable/unstable
spaces for the corresponding flow $\varphi_{-t}$ are reversed.

Note also that for any $\lambda,\lambda^*\in\mathbb C$, one can define a pairing
\begin{equation}
  \label{e:inner-product}
\langle u,u^*\rangle\in\mathbb C,\quad
u\in\Res_X(\lambda),\
u^*\in\Res_{X^*}(\lambda^*).
\end{equation}
One way of doing that is using the fact that wavefront sets of $u,u^*$ intersect only at the zero section,
and applying~\cite[Theorem~8.2.10]{ho1}.
An equivalent definition is noting that $u\in \mathcal H^r$ and $u^*\in\mathcal H^{-r}$
for $r>0$ large enough and using the duality of $\mathcal H^r$ and $\mathcal H^{-r}$.
Note that for $\lambda\neq\lambda^*$, we have $\langle u,u^*\rangle=0$; indeed,
$X(u\overline{u^*})=(\lambda^*-\lambda)u\overline {u^*}$ integrates to 0. The question
of computing the product $\langle u,u^*\rangle$ for $\lambda=\lambda^*$ is much more subtle
and related to algebraic multiplicities, see
Section~\ref{s:multiplicity}.

Since ${1\over i}X$ is self-adjoint on $L^2=\mathcal H^0$
(see~\cite[Appendix~A.1]{FaSj}), it has no eigenvalues on this space away from the real line; this implies
that there are no Pollicott--Ruelle resonances in the right half-plane. In other words, we have
%%%%%%%%%%%%%%%%%%%%%%%%%%%%%%%%%%%%%%%%%%%%%%%%%%%%%%%%%%%%%%%%%%%%%%%%%%%%%%%%
\begin{lemm}
  \label{l:upper}
The spaces $\Res_X(\lambda)$ and $\Res_{X^*}(\lambda)$ are trivial for $\Re\lambda>0$.
\end{lemm}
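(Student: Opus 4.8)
The plan is to reduce the statement to the self-adjointness of ${1\over i}X$ on $L^2$. First I would specialize Theorem~\ref{t:ruelle} to $r=0$, where $\mathcal H^0=L^2(\mathcal M)$: the region $\{\Re\lambda>-r/C_0\}$ becomes the open right half-plane, so every Pollicott--Ruelle resonance $\lambda$ with $\Re\lambda>0$ is an eigenvalue of the closed unbounded operator $-X$ acting on $L^2(\mathcal M)$ (recall that the resonances, with multiplicities, do not depend on $r$). Thus it suffices to show that $-X$ has no $L^2$-eigenvalue with positive real part.

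Next I would invoke~\cite[Appendix~A.1]{FaSj}: since $\mathcal L_X\mu=0$, the operator ${1\over i}X$ with its natural domain is self-adjoint on $L^2(\mathcal M)$. Hence $X$ is skew-adjoint, its $L^2$-spectrum lies in $i\mathbb R$, and in particular $X$—and therefore $-X$—has no eigenvalue off the imaginary axis. Combined with the previous paragraph, there are no Pollicott--Ruelle resonances in $\{\Re\lambda>0\}$, and Lemma~\ref{l:criterion} then gives $\Res_X(\lambda)=\{0\}$ whenever $\Re\lambda>0$.

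Finally, for the coresonant states I would observe that by~\eqref{e:resonances*} and the remark following it, $\Res_{X^*}(\lambda)$ is the space of resonant states at $\bar\lambda$ for the Anosov flow $\varphi_{-t}$ generated by $-X$, which still preserves $\mu$ (only the roles of $E_s$ and $E_u$ are exchanged); since $\Re\bar\lambda=\Re\lambda>0$, the argument above applies verbatim to give $\Res_{X^*}(\lambda)=\{0\}$. Alternatively, one can use the Fredholm property of $X+\lambda$ on $\mathcal H^r$ noted just before the lemma to conclude $\dim\Res_{X^*}(\lambda)=\dim\Res_X(\lambda)=0$. There is no real obstacle in this argument; the only point deserving a line of care is checking that the $\mathcal H^0$-realization of $X$ underlying the $r=0$ case of the Faure--Sj\"ostrand construction is exactly the skew-adjoint $L^2$-extension, so that resonances in the right half-plane really are honest $L^2$-eigenvalues.
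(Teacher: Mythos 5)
Your argument is correct and is essentially the paper's own proof: the paper also deduces the lemma from the self-adjointness of ${1\over i}X$ on $L^2=\mathcal H^0$ (citing \cite[Appendix~A.1]{FaSj}), noting that a resonance with $\Re\lambda>0$ would be an $L^2$-eigenvalue of $-X$ off the imaginary axis, and the coresonant case follows by the same reasoning applied to $-X$ (equivalently, by the Fredholm index-zero remark). Your added care about identifying the $r=0$ anisotropic space with the skew-adjoint $L^2$ realization is exactly the point the paper delegates to \cite{FaSj}.
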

%%%%%%%%%%%%%%%%%%%%%%%%%%%%%%%%%%%%%%%%%%%%%%%%%%%%%%%%%%%%%%%%%%%%%%%%%%%%%%%%

Finally, we note that the results above apply to certain operators on vector bundles. More precisely, let
$\mathscr E$ be a smooth vector bundle over $\mathcal M$ and assume that $\mathcal X$ is a first order differential
operator on $\mathcal D'(\mathcal M;\mathscr E)$ whose principal part is given by $X$, namely
\begin{equation}
  \label{e:lilit}
\mathcal X(f \mathbf u)= f\, \mathcal X(\mathbf u)+(Xf) \mathcal X(\mathbf u),\quad
f\in \mathcal D'(\mathcal M),\
\mathbf u\in \CI(\mathcal M;\mathscr E).
\end{equation}
Assume moreover that $\mathscr E$ is endowed with an inner product $\langle\cdot,\cdot\rangle_{\mathscr E}$
and ${1\over i}\mathcal X$ is symmetric on $L^2$ with respect to this inner product and the measure $\mu$.
By an easy adaptation of the results of \cite{FaSj} (see~\cite{FaTs3} and~\cite{DyZw}),
one can construct
anisotropic Sobolev spaces $\mathcal H^r(\mathcal M;\mathscr E)$ and
Theorem~\ref{t:ruelle} and Lemmas~\ref{l:criterion}, \ref{l:upper}
apply to $\mathcal X$ on these spaces.

%%%%%%%%%%%%%%%%%%%%%%%%%%%%%%%%%%%%%%%%%%%%%%%%%%%%%%%%%%%%%%%%%%%%%%%%%%%%%%%%
\subsection{Proof of the main theorem}
\label{s:rr-2}

We now concentrate on the case
$$
\mathcal M=SM=\Gamma\backslash(S\mathbb H^{n+1}),\quad
M=\Gamma\backslash \mathbb H^{n+1},
$$
with $\varphi_t$ the geodesic flow. Here $\Gamma\subset G=\PSO(1,n+1)$ is a co-compact discrete subgroup with no fixed points,
so that $M$ is a compact smooth manifold. Henceforth we identify functions on the sphere bundle $SM$ with functions
on $S\mathbb H^{n+1}$ invariant under $\Gamma$,
and similar identifications will be used for other geometric objects. It is important to note
that \emph{the constructions of the previous sections, except those involving the conformal infinity, are invariant under left
multiplication by elements of $G$ and thus descend naturally to $SM$.}

The lift of the geodesic flow on $SM$ is the generator of the geodesic flow on $S\mathbb H^{n+1}$ (see Section~\ref{s:geodesic});
both are denoted $X$.
The lifts of the stable/unstable spaces $E_s,E_u$
to $S\mathbb H^{n+1}$ are given in~\eqref{e:stable-unstable}, and we see that~\eqref{e:anosov} holds
with $\theta=1$. The invariant measure $\mu$ on $SM$ is just the product of the volume measure on $M$
and the standard measure on the fibers of $SM$ induced by the metric.

Consider the bundle $\mathcal E$ on $SM$ defined in Section~\ref{s:E}.
Then for each $m$, the operator
$$
\mathcal X:\mathcal D'(SM;\otimes_S^m\mathcal E^*)\to \mathcal D'(SM;\otimes_S^m\mathcal E^*)
$$
defined in~\eqref{e:XX-def-2} satisfies~\eqref{e:lilit}
and ${1\over i}\mathcal X$ is symmetric. The results of Section~\ref{s:rr-1} apply both to $X$
and $\mathcal X$.

Recall the operator $\mathcal U_-$ introduced in Section~\ref{s:horocycl} and its powers,
for $m\geq 0$,
$$
\mathcal U_-^m:\mathcal D'(SM)\to \mathcal D'(SM;\otimes_S^m\mathcal E^*).
$$
The significance of $\mathcal U_-^m$ for Pollicott--Ruelle resonances is explained by the following
%%%%%%%%%%%%%%%%%%%%%%%%%%%%%%%%%%%%%%%%%%%%%%%%%%%%%%%%%%%%%%%%%%%%%%%%%%%%%%%%
\begin{lemm}
  \label{l:u-pm-used}
Assume that $\lambda\in\mathbb C$ is a Pollicott--Ruelle resonance of $X$ and
$u\in \Res_X(\lambda)$ is a corresponding resonant state as defined in~\eqref{e:resonances}. Then
$$
\mathcal U_-^m u=0\quad\text{for }m>-\Re\lambda.
$$
\end{lemm}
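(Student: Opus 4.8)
The plan is to identify $\mathcal U_-^m u$ as a resonant state of the operator $\mathcal X$ acting on $\otimes_S^m\mathcal E^*$ at the resonance $\lambda+m$, and then to invoke the absence of Pollicott--Ruelle resonances in the open right half-plane. Recall that by the remarks closing Section~\ref{s:rr-1} and opening Section~\ref{s:rr-2}, the operator $\frac1i\mathcal X$ on $\otimes_S^m\mathcal E^*$ is symmetric and the results of Section~\ref{s:rr-1} — in particular Lemma~\ref{l:criterion} and Lemma~\ref{l:upper} — apply to it. Since $m>-\Re\lambda$ forces $\Re(\lambda+m)>0$, the bundle version of Lemma~\ref{l:upper} will give $\mathcal U_-^m u=0$ at once. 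This is precisely the higher-dimensional incarnation of the dimension-$2$ observation $U_-^m(\Res_X(\lambda))\subset\Res_X(\lambda+m)$ together with the vanishing of $\Res_X$ in the right half-plane.

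First I would check the eigenvalue equation. On scalar distributions $\mathcal X$ acts as $X$, so $(\mathcal X+\lambda)u=0$. Applying the commutation relation~\eqref{e:comm-hor}, $\mathcal X\mathcal U_-v-\mathcal U_-\mathcal Xv=-\mathcal U_-v$, to $v=\mathcal U_-^j u\in\mathcal D'(SM;\otimes^j_S\mathcal E^*)$, one gets
\[
(\mathcal X+\lambda+j+1)(\mathcal U_-^{j+1}u)=\mathcal U_-\big((\mathcal X+\lambda+j)\mathcal U_-^j u\big).
\]
Starting from $(\mathcal X+\lambda)u=0$ and iterating $m$ times yields $(\mathcal X+\lambda+m)(\mathcal U_-^m u)=0$; note also that $\mathcal U_-^m u$ is a symmetric cotensor since $[U_i^-,U_j^-]=0$.

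It remains to verify the microlocal condition. The operator $\mathcal U_-$ is a first-order differential operator between section spaces — in the frame $\mathbf e_K^*$ it is given by the left-invariant vector fields $U^-_j$, see~\eqref{e:Udef3} — and differential operators do not enlarge the wavefront set, so $\WF(\mathcal U_-^m u)\subset\WF(u)\subset E_u^*$. Hence $\mathcal U_-^m u$ lies in the space $\Res_{\mathcal X}(\lambda+m)$ defined as in~\eqref{e:resonances} but with the bundle $\otimes_S^m\mathcal E^*$ in place of the trivial line bundle; by the bundle analogue of Lemma~\ref{l:upper} this space is trivial once $\Re(\lambda+m)>0$, i.e. once $m>-\Re\lambda$, giving $\mathcal U_-^m u=0$. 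The argument is short and there is no real obstacle; the only point deserving a moment's care is precisely the stability of the condition $\WF\subset E_u^*$ under $\mathcal U_-$, which makes the vector-bundle form of Lemma~\ref{l:criterion} (and hence of Lemma~\ref{l:upper}) applicable, and this is immediate from pseudolocality of differential operators.
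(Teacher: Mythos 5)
Your argument is correct and is essentially the paper's own proof: use \eqref{e:comm-hor} to get $(\mathcal X+\lambda+m)\mathcal U_-^m u=0$, note that $\mathcal U_-^m$ is a differential operator so $\WF(\mathcal U_-^m u)\subset E_u^*$, and apply the vector-bundle version of Lemma~\ref{l:upper} (valid by the remarks at the end of Section~\ref{s:rr-1}) since $\Re(\lambda+m)>0$. No gaps.
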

%%%%%%%%%%%%%%%%%%%%%%%%%%%%%%%%%%%%%%%%%%%%%%%%%%%%%%%%%%%%%%%%%%%%%%%%%%%%%%%%
\begin{proof}
By~\eqref{e:comm-hor},
$$
(\mathcal X+\lambda+m) \mathcal U_-^m u=0.
$$
Note also that $\WF(\mathcal U_-^mu)\subset E_u^*$ since $\WF(u)\subset E_u^*$
and $\mathcal U_-^m$ is a differential operator.
Since $\lambda+m$ lies in the right half-plane, it remains to apply Lemma~\ref{l:upper}
to $\mathcal U_-^mu$.
\end{proof}
%%%%%%%%%%%%%%%%%%%%%%%%%%%%%%%%%%%%%%%%%%%%%%%%%%%%%%%%%%%%%%%%%%%%%%%%%%%%%%%%
We can then use the operators $\mathcal U_-^m$ to split the resonance spectrum into bands:
%%%%%%%%%%%%%%%%%%%%%%%%%%%%%%%%%%%%%%%%%%%%%%%%%%%%%%%%%%%%%%%%%%%%%%%%%%%%%%%%
\begin{lemm}
  \label{l:bands-exhibited}
Assume that $\lambda\in\mathbb C\setminus {1\over 2}\mathbb Z$. Then
\begin{equation}
  \label{e:bands-1}
\dim\Res_X(\lambda)=\sum_{m\geq 0}\dim\Res_{\mathcal X}^m(\lambda+m),
\end{equation}
where
\begin{equation}
  \label{e:resonances2}
\Res_{\mathcal X}^m(\lambda):=\{v\in\mathcal D'(SM;\otimes_S^m\mathcal E^*)\mid (\mathcal X+\lambda)v=0,\
\mathcal U_-v=0,\ \WF(v)\subset E_u^*\}.
\end{equation}
The space $\Res_{\mathcal X}^m(\lambda)$ is trivial for $\Re\lambda>0$ (by Lemma~\ref{l:upper}).
If $\lambda\in {1\over 2}\mathbb Z$, then we have
\begin{equation}
  \label{e:bands-2}
\dim\Res_X(\lambda)\leq \sum_{m\geq 0}\dim\Res_{\mathcal X}^m(\lambda+m).
\end{equation}
\end{lemm}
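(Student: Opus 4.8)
The plan is to produce a map from $\Res_X(\lambda)$ to $\bigoplus_{m\geq 0}\Res^m_{\mathcal X}(\lambda+m)$ built from the horocyclic operators $\mathcal U_-^m$, and to invert it using Lemma~\ref{l:recovery}. First I would fix $u\in\Res_X(\lambda)$. By Lemma~\ref{l:u-pm-used}, $\mathcal U_-^m u=0$ for $m>-\Re\lambda$, so there is a well-defined largest integer $m_0\geq 0$ with $\mathcal U_-^{m_0}u\neq 0$ (or $u=0$). For each $m$, set $v_m:=\mathcal U_-^m u\circ(\text{projection onto the }\mathcal U_-\text{-closed part})$; more precisely, I want to peel off bands one at a time. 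The cleanest way: define $\Phi(u)=(v_0,v_1,\dots)$ where $v_m\in\Res^m_{\mathcal X}(\lambda+m)$ is obtained by an inductive Gram--Schmidt-type procedure, subtracting from $u$ the contributions of the higher bands recovered via Lemma~\ref{l:recovery}. Alternatively, and more transparently, filter $\Res_X(\lambda)$ by $W_m:=\{u\in\Res_X(\lambda)\mid \mathcal U_-^m u=0\}$, so $W_0=0\subset W_1\subset\cdots$ stabilizes at $\Res_X(\lambda)$ by Lemma~\ref{l:u-pm-used}, and it suffices to identify each graded piece $W_{m+1}/W_m$ with $\Res^m_{\mathcal X}(\lambda+m)$.

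The key step is this identification. Given $u\in W_{m+1}$, the tensor $v:=\mathcal U_-^m u$ satisfies $\mathcal U_- v=\mathcal U_-^{m+1}u=0$, and $(\mathcal X+\lambda+m)v=0$ by~\eqref{e:comm-hor}, and $\WF(v)\subset E_u^*$ since $\mathcal U_-^m$ is a differential operator; moreover $v\in\otimes_S^m\mathcal E^*$ because the $U_i^-$ commute. Hence $v\in\Res^m_{\mathcal X}(\lambda+m)$, and $u\mapsto v$ descends to a map $W_{m+1}/W_m\to\Res^m_{\mathcal X}(\lambda+m)$ which is injective by construction. For surjectivity, take $v\in\Res^m_{\mathcal X}(\lambda+m)$; apply Lemma~\ref{l:recovery} (with the sign $-$, and with the parameter there equal to $-(\lambda+m)$, which is not in $\tfrac12\mathbb Z$ since $\lambda\notin\tfrac12\mathbb Z$) to obtain $u'\in\mathcal D'(SM)$ with $\mathcal U_-^m u'=v$ and $Xu'=-(\lambda+m-m)u'=-\lambda u'$, i.e. $(X+\lambda)u'=0$. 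The wavefront condition $\WF(u')\subset E_u^*$ must be checked: here I would use that $u'$ is constructed from $v$ by applying $\mathcal V_+^{m}$ and the horocyclic Laplacians $\Delta_+$ — all differential operators in directions tangent to $E_u$ — together with the criterion of Lemma~\ref{l:criterion}; concretely, $u'\in\Res_X(\lambda)$ follows because $u'$ is a distributional solution of $(X+\lambda)u'=0$ and one can run the argument of Lemma~\ref{l:criterion} after verifying $\WF(u')\subset E_u^*$, which in turn follows since applying $\mathcal V_+$ (a first-order operator) preserves the containment $\WF\subset E_u^*$ — this needs the observation that the recovery operators differentiate only along $E_u$ and $X$, directions over which $E_u^*$ is conormal. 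Then $u'\in W_{m+1}$ with $\mathcal U_-^m u'=v$, giving surjectivity onto the graded piece. Summing over $m$ yields~\eqref{e:bands-1}.

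For the case $\lambda\in\tfrac12\mathbb Z$, the inequality~\eqref{e:bands-2} is proved the same way but only using the injective map: the filtration $W_m$ and the injections $W_{m+1}/W_m\hookrightarrow\Res^m_{\mathcal X}(\lambda+m)$ (defined by $u\mapsto\mathcal U_-^m u$) do not require $\lambda\notin\tfrac12\mathbb Z$ — only Lemma~\ref{l:u-pm-used} and~\eqref{e:comm-hor} are used — so $\dim\Res_X(\lambda)=\sum_m\dim(W_{m+1}/W_m)\leq\sum_m\dim\Res^m_{\mathcal X}(\lambda+m)$. The triviality of $\Res^m_{\mathcal X}(\lambda)$ for $\Re\lambda>0$ is immediate from Lemma~\ref{l:upper} applied to the operator $\mathcal X$ on $\otimes_S^m\mathcal E^*$, as noted at the end of Section~\ref{s:rr-1}.

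The main obstacle I anticipate is the wavefront set bookkeeping in the surjectivity argument: one must be sure that the distribution $u'$ recovered from $v$ via Lemma~\ref{l:recovery} genuinely has $\WF(u')\subset E_u^*$, so that it qualifies as a resonant state rather than merely a distributional solution of $(X+\lambda)u'=0$. The clean resolution is to note that $E_u^*$ is the conormal bundle to the fibers of $B_-$ (by~\eqref{E*}), that $X$ and all the vector fields $U_i^-$ are tangent to these fibers (since $dB_-\cdot X=0$ and $d(B_-\circ\pi_S)\cdot U_i^-=0$ by~\eqref{e:isot}), and that differentiation along a foliation preserves the property of having wavefront set in that foliation's conormal bundle; since the recovery formula for $u'$ in Lemma~\ref{l:recovery} involves only $\mathcal V_+=\mathcal T\mathcal U_+$, wait — $\mathcal U_+$ is transverse to $E_u$, so one must instead argue via ellipticity/propagation: actually $u' = \mathcal V_-^m$-type expressions are the ones used (the proof of Lemma~\ref{l:recovery} uses $\mathcal V_\mp$ and $\Delta_\mp$ with the lower sign matching $\mathcal U_\pm^m u=v$, so for $\mathcal U_-^m u'=v$ we use $\mathcal V_-$ and $\Delta_-$), which differentiate along $E_u$, so the containment is preserved. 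Confirming precisely which operators appear and that they are all tangent to the $B_-$-fibers is the delicate point, but it is a finite check against the formulas in Section~\ref{s:hor-inv}.
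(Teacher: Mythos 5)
Your proof is essentially the paper's: the same filtration $V_m(\lambda)=\{u\in\Res_X(\lambda)\mid \mathcal U_-^mu=0\}$ (your $W_m$), stabilization via Lemma~\ref{l:u-pm-used}, the graded maps $\mathcal U_-^m:V_{m+1}/V_m\hookrightarrow\Res^m_{\mathcal X}(\lambda+m)$ giving~\eqref{e:bands-2} unconditionally, and surjectivity from Lemma~\ref{l:recovery} when $\lambda\notin\tfrac12\mathbb Z$ (with the recovery parameter $\lambda+m$; your ``$-(\lambda+m)$'' is only a sign slip in wording, your subsequent computation $Xu'=-\lambda u'$ is the right one).

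The one place where your reasoning goes astray is the wavefront-set step, which you flag as the delicate point. Your ``clean resolution'' rests on the claim that the operators appearing in Lemma~\ref{l:recovery} differentiate along directions tangent to the fibers of $B_-$; this is false, and your final self-correction makes it worse: to produce $u'$ with $\mathcal U_-^mu'=v$ the proof of Lemma~\ref{l:recovery} applies $\mathcal V_+=\mathcal T\mathcal U_+$ and $\Delta_+$ (see also the explicit formula~\eqref{e:layout}), i.e.\ differentiations along the \emph{stable} directions $U^+_j$, which are transverse to the $B_-$-fibers, not $\mathcal V_-,\Delta_-$ along $E_u$. Fortunately none of this matters: differential operators with smooth coefficients never enlarge wavefront sets, so $\WF(u')\subset\WF(v)\subset E_u^*$ regardless of the directions involved~--- this is exactly the paper's one-line justification, and it is the same fact you already used when checking $\WF(\mathcal U_-^mu)\subset E_u^*$ for the injectivity half. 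No ellipticity or propagation argument is needed. The only other point worth making explicit is that Lemma~\ref{l:recovery} is stated on $S\mathbb H^{n+1}$, so to get $u'$ on $SM$ one uses its equivariance clause (if $v$ is $\Gamma$-equivariant then $u'$ is $\Gamma$-invariant), or equivalently the fact that $\mathcal V_+,\Delta_+$ descend to $SM$.
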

%%%%%%%%%%%%%%%%%%%%%%%%%%%%%%%%%%%%%%%%%%%%%%%%%%%%%%%%%%%%%%%%%%%%%%%%%%%%%%%%
\begin{proof}
Denote for $m\geq 1$,
$$
V_m(\lambda):=\{u\in\mathcal D'(SM)\mid (X+\lambda) u=0,\
\mathcal U_-^m u=0,\ \WF(u)\subset E_u^*\}.
$$
Clearly, $V_m(\lambda)\subset V_{m+1}(\lambda)$. Moreover,
by Lemma~\ref{l:u-pm-used} we have $\Res_X(\lambda)=V_m(\lambda)$ for $m$ large enough
depending on $\lambda$. By~\eqref{e:comm-hor}, the operator $\mathcal U_-^m$
acts
\begin{equation}
  \label{e:u-restricted}
\mathcal U_-^m:V_{m+1}(\lambda)\to \Res_{\mathcal X}^m(\lambda+m),
\end{equation}
and the kernel of~\eqref{e:u-restricted} is exactly $V_m(\lambda)$, with the convention
that $V_0(\lambda)=0$. Therefore
$$
\dim V_{m+1}(\lambda)\leq \dim V_m(\lambda)+\dim \Res_{\mathcal X}^m(\lambda+m)
$$
and~\eqref{e:bands-2} follows.

To show~\eqref{e:bands-1}, it remains to prove that the operator~\eqref{e:u-restricted} is onto;
this follows from Lemma~\ref{l:recovery} (which does not enlarge the wavefront set of the resulting
distribution since it only employs differential operators in the proof).
\end{proof}
%%%%%%%%%%%%%%%%%%%%%%%%%%%%%%%%%%%%%%%%%%%%%%%%%%%%%%%%%%%%%%%%%%%%%%%%%%%%%%%%
The space $\Res_{\mathcal X}^m(\lambda+m)$ is called the space of \emph{resonant states at $\lambda$
associated to $m$th band}; later we see that most of the corresponding Pollicott--Ruelle resonances
satisfy $\Re\lambda=-n/2-m$. Similarly, we can describe $\Res_{X^*}(\lambda)$ via
the spaces $\Res_{\mathcal X^*}^m(\lambda+m)$, where
\begin{equation}
  \label{e:resonances2*}
\Res_{\mathcal X^*}^m(\lambda):=\{v\in\mathcal D'(SM;\otimes_S^m\mathcal E^*)\mid (\mathcal X-\bar\lambda) v=0,\
\mathcal U_+v=0,\ \WF(v)\subset E_s^*\};
\end{equation}
note that here $\mathcal U_+$ is used in place of $\mathcal U_-$.

We further decompose $\Res_{\mathcal X}^m(\lambda)$ using \emph{trace free} tensors:
%%%%%%%%%%%%%%%%%%%%%%%%%%%%%%%%%%%%%%%%%%%%%%%%%%%%%%%%%%%%%%%%%%%%%%%%%%%%%%%%
\begin{lemm}
  \label{l:bands2-exhibited}
Recall the homomorphisms $\mathcal T:\otimes_S^m\mathcal E^*\to \otimes_S^{m-2} \mathcal E^*$,
$\mathcal I:\otimes_S^m\mathcal E^*\to \otimes_S^{m-2} \mathcal E^*$
defined in Section~\ref{symtens} (we put $\mathcal T=0$ for $m=0,1$). Define the space
\begin{equation}
  \label{e:bands2}
\Res_{\mathcal X}^{m,0}(\lambda):=\{v\in \Res_{\mathcal X}^m(\lambda)\mid \mathcal T(v)=0\}.
\end{equation}
Then for all $m\geq 0$ and $\lambda$,
\begin{equation}
  \label{e:bands2-e}
\dim \Res_{\mathcal X}^m(\lambda)=\sum_{\ell=0}^{\lfloor {m\over 2}\rfloor} \dim\Res_{\mathcal X}^{m-2\ell,0}(\lambda).
\end{equation}
In fact,
\begin{equation}
  \label{e:bands2-ee}
\Res_{\mathcal X}^{m,0}(\lambda)=\bigoplus_{\ell=0}^{\lfloor {m\over 2}\rfloor} \mathcal I^\ell (\Res_{\mathcal X}^{m-2\ell,0}(\lambda)).
\end{equation}
\end{lemm}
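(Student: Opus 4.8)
The plan is to prove the decomposition~\eqref{e:bands2-ee} directly, from which~\eqref{e:bands2-e} follows by taking dimensions, since the sum on the right-hand side of~\eqref{e:bands2-ee} is a direct sum (the maps $\mathcal I^\ell$ being injective on symmetric tensors by the discussion following~\eqref{e:trart}, and the summands landing in distinct "trace-depth" strata). First I would recall from~\eqref{decompositionoftensors} that every symmetric cotensor $v\in\otimes^m_S\mathcal E^*$ decomposes uniquely as $v=\sum_{\ell=0}^{\lfloor m/2\rfloor}\mathcal I^\ell(v_\ell)$ with $v_\ell$ trace-free of order $m-2\ell$, and that this decomposition is pointwise on $SM$, hence extends to distributional sections. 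So the content is to show that if $v\in\Res^m_{\mathcal X}(\lambda)$ then each $v_\ell$ lies in $\Res^{m-2\ell,0}_{\mathcal X}(\lambda)$, i.e. that $v_\ell$ is trace-free (immediate), satisfies $(\mathcal X+\lambda)v_\ell=0$, and satisfies $\mathcal U_-v_\ell=0$, with wavefront set in $E_u^*$; and conversely that $\mathcal I^\ell$ maps $\Res^{m-2\ell,0}_{\mathcal X}(\lambda)$ into $\Res^m_{\mathcal X}(\lambda)$.

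The easy containments come first. The operator $\mathcal X$ commutes with $\mathcal T$ and $\mathcal I$ (these are built from the metric $g_E$, which is parallel, so $\mathcal X$ acts coordinatewise and commutes with the purely algebraic trace and insertion maps); therefore $(\mathcal X+\lambda)v=0$ forces $(\mathcal X+\lambda)(\mathcal T^k v)=0$ for all $k$, and by the uniqueness of the decomposition applied to $(\mathcal X+\lambda)v=\sum_\ell \mathcal I^\ell((\mathcal X+\lambda)v_\ell)=0$ we get $(\mathcal X+\lambda)v_\ell=0$ for each $\ell$. Likewise $\mathcal I^\ell(\Res^{m-2\ell,0}_{\mathcal X}(\lambda))\subset\ker(\mathcal X+\lambda)$. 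Wavefront conditions are preserved in both directions because $\mathcal T,\mathcal I$ are bundle homomorphisms (order-zero differential operators). The one genuinely nontrivial point is the interaction with $\mathcal U_-$: I must show $\mathcal U_-v=0$ forces $\mathcal U_-v_\ell=0$, and conversely $\mathcal U_-v_\ell=0$ forces $\mathcal U_-(\mathcal I^\ell v_\ell)=0$.

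For the converse direction I would compute the commutator of $\mathcal U_-$ with $\mathcal I$. Since $\mathcal I$ inserts factors of $g_E$ and symmetrizes, and $\mathcal U_-$ is (in the frame $\mathbf e^*_K$) differentiation by $U^-_r$ followed by prepending the index $r$, the commutator $[\mathcal U_-,\mathcal I]$ should be expressible in terms of $\mathcal S(g_E\otimes\mathcal U_-(\cdot))$ versus $\mathcal U_-\mathcal S(g_E\otimes\cdot)$; because $g_E$ is parallel the derivative $U^-_r$ does not hit it, so the discrepancy is purely combinatorial and $[\mathcal U_-,\mathcal I]$ is again a multiple of $\mathcal S(g_E\otimes(\cdot))$ applied to lower tensors — in particular it vanishes on the range once $\mathcal U_- v_\ell=0$. (If a clean commutator identity is not immediately at hand, one can instead argue in coordinates using the explicit formulas for $\mathcal I$ and $\mathcal U_-$ from Sections~\ref{symtens} and~\ref{s:horocycl}, together with the fact that $U^-_r$ commutes through the Kronecker-delta factors in $\mathcal I$.) For the forward direction, apply $\mathcal T^\ell$ (possibly iterated) to $\mathcal U_-v=0$: since $\mathcal U_-$ raises the tensor order by one, I need the commutation of $\mathcal T$ with $\mathcal U_-$, which is essentially the definition $\mathcal V_-=\mathcal T\mathcal U_-$ together with relations like $\mathcal T\mathcal U_-^2$ appearing in Lemma~\ref{l:penible}; unwinding these, $\mathcal U_-v=0$ together with uniqueness of the trace decomposition yields $\mathcal U_-v_\ell=0$ for each $\ell$ stratum by stratum, starting from the top. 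The main obstacle, and the step I would spend the most care on, is precisely bookkeeping these commutators $[\mathcal U_-,\mathcal I]$ and $[\mathcal U_-,\mathcal T]$ so that the trace decomposition of $\mathcal U_- v$ matches term-by-term with $\mathcal U_-$ applied to the trace decomposition of $v$; once that matching is established, both inclusions of~\eqref{e:bands2-ee} follow, the directness of the sum is automatic, and~\eqref{e:bands2-e} is obtained by counting dimensions.
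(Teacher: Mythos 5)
Your proposal is correct and takes essentially the same route as the paper, whose proof is exactly the observation that \eqref{e:bands2-ee} follows from the pointwise trace decomposition~\eqref{decompositionoftensors} because the defining properties of $\Res_{\mathcal X}^m(\lambda)$ are preserved by the canonical tensorial operations, together with the injectivity of $\mathcal I$. The commutator bookkeeping for $\mathcal U_-$ that you single out as the delicate step is in fact unnecessary: in the frame $\mathbf e^*_K$ the maps $\mathcal T$, $\mathcal I$ (and hence $(\mathcal T\mathcal I)^{-1}$, so all the $v_\ell$) have constant coefficients, while $\mathcal U_- v=0$ says precisely that every component $v_K$ is annihilated by all $U^-_r$, so the condition passes immediately to each $v_\ell$ and to $\mathcal I^\ell v_\ell$ without computing $[\mathcal U_-,\mathcal T]$ or $[\mathcal U_-,\mathcal I]$.
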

%%%%%%%%%%%%%%%%%%%%%%%%%%%%%%%%%%%%%%%%%%%%%%%%%%%%%%%%%%%%%%%%%%%%%%%%%%%%%%%%
\begin{proof}
The identity~\eqref{e:bands2-ee} follows immediately from~\eqref{decompositionoftensors};
it is straightforward to see that the defining properties of $\Res_{\mathcal X}^m(\lambda)$
are preserved by the canonical tensorial operations involved. The identity~\eqref{e:bands2-e}
then follows since $\mathcal I$ is one to one by the paragraph following~\eqref{e:trart}.
\end{proof}
%%%%%%%%%%%%%%%%%%%%%%%%%%%%%%%%%%%%%%%%%%%%%%%%%%%%%%%%%%%%%%%%%%%%%%%%%%%%%%%%
The elements of $\Res_{\mathcal X}^{m,0}(\lambda)$ can be expressed via distributions on the conformal boundary
$\mathbb S^n$:
%%%%%%%%%%%%%%%%%%%%%%%%%%%%%%%%%%%%%%%%%%%%%%%%%%%%%%%%%%%%%%%%%%%%%%%%%%%%%%%%
\begin{lemm}
  \label{l:reduced}
Let $\mathcal Q_-$ be the operator defined in~\eqref{defofQpm}; recall that it is injective. If
$\pi_\Gamma:S\mathbb H^{n+1}\to SM$ is the natural projection map, then
$$
\pi_\Gamma^*\Res_{\mathcal X}^{m,0}(\lambda)=\Phi_-^{\lambda}\mathcal Q_-(\Bd^{m,0}(\lambda)),
$$
where $\Bd^{m,0}(\lambda)\subset \mathcal D'(\mathbb S^n;\otimes_S^m(T^*\mathbb S^n))$ consists
of all distributions $w$ such that $\mathcal T(w)=0$ and
\begin{equation}
  \label{e:reduced-gamma}
L_\gamma^* w(\nu)=N_{\gamma}(\nu)^{-\lambda-m}w(\nu),\quad
\nu\in\mathbb S^n,\ \gamma\in \Gamma,
\end{equation}
where $L_\gamma,N_\gamma$ are defined in~\eqref{e:defLgamma}. Similarly
$$
\pi_\Gamma^*\Res_{\mathcal X^*}^{m,0}(\lambda)=\Phi_+^{\bar\lambda}\mathcal Q_+(\Bd^{m,0}(\bar\lambda)),\quad
\Bd^{m,0}(\bar\lambda)=\overline{\Bd^{m,0}(\lambda)}.
$$
\end{lemm}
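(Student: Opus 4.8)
The plan is to identify $\Res^{m,0}_{\mathcal X}(\lambda)$ with a space of equivariant boundary distributions by composing three ingredients already in hand: the isomorphism of Lemma~\ref{corresp_avec_bord}, multiplication by the power $\Phi_-^{\lambda}$, and the equivariance criterion of Lemma~\ref{l:q-pm-gamma}; the only step that is not purely formal will be the wavefront set computation.

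First I would record how the relevant operators interact with multiplication by $\Phi_-^{s}$. By~\eqref{invarofPhi} we have $X\Phi_-=-\Phi_-$, and by~\eqref{e:isot2} the horocyclic vector fields annihilate $\Phi_-\circ\pi_S$; since $\mathcal X$ acts coordinatewise in the frame $\mathbf e^*_K$ and $\mathcal U_-$ is built from the $U^-_i$, this gives $\mathcal U_-(\Phi_-^{s}f)=\Phi_-^{s}\mathcal U_-f$ and $\mathcal X(\Phi_-^{s}f)=\Phi_-^{s}(\mathcal X f-sf)$ for any section $f$ of a tensor power of $\mathcal E^*$. Also, because $\mathcal A_-$ is a fibrewise isometry by~\eqref{Apmiso}, the operator $\mathcal Q_-$ of~\eqref{defofQpm} intertwines the trace maps, $\mathcal T(\mathcal Q_-w)=\mathcal Q_-(\mathcal Tw)$. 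Combining these with Lemma~\ref{corresp_avec_bord}, I obtain that $w\mapsto\Phi_-^{\lambda}\mathcal Q_-(w)$ is a linear bijection from $\{w\in\mathcal D'(\mathbb S^n;\otimes^m_S(T^*\mathbb S^n)):\mathcal T(w)=0\}$ onto $\{v\in\mathcal D'(S\mathbb H^{n+1};\otimes^m_S\mathcal E^*):(\mathcal X+\lambda)v=0,\ \mathcal U_-v=0,\ \mathcal T(v)=0\}$, with inverse $v\mapsto\mathcal Q_-^{-1}(\Phi_-^{-\lambda}v)$; the only real check here is that $(\mathcal X+\lambda)(\Phi_-^{\lambda}\mathcal Q_-w)=0$, which follows from $\mathcal X(\mathcal Q_-w)=0$ and $X\Phi_-^{\lambda}=-\lambda\Phi_-^{\lambda}$.

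Next I would deal with the wavefront set. From the formula $\mathcal Q_-w=(\otimes^m(\mathcal A_-^{-1})^{T})\cdot(w\circ B_-)$ and the smoothness of the bundle map $\mathcal A_-^{-1}$, $\mathcal Q_-w$ is, up to a smooth factor, the pull-back of $w$ by the submersion $B_-$, so $\WF(\mathcal Q_-w)$ lies in the conormal bundle of the fibres of $B_-$, which equals $E^*_u$ by~\eqref{E*}; multiplying by the smooth nowhere-vanishing function $\Phi_-^{\lambda}$ does not enlarge this. Thus $\WF(\Phi_-^{\lambda}\mathcal Q_-(w))\subset E^*_u$ automatically, which is precisely why the condition $\WF(v)\subset E^*_u$ in the definition~\eqref{e:resonances2} of $\Res^m_{\mathcal X}(\lambda)$ is redundant here (this is the point of the remark following the statement). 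Hence the bijection of the previous paragraph restricts to a bijection onto what I will call $\Res^{m,0}_{\mathcal X}(\lambda)$ with the $\Gamma$-equivariance forgotten.

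Finally I would reinsert the $\Gamma$-action: a section in $\Res^{m,0}_{\mathcal X}(\lambda)$ on $SM$ is the same as a section $\tilde v=\Phi_-^{\lambda}\mathcal Q_-(w)$ on $S\mathbb H^{n+1}$ that is equivariant under left multiplication by every $\gamma\in\Gamma$ in the sense of~\eqref{e:v-gamma}, and Lemma~\ref{l:q-pm-gamma} with $s=\lambda$ converts this equivariance into $L_\gamma^*w=N_\gamma^{-\lambda-m}w$ for all $\gamma\in\Gamma$, i.e.\ into $w\in\Bd^{m,0}(\lambda)$. This yields both inclusions, hence $\pi_\Gamma^*\Res^{m,0}_{\mathcal X}(\lambda)=\Phi_-^{\lambda}\mathcal Q_-(\Bd^{m,0}(\lambda))$. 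The statement for $\Res^{m,0}_{\mathcal X^*}(\lambda)$ defined in~\eqref{e:resonances2*} goes through verbatim after replacing $\mathcal Q_-$ by $\mathcal Q_+$, $B_-$ by $B_+$, $E^*_u$ by $E^*_s$, and $\lambda$ by $\bar\lambda$ (using $X\Phi_+=\Phi_+$, $\mathcal U_+\Phi_+=0$, and that $E^*_s$ is the conormal bundle of the $B_+$-fibres by~\eqref{E*}); the identity $\Bd^{m,0}(\bar\lambda)=\overline{\Bd^{m,0}(\lambda)}$ is immediate from the defining equations since $N_\gamma>0$ and $\mathcal T$ are real. I expect the bulk of the work to be the bookkeeping of the commutation identities among $\mathcal Q_\pm$, $\Phi_\pm^{\lambda}$, $\mathcal T$, $\mathcal U_\pm$ and $\mathcal X$, all of which reduce to facts from Section~\ref{s:geomhyp} and Section~\ref{s:horror}; the only genuinely geometric ingredient is the wavefront computation, where the identification~\eqref{E*} of $E^*_u$ with the conormal bundle of the $B_-$-fibres is essential.
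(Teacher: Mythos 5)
Your proposal is correct and follows essentially the same route as the paper's proof: the identities $X\Phi_-^{\lambda}=-\lambda\Phi_-^{\lambda}$ and $U^-_j(\Phi_-\circ\pi_S)=0$ together with Lemma~\ref{corresp_avec_bord}, the isometry property of $\mathcal A_-$ for the trace condition, the conormal-bundle/wavefront argument via~\eqref{E*}, and Lemma~\ref{l:q-pm-gamma} for the $\Gamma$-equivariance. The only difference is organizational (you set up the non-equivariant bijection first and then impose the $\Gamma$-action, while the paper proves one containment and reverses the argument), which does not change the substance.
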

%%%%%%%%%%%%%%%%%%%%%%%%%%%%%%%%%%%%%%%%%%%%%%%%%%%%%%%%%%%%%%%%%%%%%%%%%%%%%%%%
\begin{proof}
Assume first that $w\in\Bd^{m,0}(\lambda)$ and put
$\tilde v=\Phi_-^{\lambda}\mathcal Q_-(w)$. Then by
Lemma~\ref{l:q-pm-gamma} and~\eqref{e:reduced-gamma}, $\tilde v$ is invariant under
$\Gamma$ and thus descends to a distribution
$v\in \mathcal D'(SM;\otimes_S^m\mathcal E^*)$. Since $X\Phi_-^{\lambda}=-\lambda \Phi_-^{\lambda}$
and $U_j^-(\Phi_-^{\lambda}\circ \pi_S)=0$ by~\eqref{invarofPhi} and~\eqref{e:isot}, and
$\mathcal X$ and $\mathcal U_-$ annihilate the image of $\mathcal Q_-$ by Lemma~\ref{corresp_avec_bord},
we have $(\mathcal X+\lambda) v=0$ and $\mathcal U_- v=0$. Moreover, by~\cite[Theorem~8.2.4]{ho1}
the wavefront set of $\tilde v$ is contained in the conormal bundle to the fibers
of the map $B_-$; by~\eqref{E*}, we see that $\WF(v)\subset E_u^*$. Finally,
$\mathcal T(v)=0$ since the map $\mathcal A_-(x,\xi)$ used in the definition of $\mathcal Q_-$ is
an isometry. Therefore,
$v\in\Res_{\mathcal X}^{m,0}(\lambda)$ and we proved the containment
$\pi_\Gamma^*\Res_{\mathcal X}^{m,0}(\lambda)\supset\Phi_-^{\lambda}\mathcal Q_-(\Bd^{m,0}(\lambda))$.
The opposite containment is proved by reversing this argument.
\end{proof}
%%%%%%%%%%%%%%%%%%%%%%%%%%%%%%%%%%%%%%%%%%%%%%%%%%%%%%%%%%%%%%%%%%%%%%%%%%%%%%%%

\noindent\textbf{Remark}.
It follows from the proof of Lemma~\ref{l:reduced} that the condition~$\WF(v)\subset E_u^*$ in~\eqref{e:resonances2}
is unnecessary. This could also be seen by applying~\cite[Theorem~18.1.27]{ho3} to the equations
$(\mathcal X+\lambda)v=0$, $\mathcal U_-v=0$, since $\mathcal X$ differentiates along the direction $E_0$,
$\mathcal U_-$ differentiates along the direction $E_u$ (see~\eqref{e:Udef1} and~\eqref{e:Udef4}),
and the annihilator of $E_0\oplus E_u$ (that is, the joint critical set of $\mathcal X+\lambda,\mathcal U_-$)
is exactly $E_u^*$.

It now remains to relate the space $\Bd^{m,0}(\lambda)$
to an eigenspace of the Laplacian on symmetric tensors.
For that, we introduce the following operator
obtained by integrating the corresponding elements of $\Res^{m,0}_{\mathcal X}(\lambda)$ along the
fibers of $\mathbb S^n$:
%%%%%%%%%%%%%%%%%%%%%%%%%%%%%%%%%%%%%%%%%%%%%%%%%%%%%%%%%%%%%%%%%%%%%%%%%%%%%%%%
\begin{defi}
  \label{d:poisson}
Take $\lambda\in\mathbb C$. The \textbf{Poisson operators}
$$
\mathscr P^\pm_\lambda:\mc{D}'(\sph^n; \otimes^m T^*\sph^n)\to \mc{C}^\infty(\hh^{n+1}; \otimes^m T^*\hh^{n+1})
$$
are defined by the formulas
\begin{equation}
  \label{e:poisson-def}
\begin{gathered}
\mathscr P^-_\lambda w(x)=\int_{S_x\mathbb H^{n+1}}\Phi_-(x,\xi)^{\lambda}\mathcal Q_-(w)(x,\xi)\,dS(\xi),\\ 
\mathscr P^+_\lambda w(x)=\int_{S_x\mathbb H^{n+1}}\Phi_+(x,\xi)^{\bar\lambda}\mathcal Q_+(w)(x,\xi)\,dS(\xi).
\end{gathered}
\end{equation}
Here integration of elements of $\otimes^m\mathcal E^*(x,\xi)$ is performed by embedding them in
$\otimes^m T^*_x \mathbb H^{n+1}$ using composition with the orthogonal projection
$T_x\mathbb H^{n+1}\to \mathcal E(x,\xi)$.
\end{defi}
%%%%%%%%%%%%%%%%%%%%%%%%%%%%%%%%%%%%%%%%%%%%%%%%%%%%%%%%%%%%%%%%%%%%%%%%%%%%%%%%
The operators $\mathscr P^\pm_\lambda$ are related by the identity
\begin{equation}
  \label{e:pid}
\overline{\mathscr P^\pm_\lambda w}=\mathscr P^\mp_\lambda \overline w.
\end{equation}
By Lemma~\ref{l:reduced}, $\mathscr P^-_\lambda$ maps $\Bd^{m,0}(\lambda)$
onto symmetric $\Gamma$-equivariant tensors, which can thus be considered as elements
of $\CI(M;\otimes_S^m T^*M)$. The relation with the Laplacian is given by the following
fact, proved in Section~\ref{s:poisson-basic}:
%%%%%%%%%%%%%%%%%%%%%%%%%%%%%%%%%%%%%%%%%%%%%%%%%%%%%%%%%%%%%%%%%%%%%%%%%%%%%%%%
\begin{lemm}
  \label{l:laplacian}
For each $\lambda$, the image of $\Bd^{m,0}(\lambda)$ under $\mathscr P^-_\lambda$ is contained in the 
eigenspace $\Eig^m(-\lambda(n+\lambda)+m)$, where
\begin{equation}
  \label{e:eigdef}
\Eig^m(\sigma):=\{f\in \CI(M;\otimes_S^m T^*M)\mid \Delta f= \sigma f,\
\nabla^* f=0,\
\mathcal T(f)=0\}.
\end{equation}
Here the trace $\mathcal T$ was defined in Section~\ref{symtens} and the Laplacian~$\Delta$ and the
divergence $\nabla^*$ are introduced in Section~\ref{s:laplacian-def}.
(A similar result for $\mathscr P^+_\lambda$ follows from~\eqref{e:pid}.)
\end{lemm}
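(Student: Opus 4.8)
The statement is $G$-invariant, so the plan is to lift everything to $\hh^{n+1}$: take $w$ to be a $\Gamma$-equivariant distribution on $\sph^n$, set $v:=\Phi_-^{\lambda}\mc{Q}_-(w)$, and recall from the proof of Lemma~\ref{l:reduced} that $v\in\mathcal D'(S\hh^{n+1};\otimes^m_S\mc{E}^*)$ satisfies $\mc{X}v=-\lambda v$, $\mc{U}_-v=0$ and $\mc{T}(v)=0$, while $f:=\mathscr P^-_\lambda w$ is obtained by embedding each $v(x,\xi)$ into $\otimes^m T^*_x\hh^{n+1}$ via the orthogonal projection $T_x\hh^{n+1}\to\mc{E}(x,\xi)$ and integrating over the fibre $S_x\hh^{n+1}$. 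I then check the three defining conditions of $\Eig^m$. Trace-freeness is immediate: since $T_x\hh^{n+1}=\rr\xi\oplus\mc{E}(x,\xi)$ orthogonally and the embedded tensor $v(x,\xi)$ vanishes whenever one of its arguments is $\xi$, the trace of $v(x,\xi)$ with respect to $g_H$ on $T_x\hh^{n+1}$ equals its trace on $\mc{E}(x,\xi)$, which is $0$; as $\mc{T}$ acts pointwise in $x$ it commutes with the fibre integral, so $\mc{T}(f)=0$. (Symmetry of $f$ likewise descends from symmetry of $v$.)

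For $\nabla^* f$ and $\Delta f$ I would differentiate under the integral sign and push all derivatives onto $v$. The geometric input is that, for the connection $\nabla^S$ on $\mc{E}$, the horizontal lift of $\eta\in\mc{E}(x,\xi)$ is $-\tfrac12(\theta_+(\eta)+\theta_-(\eta))$ and the horizontal lift of $\xi$ is the geodesic field $X$; combined with the interpretation \eqref{e:Udef4} of $\mc{U}_\pm$ as covariant derivatives along $\theta_\pm$, this lets one write $\nabla_\eta f(x)$ as a fibre integral of a universal combination of $\mc{X}v$, $\mc{U}_+v$, $\mc{U}_-v$ plus an algebraic correction in $v$ and $\xi$ reflecting that $\mc{E}\subset\pi^*T\hh^{n+1}$ is not $\nabla^{T\hh^{n+1}}$-parallel along the fibres. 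Iterating, $\nabla^* f$ and $\Delta f$ become fibre integrals of second-order tensorial expressions in $v$, $\mc{X}v$, $\mc{X}^2v$, $\mc{U}_\pm v$, $\mc{U}_\pm\mc{X}v$, $\mc{U}_\pm^2v$ and $\mc{U}_+\mc{U}_-v$. Now substitute $\mc{X}v=-\lambda v$, hence $\mc{X}^2v=\lambda^2 v$, and $\mc{U}_-v=0$, then integrate by parts on the fibre to move the remaining $\mc{U}_+$'s onto $v$ via $\mc{U}_+^*=-\mc{V}_+=-\mc{T}\mc{U}_+$ (Lemma~\ref{l:tough-adjoint}), so that each such term acquires a factor of a trace of a symmetric power of $v$ and therefore vanishes once $\mc{T}(v)=0$ together with the moment identities $\int_{S_x}\xi_i\,dS=0$ and $\int_{S_x}\xi_i\xi_j\,dS=\mathrm{const}\cdot g_{ij}$ are used. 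The outcome is $\nabla^* f=0$, and in $\Delta f$ only a scalar multiple of $f$ survives.

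It remains to identify that scalar as $-\lambda(n+\lambda)+m$, for which the cleanest route is the alternative viewpoint obtained by changing variables $\xi=\xi_-(x,\nu)$ from \eqref{defxi+}: by \eqref{Phiofxi} and the conformality \eqref{conformaldxi} (which gives $dS(\xi)=P(x,\nu)^n\,dS(\nu)$),
\[
f(x)=\int_{\sph^n} P(x,\nu)^{\lambda+n}\,\tau_x(\nu)\,w(\nu)\,dS(\nu),
\]
where $\tau_x(\nu)\colon\otimes^m T^*_\nu\sph^n\to\otimes^m T^*_x\hh^{n+1}$ is, up to a boundary defining function, the parallel transport from $\nu$ to $x$ in the $0$-cotangent bundle (cf.\ \eqref{e:Adef}, \eqref{e:ptarsk}), hence $\nabla$-parallel along the geodesic joining $\nu$ to $x$. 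Then the radial part of $\Delta$ acts only on the factor $P(x,\nu)^{\lambda+n}$, whose scalar eigenvalue $-\lambda(n+\lambda)$ is checked directly in the ball model from $P(\psi^{-1}(y),\nu)=(1-|y|^2)/|y-\nu|^2$, cf.\ \eqref{e:poisson-y}; the tangential part of $\Delta$ reproduces the matching scalar contribution plus second tangential derivatives of $\tau_x(\nu)$, which in curvature $-1$ evaluate to $+m$ (one unit per tensor slot) after integrating against $w$ and using $\mc{T}(w)=0$, while the derivatives falling on $w$ integrate by parts on $\sph^n$ to terms that again vanish by $\mc{T}(w)=0$. Collecting, $\Delta f=(-\lambda(n+\lambda)+m)f$, and $f\in\Eig^m(-\lambda(n+\lambda)+m)$.

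The main obstacle is exactly this tensor-valued second-order computation: pinning down the curvature correction $+m$ in the Laplacian, and verifying that in $\nabla^* f$ every contribution other than a multiple of $\mc{T}(v)$ cancels. This is where the trace-free hypothesis $\mc{T}(w)=0$, the constant curvature $-1$, and the explicit description of parallel transport to conformal infinity from Section~\ref{s:E} (the maps $\mc{A}_\pm$, the formulas \eqref{e:Adef}, \eqref{e:ptarsk}, and the conformality \eqref{conformaldxi}) enter essentially; the remaining steps — trace-freeness and turning $\mc{X}v=-\lambda v$, $\mc{U}_-v=0$ into pointwise identities — are routine.
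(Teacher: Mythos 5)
Your overall strategy is the paper's: rewrite $\mathscr P^-_\lambda w$ via the change of variables $\xi=\xi_-(x,\nu)$ as an integral over $\mathbb S^n$ of the kernel $P(x,\nu)^{n+\lambda}\otimes^m(\mathcal A_-^{-1}(x,\xi_-(x,\nu)))^T w(\nu)$ (this is Lemma~\ref{l:poisson-1}), reduce by linearity and $G$-equivariance to a point mass at a single $\nu$, observe that $P(\cdot,\nu)^{n+\lambda}$ is a scalar eigenfunction with eigenvalue $-\lambda(n+\lambda)$ and that the tensor factor is parallel along geodesics ending at $\nu$, and conclude. The trace-freeness argument is fine. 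But the heart of the lemma is exactly the step you leave as an assertion: that the rough Laplacian applied to this kernel produces precisely the shift $+m$, and that its divergence vanishes. You state this ("one unit per tensor slot", "every contribution other than a multiple of $\mathcal T(v)$ cancels") and then explicitly flag it as "the main obstacle"; no identity is actually established, neither here nor in your first sketch via the horocyclic operators, which also stays at the level of "a universal combination ... plus an algebraic correction". That computation is the content of the proof, and as written the proposal does not supply it. For comparison, the paper carries it out by fixing $\nu$, passing to half-space coordinates in which $\nu$ is sent to infinity so that the kernel becomes $z_0^{n+\lambda}u_0$ with $u_0$ a constant-coefficient trace-free tensor in the frame $Z_{j_1}^*\otimes\dots\otimes Z_{j_m}^*$ (Lemma~\ref{l:poisson-2}), and then reading both identities off the explicit formulas of Lemma~\ref{DeltaE_0}: the term $\sum_I((\Delta+m)f_I)\mathcal S(Z_I^*)$ gives the $+m$, the term $m(m-1)\mathcal S(\mathcal T(u_0)\otimes Z_0^*\otimes Z_0^*)$ dies by trace-freeness, and $\nabla^*u_0=0$ because $\mathcal T(u_0)=0$ and the coefficients depend only on $z_0$, so $d_zf_I=0$. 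Some such explicit computation (in a frame, or an equivalent covariant identity) must appear for the proof to be complete.

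There is also a conceptual slip in the way you propose to finish: in $f(x)=\int_{\mathbb S^n}P(x,\nu)^{n+\lambda}\tau_x(\nu)w(\nu)\,dS(\nu)$ the Laplacian and divergence act in $x$ only, so no derivatives ever fall on $w$, there is nothing to integrate by parts on $\mathbb S^n$, and $\mathcal T(w)=0$ plays no role in any such cancellation; what is needed is that for each fixed $\nu$ the kernel itself, with trace-free tensor value, is divergence-free and an eigentensor, which is again the missing computation. The claim must hold pointwise in $\nu$, and indeed it does, but asserting the two numerical outcomes ($+m$ and $0$) without verifying them is where the argument stops short.
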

%%%%%%%%%%%%%%%%%%%%%%%%%%%%%%%%%%%%%%%%%%%%%%%%%%%%%%%%%%%%%%%%%%%%%%%%%%%%%%%%
Furthermore, in Sections~\ref{s:poisson-basic} and~\ref{s:poisson-is} we show the following crucial
%%%%%%%%%%%%%%%%%%%%%%%%%%%%%%%%%%%%%%%%%%%%%%%%%%%%%%%%%%%%%%%%%%%%%%%%%%%%%%%%
\begin{theo}
  \label{t:laplacian}
Assume that $\lambda\notin\mathcal R_m$, where
\begin{equation}
\label{e:r-m-def0}
\mc{R}_m=\left\{\begin{array}{ll}
-\tfrac{n}{2}-\tfrac{1}{2}\nn_0 & \textrm{ if }n>1 \textrm{ or }m=0\\
-\tfrac{1}{2}\nn_0 & \textrm{ if }n=1 \textrm{ and }m>0  
\end{array}\right.
\end{equation}
Then the map
$\mathscr P^-_\lambda :\Bd^{m,0}(\lambda)\to \Eig^m(-\lambda(n+\lambda)+m)$
is an isomorphism.
\end{theo}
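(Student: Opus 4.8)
The plan is to prove separately that $\mathscr P^-_\lambda$ is injective on $\Bd^{m,0}(\lambda)$ and that it is onto $\Eig^m(-\lambda(n+\lambda)+m)$; the containment $\mathscr P^-_\lambda(\Bd^{m,0}(\lambda))\subset\Eig^m(-\lambda(n+\lambda)+m)$ is already supplied by Lemma~\ref{l:laplacian}. Both halves are driven by one geometric fact: a trace-free, divergence-free symmetric $m$-tensor on $\hh^{n+1}$ killed by $\Delta+\lambda(n+\lambda)-m$ has, near the conformal infinity $\sph^n=\partial\overline{\bb^{n+1}}$, a \emph{weak} (distributional) polyhomogeneous expansion in the boundary defining function $\rho_0$ of Section~\ref{s:confinf}, whose coefficients are tensor distributions on $\sph^n$ obtained through the $0$-bundle splitting ${}^0T^*\overline{\bb^{n+1}}\cong \rr\,d\rho_0/\rho_0\oplus {}^0T^*\sph^n$ and the parallel transports $\mathcal A_-$, $\tau$ of Section~\ref{s:E}. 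The condition $\lambda\notin\mathcal R_m$ is precisely what guarantees that the two relevant indicial exponents, $-\lambda$ and $n+\lambda$ (shifted by the integers forced by the tensorial components), never coincide, so that the coefficient of the $\rho_0^{-\lambda}$-type term is unambiguously defined.

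For injectivity I would argue directly on $f:=\mathscr P^-_\lambda w$, for an arbitrary $w\in\Bd^{m,0}(\lambda)$, pulled back to $\overline{\bb^{n+1}}$, and produce its weak expansion in powers of $1-|y|$ at $\sph^n$. The computational input is the asymptotics of $\int_{S_x\hh^{n+1}}\Phi_-(x,\xi)^\lambda\,\mathcal Q_-(w)(x,\xi)\,dS(\xi)$ as $x$ approaches a boundary point: using $\Phi_-(x,\xi_-(x,\nu))=P(x,\nu)$ from~\eqref{Phiofxi}, the explicit Poisson kernel~\eqref{e:poisson-y}, and the expression~\eqref{e:Adef} for $\mathcal A_-$, one localizes the fiber integral near the direction pointing at that boundary point and runs a Hadamard/stationary-type expansion. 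This yields two families of terms, one with leading exponent $\rho_0^{-\lambda}$ whose top coefficient is a fixed nonzero meromorphic-in-$\lambda$ multiple of $w$, and one beginning at $\rho_0^{n+\lambda}$. Since $\lambda\notin\mathcal R_m$ separates these exponents, the $\rho_0^{-\lambda}$ coefficient of $f$ is well-defined, so $f=0$ forces $w=0$.

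For surjectivity I would take $f\in\Eig^m(-\lambda(n+\lambda)+m)$, lift it to a $\Gamma$-equivariant tensor $\tilde f$ on $\hh^{n+1}$, and prove $\tilde f$ admits a weak expansion at $\sph^n$. The core is an indicial computation: writing $\Delta$, $\nabla^*$ and $\mathcal T$ on symmetric $m$-tensors in terms of the $0$-bundle splitting near the boundary, one decomposes $\tilde f$ by its normal degree $0,\dots,m$ and checks that the equations $\mathcal T\tilde f=0$, $\nabla^*\tilde f=0$ force the normal-degree $\geq 1$ parts to be determined by lower-order data, while the normal-degree $0$ part solves a scalar-type indicial equation with roots $\rho_0^{-\lambda}$ and $\rho_0^{n+\lambda}$ (shifted by $m$ in the appropriate normalization); iterating produces the full expansion with remainder. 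One then takes $w$ to be the coefficient of the $\rho_0^{-\lambda}$-type term, transported to a section of $\otimes_S^m T^*\sph^n$ via $\mathcal A_-$. Trace-freeness of $w$ follows from $\mathcal T(f)=0$, and the equivariance~\eqref{e:reduced-gamma} from the $\Gamma$-invariance of $\tilde f$ together with the transformation rules~\eqref{changeofPhi} and~\eqref{equivarianceA} for $\Phi_-$ and $\mathcal A_-$; thus $w\in\Bd^{m,0}(\lambda)$. Finally $g:=\tilde f-\mathscr P^-_\lambda w$ again lies in $\Eig^m(-\lambda(n+\lambda)+m)$ by Lemma~\ref{l:laplacian}, and by the construction of $w$ together with the injectivity analysis its weak expansion has vanishing $\rho_0^{-\lambda}$ coefficient; a uniqueness argument based on the indicial structure and unique continuation from the conformal boundary then gives $g\equiv 0$, i.e. $f=\mathscr P^-_\lambda w$.

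The main obstacle is the surjectivity half, and within it the tensorial indicial analysis near conformal infinity: unlike for functions, $\Delta$ does not act diagonally on symmetric tensors, the operators $\Delta$, $\nabla^*$ and $\mathcal T$ couple the normal-degree components, and one must show that imposing trace-free and divergence-free decouples the system enough that a single scalar indicial equation governs the leading behaviour. Tracking the $m$-dependent shifts of the indicial exponents is the delicate point; it is exactly what distinguishes the generic forbidden set $-\tfrac{n}{2}-\tfrac12\mathbb{N}_0$ from the smaller set $-\tfrac12\mathbb{N}_0$ in the case $n=1$, $m>0$, where the one-dimensionality of $\sph^1$ constrains the tensor structure. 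A secondary difficulty is that the expansions are only weak, so every step must be carried out at the level of distributions paired with smooth sections.
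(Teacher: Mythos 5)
Your overall architecture is the same as the paper's: injectivity via a weak polyhomogeneous expansion of $\mathscr P^-_\lambda w$ at $\mathbb S^n$ with leading coefficient a nonzero multiple of $w$ (this is Lemma~\ref{injectiv} and Corollary~\ref{l:injdone}), and surjectivity via a weak expansion of the lifted eigentensor, taking $w$ to be the $\rho_0^{-\lambda}$-coefficient and comparing with $\mathscr P^-_\lambda w$ (Lemmas~\ref{mazzeo}, \ref{l:indic-family}, \ref{asympexp} and Corollary~\ref{corsurj}). The injectivity half and the indicial/divergence-free analysis you sketch are in substance what the paper does.

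The gap is the final step of surjectivity. Having arranged that $g=\tilde f-\mathscr P^-_\lambda w$ has vanishing $\rho_0^{-\lambda}$-coefficient, you conclude $g\equiv 0$ by ``unique continuation from the conformal boundary.'' But $g$ does not vanish to infinite order at $\mathbb S^n$: its weak expansion may still carry the entire second family of terms $\rho_0^{n+\lambda+\ell}$ (with logarithms when $\mu\in\tfrac12\mathbb N_0$), so no Mazzeo-type unique continuation from the boundary applies, and the assertion you need --- that a tempered trace-free divergence-free eigentensor whose $\rho_0^{-\lambda}$-coefficient vanishes must vanish identically --- is exactly the nontrivial content of this theorem, not a standard fact. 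The paper proves it in two steps, neither of which appears in your outline. First, Lemma~\ref{l:vanish2} uses the coupled system coming from $\nabla^*u=0$ (equations \eqref{eqdivnul}, \eqref{eqdivnul2}, \eqref{relation3}, \eqref{relation4}) to propagate the vanishing of the normal-degree-zero coefficient $w^0_{-\mu,0}$ to all coefficients $w^r_{-\mu,0}$, $0\le r\le m$; it is precisely here that the constants $(n+2m-2-4j)$ and the condition $\pm\mu\neq\tfrac n2-1$ enter, and this, rather than a shift of indicial exponents, is what forces the larger exceptional set when $n=1$, $m>0$. Second, Lemma~\ref{l:vanish1} reduces to the scalar case: contracting $u$ with $\otimes^m\partial_{z_0}$, with the boundary point chosen so that $\partial_{z_0}$ realizes any prescribed direction at a given interior point, yields a tempered scalar eigenfunction of the Laplacian on $\mathbb H^{n+1}$ whose leading coefficient vanishes, and one then invokes the known scalar isomorphism of the Poisson transform (Oshima--Sekiguchi, Van den Ban--Schlichtkrull) to conclude that this scalar function, hence (varying the direction and using the symmetry of $u$) the tensor itself, vanishes. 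Without a substitute for these two ingredients --- in particular without importing the scalar surjectivity result as external input --- your surjectivity argument does not close.
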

%%%%%%%%%%%%%%%%%%%%%%%%%%%%%%%%%%%%%%%%%%%%%%%%%%%%%%%%%%%%%%%%%%%%%%%%%%%%%%%%
\noindent\textbf{Remark}. In Theorem \ref{t:laplacian}, the set of exceptional points where we do not show isomorphism 
is not optimal but sufficient for our application
(we only need $\mathcal R_m\subset m-{n\over 2}-{1\over 2}\mathbb N_0$); we expect the exceptional set to be contained in 
$-n+1-\nn_0$. 
This result is known for functions, that is for $m=0$, with the exceptional set being $-n-\nn$. 
This was proved by Helgason, Minemura in the case of hyperfunctions on $\mathbb{S}^n$ and by Oshima--Sekiguchi~\cite{OsSe} and Schlichtkrull--Van Den Ban~\cite{VdBSc} for distributions; Grellier--Otal~\cite{GrOt} studied the sharp functional spaces on $\mathbb{S}^n$ of the boundary values of bounded eigenfunctions on $\hh^{n+1}$. The extension to $m>0$ does not seem to be known in the literature and is not trivial, it takes most of Sections \ref{s:laplacian} and \ref{s:poisson-is}.

We finally provide the following refinement of Lemma~\ref{l:bands-exhibited}, needed to handle
the case $\lambda\in (-n/2,\infty)\cap {1\over 2}\mathbb Z$:
%%%%%%%%%%%%%%%%%%%%%%%%%%%%%%%%%%%%%%%%%%%%%%%%%%%%%%%%%%%%%%%%%%%%%%%%%%%%%%%%
\begin{lemm}
  \label{l:better}
Assume that $\lambda\in -{n\over 2}+{1\over 2}\mathbb N$. If $\lambda\in- 2\mathbb N$, then
$$
\dim\Res_X(\lambda)=\sum_{m\geq 0\atop m\neq -\lambda}
\dim\Res^{m}_{\mathcal X}(\lambda+m).
$$
If $\lambda\notin -2\mathbb N$, then~\eqref{e:bands-1} holds.
\end{lemm}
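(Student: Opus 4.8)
The plan is to sharpen the filtration used to prove Lemma~\ref{l:bands-exhibited}. Recall the nested spaces $V_m(\lambda)=\{u\in\mathcal D'(SM):(X+\lambda)u=0,\ \mathcal U_-^m u=0,\ \WF(u)\subset E_u^*\}$, that $\Res_X(\lambda)=V_M(\lambda)$ for $M$ large (Lemma~\ref{l:u-pm-used}), and that $\mathcal U_-^m$ induces a map $V_{m+1}(\lambda)\to\Res_{\mathcal X}^m(\lambda+m)$ with kernel exactly $V_m(\lambda)$; hence, writing $W_m:=\mathcal U_-^m V_{m+1}(\lambda)\subset\Res_{\mathcal X}^m(\lambda+m)$,
$$\dim\Res_X(\lambda)=\sum_{m\ge 0}\dim W_m .$$
Since there are no resonances with $\Re\lambda>0$ (Lemma~\ref{l:upper}), we may assume $\lambda\le 0$; then $\Res_{\mathcal X}^m(\lambda+m)=0$ unless $0\le m\le -\lambda$. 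Everything reduces to showing: $W_m=\Res_{\mathcal X}^m(\lambda+m)$ for all $m$ if $\lambda\notin -2\mathbb N$; and, if $\lambda\in -2\mathbb N$, $W_m=\Res_{\mathcal X}^m(\lambda+m)$ for $m<-\lambda$ while $W_{-\lambda}=0$.

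First I would pin down when $\mathcal U_-^m$ is onto $\Res_{\mathcal X}^m(\kappa)$, $\kappa:=\lambda+m$. Writing a given $v\in\Res_{\mathcal X}^m(\kappa)$ as $v=\sum_{\ell=0}^{\lfloor m/2\rfloor}\mathcal I^\ell(v_\ell)$ with $v_\ell$ trace free (Lemma~\ref{l:bands2-exhibited}), the distributions $u_\ell:=(-\Delta_+)^\ell\mathcal V_+^{m-2\ell}v_\ell$ from the proof of Lemma~\ref{l:recovery} belong to $V_{m+1}(\lambda)$ and, combining Lemmas~\ref{l:long-product} and~\ref{l:moins-penible} and putting $m'=m-2\ell$, satisfy $\mathcal U_-^m u_\ell=P_\ell(\kappa)\,\mathcal I^\ell(v_\ell)$ with
$$P_\ell(\kappa)=c_\ell\Bigl(\prod_{k=n-1}^{n+m'-2}(k+\kappa)\Bigr)\Bigl(\prod_{j=0}^{\ell-1}(m'+j-\kappa)\Bigr)\Bigl(\prod_{j=1}^{\ell}(2j-n-2\kappa)\Bigr),\qquad c_\ell>0.$$
Consequently $W_m\supset\bigoplus_{\ell:\,P_\ell(\kappa)\neq 0}\mathcal I^\ell\Res_{\mathcal X}^{m-2\ell,0}(\kappa)$, so $\mathcal U_-^m$ is onto as soon as $P_\ell(\kappa)\neq 0$ whenever $\Res_{\mathcal X}^{m-2\ell,0}(\kappa)\neq 0$. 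But by Lemma~\ref{l:reduced}, Theorem~\ref{t:laplacian} (whose exceptional set does not meet $-{n\over 2}+{1\over 2}\mathbb N$) and the spectral estimate behind~\eqref{e:reparts} (Lemma~\ref{bottomsp}), nontriviality of $\Res_{\mathcal X}^{m-2\ell,0}(\kappa)$ confines $\kappa$ to $[1-n,-1]\cup\{-{n\over 2}\}$ if $m'\ge 1$ and to $[-n,0]\cup\{-{n\over 2}\}$ if $m'=0$. A direct inspection shows that every zero of the three products lying in these windows has the form $\kappa=\lambda+m$ with $\lambda\notin -{n\over 2}+{1\over 2}\mathbb N$ — using $\lambda>-{n\over 2}$ and $m<{n\over 2}$, the zeros $-k$ ($k\ge n-1$), $m'+j$ ($j\ge 1$), $j-{n\over 2}$ ($1\le j\le\ell$) all correspond to $\lambda$-values in $-{n\over 2}-{1\over 2}\mathbb N_0$ — with the single exception $\kappa=0$, which requires $m'=0$, i.e.\ $\ell=m/2$, $m$ even, $\lambda=-m\in -2\mathbb N$ (the case $m=0$ being irrelevant since $\mathcal U_-^0=\Id$). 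This yields $W_m=\Res_{\mathcal X}^m(\lambda+m)$ for every $m$ when $\lambda\notin -2\mathbb N$, and also shows that when $\lambda\in -2\mathbb N$ the only band where surjectivity can fail is $m=-\lambda$, where $P_{m/2}(0)=0$ but $P_\ell(0)\neq 0$ for $\ell<m/2$.

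It remains to show $W_{m_0}=0$ when $\lambda\in -2\mathbb N$ and $m_0:=-\lambda$. By Lemma~\ref{l:reduced} and Theorem~\ref{t:laplacian}, $\Res_{\mathcal X}^{m_0-2\ell,0}(0)\cong\Eig^{m_0-2\ell}(m_0-2\ell)$, which vanishes for $\ell<m_0/2$ by the lower bound $\min\Spec^k(\Delta)\ge k+n-1>k$ for $k\ge 1$ (here $n>1$; Lemma~\ref{bottomsp}) and equals the constants for $\ell=m_0/2$; hence $\Res_{\mathcal X}^{m_0}(0)=\mathcal I^{m_0/2}\Res_{\mathcal X}^{0,0}(0)$ is spanned by $\mathcal I^{m_0/2}(1)$ and $\mathcal T^{m_0/2}$ is injective on it. For $u\in V_{m_0+1}(\lambda)$ we have $\mathcal U_-^{m_0}u\in\Res_{\mathcal X}^{m_0}(0)$, and iterating $\mathcal T\,\mathcal U_-^k=-\mathcal U_-^{k-2}\Delta_-$ (valid on scalars since the $U_i^-$ commute) gives $\mathcal T^{m_0/2}(\mathcal U_-^{m_0}u)=(-1)^{m_0/2}\Delta_-^{m_0/2}u$, which is thus a constant function; its value is its $\mu$-average, and this vanishes because $\Delta_-$ is formally self-adjoint (Lemma~\ref{l:tough-adjoint}) with $\Delta_-1=0$, so $\int_{SM}\Delta_-^{m_0/2}u\,d\mu=\langle\Delta_-^{m_0/2-1}u,\Delta_-1\rangle=0$. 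Therefore $\Delta_-^{m_0/2}u=0$, hence $\mathcal U_-^{m_0}u=0$ by injectivity of $\mathcal T^{m_0/2}$, i.e.\ $W_{m_0}=0$. Assembling the pieces: if $\lambda\notin -2\mathbb N$ then $\dim\Res_X(\lambda)=\sum_{m\ge 0}\dim\Res_{\mathcal X}^m(\lambda+m)$, which is~\eqref{e:bands-1}; and if $\lambda\in -2\mathbb N$ then $\dim\Res_X(\lambda)=\sum_{0\le m<-\lambda}\dim\Res_{\mathcal X}^m(\lambda+m)=\sum_{m\ge 0,\ m\neq -\lambda}\dim\Res_{\mathcal X}^m(\lambda+m)$.

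The main obstacle is the combinatorial heart of the second paragraph: extracting the factored form of $P_\ell$ with correct signs from the $\mathcal U_\pm$-calculus of Section~\ref{s:hor-inv}, and then checking, case by case on the parity of $n$ and the position of $\kappa$ within the admissible window, that the only zero of $P_\ell$ which is simultaneously attained and kills a nontrivial sub-band is $\kappa=0$ with $m-2\ell=0$. The filtration set-up, the trace-free decomposition, the identification of $\Res_{\mathcal X}^{m_0}(0)$, and the integration-by-parts step are all routine once that point is in place.
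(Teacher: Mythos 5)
Your proposal is correct and follows essentially the same route as the paper: the same filtration by $V_m(\lambda)$, the same explicit nonvanishing analysis of the coefficients from the proof of Lemma~\ref{l:recovery} combined with the spectral window coming from Lemma~\ref{l:reduced}, Theorem~\ref{t:laplacian} and Lemma~\ref{bottomsp}, and the same identification $\Res^{-\lambda}_{\mathcal X}(0)=\mathbb C\,\mathcal I^{-\lambda/2}(1)$, with your final step (applying $\mathcal T^{m_0/2}$ and pairing $\Delta_-^{m_0/2}u$ with $1$) being just a repackaging of the paper's pairing of $\mathcal U_-^{m_0}u$ against $\mathcal I^{m_0/2}(1)$ via Lemma~\ref{l:tough-adjoint}. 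One small inaccuracy: your parenthetical that the exceptional set of Theorem~\ref{t:laplacian} never meets $-\tfrac{n}{2}+\tfrac12\mathbb N$ fails for $n=1$, $m'>0$ (they meet at $0$), but this is harmless since for $n=1$ the hypothesis forces $\lambda\geq 0$, so only the $m=0$ band is nontrivial and the case $\lambda\in-2\mathbb N$ is vacuous.
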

%%%%%%%%%%%%%%%%%%%%%%%%%%%%%%%%%%%%%%%%%%%%%%%%%%%%%%%%%%%%%%%%%%%%%%%%%%%%%%%%
\begin{proof}
We use the proof of Lemma~\ref{l:bands-exhibited}. We first show that for $m$ odd or $\lambda\neq -m$,
\begin{equation}
  \label{e:lop-1}
\mathcal U_-^m(V_{m+1}(\lambda))=\Res^{m}_{\mathcal X}(\lambda+m).
\end{equation}
Using~\eqref{e:bands2-ee}, it suffices to prove that for $0\leq \ell\leq {m\over 2}$, the space
$\mathcal I^\ell(\Res^{m-2\ell,0}_{\mathcal X}(\lambda+m))$ is contained in $\mathcal U_-^m(V_{m+1}(\lambda))$. This follows from the proof
of Lemma~\ref{l:recovery} as long as
$$
\begin{gathered}
\lambda+m\notin \mathbb Z\cap \big( [2\ell+2-n-m,1-n]\cup [m-2\ell,m-\ell-1]\big),\\
\lambda+m+{n\over 2}\notin \mathbb Z\cap [1,\ell];
\end{gathered}
$$
using that $\lambda>-{n\over 2}$, it suffices to prove that
\begin{equation}
  \label{e:aslongas}
\lambda\notin \mathbb Z\cap [-2\ell, -\ell-1].
\end{equation}
On the other hand by Lemma~\ref{l:reduced}, Theorem~\ref{t:laplacian}, and Lemma~\ref{bottomsp}, if $\ell < {m\over 2}$ and
the space $\Res^{m-2\ell,0}_{\mathcal X}(\lambda+m)$ is nontrivial, then
$$
-\Big(\lambda+m+{n\over 2}\Big)^2+{n^2\over 4}+m-2\ell\geq m-2\ell+n-1,
$$
implying
\begin{equation}
  \label{e:bohoho}
\Big| \lambda+m+{n\over 2}\Big|\leq \Big|{n\over 2}-1\Big|
\end{equation}
and~\eqref{e:aslongas} follows. For the case $\ell={m\over 2}$, since $\Delta\geq 0$ on functions, we have
$$
-\Big(\lambda+m+{n\over 2}\Big)^2+{n^2\over 4}\geq 0,
$$
which implies that $\lambda\leq -m$ and thus~\eqref{e:aslongas} holds unless $\lambda=-m$.

It remains to consider the case when $m=2\ell$ is even and $\lambda=-m$. We have
$$
\Res^m_{\mathcal X}(0)=\mathcal I^{\ell}(\Res^{0,0}_{\mathcal X}(0));
$$
that is, $\Res^{m-2\ell',0}_{\mathcal X}(0)$ is trivial for $\ell'<{m\over 2}$. For $n>1$,
this follows immediately from~\eqref{e:bohoho}, and for $n=1$, since the bundle
$\mathcal E^*$ is one-dimensional we get $\Res^{m',0}_{\mathcal X}(\lambda)=0$ for $m'\geq 2$.
Now, $\Res^{0,0}_{\mathcal X}(0)=\Res^0_{\mathcal X}(0)$ corresponds via Lemma~\ref{l:reduced} and Theorem~\ref{t:laplacian}
to the kernel of the scalar Laplacian, that is, to the space of constant functions.
Therefore, $\Res^{0,0}_{\mathcal X}$ is one-dimensional and it is spanned
by the constant function $1$ on $SM$; it follows that $\Res^m_{\mathcal X}(0)$
is spanned by $\mathcal I^{\ell}(1)$.
However, by Lemma~\ref{l:tough-adjoint}, for each $u\in \mathcal D'(SM)$,
$$
\langle \mathcal I^{\ell}(1),\mathcal U_-^m u\rangle_{L^2}=(-1)^m\langle \mathcal V_-^m\mathcal I^{\ell}(1),u\rangle_{L^2}=0.
$$
Since $\mathcal U_-^m(V_{m+1}(\lambda))\subset \Res^m_{\mathcal X}(0)$,
we have $\mathcal U_-^m=0$ on $V_{m+1}(\lambda)$,
which implies that $V_{m+1}(\lambda)=V_m(\lambda)$, finishing the proof.
\end{proof}
%%%%%%%%%%%%%%%%%%%%%%%%%%%%%%%%%%%%%%%%%%%%%%%%%%%%%%%%%%%%%%%%%%%%%%%%%%%%%%%%

To prove Theorem~\ref{t:main}, it now suffices to combine Lemmas~\ref{l:bands-exhibited}--\ref{l:better}
with Theorem~\ref{t:laplacian}.

%%%%%%%%%%%%%%%%%%%%%%%%%%%%%%%%%%%%%%%%%%%%%%%%%%%%%%%%%%%%%%%%%%%%%%%%%%%%%%%%
\subsection{Resonance pairing and algebraic multiplicity}
\label{s:multiplicity}

In this section, we prove Theorem~\ref{t:noalg}. The key component is a pairing formula which states that
the inner product between a resonant and a coresonant state, defined in~\eqref{e:inner-product}, is determined
by the inner product between the corresponding eigenstates of the Laplacian. The nondegeneracy of the resulting inner product
as a bilinear operator on $\Res_X(\lambda)\times \Res_{X^*}(\lambda)$ for $\lambda\not\in {1\over 2}\mathbb Z$
immediately implies the fact that the algebraic and geometric multiplicities of $\lambda$ coincide
(that is, $X+\lambda$ does not have any nontrivial Jordan cells).

To state the pairing formula, we first need a decomposition of the space $\Res_X(\lambda)$, which
is an effective version of the formulas~\eqref{e:bands-1} and~\eqref{e:bands2-e}.
Take $m\geq 0$, $\ell\leq \lfloor m/2\rfloor$,
$w\in \Bd^{m-2\ell,0}(\lambda)$. Let $\mathcal I$ be the operator defined in Section~\ref{symtens}. Then
\eqref{e:bands2-ee} and Lemma~\ref{l:reduced} show that
$$
\Res_{\mathcal X}^m(\lambda)=\bigoplus_{\ell=0}^{\lfloor m/2\rfloor}
\mathcal I^\ell(\Res_{\mathcal X}^{m-2\ell,0}(\lambda))=
\bigoplus_{\ell=0}^{\lfloor m/2\rfloor} \mathcal I^\ell(\Phi_-^{\lambda}\mathcal Q_-(\Bd^{m-2\ell, 0}(\lambda))).
$$
Next, let
$$
\mathcal V_\pm^m:\mathcal D'(SM;\otimes^m_S \mathcal E^*)\to\mathcal D'(SM),\quad
\Delta_\pm:\mathcal D'(SM)\to\mathcal D'(SM)
$$
be the operators introduced in Section~\ref{s:hor-inv}. Then the proofs of Lemma~\ref{l:bands-exhibited} and Lemma~\ref{l:recovery}
show that for $\lambda\not\in {1\over 2}\mathbb Z$,
\begin{equation}
  \label{e:layout}
\begin{gathered}
\Res_X(\lambda)=\bigoplus_{m\geq 0}\bigoplus_{\ell=0}^{\lfloor m/2\rfloor} V_{m\ell}(\lambda),\quad
\Res_{X^*}(\lambda)=\bigoplus_{m\geq 0}\bigoplus_{\ell=0}^{\lfloor m/2\rfloor}V_{m\ell}^*(\lambda);\\
V_{m\ell}(\lambda):=
\Delta_+^\ell\mathcal  V_+^{m-2\ell}(\Phi_-^{\lambda+m}\mathcal Q_-(\Bd^{m-2\ell, 0}(\lambda+m))),\\
V_{m\ell}^*(\lambda):=
\Delta_-^\ell\mathcal  V_-^{m-2\ell}(\Phi_+^{\bar\lambda+m}\mathcal Q_+(\overline{\Bd^{m-2\ell, 0}(\lambda+m)})),
\end{gathered}
\end{equation}
and the operators in the definitions of $V_{m\ell}(\lambda),V^*_{m\ell}(\lambda)$ are one-to-one
on the corresponding spaces. By the proof of Lemma~\ref{l:better}, the decomposition~\eqref{e:layout}
is also valid for $\lambda\in (-n/2,\infty)\setminus (-2\mathbb N)$; for
$\lambda\in (-n/2,\infty)\cap (-2\mathbb N)$, we have
\begin{equation}
  \label{e:layout2}
\begin{gathered}
\Res_X(\lambda)=\bigoplus_{m\geq 0\atop m\neq -\lambda}\bigoplus_{\ell=0}^{\lfloor m/2\rfloor} V_{m\ell}(\lambda),\quad
\Res_{X^*}(\lambda)=\bigoplus_{m\geq 0\atop m\neq -\lambda}\bigoplus_{\ell=0}^{\lfloor m/2\rfloor}V_{m\ell}^*(\lambda).
\end{gathered}
\end{equation}
We can now state the pairing formula:
%%%%%%%%%%%%%%%%%%%%%%%%%%%%%%%%%%%%%%%%%%%%%%%%%%%%%%%%%%%%%%%%%%%%%%%%%%%%%%%%
\begin{lemm}
  \label{l:the-pairing}
Let $\lambda\not\in -{n\over 2}-{1\over 2}\mathbb N_0$ and $u\in\Res_X(\lambda)$, $u^*\in\Res_{X^*}(\lambda)$.
Let
$\langle u,u^*\rangle_{L^2(SM)}$ be defined by~\eqref{e:inner-product}. Then:

1. If $u\in V_{m\ell}(\lambda), u^*\in V^*_{m'\ell'}(\lambda)$, and $(m,\ell)\neq (m',\ell')$, then $\langle u,u^*\rangle_{L^2(SM)}=0$.

2. If $u\in V_{m\ell}(\lambda)$, $u^*\in V^*_{m\ell}(\lambda)$ and $w\in \Bd^{m-2\ell,0}(\lambda+m)$,
$w^*\in\overline{\Bd^{m-2\ell,0}(\lambda+m)}$ are the elements generating $u,u^*$ according to~\eqref{e:layout},
then 
\begin{equation}
  \label{e:the-pairing}
\langle u,u^*\rangle_{L^2(SM)}= c_{m\ell}(\lambda)\langle \mathscr P^-_{\lambda+m}(w), \mathscr P^+_{\lambda+m}(w^*)\rangle_{L^2(M)},
\end{equation}
where
$$
\begin{gathered}
c_{m\ell}(\lambda)=2^{m+2\ell-n}\pi^{-1-{n\over 2}}\ell!(m-2\ell)!\sin\Big(\pi\Big({n\over 2}+\lambda\Big)\Big)\\
\cdot {\Gamma(m+{n\over 2}-\ell)
\Gamma(\lambda+n+2m-2\ell)\Gamma(-\lambda-\ell)\Gamma(-\lambda-m-{n\over 2}+\ell+1)
\over \Gamma(m+{n\over 2}-2\ell)\Gamma(-\lambda-2\ell)}.
\end{gathered}
$$
and under the conditions (i) either $\lambda\notin-2\mathbb N$ or $m\neq-\lambda$
and (ii) $V_{m\ell}(\lambda)$ is nontrivial, we have $c_{m\ell}(\lambda)\neq 0$.
\end{lemm}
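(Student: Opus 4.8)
The strategy is to reduce the pairing $\langle u,u^*\rangle_{L^2(SM)}$, computed on the sphere bundle, to the pairing $\langle \mathscr P^-_{\lambda+m}(w),\mathscr P^+_{\lambda+m}(w^*)\rangle_{L^2(M)}$ of the associated Laplacian eigenstates on $M$, tracking all constants along the way. First I would establish part 1 (orthogonality across distinct $(m,\ell)$). Since $\langle u,u^*\rangle$ is $\Gamma$-invariant and $u,u^*$ lift to $S\mathbb H^{n+1}$, I would use the commutation relations~\eqref{e:comm-hor}, \eqref{e:comm-lap} together with the adjoint identities $\mathcal U_\pm^*=-\mathcal V_\pm$ (Lemma~\ref{l:tough-adjoint}) and the self-adjointness of $\Delta_\pm$: moving the operators $\Delta_+^\ell\mathcal V_+^{m-2\ell}$ defining $u$ onto $u^*$ turns them into $\mathcal U_-^{m-2\ell}$ and $\Delta_-^\ell$ applied to $u^*$, which vanish unless the band indices match, because $\mathcal U_-$ annihilates coresonant states of its own band and lowers the band index otherwise; a parallel argument with $\mathcal T,\mathcal I$ (using \eqref{e:trart}) separates the $\ell$ index. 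One has to be careful that these integrations by parts are legitimate despite $u,u^*\notin L^2$; this is justified exactly as in~\eqref{e:inner-product}, since $\WF(u)\subset E_u^*$, $\WF(u^*)\subset E_s^*$ are disjoint and all operators involved are differential.

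For part 2, the main computation, I would proceed in two stages. Stage one: reduce to the case $m-2\ell$, i.e.\ compute the effect of the operators $\Delta_\pm^\ell \mathcal V_\pm^{m-2\ell}$ on the pairing. Writing $u=\Delta_+^\ell\mathcal V_+^{m-2\ell}\tilde v$ with $\tilde v=\Phi_-^{\lambda+m}\mathcal Q_-(w)\in\Res^{m-2\ell,0}_{\mathcal X}(\lambda+m)$ (and similarly $u^*$ from $\tilde v^*$), I move $\Delta_+^\ell\mathcal V_+^{m-2\ell}$ onto the $u^*$ side using Lemma~\ref{l:tough-adjoint} and self-adjointness of $\Delta_-$, obtaining $\pm\langle \tilde v, \mathcal U_-^{m-2\ell}\Delta_-^\ell u^*\rangle$. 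Then $\mathcal U_-^{m-2\ell}\Delta_-^\ell\Delta_-^\ell\mathcal V_-^{m-2\ell}\tilde v^*$ is evaluated by Lemma~\ref{l:long-product} and Lemma~\ref{l:moins-penible} (with the roles of $\pm$ interchanged), applied to the trace-free tensor $\tilde v^*$ on which $\mathcal X$ acts by $\bar\lambda+m$; this produces an explicit scalar factor built from products $\prod_{j=n-1}^{n+(m-2\ell)-2}(\bar\lambda+m+j)$, $\prod(2(\bar\lambda+m)\pm(n-2j))$, $\prod(\bar\lambda+m\mp(m-2\ell+j))$, which after using the relation between $\lambda$ and $\bar\lambda$ in the pairing (the pairing is sesquilinear but the constants are real) and simplifying via the Gamma function functional equation $\Gamma(z+1)=z\Gamma(z)$ collapses to the ratio of Gammas appearing in $c_{m\ell}(\lambda)$ up to the factor carried over from stage two. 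This reduces everything to computing $\langle \tilde v,\tilde v^*\rangle$ for $\tilde v\in\Res^{k,0}_{\mathcal X}(\lambda+m)$, $\tilde v^*\in\Res^{k,0}_{\mathcal X^*}(\lambda+m)$ where $k=m-2\ell$.

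Stage two: relate $\langle \tilde v,\tilde v^*\rangle_{L^2(SM)}$ directly to $\langle \mathscr P^-_{\lambda+m}(w),\mathscr P^+_{\lambda+m}(w^*)\rangle_{L^2(M)}$. Here I would lift the integral over $SM=\Gamma\backslash S\mathbb H^{n+1}$ to a fundamental domain, write $\tilde v=\Phi_-^{\lambda+m}\mathcal Q_-(w)$, $\tilde v^*=\Phi_+^{\bar\lambda+m}\mathcal Q_+(w^*)$, and use the fibration $SM\to M$: integrating over the fiber $S_xM$ produces $\mathscr P^\mp$ by Definition~\ref{d:poisson}, but the two Poisson kernels involve $B_-$ resp.\ $B_+$, so one must handle a double fiber integral over $S_xM$ in the variable $\xi$ with the two endpoint maps. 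The cleanest route is to pass to the parametrization~\eqref{e:otherpar} of $S\mathbb H^{n+1}$ by $(\nu_-,\nu_+,s)$, in which $\Phi_-,\Phi_+$ and the Jacobian are explicit (using~\eqref{e:relation}), and the $s$-integral contributes a convergent factor of the form $\int_{\mathbb R} e^{(\lambda-\bar\lambda)s}\,ds$ type --- more precisely one obtains a Beta-type integral whose value is a ratio of Gammas times $\sin(\pi(\tfrac n2+\lambda))$, the origin of that sine factor in $c_{m\ell}(\lambda)$. The pairing $\langle w, w^*\rangle$ of boundary distributions then reassembles, via the two Poisson operators and the conformal isometry property~\eqref{conformaldxi} of $\mathcal A_\pm$, into the claimed $L^2(M)$ pairing of the eigentensors, with the remaining explicit constant. \textbf{I expect stage two --- the double fiber integral and the precise identification of the Beta/Gamma constant and the $\sin$ factor --- to be the main obstacle}, since it requires carefully matching the normalization of $\mathscr P^\pm_\lambda$, the measure on $\mathbb S^n\times\mathbb S^n$, and the Poisson kernel identity $\Phi_+\Phi_-(1-B_+\cdot B_-)=2$, and one must also check nonvanishing of $c_{m\ell}(\lambda)$ under hypotheses (i)--(ii): the Gamma factors in the numerator are finite and nonzero off the poles listed, the denominator is finite, and $\sin(\pi(\tfrac n2+\lambda))\neq 0$ precisely because $\lambda\notin-\tfrac n2-\tfrac12\mathbb N_0$ combined with~\eqref{e:reparts}-type constraints forcing, when $V_{m\ell}(\lambda)$ is nontrivial, that $\tfrac n2+\lambda$ is not an integer unless we are in the excluded case $\lambda\in-2\mathbb N$, $m=-\lambda$.
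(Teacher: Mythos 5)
Your part~1 and the first stage of part~2 follow the paper's own route: the orthogonality in $(m,\ell)$ and the reduction of $\langle u,u^*\rangle$ to $\langle v,v^*\rangle_{L^2(SM;\otimes^{m-2\ell}\mathcal E^*)}$ are done exactly by moving $\Delta_\pm^{\ell}\mathcal V_\pm^{m-2\ell}$ across the pairing with Lemma~\ref{l:tough-adjoint} and evaluating with Lemmas~\ref{l:long-product} and~\ref{l:moins-penible} (modulo a slip where you write $\mathcal U_-^{m-2\ell}\Delta_-^{\ell}\Delta_-^{\ell}\mathcal V_-^{m-2\ell}\tilde v^*$; the operators landing on $u^*$ should be $\mathcal U_+^{m-2\ell}\Delta_+^{\ell}$, as in the displayed identity after~\eqref{e:biprod} in the paper, but this is cosmetic).

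The genuine gap is in your stage two, which is the analytic heart of the lemma (the paper's Lemma~\ref{l:pairing-key}). The difficulty is not a ``double fiber integral producing the two Poisson kernels'': it is that $\langle f,f^*\rangle_{L^2(M)}$ is an integral over $S^2M$ with \emph{two independent} fiber variables $\eta_-,\eta_+$ (see~\eqref{e:intie-1}), while $\langle v,v^*\rangle_{L^2(SM)}$ has a single fiber variable, and the passage between the two is a change of variables $\Psi:(x,\xi,\eta)\mapsto(y,\eta_-,\eta_+)$ with an extra $n$-dimensional parameter $\eta\in\mathcal E(x,\xi)$, whose $\eta$-integral \emph{diverges} and must be handled by Hadamard regularization plus a separate argument (the dilation field $L$) showing the cut-off complement carries no constant term. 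Your proposed substitute~--- pass to the $(\nu_-,\nu_+,s)$ coordinates~\eqref{e:otherpar} and extract a Beta-type $s$-integral~--- would fail: along a fixed geodesic both $\Phi_+\Phi_-$ (by~\eqref{e:relation}) and the pairing of the $\mathcal A_\pm$-transported frames are constant in $s$, so there is no convergent $\int ds$ to produce Gamma factors, and on the compact quotient the fundamental domain does not factor as (domain in $\nu_\pm$)$\times$(interval in $s$). In the actual proof the Gamma ratio comes from the regularized radial integral in $\eta$ (the Jacobian~\eqref{e:Psi-J}, the operator $\mathscr C_\eta$, and Lemma~\ref{l:asysa2}), and the factor $\sin(\pi({n\over 2}+\lambda))$ arises from Gamma reflection when the stage-one constants are combined with the reciprocal of the constant in Lemma~\ref{l:pairing-key}~--- not from an $s$-integration; relatedly, the nonvanishing of $c_{m\ell}(\lambda)$ is not simply ``$\sin\neq 0$'' (it can vanish at allowed $\lambda$ and be compensated by Gamma poles), but follows from the eigenvalue constraints used in the proof of Lemma~\ref{l:better}.
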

%%%%%%%%%%%%%%%%%%%%%%%%%%%%%%%%%%%%%%%%%%%%%%%%%%%%%%%%%%%%%%%%%%%%%%%%%%%%%%%%
\noindent\textbf{Remarks}. (i)
The proofs below are rather technical, and
it is suggested that the reader start with the case of resonances in the first band, $m=\ell=0$, which preserves
the essential analytic difficulties of the proof but considerably reduces the amount of calculations
needed (in particular, one can
go immediately to Lemma~\ref{l:pairing-key}, and
the proof of this lemma for the case $m=\ell=0$ does not involve the operator $\mathscr C_\eta$). We have
$$
c_{00}(\lambda)=(4\pi)^{-n/2}{\Gamma(n+\lambda)\over\Gamma({n\over 2}+\lambda)}.
$$

\noindent (ii)
In the special case of $n=1,m=\ell=0$, Lemma~\ref{l:the-pairing} is a corollary of~\cite[Theorem~1.2]{an-ze},
where the product $uu^*\in\mathcal D'(SM)$ lifts to a Patterson--Sullivan distribution on $S\mathbb H^2$.
In general, if $|\Re\lambda|\leq C$ and $\Im\lambda\to\infty$, then $c_{m\ell}(\lambda)$ grows like $|\lambda|^{{n\over 2}+m}$.

Lemma~\ref{l:the-pairing} immediately gives
%%%%%%%%%%%%%%%%%%%%%%%%%%%%%%%%%%%%%%%%%%%%%%%%%%%%%%%%%%%%%%%%%%%%%%%%%%%%%%%%
\begin{proof}[Proof of Theorem~\ref{t:noalg}]
By Theorem~\ref{t:laplacian}, we know that
$$
\mathscr P^-_\lambda:\Bd^{m-2\ell,0}(\lambda+m)\to\Eig^{m-2\ell}(-(\lambda+m+n/2)^2+n^2/4+m-2\ell)
$$
is an isomorphism. Given~\eqref{e:pid}, we also get the isomorphism
$$
\mathscr P^+_\lambda:\overline{\Bd^{m-2\ell,0}(\lambda+m)}\to \Eig^{m-2\ell}(-(\lambda+m+n/2)^2+n^2/4+m-2\ell).
$$
Here we used that the target space is invariant under complex conjugation. By Lemma~\ref{l:the-pairing},
the bilinear product
\begin{equation}
  \label{e:biprod}
\Res_X(\lambda)\times \Res_{X^*}(\lambda)\to\mathbb C,\quad
(u,u^*)\mapsto\langle u,u^*\rangle_{L^2(SM)}
\end{equation}
is nondenegerate, since the $L^2(M)$ inner product restricted to $\Eig^{m-2\ell}(-(\lambda+m+n/2)^2+n^2/4+m-2\ell)$
is nondegenerate for all $m,\ell$.

Assume now that $\tilde u\in\mathcal D'(SM)$ satisfies $(X+\lambda)^2\tilde u=0$ and $\tilde u\in\mathcal H^r$ for some $r$, $\Re\lambda>-r/C_0$; we need
to show that $(X+\lambda)\tilde u=0$. Put $u:=(X+\lambda)\tilde u$. Then $u\in\Res_X(\lambda)$. However, $u$ also lies
in the image of $X+\lambda$ on $\mathcal H^r$, therefore
we have $\langle u,u^*\rangle=0$ for each $u^*\in\Res_{X^*}(\lambda)$. Since the product~\eqref{e:biprod} is nondegenerate,
we see that $u=0$, finishing the proof.
\end{proof}
%%%%%%%%%%%%%%%%%%%%%%%%%%%%%%%%%%%%%%%%%%%%%%%%%%%%%%%%%%%%%%%%%%%%%%%%%%%%%%%%
In the remaining part of this section, we prove Lemma~\ref{l:the-pairing}. Take some $m,m',\ell,\ell'\geq 0$ such that 
$2\ell\leq m$, $2\ell'\leq m'$, and consider $u\in V_{m\ell}(\lambda)$, $u^*\in V_{m'\ell'}^*(\lambda)$ given by
$$
u=\Delta_+^\ell \mathcal V_+^{m-2\ell} v,\quad
u^*=\Delta_-^{\ell'}\mathcal V_-^{m'-2\ell'} v^*,
$$
where for some $w\in \Bd^{m-2\ell,0}(\lambda+m)$ and $w^*\in \overline{\Bd^{m'-2\ell',0}(\lambda+m')}$,
$$
v=\Phi_-^{\lambda+m}\mathcal Q_-(w)\in\mathcal \Res^{m-2\ell,0}_{\mathcal X}(\lambda+m),\quad
v^*=\Phi_+^{\bar\lambda+m'}\mathcal Q_+(w^*)\in\Res^{m'-2\ell',0}_{\mathcal X^*}(\lambda+m').
$$
Using Lemma~\ref{l:tough-adjoint} and the fact that $\Delta_\pm$ are symmetric, we get
$$
\langle u,u^*\rangle_{L^2(SM)}=(-1)^{m'}\langle \mathcal U_-^{m'-2\ell'}\Delta_-^{\ell'}\Delta_+^\ell
\mathcal V_+^{m-2\ell} v,v^*\rangle_{L^2(SM;\otimes^{m'-2\ell'}\mathcal E^*)}.
$$
By Lemmas~\ref{l:long-product} and~\ref{l:moins-penible}, we have $\mathcal U_-^{m+1}\Delta_+^\ell\mathcal V_+^{m-2\ell} v=0$.
Therefore, if $m'>m$, we derive that $\langle u,u^*\rangle_{L^2(SM)}=0$; by swapping $u$ and $u^*$,
one can similarly handle the case $m'<m$. We therefore assume that $m=m'$. Then by Lemmas~\ref{l:long-product} and~\ref{l:moins-penible}
(see the proof of Lemma~\ref{l:recovery}),
$$
\begin{gathered}
(-1)^{\ell+\ell'}\mathcal U_-^{m-2\ell'}\Delta_-^{\ell'}\Delta_+^\ell \mathcal V_+^{m-2\ell} v
=\mathcal T^{\ell'}\mathcal U_-^m(-\Delta_+)^\ell \mathcal V_+^{m-2\ell} v\\
=2^{m+\ell}(m-2\ell)!{\Gamma(\lambda+n+2m-2\ell-1)\Gamma(-\lambda-\ell)\Gamma(-\lambda-m-{n\over 2}+\ell+1)
\over \Gamma(\lambda+m+n-1)\Gamma(-\lambda-2\ell)\Gamma(-\lambda-m-{n\over 2}+1)
}\mathcal T^{\ell'}\mathcal I^{\ell}v.
\end{gathered}
$$
If $\ell'>\ell$, this implies that $\langle u,u^*\rangle_{L^2(SM)}=0$, and the
case $\ell'<\ell$ is handled similarly. (Recall that $\mathcal T(v)=0$.)
We therefore assume that $m=m',\ell=\ell'$. In this case, by~\eqref{e:trart},
$$
\mathcal T^{\ell}\mathcal I^{\ell}v=2^\ell\ell!{\Gamma(m+{n\over 2}-\ell)\over\Gamma(m+{n\over 2}-2\ell)}v,
$$
which implies that
$$
\begin{gathered}
\langle u,u^*\rangle_{L^2(SM)}=(-2)^{m+2\ell}
\ell!(m-2\ell)!
{\Gamma(m+{n\over 2}-\ell)
\Gamma(\lambda+n+2m-2\ell-1)\over \Gamma(m+{n\over 2}-2\ell)\Gamma(\lambda+n+m-1)}\\
{\Gamma(-\lambda-\ell)\Gamma(-\lambda-m-{n\over 2}+\ell+1)
\over \Gamma(-\lambda-2\ell)\Gamma(-\lambda-m-{n\over 2}+1)}
\langle v,v^*\rangle_{L^2(SM;\otimes^{m-2\ell}\mathcal E^*)}.
\end{gathered}
$$
Note that under assumptions (i) and (ii) of Lemma~\ref{l:the-pairing}, the coefficient in
the formula above is nonzero, see the proof of Lemma~\ref{l:better}.

It then remains to prove the following identity (note that the coefficient there is
nonzero for $\lambda\notin \mathbb Z$ or $\Re\lambda>m-{n\over 2}$):
%%%%%%%%%%%%%%%%%%%%%%%%%%%%%%%%%%%%%%%%%%%%%%%%%%%%%%%%%%%%%%%%%%%%%%%%%%%%%%%%
\begin{lemm}
  \label{l:pairing-key}
Assume that $v\in\Res_{\mathcal X}^{m,0}(\lambda)$ and $v^*\in\Res_{\mathcal X^*}^{m,0}(\lambda)$.
Define
$$
f(x):=\int_{S_xM} v(x,\xi)\,dS(\xi),\quad
f^*(x):=\int_{S_xM} v^*(x,\xi)\,dS(\xi),
$$
where integration of tensors is understood as in Definition~\ref{d:poisson}.
If $\lambda\not \in -({n\over 2}+\mathbb N_0)$, then
$$
\langle f,f^*\rangle_{L^2(M;\otimes^m T^*M)}=2^n\pi^{n\over 2}{\Gamma({n\over 2}+\lambda)
\over (n+\lambda+m-1)\Gamma(n-1+\lambda)}\langle v,v^*\rangle_{L^2(SM;\otimes^m\mathcal E^*)}.
$$
\end{lemm}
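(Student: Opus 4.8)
The plan is to push both pairings down to the conformal boundary $\sph^n$ and then to an explicit Euclidean integral. By Lemma~\ref{l:reduced}, lifting to $S\hh^{n+1}$ we may write $v=\Phi_-^{\la}\mc{Q}_-(w)$ and $v^*=\Phi_+^{\bar\la}\mc{Q}_+(w^*)$ for trace-free, $\Gamma$-equivariant boundary tensors $w\in\Bd^{m,0}(\la)$, $w^*\in\overline{\Bd^{m,0}(\la)}$, so that $f=\mathscr{P}^-_\la w$ and $f^*=\mathscr{P}^+_\la w^*$ in the sense of Definition~\ref{d:poisson}. First I would compute the pointwise tensor contraction $\langle v,v^*\rangle_{\otimes^m\mc{E}^*}$ on $S\hh^{n+1}$. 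The maps $\mc{A}_\pm$ of Section~\ref{s:E} are isometries by~\eqref{Apmiso} and are constant along each geodesic: this follows from~\eqref{e:Adef} and~\eqref{formulaflow}, since under $\varphi_t$ the vector $x\pm\xi$ scales by $e^{\pm t}$ while $\langle(0,\zeta),x\rangle_M$ scales by $e^{\mp t}$. Hence $\mathscr{C}:=\mc{A}_+^{-1}\mc{A}_-$ is a geodesic-invariant orthogonal bundle map depending only on the pair $(B_-(x,\xi),B_+(x,\xi))$ (this is essentially the operator $\mathscr{C}_\eta$ of the Remark following Lemma~\ref{l:the-pairing}; for $m=0$ it drops out). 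Using this,
\[
\langle v,v^*\rangle_{\otimes^m\mc{E}^*}(x,\xi)=\big(\Phi_-(x,\xi)\Phi_+(x,\xi)\big)^{\la}\,\big\langle w(B_-(x,\xi)),(\otimes^m\mathscr{C})\,\overline{w^*}(B_+(x,\xi))\big\rangle ,
\]
and by~\eqref{e:relation} the prefactor is $\big(2/(1-B_-(x,\xi)\cdot B_+(x,\xi))\big)^{\la}$. Thus $\langle v,v^*\rangle$ is the pullback under the submersion $(B_-,B_+)\colon S\hh^{n+1}\to(\sph^n\times\sph^n)_\Delta$ of an explicit distributional kernel $K$ on $(\sph^n\times\sph^n)_\Delta$ built from $w\otimes\overline{w^*}$, the factor $(1-\nu_-\cdot\nu_+)^{-\la}$, and $\mathscr{C}$; this is well defined because $\WF(v)\subset E_u^*$ and $\WF(\overline{v^*})\subset E_s^*$ are conormal to the fibres of $B_\mp$ and meet only at the zero section, by~\eqref{E*}.

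On the $L^2(M)$ side, I would turn $f$ and $f^*$ into Poisson integrals over $\sph^n$: changing variables $\xi=\xi_\mp(x,\nu)$ in~\eqref{e:poisson-def}, with $dS(\xi)=P(x,\nu)^n\,dS(\nu)$ from~\eqref{conformaldxi} and $\Phi_\mp(x,\xi_\mp(x,\nu))=P(x,\nu)$ from~\eqref{Phiofxi}, gives $f(x)=\int_{\sph^n}P(x,\nu)^{n+\la}\,\mathbf{T}_-(x,\nu)\,w(\nu)\,dS(\nu)$ and an analogous formula for $f^*$, where $\mathbf{T}_\pm$ is the tensorial factor coming from $\mc{A}_\pm$ and the orthogonal projection $T_x\hh^{n+1}\to\mc{E}(x,\xi)$ (of pointwise operator norm $\le 1$, again by~\eqref{Apmiso}). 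Therefore
\[
\langle f,f^*\rangle_{L^2(M)}=\int_{\mc{F}}\iint_{\sph^n\times\sph^n}P(x,\nu)^{n+\la}P(x,\nu')^{n+\la}\big\langle\mathbf{T}_-(x,\nu),\mathbf{T}_+(x,\nu')\big\rangle\,\big(w\otimes\overline{w^*}\big)\,dS(\nu)\,dS(\nu')\,d\mathrm{vol}(x),
\]
the $x$-integral being over a fundamental domain $\mc{F}$ for $\Gamma$ on $\hh^{n+1}$.

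The analytic core is to show that $\int_{SM}\langle v,v^*\rangle\,d\mu$ and $\int_M\langle f,f^*\rangle\,d\mathrm{vol}$ are, up to an explicit scalar, the same pairing of $w\otimes\overline{w^*}$ against a $\Gamma$-invariant kernel on $(\sph^n\times\sph^n)_\Delta$: on the $SM$ side one disintegrates the Liouville measure along the geodesic flow and uses that $K$ is constant along geodesics, while on the $M$ side one unfolds the $\mc{F}$-integral against $\Gamma$. Neither integral is absolutely convergent in the flow direction — the ``$s$'' variable of~\eqref{e:otherpar} — so both must be regularized, e.g. by analytic continuation in the exponent (replacing $\la$ by $\la-\epsilon$ in the prefactor and continuing) or by an $\epsilon$-cutoff in $s$; the two regularizations produce different scalar factors, and the asserted identity is exactly that their ratio equals $2^n\pi^{\ndemi}\Gamma(\ndemi+\la)\big/\big((n+\la+m-1)\Gamma(n-1+\la)\big)$. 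I expect the main obstacle to be precisely this matching of the two regularizations, together with checking that the $\Gamma$-equivariance of $w,w^*$ is preserved under the unfolding; the hypothesis $\la\notin-(\ndemi+\nn_0)$ is what keeps the relevant factors finite and the continuation unambiguous.

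Finally, the residual scalar is a closed Euclidean integral: from the $M$ side it is (a regularization of) $\int_{\hh^{n+1}}P(x,\nu)^{n+\la}P(x,\nu')^{n+\la}\big\langle\mathbf{T}_-(x,\nu),\mathbf{T}_+(x,\nu')\big\rangle\,d\mathrm{vol}(x)$, which after sending one of the two boundary points to infinity in the half-space model~\eqref{e:umetric} reduces to a Beta-type integral. The trace-free conditions $\mc{T}(w)=\mc{T}(w^*)=0$ let one replace the $\mathbf{T}_\pm$-contracted tensors by homogeneous harmonic polynomials via the correspondence of Section~\ref{symtens} (in particular~\eqref{Trace=Lap}), so the tensor part becomes a standard integral of a spherical harmonic against a power of the chordal distance; tracking all constants yields the stated factor. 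As the Remark suggests, the case $m=\ell=0$ — where $\mathscr{C}$ and $\mathbf{T}_\pm$ are trivial and only the scalar Poisson kernel $P(x,\nu)^{n+\la}$ appears — is a clean model of the whole argument, recovering for $n=1$ the pairing of~\cite[Theorem~1.2]{an-ze}.
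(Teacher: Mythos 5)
Your reduction of the data is fine as far as it goes ($v=\Phi_-^{\la}\mc{Q}_-(w)$, $f=\mathscr P^-_\la w$, flow-invariance of $\mc{A}_\pm$ and hence the formula $\langle v,v^*\rangle=(\Phi_-\Phi_+)^{\la}\langle w\circ B_-,(\otimes^m\mathscr C)\,\overline{w^*}\circ B_+\rangle$), but the step you call the ``analytic core'' is not an obstacle you can defer: it \emph{is} the lemma. Saying that both sides reduce, after regularization, to the same $\Gamma$-invariant boundary pairing of $w\otimes\overline{w^*}$ up to a scalar, and that ``the asserted identity is exactly that their ratio equals'' the stated constant, restates the conclusion without producing it. Moreover several of the supporting claims are off. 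The pairing $\langle v,v^*\rangle_{L^2(SM)}$ needs no regularization at all --- it is already well defined by wavefront transversality, as in~\eqref{e:inner-product}; the genuine singularity is not ``in the flow direction $s$'' but at the collision of the two boundary points, $\nu_-=\nu_+$ (equivalently $\eta_-+\eta_+=0$ in the $S^2M$ picture), where your kernel $(1-\nu_-\cdot\nu_+)^{-\la}$ blows up against the distributional product $w\otimes\overline{w^*}$, and you give no argument that this boundary pairing exists, nor a uniqueness statement forcing both sides to be multiples of one and the same pairing. On the $L^2(M)$ side, ``unfolding the $\mc{F}$-integral against $\Gamma$'' is not an available operation: the integrand $\langle f(x),\overline{f^*(x)}\rangle$ is $\Gamma$-invariant rather than a sum of $\Gamma$-translates, and the would-be unfolded integral $\int_{\hh^{n+1}}P(x,\nu)^{n+\la}P(x,\nu')^{n+\la}\langle\mathbf T_-,\mathbf T_+\rangle\,d\mathrm{vol}(x)$ diverges for the resonant range $\Re\la\le 0$ (indeed $\Re\la$ can be far below $-n/2$), so even the ``Beta-type integral'' that is supposed to yield the constant is not defined without precisely the kind of renormalization argument you have not supplied.

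For comparison, the paper never forms a boundary pairing. It writes $\langle f,f^*\rangle$ as an integral over $S^2M$, transports it to the bundle $\mc{E}_M$ by the explicit diffeomorphism $\Psi$ with Jacobian~\eqref{e:Psi-J}, uses the pointwise identity~\eqref{e:kendall} to exhibit the integrand as $(1+|\eta|^2)^{-\la-n}$ times a twisted contraction of $v$ and $v^*$ at the \emph{same} point of $SM$, inserts the Hadamard cutoff $\chi(\eps|\eta|)$, and computes the $\eta$-integral explicitly (Lemma~\ref{l:asysa2}), which is where the factor $2^n\pi^{n/2}\Gamma(\tfrac n2+\la)/\big((n+\la+m-1)\Gamma(n-1+\la)\big)$ actually comes from. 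The decisive remaining point --- the analogue of which is entirely absent from your plan --- is that the complementary, near-diagonal piece~\eqref{e:bbi} contributes no constant term as $\eps\to 0$; this is proved by integrating by parts along the dilation vector field $L$ and using the commutation identity~\eqref{e:L-eq}, and it is exactly what makes the two ``regularizations'' match. Without an argument of this type (or a meromorphic-continuation substitute carried out in detail), your proposal does not establish either the proportionality of the two pairings or the value of the constant.
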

%%%%%%%%%%%%%%%%%%%%%%%%%%%%%%%%%%%%%%%%%%%%%%%%%%%%%%%%%%%%%%%%%%%%%%%%%%%%%%%%
\begin{proof}
We write
\begin{equation}
  \label{e:intie-1}
\langle f,f^*\rangle_{L^2(M;\otimes^m T^*M)}
=\int_{S^2 M} \langle v(y,\eta_-),\overline{v^*(y,\eta_+)}\rangle_{\otimes^m T^*_y M}\,dyd\eta_-d\eta_+
\end{equation}
where the bundle $S^2M$ is given by
$$
S^2M=\{(y,\eta_-,\eta_+)\mid y\in M,\ \eta_\pm\in S_yM\}.
$$
Define also
$$
S^2_\Delta M=\{(y,\eta_-,\eta_+)\in S^2M\mid \eta_-+\eta_+\neq 0\}.
$$
On the other hand
\begin{equation}
  \label{e:intie-2}
\langle v,v^*\rangle_{L^2(SM;\otimes^m\mathcal E^*)}
=\int_{SM} \langle v(x,\xi),\overline{v^*(x,\xi)}\rangle_{\otimes^m\mathcal E^*(x,\xi)}\,dxd\xi.
\end{equation}
The main idea of the proof is to reduce~\eqref{e:intie-1} to~\eqref{e:intie-2} by applying the
coarea formula to a correctly chosen map $S^2_\Delta M\to SM$. More precisely, 
consider the following map $\Psi:\mathcal E\to S^2_{\Delta}\mathbb H^{n+1}$:
for $(x,\xi)\in S\mathbb H^{n+1}$ and $\eta\in\mathcal E(x,\xi)$, define
$\Psi(x,\xi,\eta):=(y,\eta_-,\eta_+)$, with
$$
\begin{pmatrix} y\\\eta_-\\\eta_+ \end{pmatrix}
=A\big(|\eta|^2\big)\begin{pmatrix} x\\\xi\\\eta\end{pmatrix},\quad
A(s)=\begin{pmatrix}
\sqrt{s+1}&0&1\\
{s\over\sqrt{s+1}}&{1\over \sqrt{s+1}}&1\\
-{s\over\sqrt{s+1}}&{1\over \sqrt{s+1}}&-1
\end{pmatrix}.
$$
%%%%%
\begin{figure}
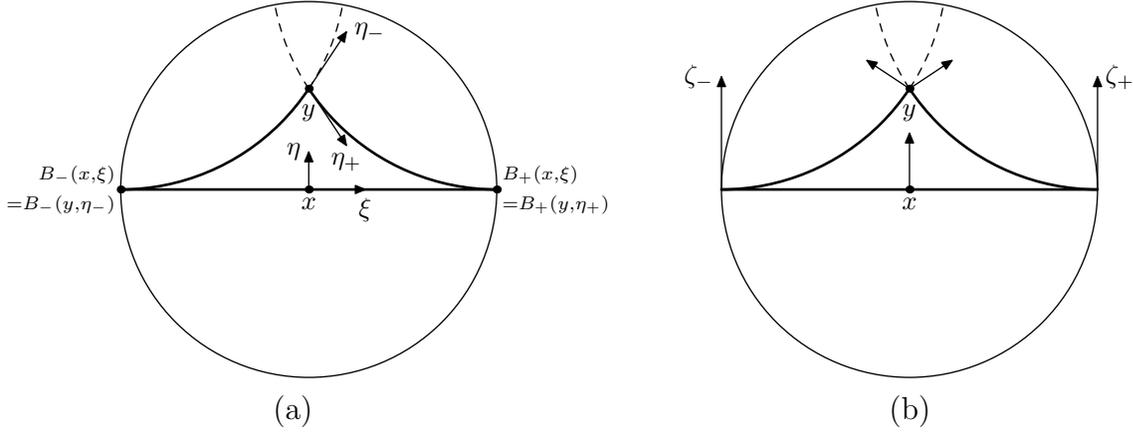

\includegraphics{rrh.1}
\qquad
\includegraphics{rrh.2}
\hbox to\hsize{\qquad\hss (a) \hss\hss \qquad\qquad(b)\hss}
\caption{(a) The map $\Psi:(x,\xi,\eta)\mapsto (y,\eta_-,\eta_+)$.
(b) The vectors $\mathcal A_\pm(x,\xi)\zeta_\pm$ (equal in the case drawn)
and $\mathcal A_\pm(y,\eta_\pm)\zeta_\pm$.}
\label{f:Psi}
\end{figure}
%%%%%
Note that, with $|\eta|$ denoting the Riemannian length of $\eta$ (that is, $|\eta|^2=-\langle \eta,\eta\rangle_M$),
$$
\Phi_\pm(y,\eta_\pm)={\Phi_\pm(x,\xi)\over \sqrt{1+|\eta|^2}},\quad
B_\pm(y,\eta_\pm)=B_\pm(x,\xi),\quad
|\eta_++\eta_-|={2\over\sqrt{1+|\eta|^2}}.
$$
Also,
$$
\det A(s)=-{2\over s+1},\quad
A(s)^{-1}=\begin{pmatrix}
\sqrt{s+1}&-{\sqrt{s+1}\over 2}&{\sqrt{s+1}\over 2}\\
0&{\sqrt{s+1}\over 2}&{\sqrt{s+1}\over 2}\\
-s&{s+1\over 2}&-{s+1\over 2}
\end{pmatrix}.
$$
The map $\Psi$ is a diffeomorphism; the inverse is given by the formulas
$$
x={2y+\eta_+-\eta_-\over |\eta_++\eta_-|},\quad
\xi={\eta_++\eta_-\over |\eta_++\eta_-|},\quad
\eta={2(\eta_--\eta_+)-|\eta_+-\eta_-|^2 y\over |\eta_++\eta_-|^2}.
$$
The map $\Psi^{-1}$ can be visualized as follows (see Figure~\ref{f:Psi}(a)): given $(y,\eta_-,\eta_+)$,
the corresponding tangent vector $(x,\xi)$ is the closest to $y$ point on the geodesic going from $\nu_-=B_-(y,\eta_-)$
to $\nu_+=B_+(y,\eta_+)$ and the vector $\eta$ measures both the distance between $x$ and $y$ and
the direction of the geodesic from $x$ to $y$. The exceptional set $\{\eta_++\eta_-=0\}$
corresponds to $|\eta|=\infty$.

A calculation using~\eqref{e:Adef} shows that for $\zeta_\pm\in T_{B_\pm(x,\xi)} \mathbb S^n$,
$$
\mathcal A_\pm(y,\eta_\pm)\zeta_\pm=\mathcal A_\pm(x,\xi)\zeta_\pm
+{(\mathcal A_\pm(x,\xi)\zeta_\pm)\cdot \eta\over \sqrt{1+|\eta|^2}}(x\pm \xi).
$$
Here $\cdot$ stands for the Riemannian inner product on $\mathcal E$ which is equal
to $-\langle \cdot,\cdot\rangle_M$ restricted to $\mathcal E$.
Then (see Figure~\ref{f:Psi}(b))
$$
\begin{gathered}
(\mathcal A_+(y,\eta_+)\zeta_+)\cdot (\mathcal A_-(y,\eta_-)\zeta_-)
=(\mathcal A_+(x,\xi)\zeta_+)\cdot (\mathcal A_-(x,\xi)\zeta_-)
\\-{2\over 1+|\eta|^2} \big((\mathcal A_+(x,\xi)\zeta_+)\cdot\eta\big)
\big((\mathcal A_-(x,\xi)\zeta_-)\cdot\eta\big)\\
=\big(\mathscr C_\eta(\mathcal A_+(x,\xi)\zeta_+)\big)\cdot (\mathcal A_-(x,\xi)\zeta_-),
\end{gathered}
$$
where $\mathscr C_\eta:\mathcal E(x,\xi)\to\mathcal E(x,\xi)$ is given by
$$
\mathscr C_\eta(\tilde\eta)=\tilde\eta-{2\over 1+|\eta|^2}(\tilde\eta\cdot \eta)\eta.
$$
We can similarly define $\mathscr C_\eta^*:\mathcal E(x,\xi)^*\to\mathcal E(x,\xi)^*$.
Then for $\zeta_\pm\in\otimes^m T^*_{B_\pm(x,\xi)}\mathbb S^n$,
\begin{equation}
  \label{e:tensie}
\begin{gathered}
\langle\otimes^m(\mathcal A_+^{-1}(y,\eta_+)^T)\zeta_+,
\otimes^m(\mathcal A_-^{-1}(y,\eta_-)^T)\zeta_-\rangle_{\otimes^m T^*_y\mathbb H^{n+1}}\\
=\langle\otimes^m\mathscr C_\eta^*\otimes^m(\mathcal A_+^{-1}(x,\xi)^T)\zeta_+,
\otimes^m(\mathcal A_-^{-1}(x,\xi)^T)\zeta_-\rangle_{\otimes^m \mathcal E^*(x,\xi)}.
\end{gathered}
\end{equation}
The Jacobian of $\Psi$ with respect to naturally arising volume forms on $\mathcal E$ and
$S^2_\Delta \mathbb H^{n+1}$ is given by (see Appendix~\ref{s:J-Psi} for the proof)
\begin{equation}
  \label{e:Psi-J}
J_\Psi(x,\xi,\eta)=2^n(1+|\eta|^2)^{-n}.
\end{equation}
Now, $\Psi$ is equivariant under $G$, therefore it descends to a diffeomorphism
$$
\Psi:\mathcal E_M\to S^2_\Delta M,\quad
\mathcal E_M:=\{(x,\xi,\eta)\mid (x,\xi)\in SM,\ \eta\in \mathcal E(x,\xi)\}.
$$
Using Lemma~\ref{l:reduced} and~\eqref{e:tensie}, we calculate
for $(x,\xi,\eta)\in\mathcal E_M$ and $(y,\eta_-,\eta_+)=\Psi(x,\xi,\eta)$,
\begin{equation}
  \label{e:kendall}
\langle v(y,\eta_-),\overline{v^*(y,\eta_+)}\rangle_{\otimes^mT_y^*M}
=(1+|\eta|^2)^{-\lambda}\langle \otimes^m\mathscr C_\eta^* v(x,\xi),\overline{v^*(x,\xi)}\rangle_{\otimes^m\mathcal E^*(x,\xi)}.
\end{equation}
We would now like to plug this expression into~\eqref{e:intie-1}, make the change of variables
from $(y,\eta_-,\eta_+)$ to $(x,\xi,\eta)$, and integrate $\eta$ out, obtaining a multiple of~\eqref{e:intie-2}.
However, this is not directly possible because (i) the integral in $\eta$ typically diverges (ii)
since the expression integrated in~\eqref{e:intie-1} is a distribution, one cannot
simply replace $S^2M$ by $S^2_\Delta M$ in the integral.

We will instead use the asymptotic behavior
of both integrals as one approaches the set $\{\eta_++\eta_-=0\}$,
and Hadamard regularization in $\eta$ in the $(x,\xi,\eta)$ variables.
For that, fix $\chi\in \CI_0(\mathbb R)$ such that $\chi=1$ near 0, and define for $\varepsilon>0$,
$$
\chi_\varepsilon(y,\eta_-,\eta_+)=\chi\big(\varepsilon\, |\eta(y,\eta_-,\eta_+)|\big),
$$
where $\eta(y,\eta_-,\eta_+)$ is the corresponding component of $\Psi^{-1}$; in fact, we can write
$$
\chi_\varepsilon(y,\eta_-,\eta_+)=\chi\Big(\varepsilon\, {|\eta_+-\eta_-|\over |\eta_++\eta_-|}\Big).
$$
Then $\chi_\varepsilon\in\mathcal D'(S^2M)$. In fact, $\chi_\varepsilon$ is supported inside $S^2_\Delta M$;
by making the change of variables $(y,\eta_-,\eta_+)=\Psi(x,\xi,\eta)$ and using~\eqref{e:Psi-J} and~\eqref{e:kendall}, we get
\begin{equation}
  \label{e:abi}
\begin{gathered}
\int_{S^2M} \chi_\varepsilon(y,\eta_-,\eta_+) \langle v(y,\eta_-),\overline{v^*(y,\eta_+)}\rangle_{\otimes^m T_y^*M}\,dyd\eta_- d\eta_+
\\=2^n\int_{\mathcal E_M} \chi(\varepsilon|\eta|)(1+|\eta|^2)^{-\lambda-n}
\langle\otimes^m \mathscr C_\eta^*v(x,\xi),\overline{v^*(x,\xi)}\rangle_{\otimes^m \mathcal E^*(x,\xi)}
\,dxd\xi d\eta.
\end{gathered}
\end{equation}
By Lemma~\ref{l:asysa2}, \eqref{e:abi} has the asymptotic expansion
\begin{equation}
  \label{e:asybsy}
\begin{gathered}
2^n\pi^{n\over 2}{\Gamma({n\over 2}+\lambda)\over (n+\lambda+m-1)\Gamma(n-1+\lambda)}\langle v,v^*\rangle_{L^2(SM;\otimes^m_S\mathcal E^*)}\\
+\sum_{0\leq j\leq -\Re\lambda-{n\over 2}}c_j\varepsilon^{n+2\lambda+2j}+o(1)
\end{gathered}
\end{equation}
for some constants $c_j$.

It remains to prove the following asymptotic expansion as $\varepsilon\to 0$:
\begin{equation}
  \label{e:bbi}
\int_{S^2M} (1-\chi_\varepsilon(y,\eta_-,\eta_+)) \langle v(y,\eta_-),\overline{v^*(y,\eta_+)}\rangle_{\otimes^m T_y^*M}\,dyd\eta_- d\eta_+
\sim\sum_{j=0}^\infty c'_j \varepsilon^{n+2\lambda+2j}
\end{equation}
where $c'_j$ are some constants.
Indeed, $\langle f,f^*\rangle_{L^2(M;\otimes^m T^*M)}$ is equal to the sum of~\eqref{e:abi} and~\eqref{e:bbi};
since~\eqref{e:bbi} does not have a constant term, $\langle f,f^*\rangle$ is equal to
the constant term in the expansion~\eqref{e:asybsy}.

To show~\eqref{e:bbi}, we use the dilation vector field $\eta\cdot\partial_\eta$ on $\mathcal E$, which
under $\Psi$ becomes the following vector field on $S^2_\Delta M$
extending smoothly to $S^2M$:
$$
L_{(y,\eta_-,\eta_+)}=\bigg({\eta_--\eta_+\over 2},
{|\eta_+-\eta_-|^2\over 4}\,y-{\eta_+\over 2}+{\eta_-\cdot\eta_+\over 2}\,\eta_-,
-{|\eta_+-\eta_-|^2\over 4}\,y-{\eta_-\over 2}+{\eta_-\cdot\eta_+\over 2}\,\eta_+
\bigg).
$$
The vector field $L$ is tangent to the submanifold
$\{\eta_++\eta_-=0\}$, in fact
$$
L(|\eta_+-\eta_-|^2)=-L(|\eta_++\eta_-|^2)={|\eta_+-\eta_-|^2\cdot |\eta_++\eta_-|^2\over 2}.
$$
We can then compute (following the identity $L|\eta|=|\eta|$)
$$
L\bigg({|\eta_+-\eta_-|\over |\eta_++\eta_-|}\bigg)={|\eta_+-\eta_-|\over |\eta_++\eta_-|}\quad\text{on }S^2_\Delta M.
$$
Using the $(x,\xi,\eta)$ coordinates and~\eqref{e:Psi-J}, we can compute the divergence of $L$ with respect to the standard volume
form on $S^2M$:
$$
\Div L=n(\eta_+\cdot\eta_-).
$$
Moreover, $B_\pm(y,\eta_\pm)$ are constant along the trajectories
of $L$, and
$$
L(\Phi_\pm(y,\eta_\pm))=-{|\eta_+-\eta_-|^2\over 4}\Phi_\pm(y,\eta_\pm).
$$
We also use~\eqref{e:Adef} to calculate for $\zeta_\pm\in T_{B_\pm(y,\eta_\pm)}\mathbb S^n$,
$$
\begin{gathered}
L\big(
(\mathcal A_+(y,\eta_+)\zeta_+)
\cdot
(\mathcal A_-(y,\eta_-)\zeta_-)
\big)=
\big((\mathcal A_+(y,\eta_+)\zeta_+)\cdot\eta_-\big)
\big((\mathcal A_-(y,\eta_-)\zeta_-)\cdot\eta_+\big),\\
L\big((\mathcal A_\pm(y,\eta_\pm)\zeta_\pm)\cdot\eta_\mp\big)
=(\eta_+\cdot\eta_-)\big((\mathcal A_\pm(y,\eta_\pm)\zeta_\pm)\cdot\eta_\mp\big).
\end{gathered}
$$
Combining these identities and using Lemma~\ref{l:reduced}, we get
\begin{equation}
  \label{e:L-eq}
\begin{gathered}
\Big( L+{\lambda\over 2}|\eta_+-\eta_-|^2\Big)\langle v(y,\eta_-),\overline{v^*(y,\eta_+)}\rangle_{\otimes^m T_y^*M}
\\=m\langle \iota_{\eta_+} v(y,\eta_-), \iota_{\eta_-}\overline{v^*(y,\eta_+)}\rangle_{\otimes^{m-1} T_y^*M}.
\end{gathered}
\end{equation}
Integrating by parts, we find
$$
\begin{gathered}
\varepsilon\partial_\varepsilon \int_{S^2 M} (1-\chi_\varepsilon(y,\eta_-,\eta_+))\langle v(y,\eta_-),\overline{v^*(y,\eta_+)}\rangle_{\otimes^m T_y^*M}\,dyd\eta_-d\eta_+\\
=\int_{S^2M} L\big (1-\chi_\varepsilon(y,\eta_-,\eta_+)\big)\,\langle v(y,\eta_-),\overline{v^*(y,\eta_+)}\rangle_{\otimes^m T_y^*M}\,dyd\eta_-d\eta_+\\
=\int_{S^2M} \bigg({\lambda\over 2}|\eta_+-\eta_-|^2-n(\eta_+\cdot\eta_-)\bigg)
(1-\chi_\varepsilon(y,\eta_-,\eta_+))\langle v(y,\eta_-),\overline{v^*(y,\eta_+)}\rangle_{\otimes^m T_y^*M}\,dyd\eta_-d\eta_+\\
-m\int_{S^2M} (1-\chi_\varepsilon(y,\eta_-,\eta_+))\langle \iota_{\eta_+}v(y,\eta_-),\iota_{\eta_-}\overline{v^*(y,\eta_+)}\rangle_{\otimes^{m-1}T_y^*M}
\,dyd\eta_-d\eta_+.
\end{gathered}
$$
Arguing similarly, we see that if for integers $0\leq r\leq m$, $p\geq 0$, we put
$$
I_{r,p}(\varepsilon):=\int_{S^2 M} |\eta_-+\eta_+|^{2p}(1-\chi_\varepsilon(y,\eta_-,\eta_+))\langle \iota_{\eta_+}^r v(y,\eta_-),\iota_{\eta_-}^r\overline{v^*(y,\eta_+)}\rangle_{\otimes^{m-r} T_y^*M}\,dyd\eta_-d\eta_+
$$
then $(\varepsilon\partial_\varepsilon-2\lambda-n-2(r+p))I_{r,p}(\varepsilon)$ is a finite linear combination of
$I_{r',p'}(\varepsilon)$, where $r'\geq r,p'\geq p$, and $(r',p')\neq (r,p)$. For
example, the calculation above shows that
$$
(\varepsilon\partial_\varepsilon-2\lambda-n)I_{0,0}(\varepsilon)
=-{\lambda+n\over 2}I_{0,1}(\varepsilon)-mI_{1,0}(\varepsilon).
$$
Moreover, if $N$ is fixed and $p$ is large enough depending on $N$,
then $I_{r,p}(\varepsilon)=\mathcal O(\varepsilon^N)$; to see this, note that $I_{r,p}(\varepsilon)$ is bounded by
some fixed $\CI$-seminorm of $|\eta_-+\eta_+|^{2p}(1-\chi_\varepsilon(y,\eta_-,\eta_+))$.
It follows that if $N$ is fixed and $\widetilde N$ is large depending on $N$, then
$$
\bigg(\prod_{j=0}^{\widetilde N}(\varepsilon\partial_\varepsilon-2\lambda-n-2j)
\bigg)I_{0,0}(\varepsilon)=\mathcal O(\varepsilon^N)
$$
which implies the existence of the decomposition~\eqref{e:bbi} and finishes the proof.
\end{proof}
%%%%%%%%%%%%%%%%%%%%%%%%%%%%%%%%%%%%%%%%%%%%%%%%%%%%%%%%%%%%%%%%%%%%%%%%%%%%%%%%

%%%%%%%%%%%%%%%%%%%%%%%%%%%%%%%%%%%%%%%%%%%%%%%%%%%%%%%%%%%%%%%%%%%%%%%%%%%%%%%%
%                                  SECTION 6                                   %
%%%%%%%%%%%%%%%%%%%%%%%%%%%%%%%%%%%%%%%%%%%%%%%%%%%%%%%%%%%%%%%%%%%%%%%%%%%%%%%%
\section{Properties of the Laplacian}
\label{s:laplacian}

In this section, we introduce the Laplacian and study its basic properties (Section~\ref{s:laplacian-def}).
We then give formulas for the Laplacian on symmetric tensors in the half-plane model (Section~\ref{s:laplacian-half})
which will be the basis for the analysis of the following sections. Using these formulas, we study the Poisson kernel
and in particular prove Lemma~\ref{l:laplacian} and the injectivity of the Poisson kernel (Section~\ref{s:poisson-basic}).

%%%%%%%%%%%%%%%%%%%%%%%%%%%%%%%%%%%%%%%%%%%%%%%%%%%%%%%%%%%%%%%%%%%%%%%%%%%%%%%%
\subsection{Definition and Bochner identity}
\label{s:laplacian-def}

The Levi--Civita connection associated to the hyperbolic metric $g_H$ is the operator 
\[
\nabla : \CI(\hh^{n+1},T\hh^{n+1})\to \CI(\hh^{n+1},T^*\hh^{n+1}\otimes T\hh^{n+1})
\] 
which induces a natural covariant derivative, still denoted $\nabla$, on sections of $\otimes^mT^*\hh^{n+1}$. 
We can work in the ball model $\mathbb B^{n+1}$ and use the $0$-tangent structure (see Section~\ref{s:E})
and nabla can be viewed as a differential operator of order $1$   
\[
\nabla: \CI(\overline{\mathbb{B}^{n+1}}; \otimes^m ({^0}T^*\overline{\mathbb{B}^{n+1}}))\to \CI(\overline{\mathbb{B}^{n+1}},\otimes^{m+1} ({^0}T^*\overline{\mathbb B^{n+1}}))
\] 
and we denote by $\nabla^*$ its adjoint with respect to the $L^2$ scalar product, $\nabla^*$ is called the \emph{divergence}: it is given by $\nabla^*u=-\mc{T}(\nabla u)$ where $\mc{T}$ denotes the trace, see Section~\ref{symtens}.
 Define the rough Laplacian acting on $\CI(\overline{\mathbb{B}^{n+1}}; \otimes^m ({^0}T^*\overline{\mathbb{B}^{n+1}}))$ by
\begin{equation}\label{roughlap}
\Delta :=\nabla^*\nabla
\end{equation}  
and this operator maps symmetric tensors to symmetric tensors. 
It also extends to $\mc{D'}(\mathbb{B}^{n+1}; \otimes_S^m ({^0}T^*\overline{\mathbb{B}^{n+1}}))$ by duality.
The operator $\Delta$ commutes with $\mc{T}$ and 
$\mc{I}$: 
\begin{equation}\label{commuteDelta}
\Delta \mc{T}(u)=\mc{T}(\Delta u), \quad \Delta \mc{I}(u)=\mc{I}(\Delta u)
\end{equation}
for all $u\in \mc{D'}(\mathbb{B}^{n+1}; \otimes_S^m ({^0}T^*\overline{\mathbb{B}^{n+1}}))$.

There is another natural operator given by 
\[\Delta_D=D^*D\] 
if
$$
D:  \mc{C}^\infty(\overline{\mathbb{B}^{n+1}}; \otimes_S^{m} ({^0}T^*\overline{\mathbb{B}^{n+1}}))\to
\mc{C}^\infty(\overline{\mathbb{B}^{n+1}}; \otimes_S^{m+1} ({^0}T^*\overline{\mathbb{B}^{n+1}}))
$$
is defined by $D:= \mc{S}\circ \nabla$, where $\mc{S}$ is the symmetrization defined by \eqref{symmet}, and $D^*=\nabla^*$ is the formal adjoint. There is a Bochner--Weitzenb\"ock formula relating $\Delta$ and $\Delta_D$, and using that the curvature is constant, we have on trace-free symmetric tensors of order $m$ by~\cite[Lemma 8.2]{DaSh}
\begin{equation}\label{bochner}
\Delta_D=\frac{1}{m+1}( m\, DD^*+\Delta+m(m+n-1)).
\end{equation}
In particular, since $|\mathcal S\nabla u|^2\leq |\nabla u|^2$ pointwise by the fact that $\mc{S}$ is an orthogonal projection, we see that for 
$u$ smooth and compactly supported, $\|Du\|_{L^2}^2\leq \|\nabla u\|^2_{L^2}$ and thus for $m\geq 1$,
$u\in \CI_0(\mathbb H^{n+1};\otimes_S^m(T^*\mathbb H^{n+1}))$, and $\mathcal Tu=0$,
\begin{equation}\label{bochner2}
\cjg \Delta u,u\cjd_{L^2} \geq (m+n-1)\|u\|^2.
\end{equation}
Since the Bochner identity is local, the same inequality clearly descends to co-compact quotients $\Gamma\backslash \hh^{n+1}$
(where $\Delta$ is self-adjoint and has compact resolvent by standard theory of elliptic operators,
as its principal part is given by the scalar Laplacian), and this implies
%%%%%%%%%%%%%%%%%%%%%%%%%%%%%%%%%%%%%%%%%%%%%%%%%%%%%%%%%%%%%%%%%%%%%%%%%%%%%%%%
\begin{lemm}
  \label{bottomsp} 
The spectrum of $\Delta$ acting on trace-free symmetric tensors of order $m\geq 1$ on hyperbolic compact manifolds 
of dimension $n+1$ is bounded below by $m+n-1$.
\end{lemm}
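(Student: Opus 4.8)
The plan is to deduce Lemma~\ref{bottomsp} directly from the Bochner inequality~\eqref{bochner2} together with the self-adjointness of $\Delta$ on the compact quotient. First I would note that on a closed manifold $N=\Gamma\backslash\hh^{n+1}$ the rough Laplacian $\Delta=\nabla^*\nabla$ acting on $\otimes_S^m T^*N$ is essentially self-adjoint with domain the smooth sections (its principal symbol is scalar, equal to that of the scalar Laplacian, so elliptic regularity and the standard theory apply); it has discrete spectrum accumulating only at $+\infty$, with smooth eigentensors. Moreover $\Delta$ preserves the splitting of $\otimes_S^m T^*N$ into the trace-free part and the image of $\mathcal I$, because $\Delta$ commutes with $\mathcal T$ and $\mathcal I$ by~\eqref{commuteDelta}; hence it makes sense to speak of the spectrum of $\Delta$ restricted to trace-free symmetric $m$-tensors.

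Next I would invoke~\eqref{bochner2}. That inequality was derived on $\hh^{n+1}$ for compactly supported trace-free symmetric tensors from the pointwise bound $|\mathcal S\nabla u|^2\le|\nabla u|^2$ and the Bochner--Weitzenb\"ock identity~\eqref{bochner}; since both~\eqref{bochner} and the pointwise inequality are purely local statements, the same computation is valid for any smooth trace-free symmetric $m$-tensor on the closed quotient $N$ (integration by parts has no boundary terms there). Thus for every smooth trace-free symmetric $u$ with $m\ge1$,
$$
\langle\Delta u,u\rangle_{L^2(N)}\ge(m+n-1)\|u\|_{L^2(N)}^2.
$$

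Finally, applying this to a unit-norm eigentensor $u$ of $\Delta$ with eigenvalue $\sigma$ (which exists by the discreteness of the spectrum and which is automatically smooth and may be taken trace-free since $\Delta$ preserves the trace-free subspace), we get $\sigma=\langle\Delta u,u\rangle_{L^2(N)}\ge m+n-1$, so every eigenvalue of $\Delta$ on trace-free symmetric $m$-tensors is at least $m+n-1$. This proves the stated lower bound. I do not anticipate a real obstacle here: the only points requiring a word of care are that the variational inequality~\eqref{bochner2}, proved on $\hh^{n+1}$ for compactly supported tensors, transfers verbatim to the compact quotient by locality, and that the trace-free condition is preserved under $\Delta$ so that the restricted spectrum is well defined — both of which are already recorded in the surrounding text.
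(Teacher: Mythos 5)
Your argument is correct and follows essentially the same route as the paper: the lower bound is obtained from the Bochner--Weitzenb\"ock inequality~\eqref{bochner2}, which transfers to the compact quotient by locality (no boundary terms), combined with self-adjointness, discreteness of the spectrum, and the fact that $\Delta$ preserves the trace-free subspace via~\eqref{commuteDelta}. The paper's proof is just a compressed version of exactly these observations, so there is nothing to add.
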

%%%%%%%%%%%%%%%%%%%%%%%%%%%%%%%%%%%%%%%%%%%%%%%%%%%%%%%%%%%%%%%%%%%%%%%%%%%%%%%%
We finally define
\begin{equation}
  \label{e:e-m-def}
E^{(m)}:=\otimes_S^{m} ({^0}T^*\overline{\mathbb{B}^{n+1}})\cap \ker\mc{T}
\end{equation}
to be the bundle of trace-free symmetric $m$-cotensors over the ball model of hyperbolic space.  

%%%%%%%%%%%%%%%%%%%%%%%%%%%%%%%%%%%%%%%%%%%%%%%%%%%%%%%%%%%%%%%%%%%%%%%%%%%%%%%%
\subsection{Laplacian in the half-plane model}
\label{s:laplacian-half}

We now give concrete formulas concerning the Laplacian on symmetric tensors
in the half-space model $\mathbb U^{n+1}$ (see~\eqref{e:umetric}).
We fix $\nu\in\mathbb S^n$ and map $\mathbb B^{n+1}$ to $\mathbb U^{n+1}$ by a composition
of a rotation of $\mathbb B^{n+1}$ and the map~\eqref{e:udiffeo};
the rotation is chosen so that $\nu$ is mapped to $0\in\overline{\mathbb U^{n+1}}$
and $-\nu$ is mapped to infinity.

The $0$-cotangent and tangent bundles ${^0}T^*\overline{\mathbb B^{n+1}}$ and 
${^0}T\overline{\mathbb B^{n+1}}$ pull back to the half-space, we denote them ${^0}T^*\mathbb{U}^{n+1}$ and 
${^0}T\mathbb{U}^{n+1}$. The coordinates on $\mathbb{U}^{n+1}$ are $(z_0,z)\in\rr^+\x \rr^n$ and $z=(z_1,\dots,z_n)$.
We use the following orthonormal bases of ${^0}T\mathbb{U}^{n+1}$
and ${^0}T^*\mathbb{U}^{n+1}$:
$$
Z_i=z_0\pl_{z_i},\quad
Z_i^*={dz_i\over z_0};
\quad 0\leq i\leq n.
$$
Note that in the compactification $\overline{\mathbb B^{n+1}}$ this basis is smooth only on 
$\overline{\mathbb B^{n+1}}\setminus \{-\nu\}$.

Denote $\mathscr A:=\{1,\dots,n\}$. We can decompose the vector bundle $\otimes_S^m({^0T}^*{\mathbb{U}^{n+1}})$ into an
orthogonal direct sum
\[
\otimes_S^m({^0T}^*{\mathbb{U}^{n+1}})=\bigoplus_{k=0}^{m} E^{(m)}_k, \quad
E^{(m)}_k={\rm span}\big(\mc{S}((Z_0^*)^{\otimes k}\otimes Z_I^*))_{I\in\mathscr{A}^{m-k}}\big)
\]
and we let $\pi_i$ be the orthogonal projection onto $E_i^{(m)}$.
Now, each  tensor $u\in \otimes_S^{m}({^0T}^*{\mathbb{U}^{n+1}})$ can be decomposed as  
$u=\sum_{i=0}^{m}u_i$, with  $u_i=\pi_i(u)\in E^{(m)}_i$ which we can write as
\begin{equation}
  \label{e:uuform}
u=\sum_{i=0}^m u_i,\quad
u_i= \mc{S}((Z_0^*)^{\otimes i}\otimes u'_i),\quad
u'_i\in E^{(m-i)}_0.
\end{equation}
We can therefore identify $E^{(m)}_k$ with $E^{(m-k)}_0$ and view $E^{(m)}$ as a direct sum
$E^{(m)}=\bigoplus_{k=0}^mE^{(m-k)}_0$. The trace-free condition
$\mc{T}(u)=0$ is equivalent to the relations
\begin{equation}
  \label{u'r}
\mc{T}(u'_r)=-\frac{(r+2)(r+1)}{(m-r)(m-r-1)}u'_{r+2},\quad
0\leq r\leq m-2.
\end{equation}
and in particular all $u_i$ are determined by $u_0$ and $u_1$ by iterating the trace map 
$\mc{T}$. The $u_i'$ are related to the elements in the decomposition \eqref{decompositionoftensors} of 
$u_0$ and $u_1$ viewed as a symmetric $m$-cotensor 
on the bundle $(Z_0)^\perp$ using the metric $z_0^{-2}h=\sum_{i}Z_i^*\otimes Z_i^*$. 
We see that a nonzero trace-free tensor $u$ on $\mathbb{U}^{n+1}$ must have a 
nonzero $u_0$ or $u_1$ component. 

Koszul formula gives us for $i,j\geq 1$
\begin{equation}\label{koszul} 
\nabla_{Z_i}Z_j=\delta_{ij}Z_0, \quad \nabla_{Z_0}Z_j=0,\quad \nabla_{Z_i}Z_0=-Z_i,\quad \nabla_{Z_0}Z_0=0
\end{equation}
which implies 
\begin{equation}\label{computenabla} 
\nabla Z_0^*= -\sum_{j=1}^nZ_j^*\otimes Z_j^*=-\frac{h}{z_0^2}, \quad \nabla Z_j^*=Z_j^*\otimes Z_0^*.
\end{equation}

We shall use the following notations: if $\Pi_m$ denotes the set of permutations of $\{1,\dots,m\}$, we 
write $\sigma(I):=(i_{\sigma(1)},\dots,i_{\sigma(m)})$ if $\sigma\in \Pi_m$.
If $S=S_1\otimes \dots\otimes S_\ell$ is a tensor in $\otimes^\ell ({^0}T^*\mathbb{U}^{n+1})$, we denote by $\tau_{i\lra j}(S)$ 
the tensor obtained by permuting $S_i$ with $S_j$ in $S$, and by $\rho_{i\to V}(S)$ the operation of 
replacing $S_i$ by $V\in {^0}T^*\mathbb{U}^{n+1}$ in $S$.\\

%%%%%%%%%%%%%%%%%%%%%%%%%%%%%%%%%%%%%%%%%%%%%%%%%%%%%%%%%%%%%%%%%%%%%%%%%%%%%%%%
\smallsection{The Laplacian and $\nabla^*$ acting on $E_0^{(m)}$ and $E_1^{(m)}$}
We start by computing the action of $\Delta$ on sections of $E^{(m)}_0, E^{(m)}_1$, and we will later
deduce from this computation the action on $E^{(m)}_k$.
Let us consider the tensor $Z^*_I:=Z^*_{i_1}\otimes\dots\otimes Z^*_{i_m}\in E_0^{(m)}$ 
where $I=(i_1,\dots,i_m)\in \mathscr A^{m}$ and
$Z^*_{\sigma(I)}:= Z^*_{i_{\sigma(1)}}\otimes\dots \otimes Z^*_{i_{\sigma(m)}}.$
The symmetrization of $Z^*_I$ is given by $\mc{S}(Z^*_I)=\frac{1}{m!}\sum_{\sigma\in \Pi_m}Z^*_{\sigma(I)}$
and those elements form a basis of the space $E^{(m)}_0$ when $I$ ranges over all combinations of $m$-uplet in 
$\mathscr A=\{1,\dots,n\}$.

%%%%%%%%%%%%%%%%%%%%%%%%%%%%%%%%%%%%%%%%%%%%%%%%%%%%%%%%%%%%%%%%%%%%%%%%%%%%%%%%
\begin{lemm}
  \label{DeltaE_0}
Let $u_0=\sum_{I\in\mathscr{A}^m}f_I\mc{S}(Z^*_I)$ with $f_I\in\mc{C}^\infty(\mathbb{U}^{n+1})$. Then one has 
\begin{equation} \label{calculfinaltracefree}
\begin{split}
\Delta u_0= & \sum_{I\in\mathscr{A}^m}((\Delta+m)f_I)\mc{S}(Z_I^*)+2m\, \mc{S}(\nabla^*u_0\otimes Z_0^*) \\
& +m(m-1)\mc{S}(\mc{T}(u_0)\otimes Z_0^*\otimes Z_0^*) 
\end{split}\end{equation}
while, denoting  $d_{z}f_I=\sum_{i=1}^nZ_i(f_I)Z_i^*$, the divergence is given by 
\begin{equation}\label{tracediv}
\begin{split}
\nabla^*u_0= &-(m-1)\mc{S}(\mc{T}(u_0)\otimes Z_0^*)
 - \sum_{I\in\mathscr{A}^m}\iota_{d_{z}f_I}\mc{S}(Z_I^*).
 \end{split}
\end{equation}
\end{lemm}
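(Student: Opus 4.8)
The plan is to compute $\Delta u_0 = \nabla^*\nabla u_0$ directly in the orthonormal frame $Z_0,\dots,Z_n$, using the Koszul relations \eqref{koszul} and their consequences \eqref{computenabla} to evaluate all covariant derivatives explicitly, and then to collect terms according to how many factors of $Z_0^*$ they contain. First I would compute $\nabla u_0$: applying Leibniz to $u_0=\sum_I f_I\,\mc{S}(Z_I^*)$, one gets a term $\sum_{i,I} Z_i(f_I)\,Z_i^*\otimes\mc{S}(Z_I^*)$ plus $\sum_{i,I} f_I\,Z_i^*\otimes \nabla_{Z_i}\mc{S}(Z_I^*)$ and the analogous $Z_0$ terms. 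Since $\mc{S}$ commutes with the connection (the metric is parallel), $\nabla_{Z_i}\mc{S}(Z_I^*)=\mc{S}(\nabla_{Z_i}Z_I^*)$, and by \eqref{computenabla} each $\nabla_{Z_i}Z_j^*$ is either $\delta_{ij}Z_0^*$ (when $i\ge 1$, using $\nabla_{Z_i}Z_j^* = Z_j^*\otimes$ nothing — careful bookkeeping needed here) or produces a $Z_0^*\otimes(\cdots)$ pattern; in any case each derivative either drops an index to $Z_0^*$ or multiplies by $Z_0^*$. The key computational input is that, modulo the symmetrization, differentiating $Z_I^*$ in the horizontal directions $Z_i$, $i\ge 1$, turns one slot into $Z_0^*$, while $\nabla_{Z_0}Z_I^*=0$.

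Next I would apply $\nabla^*=-\mc{T}\circ\nabla$ to the resulting rank $m+1$ tensor. The trace $\mc{T}$ contracts the first two slots with the metric $\sum_k Z_k^*\otimes Z_k^*$ (including $k=0$). The bulk of the work is organizing the combinatorics: one contribution comes from contracting the ``derivative'' slot $Z_k^*$ against a slot of $\mc{S}(Z_I^*)$ that has been converted to $Z_0^*$ or that already equals $Z_k^*$; another comes from contracting against the $Z_0^*$-factors created by $\nabla$. Tracking the symmetrization factors $1/m!$, $1/(m+1)!$ carefully is what produces the explicit coefficients $m$, $2m$, and $m(m-1)$. A clean way to reduce the bookkeeping is to test both sides against an arbitrary collection of frame vectors and use the identities for $\tau_{i\lra j}$ and $\rho_{i\to V}$ introduced just above the lemma; this turns the identity into a finite sum over permutations that can be resummed. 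I would first establish the divergence formula \eqref{tracediv} this way — it is the shorter computation — and then feed it back into the computation of $\Delta u_0$: indeed the term $2m\,\mc{S}(\nabla^*u_0\otimes Z_0^*)$ in \eqref{calculfinaltracefree} is recognized by comparing the ``mixed'' terms of $\nabla^*\nabla u_0$ with \eqref{tracediv}, and the term $m(m-1)\mc{S}(\mc{T}(u_0)\otimes Z_0^*\otimes Z_0^*)$ collects the double-$Z_0^*$ contributions, coming both from $\nabla_{Z_0}$ acting twice-indirectly and from iterated use of $\nabla Z_0^* = -h/z_0^2$.

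The term $(\Delta + m)f_I$ deserves a separate remark: here $\Delta$ on functions is the scalar hyperbolic Laplacian $-\sum_{i=0}^n Z_i^2 + $ (first order), and the extra $+m$ appears because each of the $m$ factors $Z_i^*$, when differentiated twice and traced back, contributes a $-1$ from $\nabla_{Z_i}Z_i^* $ being of the form $Z_i^*\otimes Z_0^*$ followed by $\nabla_{Z_0}(\cdots)$ hitting $Z_0^*$ via $\nabla Z_0^*=-h/z_0^2$, i.e. from the curvature being $-1$. Concretely I would isolate the ``diagonal'' part of $\nabla^*\nabla$ — the part where the same horizontal direction $Z_i$ is differentiated twice — and show it equals $-\sum_i Z_i^2 f_I$ plus $m f_I$ times the tensor, using \eqref{koszul}; the remaining ``off-diagonal'' and boundary-direction parts then assemble into the divergence and trace terms.

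The main obstacle will be the combinatorial accounting in the symmetrized setting: $\mc{S}(Z_I^*)$ is not a monomial, so every derivative of it is a sum over which slot got hit, and every subsequent trace is a sum over which pair of slots got contracted, so one is managing a triple sum over $\Pi_m$, over the differentiated index, and over the contracted pair, with several of these sums coupled by Kronecker deltas. The danger is miscounting multiplicities and losing or gaining a factor of $m$ or $m-1$. I expect the cleanest route is to avoid symmetrized generators until the very end: compute $\Delta(Z_{i_1}^*\otimes\cdots\otimes Z_{i_m}^*)$ for an \emph{ordered} tuple first (where Leibniz is unambiguous), verify it has the stated form with $\mc{S}$ replaced by nothing and traces interpreted appropriately, and then apply $\mc{S}$ to both sides using that $\mc{S}$ commutes with $\Delta$, $\mc{T}$, and with tensoring by the fixed tensors $Z_0^*$ and $h$. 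This isolates all the genuine content into the ordered computation, which is a routine but careful application of \eqref{koszul}–\eqref{computenabla}.
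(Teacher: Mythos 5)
Your plan is, in substance, the paper's own proof: compute $\nabla u_0$ and $\nabla^2 u_0$ in the frame $Z_0,\dots,Z_n$ using \eqref{koszul}--\eqref{computenabla}, take traces, obtain \eqref{tracediv} first from the trace of $\nabla u_0$, and then recognize the mixed and double-$Z_0^*$ contributions of $-\mc{T}(\nabla^2u_0)$ as $2m\,\mc{S}(\nabla^*u_0\otimes Z_0^*)$ and $m(m-1)\mc{S}(\mc{T}(u_0)\otimes Z_0^*\otimes Z_0^*)$, with the $+m$ in $(\Delta+m)f_I$ coming from differentiating the created $Z_0^*$'s via $\nabla Z_0^*=-h/z_0^2$. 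All of that is correct and matches the paper.

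The one step that would fail as written is your proposed ``cleanest route'': $\mc{S}$ does \emph{not} commute with $\mc{T}$ on non-symmetric tensors. For instance $\mc{T}(\mc{S}(a\otimes b\otimes c))=\tfrac13\big(\langle a,b\rangle c+\langle a,c\rangle b+\langle b,c\rangle a\big)$ while $\mc{S}(\mc{T}(a\otimes b\otimes c))=\langle a,b\rangle c$. So you cannot prove the identity for ordered $Z_I^*$ ``with $\mc{S}$ replaced by nothing'' and then symmetrize both sides wholesale; the right-hand side of \eqref{calculfinaltracefree} involves $\nabla^*u_0$ and $\mc{T}(u_0)$, i.e.\ traces of the \emph{symmetrized} tensor, and these do not arise by blindly applying $\mc{S}$ to the ordered traces. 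What does commute with symmetrization of the $m$ slots is $\nabla$ (hence $\Delta$), which is why working with $\mc{S}(Z_I^*)$ directly is harmless; the trace-type terms must then be matched by expanding $\mc{S}(\mc{T}(\mc{S}(Z_I^*))\otimes Z_0^*)$ and $\mc{S}(\mc{T}(\mc{S}(Z_I^*))\otimes Z_0^*\otimes Z_0^*)$ as explicit sums over $\Pi_m$ and comparing with the permutation sums produced by the trace of $\nabla^2 u_0$ --- this is exactly what the paper does, and it is where the coefficients $2m$ and $m(m-1)$ actually come from. With that replacement (or, equivalently, carrying out your first, frame-testing plan carefully), your argument goes through and is the same as the paper's.
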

%%%%%%%%%%%%%%%%%%%%%%%%%%%%%%%%%%%%%%%%%%%%%%%%%%%%%%%%%%%%%%%%%%%%%%%%%%%%%%%%
\begin{proof}
Using \eqref{computenabla}, we compute 
\[ \nabla(f_I\mc{S}(Z_I^*))= \sum_{i=0}^n(Z_if_I)(z) Z_i^*\otimes \mc{S}(Z_I^*)+ \frac{f_I(z)}{m!} \sum_{k=1}^m \sum_{\sigma\in \Pi_m}
\tau_{1\lra k+1}(Z_0^*\otimes Z^*_{\sigma(I)}). \]
Then taking the trace of $\nabla(f_I\mc{S}(Z_I^*))$ gives 
\begin{equation}
  \label{divergencefT}
\begin{split}
\nabla^*(f_I\mc{S}(Z_I^*))=&  -\frac{f_I}{m!} \sum_{k=2}^m \sum_{\sigma\in \Pi_m}\delta_{i_{\sigma(1)},i_{\sigma(k)}}\rho_{k-1\to Z_0^*}(Z^*_{i_{\sigma(2)}}\otimes \dots \otimes Z^*_{i_{\sigma(m)}})\\
& - \sum_{i=1}^n(Z_if_I) \frac{1}{m!} \sum_{\sigma\in \Pi_m}\delta_{i,i_{\sigma(1)}}
(Z^*_{i_{\sigma(2)}}\otimes \dots \otimes Z^*_{i_{\sigma(m)}})
\end{split}
\end{equation}
We notice that $\mc{S}(\mc{T}(\mc{S}(Z_I^*))\otimes Z_0^*)$ is given by 
\[\mc{S}(\mc{T}(\mc{S}(Z_I^*))\otimes Z_0^*)=\frac{1}{m!(m-1)} \sum_{\sigma\in \Pi_m}\sum_{k=1}^{m-1}\delta_{i_{\sigma(1)},i_{\sigma(2)}}
\tau_{1\lra k}(Z_0^*\otimes Z^*_{i_{\sigma(3)}}\otimes \dots \otimes Z^*_{i_{\sigma(m)}}).\]
which implies \eqref{tracediv}. 
Let us now compute $\nabla^2(f_I\mc{S}(Z_I^*))$: 
\begin{align*}
\nabla^2(f_I\mc{S}(Z_I^*)) &=  \sum_{i,j=0}^nZ_jZ_i(f_I) Z_j^*\otimes Z_i^*\otimes \mc{S}(Z_I^*)- Z_0(f_I) z_0^{-2} h\otimes \mc{S}(Z_I^*) \\
& +\sum_{j=1}^n
Z_j(f_I)Z_j^*\otimes Z_0^*\otimes \mc{S}(Z_I^*) + \frac{Z_0(f_I)}{m!}  \sum_{\sigma\in \Pi_m}\sum_{k=1}^m \tau_{1\lra k+2}(Z_0^*\otimes Z_0^*\otimes Z^*_{\sigma(I)})\\
&+\sum_{i=1}^n \frac{Z_i(f_I)}{m!}\sum_{\sigma\in\Pi_m}\sum_{k=1}^m \tau_{1\lra k+2}(Z_0^*\otimes Z_i^*\otimes Z^*_{\sigma(I)})
\\
& + \sum_{i=1}^n\frac{Z_i(f_I)}{m!} \sum_{\sigma\in \Pi_m}\sum_{k=1}^m \tau_{2\lra k+2}(Z_i^*\otimes Z_0^*\otimes Z^*_{\sigma(I)})\displaybreak[0]\\
 &+ \frac{Z_0(f_I)}{m!} Z_0^*\otimes \sum_{k=1}^m \sum_{\sigma\in \Pi_m}
\tau_{1\lra k+1}(Z_0^*\otimes Z^*_{\sigma(I)})\\
&  - \frac{f_I}{m!} \sum_{j=1}^{n}Z_j^*\otimes \sum_{\sigma\in \Pi_m}\sum_{k=1}^m\tau_{1\lra k+1}(Z_j^*\otimes Z^*_{\sigma(I)})\\
&+ \frac{f_I}{m!} \sum_{k=1}^m \sum_{\substack{\ell=1\\ \ell\not=k+1}}^{m+1}\tau_{1\lra \ell+1}(Z_0^*\otimes \tau_{1\lra k+1}(Z_0^*\otimes Z^*_{\sigma(I)})).
\end{align*}
We then take the trace: the first line has trace $-(\Delta f_I) \mc{S}(Z_I^*)$, the second and fifth lines have vanishing trace,
the sixth line has trace $-mf_I\mc{S}(Z_I^*)$, the last line has trace 
\begin{equation}
  \label{lastline} 
\frac{2f_I}{m!} \sum_{\sigma\in \Pi_m}\sum_{1\leq k<\ell\leq m} \delta_{i_{\sigma(k)},i_{\sigma(\ell)}}
\rho_{k\to Z_0^*}\rho_{\ell \to Z_0^*}(Z^*_{\sigma(I)})
\end{equation}
and the sum of the third and fourth lines has trace
\begin{equation}
  \label{thirdline}
2\sum_{i=1}^n\frac{Z_i(f_I)}{m!}\sum_{\sigma\in \Pi_m}\sum_{k=1}^m \delta_{i,i_{\sigma(k)}}
\rho_{k\to Z_0^*}(Z^*_{\sigma(I)}).
\end{equation}
Computing $\mc{S}(\mc{T}(\mc{S}(Z_I^*))\otimes Z_0^*\otimes Z_0^*)$ gives 
\[
\begin{gathered}
\mc{S}\big(\mc{T}(\mc{S}(Z_I^*))\otimes Z_0^*\otimes Z_0^*\big)= \\
 \frac{2}{m!m(m-1)}
\sum_{1\leq k<\ell\leq m}\sum_{\sigma\in \Pi_m}\delta_{i_{\sigma(1)},i_{\sigma(2)}}\tau_{1\lra k+2}\tau_{2\lra \ell+2}(Z_0^*\otimes Z_0^*\otimes Z^*_{i_{\sigma(3)}}\otimes \dots \otimes Z^*_{i_{\sigma(m)}})
\end{gathered}
\]
therefore the term \eqref{lastline} can be simplified to
\[
m(m-1)f_I\mc{S}\big(\mc{T}(\mc{S}(Z_I^*))\otimes Z_0^*\otimes Z_0^*\big).
\]
Similarly to simplify \eqref{thirdline}, we compute
\[
\begin{gathered}
\mc{S}\big(\nabla^*(f_I\mc{S}(Z_I^*))\otimes Z_0^*\big)= -(m-1)\mc{S}(\mc{T}(f_I\mc{S}(Z_I^*))\otimes Z_0^*\otimes Z_0^*)\\
-\sum_{i=1}^n(Z_if_I) \frac{1}{m!m} \sum_{k=1}^{m}\sum_{\sigma\in \Pi_m}\delta_{i,i_{\sigma(1)}}
\tau_{1\lra k}(Z_0^*\otimes Z^*_{i_{\sigma(2)}}\otimes \dots \otimes Z^*_{i_{\sigma(m)}})
\end{gathered}
\]
so that 
\[
\begin{gathered}
2\sum_{i=1}^n\frac{Z_i(f_I)}{m!}\sum_{\sigma\in \Pi_m}\sum_{k=1}^m \delta_{i,i_{\sigma(k)}}
\rho_{k\to Z_0^*}(Z^*_{\sigma(I)})\\
= -2m\, \mc{S}(\nabla^*(f_I\mc{S}(Z_I^*))\otimes Z_0^*)-2m(m-1)\mc{S}(\mc{T}(f_I\mc{S}(Z_I^*))\otimes Z_0^*\otimes Z_0^*).
\end{gathered}
\]
and this achieves the proof of \eqref{calculfinaltracefree}.Ê
\end{proof}
%%%%%%%%%%%%%%%%%%%%%%%%%%%%%%%%%%%%%%%%%%%%%%%%%%%%%%%%%%%%%%%%%%%%%%%%%%%%%%%%
A similarly tedious calculation, omitted here, yields
%%%%%%%%%%%%%%%%%%%%%%%%%%%%%%%%%%%%%%%%%%%%%%%%%%%%%%%%%%%%%%%%%%%%%%%%%%%%%%%%
\begin{lemm}\label{DeltaE_1}
Let $u_1=\mathcal S(Z_0^*\otimes u'_1)$,
$u'_1=\sum_{J\in\mathscr{A}^{m-1}}g_J \mathcal S(Z^*_J)$ with $g_J\in\mc{C}^\infty(\mathbb{U}^{n+1})$, then
the $E_0^{(m)}\oplus E_1^{(m)}$ components of the Laplacian of $u_1$ are
\begin{equation} \label{Deltau1}
\begin{split}
\Delta u_1=&\sum_{J\in\mathscr{A}^{m-1}} \big((\Delta+n+3(m-1))g_J\big)\mc{S}(Z_0^* \otimes Z_J^*)\\&+
2\sum_{J\in\mathscr{A}^{m-1}} \mc{S}(d_{z}g_J\otimes Z_J^*)
 + \Ker(\pi_0+\pi_1)
\end{split}\end{equation}
and the $E_0^{(m)}\oplus E_1^{(m)}$ components of divergence of $u_1$ are
 \begin{equation}\label{tracediv2}
\begin{split}
\nabla^*u_1=&\,\frac{1}{m}
\sum_{J\in\mathscr{A}^{m-1}}((n+m-1)g_J -Z_0(g_J))\mc{S}(Z_J^*)\\
 &-\frac{(m-1)}{m}
\sum_{J\in\mathscr{A}^{m-1}}\mc{S}( Z_0^*\otimes \iota_{d_{z}g_J}\mc{S}(Z_J^*))+\Ker(\pi_0+\pi_1).
\end{split}
\end{equation}
\end{lemm}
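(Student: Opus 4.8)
The plan is to follow the computational strategy used in the proof of Lemma~\ref{DeltaE_0}, the only genuinely new feature being the bookkeeping attached to the distinguished slot carrying $Z_0^*$. Write $u_1=\mathcal{S}(Z_0^*\otimes u_1')$ with $u_1'=\sum_{J\in\mathscr{A}^{m-1}}g_J\,\mathcal{S}(Z_J^*)\in E_0^{(m)}$; here $u_1'$ is an arbitrary section of $E_0^{(m-1)}$, not assumed trace-free, so formulas \eqref{calculfinaltracefree}--\eqref{tracediv} apply to it with $m$ replaced by $m-1$. First I would compute $\nabla u_1$ by the Leibniz rule and \eqref{computenabla}: the covariant derivative falls either on a scalar $g_J$, giving $\sum_i Z_i(g_J)\,Z_i^*\otimes\mathcal{S}(Z_0^*\otimes Z_J^*)$; on the factor $Z_0^*$, giving (after symmetrization) a multiple of $z_0^{-2}h\otimes u_1'$ since $\nabla Z_0^*=-h/z_0^2$; or on one of the $Z_j^*$ inside $\mathcal{S}(Z_J^*)$, giving terms with an extra $Z_0^*$ since $\nabla Z_j^*=Z_j^*\otimes Z_0^*$. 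Taking the trace $\nabla^*u_1=-\mathcal{T}(\nabla u_1)$ and collecting the contributions landing in $E_0^{(m)}\oplus E_1^{(m)}$ yields \eqref{tracediv2}: the coefficient $(n+m-1)g_J$ comes from contracting the new index against the $Z_0^*$ slot via the identity for $\mathcal{T}(\mathcal{S}(Z_0^*\otimes\cdot))$ together with $\mathcal{T}(z_0^{-2}h)=n$; the $-Z_0(g_J)$ term comes from the $\nabla_{Z_0}$ part of $d_z g_J$ paired against $Z_0^*$; the $\mathcal{S}(Z_0^*\otimes\iota_{d_z g_J}\mathcal{S}(Z_J^*))$ term is the surviving first-order piece; and every remaining contraction either vanishes by the orthogonality $\langle Z_0^*,Z_j^*\rangle=0$ or lies in $\Ker(\pi_0+\pi_1)$.

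For the Laplacian I would iterate once more: apply \eqref{computenabla} to each term of $\nabla u_1$ to obtain $\nabla^2 u_1$, then contract the first two indices to get $\Delta u_1=-\mathcal{T}(\nabla^2 u_1)$, discarding at the outset every term whose structure forces it into $E_k^{(m)}$ with $k\geq 2$ (anything carrying two or more $Z_0^*$ factors, or the metric $h$, on symmetrized slots). The rough term $\sum_J(\Delta g_J)\,\mathcal{S}(Z_0^*\otimes Z_J^*)$ arises from contracting the two differentiation indices against each other; the shift from $\Delta+(m-1)$ — which is what \eqref{calculfinaltracefree} would give for $u_1'$ alone — to $\Delta+n+3(m-1)$ in \eqref{Deltau1} is produced by the lower-order terms in which a differentiation index is contracted against $Z_0^*$ or against one of the $Z_j^*$, together with a contribution of $n$ from $\mathcal{T}(z_0^{-2}h)$; the term $2\sum_J\mathcal{S}(d_z g_J\otimes Z_J^*)$ is the symmetrized first-order term surviving $\pi_0+\pi_1$; all else goes into $\Ker(\pi_0+\pi_1)$. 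Alternatively, one can apply \eqref{calculfinaltracefree} and \eqref{tracediv} to $u_1'$ directly and then commute the map $v\mapsto\mathcal{S}(Z_0^*\otimes v)$ past $\nabla$ and $\mathcal{T}$, paying for it with the commutator terms generated by $\nabla Z_0^*=-h/z_0^2$; this merely trades one long computation for another.

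The main obstacle is purely combinatorial: counting symmetrization multiplicities correctly when a fresh differentiation index, or a freshly produced $Z_0^*$, is inserted into a symmetrized block of $m-1$ indices, and then recognizing which of the resulting symmetrized tensors actually lie in $E_0^{(m)}\oplus E_1^{(m)}$ rather than in $\Ker(\pi_0+\pi_1)$. As in Lemma~\ref{DeltaE_0}, the key bookkeeping devices will be the explicit expressions of $\mathcal{S}(\mathcal{T}(\mathcal{S}(Z_I^*))\otimes Z_0^*)$ and $\mathcal{S}(\mathcal{T}(\mathcal{S}(Z_I^*))\otimes Z_0^*\otimes Z_0^*)$ as permutation sums, used to collapse the trace terms, while the orthogonality relations $\langle Z_0^*,Z_j^*\rangle=\delta_{0j}$ repeatedly kill cross terms. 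No new analytic input beyond the Koszul formulas \eqref{koszul}--\eqref{computenabla} is required, which is why the paper can legitimately describe this as ``a similarly tedious calculation.''
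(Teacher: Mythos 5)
Your plan is correct and is essentially the paper's own (omitted) argument: Lemma~\ref{DeltaE_0} is proved by exactly this Koszul-formula computation of $\nabla u$ and $\nabla^2 u$ followed by taking traces with the permutation/symmetrization bookkeeping, and the paper obtains Lemma~\ref{DeltaE_1} by running the same calculation on $\mathcal S(Z_0^*\otimes u_1')$, so your sketch of where each coefficient ($n+m-1$, $-Z_0(g_J)$, the shift to $n+3(m-1)$, the cross term $2\,\mathcal S(d_zg_J\otimes Z_J^*)$) comes from matches the intended proof. One caution: a term carrying $z_0^{-2}h=\sum_j Z_j^*\otimes Z_j^*$ on symmetrized slots lies in $E_0^{(m)}$, not in $\Ker(\pi_0+\pi_1)$ (compare the term $m(m-1)\,\mathcal S(z_0^{-2}h\otimes\mathcal T(u_0))$ appearing in \eqref{equationpi0}), so your blanket discarding rule is stated too broadly; it is harmless here only because every $h$ produced by $\nabla Z_0^*$ keeps one leg on the derivative index, which is contracted away in $\mathcal T(\nabla u_1)$ and $\mathcal T(\nabla^2u_1)$, so no full $h$ ever lands on symmetric slots in this particular computation.
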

%%%%%%%%%%%%%%%%%%%%%%%%%%%%%%%%%%%%%%%%%%%%%%%%%%%%%%%%%%%%%%%%%%%%%%%%%%%%%%%%

%%%%%%%%%%%%%%%%%%%%%%%%%%%%%%%%%%%%%%%%%%%%%%%%%%%%%%%%%%%%%%%%%%%%%%%%%%%%%%%%
\smallsection{General formulas for Laplacian and divergence}
Armed with Lemmas~\ref{DeltaE_0} and~\ref{DeltaE_1}, we can show the following fact
which, together with~\eqref{u'r}, determines completely the Laplacian on trace-free symmetric tensors.
%%%%%%%%%%%%%%%%%%%%%%%%%%%%%%%%%%%%%%%%%%%%%%%%%%%%%%%%%%%%%%%%%%%%%%%%%%%%%%%%
\begin{lemm}\label{pi0pi1}
Assume that $u\in\mathcal D'(\mathbb U^{n+1};\otimes_S^mT^*\mathbb U^{n+1})$ satisfies $\mathcal T(u)=0$
and is written in the form~\eqref{e:uuform}. Let
$$
u_0=\sum_{I\in\mathscr A^m} f_I\mathcal S(Z_I^*),\quad
u_1=\sum_{J\in\mathscr A^{m-1}}g_J\mathcal S(Z_0\otimes Z_J^*).
$$
Then the projection of $\Delta u$ onto $E^{(m)}_0\oplus E^{(m)}_1$ can be written 
\begin{equation}\label{equationpi0}
\begin{split}
\pi_0(\Delta u)=& \sum_{I\in\mathscr{A}^m}((\Delta+m) f_I)\mc{S}(Z^*_{I})+2\sum_{J\in\mathscr{A}^{m-1}} \mc{S}(d_{z}g_J\otimes Z_J^*)\\
 &+m(m-1)\mc{S}(z_0^{-2}h\otimes \mc{T}(u_0)),
\end{split}
\end{equation} 
\begin{equation}\label{pi1deltaT}
\begin{split}
\pi_1(\Delta u)=& \sum_{J\in\mathscr{A}^{m-1}} ((\Delta+n+3(m-1))g_J)\mc{S}(Z_0^* \otimes Z_J^*)\\
&-2m\sum_{I\in\mathscr{A}^m}\mc{S}(Z_0^*\otimes \iota_{d_{z}f_I}\mc{S}(Z_I^*))\\
&+(m-1)(m-2)\mc{S}(Z_0^*\otimes z_0^{-2}h\otimes \mc{T}(u'_1)) \\
& -2m(m-1)\sum_{I\in\mathscr{A}^m}\mc{S}(Z_0^*\otimes d_{z}f_I \otimes \mc{T}(\mc{S}(Z_I^*))).
\end{split}
\end{equation}
\end{lemm}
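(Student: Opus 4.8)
The strategy is to combine Lemmas~\ref{DeltaE_0} and~\ref{DeltaE_1} with a grading argument and the trace-free relations~\eqref{u'r}. Since the asserted identity is local, I may assume $u$ is smooth and compactly supported in $\mathbb U^{n+1}$, and write $u=\sum_{k=0}^m u_k$ with $u_k=\mc{S}((Z_0^*)^{\otimes k}\otimes u'_k)$ as in~\eqref{e:uuform}. The first step is the observation that, by the Koszul formulas~\eqref{koszul}--\eqref{computenabla}, $\Delta=\nabla^*\nabla=-\mc{T}\nabla^2$ equals the rough Laplacian $-\sum_{i=0}^n\nabla_{Z_i}\nabla_{Z_i}$ plus the lower-order term $n\nabla_{Z_0}$, and each $\nabla_{Z_i}$ changes the number of $Z_0^*$ factors of a symmetric tensor by at most one (via $\nabla_{Z_i}Z_0^*=-Z_i^*$, $\nabla_{Z_i}Z_j^*=\delta_{ij}Z_0^*$, or differentiation of a coefficient). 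Hence $\Delta$ maps $E_k^{(m)}$ into $\bigoplus_{|j-k|\le2}E_j^{(m)}$, so that
\[
\pi_0(\Delta u)=\pi_0(\Delta u_0)+\pi_0(\Delta u_1)+\pi_0(\Delta u_2),\qquad
\pi_1(\Delta u)=\sum_{k=0}^{3}\pi_1(\Delta u_k).
\]

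The $k=0$ and $k=1$ contributions are read off from Lemmas~\ref{DeltaE_0} and~\ref{DeltaE_1}. From~\eqref{calculfinaltracefree}, $\pi_0(\Delta u_0)=\sum_I((\Delta+m)f_I)\mc{S}(Z_I^*)$ (the other two terms of~\eqref{calculfinaltracefree} lie in $E_1^{(m)}\oplus E_2^{(m)}$), and using~\eqref{tracediv} the $E_1^{(m)}$ part of $2m\,\mc{S}(\nabla^*u_0\otimes Z_0^*)$ — the only part of $\Delta u_0$ in $E_1^{(m)}$, since the summand $\mc{S}(\mc{T}(u_0)\otimes Z_0^*)\in E_1^{(m-1)}$ of $\nabla^*u_0$ contributes to $E_2^{(m)}$ after tensoring with $Z_0^*$ — is $\pi_1(\Delta u_0)=-2m\sum_I\mc{S}(Z_0^*\otimes\iota_{d_zf_I}\mc{S}(Z_I^*))$. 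From~\eqref{Deltau1}, $\pi_0(\Delta u_1)=2\sum_J\mc{S}(d_zg_J\otimes Z_J^*)$ and $\pi_1(\Delta u_1)=\sum_J((\Delta+n+3(m-1))g_J)\mc{S}(Z_0^*\otimes Z_J^*)$. These produce the first two lines of~\eqref{equationpi0} and of~\eqref{pi1deltaT}.

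It remains to compute $\pi_0(\Delta u_2)$, $\pi_1(\Delta u_2)$ and $\pi_1(\Delta u_3)$ and to rewrite them via~\eqref{u'r}. For $\pi_0(\Delta u_2)$ I would argue by duality with the already-known action of $\Delta$ on $E_0^{(m)}$: for any section $T$ of $E_0^{(m)}$ one has $\langle\Delta u_2,T\rangle_{L^2}=\langle u_2,\Delta T\rangle_{L^2}=\langle u_2,\pi_2(\Delta T)\rangle_{L^2}$, and~\eqref{calculfinaltracefree}--\eqref{tracediv} give $\pi_2(\Delta T)=-m(m-1)\mc{S}(\mc{T}(T)\otimes Z_0^*\otimes Z_0^*)$; expanding the symmetrizations (equivalently, using $\mc{T}^*(w)=\mc{S}(g_H\otimes w)$ from~\eqref{e:defI} together with $g_H=Z_0^*\otimes Z_0^*+z_0^{-2}h$) then yields $\pi_0(\Delta u_2)=-2\,\mc{S}(z_0^{-2}h\otimes u'_2)$. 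The remaining pieces $\pi_1(\Delta u_2)$ and $\pi_1(\Delta u_3)$ are, by the grading, of the form $c_2\sum_I\mc{S}(Z_0^*\otimes d_z(\cdot)\otimes(\cdot))$ and $c_3\,\mc{S}(Z_0^*\otimes z_0^{-2}h\otimes u'_3)$ with constants $c_2=4$, $c_3=-6$; these one gets either by the same duality argument once the $E_2^{(m)}$ and $E_3^{(m)}$ components of $\Delta$ on $E_1^{(m)}$ are computed, or directly by the method of the proof of Lemma~\ref{DeltaE_0} — expanding $\nabla^2u_2,\nabla^2u_3$ by Leibniz and~\eqref{koszul} and retaining only the terms landing in $E_0^{(m)}\oplus E_1^{(m)}$. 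Finally, substituting~\eqref{u'r}, namely $u'_2=-\tfrac{m(m-1)}{2}\mc{T}(u_0)$ and $u'_3=-\tfrac{(m-1)(m-2)}{6}\mc{T}(u'_1)$, and using that $d_z$ commutes with the constant-coefficient map $\mc{S}(Z_I^*)\mapsto\mc{T}(\mc{S}(Z_I^*))$, turns these three terms into $m(m-1)\mc{S}(z_0^{-2}h\otimes\mc{T}(u_0))$ in~\eqref{equationpi0} and into $(m-1)(m-2)\mc{S}(Z_0^*\otimes z_0^{-2}h\otimes\mc{T}(u'_1))$ and $-2m(m-1)\sum_I\mc{S}(Z_0^*\otimes d_zf_I\otimes\mc{T}(\mc{S}(Z_I^*)))$ in~\eqref{pi1deltaT}, completing the proof.

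The main obstacle is the combinatorial bookkeeping hidden in the last paragraph: one must track precisely how the symmetrization $\mc{S}$ redistributes the repeated $Z_0^*$ factors under the action of $\nabla$ and $\mc{T}$ in order to pin down the constants $-2$, $c_2$, $c_3$, and hence to see that after the substitution~\eqref{u'r} one recovers exactly the coefficients $m(m-1)$, $(m-1)(m-2)$, $-2m(m-1)$ appearing in the statement. This is the same kind of computation as in the proof of Lemma~\ref{DeltaE_0} and in~\eqref{divergencefT}--\eqref{thirdline}, but it has to be carried out with some care for $u_2$ and $u_3$.
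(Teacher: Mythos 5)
Your skeleton is the same as the paper's: the grading $\Delta(E^{(m)}_k)\subset\bigoplus_{|j-k|\leq 2}E^{(m)}_j$, reading off the $u_0,u_1$ contributions from Lemmas~\ref{DeltaE_0} and~\ref{DeltaE_1}, and converting the $u_2,u_3$ contributions via~\eqref{u'r}; where you genuinely diverge is in how the $u_2,u_3$ terms are computed. The paper splits $\mc{S}((Z_0^*)^{\otimes 2}\otimes u_2')=\mc{S}(g_H\otimes u_2')-\mc{S}(z_0^{-2}h\otimes u_2')$ (and likewise for $u_3$), uses $\Delta\mc{I}=\mc{I}\Delta$ from~\eqref{commuteDelta}, and then applies Lemmas~\ref{DeltaE_0} and~\ref{DeltaE_1} to the auxiliary tensors $\mc{S}(z_0^{-2}h\otimes u_2')\in E^{(m)}_0$ and $\mc{S}(Z_0^*\otimes z_0^{-2}h\otimes u_3')\in E^{(m)}_1$, so all three constants come out of the already stated lemmas with no new expansion. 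Your duality argument for $\pi_0(\Delta u_2)$ is a nice alternative and is essentially complete: $\pi_2(\Delta T)=-m(m-1)\mc{S}(\mc{T}(T)\otimes Z_0^*\otimes Z_0^*)$ for $T\in E^{(m)}_0$ is indeed what \eqref{calculfinaltracefree}--\eqref{tracediv} give, and pairing with $u_2$ does yield $-2\,\mc{S}(z_0^{-2}h\otimes u_2')$ (in the polynomial model of Section~\ref{symtens} this is the statement that the $x_0$-free part of $\mc{T}(u_2)$ is $\tfrac{2}{m(m-1)}u_2'$, coming from $\partial_{x_0}^2(x_0^2P_{u_2'})=2P_{u_2'}$), which matches the paper after substituting~\eqref{u'r}.

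The one place where your write-up is thinner than it looks is the two $\pi_1$ pieces. The duality variant there requires the $E^{(m)}_2$ and $E^{(m)}_3$ components of $\Delta$ acting on $E^{(m)}_1$, which Lemma~\ref{DeltaE_1} does \emph{not} supply (it is stated only modulo $\ker(\pi_0+\pi_1)$), so you would either have to extend that computation or carry out the direct Leibniz expansion -- exactly the bookkeeping you defer to the last paragraph. Moreover, the claim that $\pi_1(\Delta u_2)$ and $\pi_1(\Delta u_3)$ have precisely the forms $c_2\,\mc{S}(Z_0^*\otimes d_z(\cdot)\otimes(\cdot))$ and $c_3\,\mc{S}(Z_0^*\otimes z_0^{-2}h\otimes u_3')$ is not forced by the grading alone (a priori additional trace-type terms could occur) and is only confirmed by the computation itself. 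Your asserted values $c_2=4$, $c_3=-6$ are the correct ones -- they reproduce the coefficients $-2m(m-1)$ and $(m-1)(m-2)$ of~\eqref{pi1deltaT} after substituting~\eqref{u'r} -- but as written they are stated rather than derived. If you want to close this without any new tedious expansion, the metric-splitting plus $\Delta\mc{I}=\mc{I}\Delta$ trick above is the efficient way to pin down both the forms and the constants.
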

%%%%%%%%%%%%%%%%%%%%%%%%%%%%%%%%%%%%%%%%%%%%%%%%%%%%%%%%%%%%%%%%%%%%%%%%%%%%%%%%
\begin{proof}
First, it is easily seen from \eqref{computenabla} that $\Delta u_k$ is a section of $\bigoplus_{j=k-2}^{k+2}E^{(m)}_{j}$.
From Lemmas~\ref{DeltaE_0} and~\ref{DeltaE_1}, we have
\begin{equation}
\label{eq1}
\pi_0(\Delta (u_0+u_1))=\sum_{I\in\mathscr{A}^m}((\Delta+m) f_I)\mc{S}(Z^*_{I})+2\sum_{J\in\mathscr{A}^{m-1}} \mc{S}(d_{z}g_J\otimes Z_J^*).
\end{equation}
Then for $u_2$, using $\mc{S}((Z_0^*)^{\otimes 2}\otimes u_2')=\mc{S}(g_H\otimes u_2')-
\mc{S}(z_0^{-2}h
\otimes u_2')$ and $\Delta\mc{I}=\mc{I}\Delta$,
\[
\pi_0(\Delta u_2)=\pi_0(\mc{S}(z_0^{-2}h\otimes \Delta u'_2))-\pi_0(\Delta(\mc{S}(z_0^{-2}h\otimes u_2')))
\]
and writing $u_2'=-\tfrac{m(m-1)}{2}\mathcal T(u_0)$ by~\eqref{u'r}, we obtain, using \eqref{calculfinaltracefree} 
\begin{equation}\label{eq3}
\pi_0(\Delta u_2)=m(m-1)\mc{S}(z_0^{-2}h\otimes \mc{T}(u_0))
\end{equation}
We therefore obtain \eqref{equationpi0}.

Now we consider the projection on $E^{(m)}_1$ of the equation $(\Delta-s)T=0$. We have from \eqref{calculfinaltracefree}
\[
\pi_1(\Delta u_0)=-2m\sum_{I\in\mathscr{A}^m}\mc{S}(Z_0^*\otimes \iota_{d_{z}f_I}\mc{S}(Z_I^*))
\]
where $\iota_{d_{z}f_I}$ means $\sum_{j=1}^nZ_j(f_I)\iota_{Z_j}$. Then, from \eqref{Deltau1} 
\[
\begin{split}
\pi_1(\Delta u_1)
=&\sum_{J\in\mathscr{A}^{m-1}} \big((\Delta+n+3(m-1))g_J\big)\mc{S}(Z_0^* \otimes Z_J^*).
\end{split}
\]
Using again $\mc{S}((Z_0^*)^{\otimes 2}\otimes u_2')=\mc{S}(g_H\otimes u_2')-
\mc{S}(z_0^{-2}h \otimes u_2')$ and $\Delta\mc{I}=\mc{I}\Delta$,  \eqref{calculfinaltracefree} gives
\[
\begin{split}
\pi_1(\Delta u_2)= -2m(m-1)\sum_{I\in\mathscr{A}^m}\mc{S}(Z_0^*\otimes d_{z}f_I \otimes \mc{T}\mc{S}(Z_I^*)).
\end{split}
\]
Finally, we compute $\pi_1(\Delta u_3)$, using the computation \eqref{Deltau1} we get 
\[
\begin{split}
\pi_1(\Delta u_3)=&\pi_1(\mc{S}(z_0^{-2}h\otimes \Delta \mc{S}(Z_0^*\otimes u'_3))-\pi_1(\Delta \mc{S}(Z_0^*\otimes z_0^{-2}h\otimes u'_3))\\
=& (m-1)(m-2)\mc{S}(Z_0^*\otimes z_0^{-2}h\otimes \mc{T}(u_1')).
\end{split}
\]
We conclude that $\pi_1(\Delta u)$ is given by \eqref{pi1deltaT}.
\end{proof}
%%%%%%%%%%%%%%%%%%%%%%%%%%%%%%%%%%%%%%%%%%%%%%%%%%%%%%%%%%%%%%%%%%%%%%%%%%%%%%%%

Similarly, we also have 
%%%%%%%%%%%%%%%%%%%%%%%%%%%%%%%%%%%%%%%%%%%%%%%%%%%%%%%%%%%%%%%%%%%%%%%%%%%%%%%%
\begin{lemm}
\label{divergenceproj} 
Let $u$ be as in Lemma~\ref{pi0pi1}.
Then the projection onto $E^{(m-1)}_{0}\oplus E^{(m-1)}_1$ of the divergence of $u$ is given by
\begin{equation}\label{divofT}
\pi_0(\nabla^*u)=-\sum_{I\in\mathscr{A}^{m}}\iota_{d_{z} f_I}\mc{S}(Z_I^*)+\frac{1}{m}
\sum_{J\in\mathscr{A}^{m-1}}((n+m-1)g_J -Z_0(g_J))\mc{S}(Z_J^*),
\end{equation}
\begin{equation}\label{divofT2}
\begin{split}
\pi_1(\nabla^*u)=& (m-1)\sum_{I\in\mathscr{A}^m}(Z_0f_I-(m+n-1)f_I)
\mc{S}\big(\mc{T}(\mc{S}(Z_I^*))\otimes Z_0^*\big)\\
& -\frac{(m-1)}{m}
\sum_{J\in\mathscr{A}^{m-1}}\mc{S}( Z_0^*\otimes \iota_{d_{z}g_J}\mc{S}(Z_J^*)).\end{split}
\end{equation}
\end{lemm}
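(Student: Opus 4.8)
The plan is to repeat the proof of Lemma~\ref{pi0pi1} essentially verbatim, with the first–order operator $\nabla^{*}$ in place of $\Delta=\nabla^{*}\nabla$. Write $u=\sum_{i=0}^{m}u_{i}$ as in~\eqref{e:uuform}. First I would record the grading statement: since $\nabla$ sends a section of $E^{(m)}_{k}$ into $\bigoplus_{j=k-1}^{k+1}E^{(m+1)}_{j}$ by the Koszul identities~\eqref{computenabla} (this is exactly the computation at the start of the proof of Lemma~\ref{pi0pi1}), its formal adjoint $\nabla^{*}$ sends $E^{(m)}_{k}$ into $\bigoplus_{j=k-1}^{k+1}E^{(m-1)}_{j}$. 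Consequently $\pi_{0}(\nabla^{*}u)$ depends only on $u_{0},u_{1}$ and $\pi_{1}(\nabla^{*}u)$ only on $u_{0},u_{1},u_{2}$, so it suffices to compute those projections of $\nabla^{*}u_{0}$, $\nabla^{*}u_{1}$, $\nabla^{*}u_{2}$ and add them.

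The contributions of $u_{0}$ and $u_{1}$ are already in hand: the $E^{(m-1)}_{0}$ and $E^{(m-1)}_{1}$ parts of $\nabla^{*}u_{0}$ are the two summands of~\eqref{tracediv} (after rewriting $\mathcal T(u_{0})=\sum_{I}f_{I}\,\mathcal T(\mathcal S(Z_{I}^{*}))$), and the $E^{(m-1)}_{0}\oplus E^{(m-1)}_{1}$ part of $\nabla^{*}u_{1}$ is exactly~\eqref{tracediv2}; together these already give the whole of~\eqref{divofT} and the $g_{J}$–term of~\eqref{divofT2}. It remains to compute $\pi_{1}(\nabla^{*}u_{2})$. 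Here I would use the device from the proof of Lemma~\ref{pi0pi1}: by~\eqref{u'r} one has $u_{2}=\mathcal S\big((Z_{0}^{*})^{\otimes2}\otimes u_{2}'\big)$ with $u_{2}'=-\tfrac{m(m-1)}{2}\mathcal T(u_{0})$, and $\mathcal S\big((Z_{0}^{*})^{\otimes2}\otimes v'\big)=\mathcal S(g_{H}\otimes v')-\mathcal S(z_{0}^{-2}h\otimes v')$, where $\mathcal S(g_{H}\otimes v')$ is a multiple of $\mathcal I(v')$. Since $g_{H}$ is parallel, $\nabla^{*}\mathcal I(v')$ equals $\mathcal I(\nabla^{*}v')$ up to a controlled lower–order correction coming from the symmetrization (computable once, or extractable from the known relation $\Delta\mathcal I=\mathcal I\Delta$), while $\nabla^{*}\mathcal S(z_{0}^{-2}h\otimes v')$ is handled by the product rule together with $\nabla Z_{j}^{*}=Z_{j}^{*}\otimes Z_{0}^{*}$. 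In the latter, the term in which $\nabla$ differentiates the coefficients of $v'$ along $Z_{0}$ is precisely the source of the $Z_{0}f_{I}$ in the first line of~\eqref{divofT2}, and the term in which $\nabla$ falls on $z_{0}^{-2}h$ produces $\mathcal T$–type terms that combine with $\pi_{1}(\nabla^{*}u_{0})=-(m-1)\mathcal S(\mathcal T(u_{0})\otimes Z_{0}^{*})$ to yield the coefficient $-(m-1)(m+n-1)f_{I}$. Finally one checks that everything landing outside $E^{(m-1)}_{0}\oplus E^{(m-1)}_{1}$ is irrelevant and that all remaining $E^{(m-1)}_{1}$–terms cancel, using $\mathcal T(u)=0$.

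I expect the only real difficulty to be the combinatorial bookkeeping of the symmetrization constants produced by $\mathcal S$, $\mathcal T$ and $\mathcal I$ when restricting to $E^{(m-1)}_{0}$ and $E^{(m-1)}_{1}$ — the same bookkeeping that underlies Lemmas~\ref{DeltaE_0}--\ref{pi0pi1} — together with the verification that the various $z_{0}^{-2}h$–terms arising from $\nabla^{*}u_{0}$ and $\nabla^{*}u_{2}$ recombine into exactly the coefficients displayed in~\eqref{divofT} and~\eqref{divofT2}. This is the kind of routine but tedious calculation that the paper chooses to suppress (as with Lemma~\ref{DeltaE_1}), and no new conceptual ingredient beyond Lemmas~\ref{DeltaE_0} and~\ref{DeltaE_1} and the $\mathcal I$/$z_{0}^{-2}h$ decomposition is needed.
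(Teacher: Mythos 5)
Your proposal is correct and follows essentially the same route as the paper: the $\pi_0$ part and the $g_J$-terms are read off from \eqref{tracediv} and \eqref{tracediv2}, and the only extra input is the direct computation of $\pi_1(\nabla^*u_2)$ using $u_2'=-\tfrac{m(m-1)}{2}\mathcal T(u_0)$, whose constant term combines with $\pi_1(\nabla^*u_0)$ to produce the coefficient $-(m-1)(m+n-1)f_I$, exactly as in the paper's (suppressed) calculation. The only cosmetic difference is that you organize that direct calculation via the $g_H$ versus $z_0^{-2}h$ splitting and a commutation identity for $\nabla^*\mathcal I$ (which needs its own one-line verification rather than being a consequence of $\Delta\mathcal I=\mathcal I\Delta$), but this does not change the substance of the argument.
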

%%%%%%%%%%%%%%%%%%%%%%%%%%%%%%%%%%%%%%%%%%%%%%%%%%%%%%%%%%%%%%%%%%%%%%%%%%%%%%%%
\begin{proof}
The $\pi_0$ part follows from \eqref{tracediv} and \eqref{tracediv2}. For the $\pi_1$ part, we also use \eqref{tracediv} and \eqref{tracediv2} but we need to see the contribution from $\nabla^*u_2$ as well. For that, we write as before 
$u_2'=-\tfrac{m(m-1)}{2}\sum_{I\in\mathscr{A}^m}f_I\mc{T}(\mc{S}(Z_I^*))$ and a direct calculation shows that 
\[
\pi_1(\nabla^*u_2)=(m-1)\sum_{I\in\mathscr{A}^m}(Z_0f_I-(m+n-2)f_I)
\mc{S}(\mc{T}(\mc{S}(Z_I^*))\otimes Z_0^*)
\]
implying the desired result.
\end{proof}
%%%%%%%%%%%%%%%%%%%%%%%%%%%%%%%%%%%%%%%%%%%%%%%%%%%%%%%%%%%%%%%%%%%%%%%%%%%%%%%%

%%%%%%%%%%%%%%%%%%%%%%%%%%%%%%%%%%%%%%%%%%%%%%%%%%%%%%%%%%%%%%%%%%%%%%%%%%%%%%%%
\subsection{Properties of the Poisson kernel}
  \label{s:poisson-basic}
  
In this section, we study the Poisson kernel $\mathscr P^-_\lambda$ defined by~\eqref{e:poisson-def}.

\smallsection{Pairing on the sphere}
We start by proving the following formula:
%%%%%%%%%%%%%%%%%%%%%%%%%%%%%%%%%%%%%%%%%%%%%%%%%%%%%%%%%%%%%%%%%%%%%%%%%%%%%%%%
\begin{lemm}
  \label{l:poisson-1}
Let $\lambda\in\mathbb C$ and $w\in\mathcal D'(\mathbb S^n;\otimes_S^m(T^*\mathbb S^n))$. Then
$$
\mathscr P^-_\lambda w(x)=\int_{\mathbb S^n} P(x,\nu)^{n+\lambda}(\otimes^m(\mathcal A^{-1}_-(x,\xi_-(x,\nu)))^T) w(\nu)\,dS(\nu)
$$
where the map $\xi_-$ is defined in~\eqref{defxi+}.
\end{lemm}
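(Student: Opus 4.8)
The plan is to unfold the definition of $\mathscr P^-_\lambda$ from~\eqref{e:poisson-def} and perform the change of variables $\xi = \xi_-(x,\nu)$ in the fiber integral over $S_x\mathbb H^{n+1}$. Recall that $\mathscr P^-_\lambda w(x) = \int_{S_x\mathbb H^{n+1}} \Phi_-(x,\xi)^\lambda \mathcal Q_-(w)(x,\xi)\,dS(\xi)$, and by~\eqref{defofQpm} we have $\mathcal Q_-(w)(x,\xi) = (\otimes^m(\mathcal A_-^{-1}(x,\xi))^T)\, w(B_-(x,\xi))$. For fixed $x$, the map $\nu \mapsto \xi_-(x,\nu)$ from~\eqref{defxi+} is a diffeomorphism $\mathbb S^n \to S_x\mathbb H^{n+1}$ by the discussion following~\eqref{defxi+} (it is the inverse of $\xi \mapsto B_-(x,\xi)$ restricted to the fiber), so this substitution is legitimate; the reduction to $\CI$ sections and then extension to distributions by continuity is routine since $B_-$ and $\xi_-$ are smooth submersions.

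First I would compute the three ingredients under this substitution. By~\eqref{Phiofxi}, $\Phi_-(x,\xi_-(x,\nu)) = P(x,\nu)$, so the weight $\Phi_-(x,\xi)^\lambda$ becomes $P(x,\nu)^\lambda$. The argument $B_-(x,\xi_-(x,\nu)) = \nu$ by the defining property of $\xi_-$. The endomorphism factor $(\otimes^m(\mathcal A_-^{-1}(x,\xi))^T)$ becomes $(\otimes^m(\mathcal A_-^{-1}(x,\xi_-(x,\nu)))^T)$ directly. Finally, the Jacobian of the change of variables: the measure $dS(\xi)$ on $S_x\mathbb H^{n+1}$ pulls back under $\nu \mapsto \xi_-(x,\nu)$ to $|\det \partial_\nu \xi_-(x,\nu)|\,dS(\nu)$, and by the conformality relation~\eqref{conformaldxi} the map $\partial_\nu\xi_-(x,\nu)$ scales the metric by the factor $P(x,\nu)^2$, hence multiplies $n$-dimensional volume by $P(x,\nu)^n$. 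Collecting, the integrand acquires the total weight $P(x,\nu)^\lambda \cdot P(x,\nu)^n = P(x,\nu)^{n+\lambda}$, which yields exactly the claimed formula.

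The one point requiring a little care — and the main (modest) obstacle — is pinning down the Jacobian factor precisely, i.e. verifying that the Riemannian volume density on the fiber $S_x\mathbb H^{n+1}$ (with the metric inherited from $g_H$ on $T_x\mathbb H^{n+1}$, i.e. the round metric of radius one) transforms under $\xi_-(x,\cdot)$ with the factor $P(x,\nu)^n$ relative to the standard round measure $dS(\nu)$ on $\mathbb S^n$. This is immediate from~\eqref{conformaldxi}: if a linear map between $n$-dimensional inner product spaces rescales the inner product by a constant $c^2$ (here $c = P(x,\nu)$, pointwise in $\nu$), then it rescales the volume element by $c^n = P(x,\nu)^n$. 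One should also note $P(x,\nu)>0$ (stated after~\eqref{defofP}), so no absolute values or sign issues arise, and orientation is irrelevant since we integrate against densities. With this the computation is a direct substitution and no further analytic subtlety is involved.
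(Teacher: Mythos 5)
Your proof is correct and follows essentially the same route as the paper: the change of variables $\xi=\xi_-(x,\nu)$, the identity~\eqref{Phiofxi} for the weight, and the conformality relation~\eqref{conformaldxi} giving the Jacobian factor $P(x,\nu)^n$, which combine to the exponent $n+\lambda$. The extra care you take in justifying the Jacobian and the extension to distributions is sound but matches what the paper leaves implicit.
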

%%%%%%%%%%%%%%%%%%%%%%%%%%%%%%%%%%%%%%%%%%%%%%%%%%%%%%%%%%%%%%%%%%%%%%%%%%%%%%%%
\begin{proof}
Making the change of variables $\xi=\xi_-(x,\nu)$ defined in~\eqref{defxi+}, and using~\eqref{Phiofxi} and~\eqref{conformaldxi}, we have
$$
\begin{gathered}
\mathscr P^-_\lambda w(x)=\int_{S_x\mathbb H^{n+1}}\Phi_-(x,\xi)^{\lambda} (\otimes^m(\mathcal A^{-1}_-(x,\xi))^T) w(B_-(x,\xi))\,dS(\xi)
\\
=\int_{\mathbb S^n} P(x,\nu)^{n+\lambda}(\otimes^m(\mathcal A^{-1}_-(x,\xi_-(x,\nu)))^T) w(\nu)\,dS(\nu)
\end{gathered}
$$
as required.
\end{proof}
%%%%%%%%%%%%%%%%%%%%%%%%%%%%%%%%%%%%%%%%%%%%%%%%%%%%%%%%%%%%%%%%%%%%%%%%%%%%%%%%

%%%%%%%%%%%%%%%%%%%%%%%%%%%%%%%%%%%%%%%%%%%%%%%%%%%%%%%%%%%%%%%%%%%%%%%%%%%%%%%%
\smallsection{Poisson maps to eigenstates}
To show that $\mathscr P^-_\lambda w(x)$ is an eigenstate of the Laplacian, we use the following
%%%%%%%%%%%%%%%%%%%%%%%%%%%%%%%%%%%%%%%%%%%%%%%%%%%%%%%%%%%%%%%%%%%%%%%%%%%%%%%%
\begin{lemm}
  \label{l:poisson-2}
Assume that $w\in\mathcal D'(\mathbb S^n;\otimes^m (T^*\mathbb S^n))$ is the delta function centered at $e_1=\partial_{x_1}\in\mathbb S^n$ with
the value $e^*_{j_1+1}\otimes \dots\otimes e^*_{j_m+1}$,
where $1\leq j_1,\dots,j_m\leq n$.
Then under the identifications~\eqref{defpsi}
and~\eqref{e:udiffeo}, we have
$$
\mathscr P^-_\lambda w(z_0,z)=z_0^{n+\lambda}Z_{j_1}^*\otimes\dots\otimes Z_{j_m}^*.
$$
\end{lemm}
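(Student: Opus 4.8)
\textbf{Proof proposal for Lemma~\ref{l:poisson-2}.}
The plan is to evaluate the integral formula of Lemma~\ref{l:poisson-1} when $w$ is the delta function $\delta_{e_1}\otimes(e^*_{j_1+1}\otimes\dots\otimes e^*_{j_m+1})$. With such a $w$, the integral over $\mathbb S^n$ collapses to the single point $\nu=e_1$, so
$$
\mathscr P^-_\lambda w(x)=P(x,e_1)^{n+\lambda}\,\big(\otimes^m(\mathcal A_-^{-1}(x,\xi_-(x,e_1)))^T\big)\big(e^*_{j_1+1}\otimes\dots\otimes e^*_{j_m+1}\big),
$$
where there is a harmless abuse: the delta function should be tested against a smooth section, but since the statement is a pointwise identity of smooth tensors on $\mathbb H^{n+1}$, one may legitimately read off the integrand at $\nu=e_1$ (alternatively, approximate $w$ by smooth sections concentrating at $e_1$ and pass to the limit, which is fine because $\mathscr P^-_\lambda$ is continuous and the integrand depends continuously on $\nu$ near $e_1$). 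So the lemma reduces to two computations: (a) the value of the Poisson kernel $P(x,e_1)$ in the half-space model, and (b) the action of the transpose inverse parallel transport map $\mathcal A_-(x,\xi_-(x,e_1))^{-T}$ on the covectors $e^*_{j+1}$, again in the half-space coordinates.

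First I would set up coordinates carefully. Recall from Section~\ref{s:laplacian-half} that the half-space model $\mathbb U^{n+1}$ is obtained from $\mathbb B^{n+1}$ via a rotation sending the chosen boundary point $\nu$ to $0\in\overline{\mathbb U^{n+1}}$ composed with the map~\eqref{e:udiffeo}; here the relevant boundary point is $e_1$, so after this normalization $e_1$ corresponds to $z=0$ in $\overline{\mathbb U^{n+1}}$. I would then trace the boundary defining functions and the Poisson kernel through~\eqref{defpsi}, \eqref{e:udiffeo}, and~\eqref{e:poisson-y}: in the upper half-space model the Poisson kernel with respect to the boundary point at $0$ is the classical expression proportional to $z_0/(z_0^2+|z|^2)$, and with the normalization fixed by~\eqref{defofP}--\eqref{e:poisson-y} one should get exactly $P(\cdot,e_1)=z_0$ at the point with coordinates $(z_0,z)$ — this is the content of (a) and it is a direct substitution. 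Then $P(x,e_1)^{n+\lambda}=z_0^{n+\lambda}$, which already produces the scalar prefactor in the claimed formula.

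Next I would handle (b), the tensorial factor. The map $\mathcal A_-(x,\xi_-(x,e_1)):T_{e_1}\mathbb S^n\to\mathcal E(x,\xi_-(x,e_1))\subset T_x\mathbb H^{n+1}$ is, by~\eqref{e:Adef} and~\eqref{e:ptarsk}, essentially the parallel transport from the boundary point $e_1$ to $x$ along the geodesic emanating from $e_1$, rescaled by the Poisson kernel / the $0$-tangent structure. The key geometric fact is that in the half-space model the geodesics limiting to $0\in\partial\overline{\mathbb U^{n+1}}$ together with the orthonormal $0$-frame $Z_0^*=dz_0/z_0,\ Z_i^*=dz_i/z_0$ are compatible with parallel transport in a particularly clean way: the Koszul relations~\eqref{koszul} show that along the vertical geodesic $s\mapsto(e^s,0)$ (or more generally after using homogeneity, along any geodesic ending at $0$) the frame $(Z_i^*)$ transports essentially to itself, and the identification of $T_{e_1}\mathbb S^n$ with $\mathrm{span}(Z_1^*,\dots,Z_n^*)$ via $\mathcal A_-$ becomes the identity after the rescaling by $P(x,e_1)$ built into~\eqref{e:Adef}. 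Concretely, I would show $\mathcal A_-(x,\xi_-(x,e_1))^{-T}(e^*_{j+1})=Z_j^*$ at the point $x\leftrightarrow(z_0,z)$; combined with the prefactor this gives $\mathscr P^-_\lambda w=z_0^{n+\lambda}Z_{j_1}^*\otimes\dots\otimes Z_{j_m}^*$. It is legitimate to assume the isometry property~\eqref{Apmiso} and the equivariance~\eqref{equivarianceA}, which let me reduce to a single convenient base point $x$ (say $e_0$, i.e.\ $(z_0,z)=(1,0)$) and propagate by the $G$-action, since both sides of the claimed identity are $G$-equivariant in the appropriate sense.

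The main obstacle is bookkeeping rather than conceptual: correctly matching the three coordinate systems (hyperboloid, ball, half-space) and the induced identifications of $T^*\mathbb S^n$, $\mathcal E$, and the $0$-cotangent bundle, so that the rotation-and-Möbius normalization sending $e_1\mapsto 0$ is applied consistently to $P$, to $\mathcal A_-$, and to the frame $(Z_i^*)$. Once the normalization is pinned down, (a) is a one-line substitution into~\eqref{e:poisson-y} and (b) follows from~\eqref{e:Adef}, \eqref{conformaldxi}, \eqref{Phiofxi}, and~\eqref{koszul}. I would present the argument by first doing the scalar case $m=0$ in full (establishing $P(x,e_1)=z_0$), then observing that the tensorial factor is a pure parallel-transport statement independent of $\lambda$, and finally invoking equivariance to reduce to the base point and conclude.
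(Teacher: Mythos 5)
Your overall strategy is the same as the paper's: collapse the integral of Lemma~\ref{l:poisson-1} at the delta function, compute $P(\cdot,e_1)$ in the half-space model, and identify $\mathcal A_-^{-T}(e^*_{j+1})$ with $Z_j^*$ via the parallel-transport interpretation of $\mathcal A_-$. However, there is a concrete error in your coordinate normalization that breaks both key computations as you have set them up. The lemma is stated \emph{under the identifications~\eqref{defpsi} and~\eqref{e:udiffeo}}, with no additional rotation; under $\psi_1$ from~\eqref{e:udiffeo} the point $e_1$ is sent to \emph{infinity} in $\overline{\mathbb U^{n+1}}$ (and $-e_1$ to $0$), not to $z=0$ as you claim. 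With your normalization ($e_1\mapsto 0$) the Poisson kernel would be $z_0/(z_0^2+|z|^2)$ up to a constant, \emph{not} $z_0$, so your step (a) is internally inconsistent: the identity $P(\cdot,e_1)=z_0$ that you need holds precisely because $e_1$ is the point at infinity in the coordinates fixed by~\eqref{e:udiffeo}.

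The same misidentification undermines step (b). The frame $Z_j^*$ is parallel (by~\eqref{computenabla}, $\nabla_{Z_0}Z_j^*=0$) only along the \emph{vertical} geodesics, i.e.\ exactly those terminating at the point at infinity; along a generic geodesic ending at $z'=0$ the parallel transport of $Z_j^*$ acquires the nontrivial reflection-type factors $\delta_{ij}-2P\,(z_i-z_i')(z_j-z_j')/z_0$ (these appear explicitly in the proof of Lemma~\ref{injectiv}), so your assertion that the frame ``transports essentially to itself along any geodesic ending at $0$'' is false, and the identity $\mathcal A_-^{-T}(e^*_{j+1})=Z_j^*$ would not hold in your coordinates. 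The fix is simply to keep the identification of the lemma: $e_1\leftrightarrow\infty$, geodesics terminating at $e_1$ are the lines parallel to the $z_0$-axis, $Z_j^*$ is parallel along them and orthogonal to their tangents, and one then checks (as the paper does) that the limit of $\rho_0Z_j^*$ along such a geodesic, read in the ball model, is $e_{j+1}^*$; this is what yields~$\mathcal A_-^{-T}(e^*_{j+1})=Z_j^*$. Your proposed reduction to a base point via~\eqref{equivarianceA} can be made to work, but only after the normalization is corrected, and it requires tracking the factors $N_\gamma$ against the scaling of $z_0$ under the affine isometries fixing the point at infinity, which your sketch does not address.
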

%%%%%%%%%%%%%%%%%%%%%%%%%%%%%%%%%%%%%%%%%%%%%%%%%%%%%%%%%%%%%%%%%%%%%%%%%%%%%%%%
\begin{proof}
We first calculate
$$
P(z,e_1)=z_0.
$$
It remains to show the following identity in the half-space model
\begin{equation}
  \label{e:identitay}
\mathcal A_-^{-T}(z,\xi_-(z,\nu))e^*_{j+1}=Z_j^*,\quad
1\leq j\leq n.
\end{equation}
One can verify~\eqref{e:identitay} by a direct computation: since $\mathcal A_-$ is an isometry, one can instead
calculate the image of $e_{j+1}$ under $\mathcal A_-$, and then apply to it the differentials of the maps $\psi$
and $\psi_1$ defined in~\eqref{defpsi} and~\eqref{e:udiffeo}.

Another way to show~\eqref{e:identitay} is to use the interpretation of $\mathcal A_-$ as parallel transport to conformal
infinity, see~\eqref{e:ptarsk}.
Note that under the diffeomorphism $\psi_1:\mathbb B^{n+1}\to\mathbb U^{n+1}$,
$\nu=e_1$ is sent to infinity and geodesics terminating at $\nu$, to straight lines parallel to the $z_0$ axis.
By~\eqref{computenabla}, the covector field $Z_j^*$ is parallel along these geodesics and orthogonal to their tangent vectors.
It remains to verify that the limit of the field $\rho_0 Z_j^*$ along these geodesics as $z\to \infty$, considered
as a covector in the ball model, is equal to $e_{j+1}^*$.
\end{proof}
%%%%%%%%%%%%%%%%%%%%%%%%%%%%%%%%%%%%%%%%%%%%%%%%%%%%%%%%%%%%%%%%%%%%%%%%%%%%%%%%

%%%%%%%%%%%%%%%%%%%%%%%%%%%%%%%%%%%%%%%%%%%%%%%%%%%%%%%%%%%%%%%%%%%%%%%%%%%%%%%%
\smallsection{Proof of Lemma~\ref{l:laplacian}}
It suffices to show that for each $\nu\in\mathbb S^n$, if $w$ is a delta function centered at $\nu$
with value being some symmetric trace-free tensor in $\otimes_S^m T^*_\nu\mathbb S^n$, then
$$
\big(\Delta+\lambda(n+\lambda)-m\big) \mathscr P^-_\lambda w=0,\quad
\nabla^*\mathscr P^-_\lambda w=0,\quad
\mathcal T(\mathscr P^-_\lambda w)=0.
$$
Since the group of symmetries $G$ of $\mathbb H^{n+1}$ acts transitively on $\mathbb S^n$,
we may assume that $\nu=\partial_1$. Applying Lemma~\ref{l:poisson-2}, we write in the upper half-plane
model,
$$
\mathscr P^-_\lambda w=z_0^{n+\lambda} u_0,\quad
u_0\in E^{(m)}_0,\quad
\mathcal T(u_0)=0.
$$
It immediately follows that $\mathcal T(\mathscr P^-_\lambda w)=0$. To see the other two identities,
it suffices to apply Lemma~\ref{DeltaE_0} together with the formula
$$
\Delta z_0^{n+\lambda}=-\lambda(n+\lambda) z_0^{n+\lambda}.
$$

%%%%%%%%%%%%%%%%%%%%%%%%%%%%%%%%%%%%%%%%%%%%%%%%%%%%%%%%%%%%%%%%%%%%%%%%%%%%%%%%
\smallsection{Injectivity of Poisson}
Notice that $\mathscr P^-_\lambda$ is an analytic family of operators in $\la$. We define the set
\begin{equation}
\label{e:R-m}
\mc{R}_m=\left\{\begin{array}{ll}
-\tfrac{n}{2}-\tfrac{1}{2}\nn_0 & \textrm{ if }n>1 \textrm{ or }m=0\\
-\tfrac{1}{2}\nn_0 & \textrm{ if }n=1 \textrm{ and }m>0  
\end{array}\right.
\end{equation}
and we will prove that if $\la\notin  \mc{R}_m$ and $w\in\mc{D}'(\mathbb{S}^n;\otimes_S^mT^*\mathbb{S}^n)$ is trace-free, 
then $\mathscr P^-_\lambda(w)$ has a weak asymptotic expansion at the conformal infinity with the leading term
given by a multiple of $w$, proving injectivity of $\mathscr P^-_\lambda$.
We shall use the $0$-cotangent bundle approach in the ball model and rewrite $\mc{A}^{-1}_\pm(x,\xi_\pm(x,\nu))$ as the parallel transport $\tau(y',y)$ in ${^0}T\overline{\mathbb{B}^{n+1}}$ with $\psi(x)=y$ and $y'=\nu$, as explained in~\eqref{e:ptarsk}. 
Let $\rho\in \CI(\overline{\mathbb{B}^{n+1}})$ be a smooth boundary defining function which satisfies $\rho>0$ in 
$\mathbb{B}^{n+1}$, $|d\rho|_{\rho^2g_H}=1$ near $\sph^n=\{\rho=0\}$, where $g_H$ is the hyperbolic metric on the ball.
We can for example take the function $\rho=\rho_0$ defined in~\eqref{e:rho} and smooth it near the center $y=0$ of the ball.
Such function is called \emph{geodesic boundary defining function} and induces a diffeomorphism
\begin{equation}\label{collarpsi} 
\theta: [0,\eps)_t\x\sph^n\to \overline{\mathbb{B}^{n+1}}\cap\{\rho< \eps\},
\quad \theta(t,\nu):=\theta_t(\nu)
\end{equation}
where $\theta_t$ is the flow at time $t$ of the gradient $\nabla^{\rho^2g_H}\rho$ of $\rho$ (denoted also $\pl_\rho$) with respect to the metric 
$\rho^2g_H$. For $\rho$ given in~\eqref{e:rho}, we have for $t$ small
$$
\theta(t,\nu)={2-t\over 2+t}\,\nu,\quad
\nu\in\mathbb S^n.
$$
For a fixed geodesic boundary defining function $\rho$, one can identify, over the boundary 
$\sph^n$ of $\overline{\mathbb{B}^{n+1}}$, 
the bundle $T^*\sph^n$ and $T\sph^n$ with the bundles ${^0}T^*\sph^n:={^0}T_{\sph^n}^*\overline{\mathbb{B}^{n+1}}\cap \ker\iota_{\rho\pl_\rho}$ simply by the isomorphism
$v\mapsto \rho^{-1}v$ (and we identify their duals $T\sph^n$ and
${^0}T\sph^n$ as well). Similarly, over $\sph^n$,  $E^{(m)}\cap \ker\iota_{\rho\pl_\rho}$ identifies
with $\otimes^m_ST^*\sph^n\cap \ker\mc{T}$ by the map $v\mapsto \rho^{-m}v$. 
We can then view the Poisson operator as an operator 
\[\mathscr P^-_\lambda: \mc{D}'(\sph^n; E^{(m)}\cap\ker\iota_{\rho\pl_\rho})\to 
\mc{C}^\infty(\mathbb{B}^{n+1}; \otimes_S^m ( {^0}T^*\overline{\mathbb{B}^{n+1}})).\]

%%%%%%%%%%%%%%%%%%%%%%%%%%%%%%%%%%%%%%%%%%%%%%%%%%%%%%%%%%%%%%%%%%%%%%%%%%%%%%%%
\begin{lemm}
  \label{injectiv}
Let $w\in \mc{D}'(\sph^n; E^{(m)}\cap \ker \iota_{{\rho_0}\pl_{\rho_0}})$ and assume that 
$\la\notin \mc{R}_m$. Then $\mathscr P^-_\lambda(w)$ has a weak asymptotic expansion at $\sph^n$ as follows: for each $\nu\in \sph^n$, there exists a neighbourhood $V_\nu\subset \bbar{\mathbb{B}^{n+1}}$ of $\nu$ and a boundary defining function $\rho=\rho_\nu$ such that for any $\varphi \in \mc{C}^\infty(V_\nu\cap\sph^n; \otimes_S^m({^0}T\sph^n))$,  
there exist $F_\pm\in \mc{C}^\infty([0,\eps))$ such that for $t>0$ small
\begin{equation}\label{actionP}
\begin{gathered} 
\int_{\sph^n}  \langle \mathscr P^-_\lambda(w)(\theta(t,\nu)),\otimes^m (\tau(\theta(t,\nu),\nu)).
\varphi(\nu)\rangle dS_{\rho}(\nu)= \\
\left\{\begin{array}{ll}
t^{-\la}F_-(t)+t^{n+\la}F_+(t), &\la\notin -n/2+\nn;\\
t^{-\la}F_-(t)+t^{n+\la}\log(t)F_+(t), & \la\in -n/2+\nn.
\end{array}\right.\end{gathered}\end{equation} 
using the product collar neighbourhood \eqref{collarpsi} associated to $\rho$, and moreover one has 
\begin{equation}\label{Blambda}
F_-(0)=C \frac{\Gamma(\la+\ndemi)}{(\la+n+m-1)\Gamma(\la+n-1)} \cjg e^{\la f}.w,\varphi\cjd 
\end{equation}
for some $f\in \CI(\mathbb S^n)$ satisfying $\rho=\tfrac{1}{4}e^{f}\rho_0+\mc{O}(\rho)$ near $\rho=0$ and $C\not=0$ a constant depending only on $n$.
Here $dS_\rho$ is the Riemannian measure for the metric $(\rho^2g_H)|_{\sph^n}$ and the distributional pairing on $\sph^n$ is with respect to this measure.
\end{lemm}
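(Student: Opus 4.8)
\textbf{Proof proposal for Lemma~\ref{injectiv}.}

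The plan is to reduce the computation of the boundary asymptotics of $\mathscr P^-_\lambda(w)$ to a local model computation near a single boundary point $\nu_0\in\mathbb S^n$, using the $G$-equivariance of all the objects involved, and then to extract the leading coefficient by a stationary-phase/Laplace-type analysis of the Poisson-kernel integral in the upper half-space model. First I would fix $\nu_0$, choose the geodesic boundary defining function $\rho=\rho_{\nu_0}$ adapted to $\nu_0$ (a smoothing of $\rho_0$), and pass to the half-space model $\mathbb U^{n+1}$ via the diffeomorphism that sends $\nu_0$ to $0\in\overline{\mathbb U^{n+1}}$ and $-\nu_0$ to infinity, exactly as set up in Section~\ref{s:laplacian-half}. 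In these coordinates the flow $\theta_t$ becomes (to leading order) dilation $z\mapsto t z$ near $z=0$, the parallel transport $\tau(\theta(t,\nu),\nu)$ is explicit via~\eqref{computenabla} (the $Z_j^*$ are parallel along vertical geodesics), and the Poisson kernel is $P(\psi^{-1}(y),\nu)=(1-|y|^2)/|y-\nu|^2$, which in half-space coordinates near the relevant region becomes a smooth positive function with a nondegenerate behavior as $z_0\to 0$. The key identity to exploit is Lemma~\ref{l:poisson-1}, rewriting $\mathscr P^-_\lambda w(x)=\int_{\mathbb S^n} P(x,\nu)^{n+\lambda}(\otimes^m(\mathcal A_-^{-1}(x,\xi_-(x,\nu)))^T)w(\nu)\,dS(\nu)$, together with Lemma~\ref{l:poisson-2} which identifies the model case (delta function at a point) with the explicit tensor $z_0^{n+\lambda}Z_{j_1}^*\otimes\cdots\otimes Z_{j_m}^*$.

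Next I would carry out the asymptotic analysis of the pairing in~\eqref{actionP}. After substituting the integral formula for $\mathscr P^-_\lambda(w)$, pairing against $\otimes^m(\tau(\theta(t,\nu),\nu))\varphi(\nu)$, and changing variables so that the integration is localized near the point $\nu$ over which $\theta(t,\nu)$ sits, the integrand becomes (a power of $t$ times) an oscillation-free integral of the form $\int t^{n}K(t,\nu,\nu')^{n+\lambda}\,\Theta(t,\nu,\nu')\,d\nu'$, where $K$ is a rescaled Poisson kernel that concentrates (like an approximate identity of scale $t$) near $\nu'=\nu$ and $\Theta$ collects the parallel-transport and test-section factors, all smooth up to the boundary. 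A Taylor expansion of $K$ and $\Theta$ in the rescaled variable $(\nu'-\nu)/t$ produces the two-term expansion: the ``incoming'' part $t^{-\lambda}F_-(t)$ coming from the region where $\nu'$ is at unit distance from $\nu$ (the bulk of the sphere), and the ``outgoing'' part $t^{n+\lambda}F_+(t)$ coming from the concentration region $|\nu'-\nu|\sim t$; the resonant overlap when $-\lambda\equiv n+\lambda \pmod{\mathbb Z_{\ge0}}$, i.e. $\lambda\in -n/2+\mathbb N$, is precisely where a $\log t$ appears, as is standard for indicial-root collisions. The smoothness of $F_\pm$ in $t$ follows because all the data ($\rho$, the metric $h_\rho$, the Poisson kernel, parallel transport) are smooth up to $\mathbb S^n$, so the expansion is a genuine smooth (possibly log-modified) expansion rather than merely asymptotic.

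Finally I would compute the leading coefficient $F_-(0)$ to verify~\eqref{Blambda}. Since $F_-(0)$ comes from the bulk region of the $\nu'$-integral at $t=0$, it is just $\int_{\mathbb S^n}\langle (\text{limit of } P^{n+\lambda}\mathcal A_-^{-T} \text{ factors}),\varphi\rangle$ against $w$; by the conformal covariance~\eqref{changeofPhi} of $P$ and the equivariance~\eqref{equivarianceA} of $\mathcal A_\pm$, this reduces to a single scalar integral over $\mathbb S^n$ of a power of a Poisson-type kernel, namely a constant multiple of the classical integral $\int_{\mathbb S^n}|\nu-\nu'|^{-2(n+\lambda)}$-type expression, whose value is the ratio of Gamma functions $\Gamma(\lambda+\tfrac n2)/\Gamma(\lambda+n-1)$ up to the explicit pole factor $(\lambda+n+m-1)^{-1}$ produced by contracting the tensorial factors (this last factor is exactly where the $m$-dependence enters, and it matches the Bochner lower bound $m+n-1$ of Lemma~\ref{bottomsp}); the factor $e^{\lambda f}$ records the change from $\rho_0$ to the adapted $\rho$. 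The constant $C\neq0$ depends only on $n$ through the normalization of $dS$ and the volume of the relevant sphere. In particular $F_-(0)\neq 0$ whenever $w\neq 0$ and $\lambda\notin\mathcal R_m$ (the excluded set being where either the Gamma factors or the pole factor misbehave), which yields injectivity of $\mathscr P^-_\lambda$ on trace-free $w$.

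\textbf{Main obstacle.} I expect the main difficulty to be the tensorial bookkeeping in the concentration region: one must show that after the rescaling $\nu'=\nu+t\zeta$ the combination of the Poisson kernel power, the two parallel transports $\mathcal A_-(x,\xi_-(x,\nu'))^{-T}$ and $\tau(\theta(t,\nu),\nu)$, and the identification isomorphisms $v\mapsto\rho^{-m}v$ on $E^{(m)}$ all fit together so that the $t^{n+\lambda}$ coefficient is well-defined (and the cross terms that would spoil the clean two-term structure either vanish by the trace-free condition on $w$, as in Lemma~\ref{l:laplacian}, or get absorbed into $F_\pm$). Handling the case $n=1$, $m>0$ separately (where the exceptional set is larger, $\mathcal R_m=-\tfrac12\mathbb N_0$) is a minor additional point, reflecting that $\mathcal E^*$ is then one-dimensional so trace-free tensors of order $\ge 2$ are absent; this is consistent with Remark~(ii) after Theorem~\ref{t:main}.
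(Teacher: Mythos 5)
Your overall strategy (localize near a boundary point, pass to the half-space model, expand the Poisson integral) is the same as the paper's, but the key analytic step has a genuine gap, and one structural claim is backwards. First, $w$ is only a distribution, so the proposed split of the $\nu'$-integral into a ``bulk'' region and a concentration region $|\nu'-\nu|\sim t$, followed by a Taylor expansion in $(\nu'-\nu)/t$, is not justified: you cannot restrict a distribution to regions and expand pointwise. The paper exploits instead the convolution structure in $z$ and takes the Fourier transform, so that $P(z_0,z;\cdot)^{n+\la+r}$ becomes an explicit modified Bessel function $|\xi|^{\la+\tfrac{n}{2}+r}K_{\la+\tfrac{n}{2}+r}(z_0|\xi|)$; its small-argument expansion is what produces the two families of powers $z_0^{-\la}$ and $z_0^{n+\la}$ (with $\log$ precisely when $\la\in-\tfrac{n}{2}+\nn$), with coefficients that are well-defined pairings of $\mc{F}^{-1}(e^{\la f}w)\mc{F}(\varphi)$ against homogeneous distributions; the exceptional set $\mc{R}_m$ enters exactly as the set of $\la$ for which the ill-defined powers $|\xi|^{-n-j}$ would occur, not as ``where the Gamma factors misbehave.'' Second, your identification of the two regimes is reversed: since $P(t,z;z')^{n+\la}\approx t^{n+\la}|z-z'|^{-2(n+\la)}$ away from $z'=z$, the bulk contributes the $t^{n+\la}$ series, while the scale-$t$ concentration region (an approximate identity) contributes $t^{-\la}F_-(t)$; this is why $F_-(0)$ in \eqref{Blambda} is a \emph{local} pairing $\langle e^{\la f}w,\varphi\rangle$ times an explicit constant. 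Your computation of $F_-(0)$ ``from the bulk region,'' as a pairing of $w$ against a fixed kernel power (an $\int_{\sph^n}|\nu-\nu'|^{-2(n+\la)}$-type quantity, which moreover diverges for $\Re\la>-\tfrac{n}{2}$), would not produce a nonzero multiple of $\langle w,\varphi\rangle$ and hence would not give injectivity.

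Relatedly, the factor $\Gamma(\la+\tfrac{n}{2})/\bigl((\la+n+m-1)\Gamma(\la+n-1)\bigr)$ is not obtained by ``contracting the tensorial factors.'' In the paper it comes from expanding the parallel-transported tensor argument $p_\alpha\bigl(x-2\zeta\langle\zeta,x\rangle P/z_0\bigr)$ in powers $P^{n+\la+r}$, $0\le r\le m$, evaluating each constant term via the harmonic-polynomial identity of Lemma~\ref{littlecomput} (this is where the trace-free hypothesis on $w$ is used quantitatively, not just to kill cross terms), and then summing a binomial-type series in $r$ whose zeros in $\la$ are exactly $-n-m+2,\dots,-n+1$; this cancellation is what produces the single simple factor $(\la+n+m-1)^{-1}$ and shows $F_-(0)\neq0$ off $\mc{R}_m$. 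Without this mechanism (or an equivalent one), the central claim \eqref{Blambda}, and with it the injectivity of $\mathscr P^-_\lambda$, remains unproved in your outline.
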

%%%%%%%%%%%%%%%%%%%%%%%%%%%%%%%%%%%%%%%%%%%%%%%%%%%%%%%%%%%%%%%%%%%%%%%%%%%%%%%%

%%%%%%%%%%%%%%%%%%%%%%%%%%%%%%%%%%%%%%%%%%%%%%%%%%%%%%%%%%%%%%%%%%%%%%%%%%%%%%%%
\begin{figure}
\includegraphics{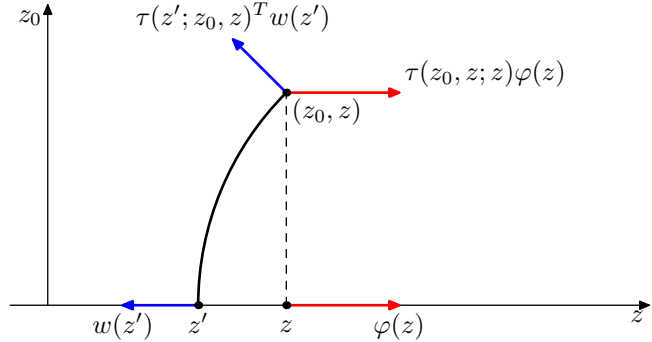}
\caption{The covector $w(z')$, the vector $\varphi(z)$, and their parallel transports
to $(z_0,z)$ viewed in the 0-bundles, for the case $m=1$.}
\label{f:transport}
\end{figure}
%%%%%%%%%%%%%%%%%%%%%%%%%%%%%%%%%%%%%%%%%%%%%%%%%%%%%%%%%%%%%%%%%%%%%%%%%%%%%%%%

%%%%%%%%%%%%%%%%%%%%%%%%%%%%%%%%%%%%%%%%%%%%%%%%%%%%%%%%%%%%%%%%%%%%%%%%%%%%%%%%
\begin{proof}
First we split $w$ into $w_1+w_2$ where $w_1$ is supported near $\nu\in \sph^n$ and $w_2$ is zero near $\nu$.
For the case where $w_2$ has support at positive distance from the support of $\varphi$, we have for any geodesic boundary defining function $\rho$ that 
\[t\mapsto t^{-n-\la}\int_{\sph^n}  \langle\mathscr P^-_\lambda(w_2)(\theta(t,\nu)),\otimes^m (\tau(\theta(t,\nu),\nu)).
\varphi(\nu)\rangle dS_\rho(\nu)
\in \mc{C}^\infty([0,\eps)),\]
this is a direct consequence of Lemma~\ref{l:poisson-1} and the following smoothness properties
\[
\begin{gathered}
(y,\nu)\mapsto \log\big(P(\psi^{-1}(y),\nu)/\rho(y)\big)\in \mc{C}^\infty(\overline{\mathbb{B}^{n+1}}\x\sph^n \setminus {\rm diag}(\sph^n\x\sph^n))\\
\tau(\cdot,\cdot)\in \mc{C}^\infty(\overline{\mathbb{B}^{n+1}}\x \overline{\mathbb{B}^{n+1}}  \setminus {\rm diag}(\sph^n\x\sph^n));
{^0}T^*\bbar{\mathbb{B}^{n+1}}\otimes {^0}T\bbar{\mathbb{B}^{n+1}}).
\end{gathered}
\]
This reduces the consideration of the Lemma to the case where $w$ is $w_1$ supported near $\nu$, 
and to simplify we shall keep the notation $w$ instead of $w_1$. 
We thus consider now $w$ and $\varphi$ to have support near $\nu$.
For convenience of calculations and as we did before, 
we work in the half-space model $\rr^+_{z_0}\x \rr_{z}^n$ by mapping $\nu$ to 
$(z_0,z)=(0,0)$ (using the composition of a rotation on the ball model with
the map defined in~\eqref{e:udiffeo}) and we choose a nighbourhood $V_\nu$ of $\nu$ which is mapped to 
$z_0^2+|z|^2<1$ in $\mathbb{U}^{n+1}$ and choose the geodesic defining function $\rho=z_0$ (and thus
$\theta(z_0,z)=(z_0,z)$). (See Figure~\ref{f:transport}.)
The geodesic boundary defining function $\rho_0=\frac{2(1-|y|)}{1+|y|}$ in the ball 
equals 
\begin{equation}\label{expofrho0}
\rho_0(z_0,z)=4z_0/(1+z_0^2+|z|^2)
\end{equation} 
in the half-space model. The metric $dS_\rho$ becomes the Euclidean metric $dz$ on $\rr^n$ near $0$ and $w$ has compact support in $\rr^n$. 
By~\eqref{e:udiffeo} and~\eqref{e:poisson-y}, the Poisson kernel in these coordinates becomes
$$
\widetilde P(z_0,z;z')=e^{f(z')}P(z_0,z;z')
\textrm{ with } P(z_0,z;z'):={z_0\over z_0^2+|z-z'|^2}, \,\,f(z')=\log(1+|z'|^2)
$$
where $z,z'\in\rr^n$ and $z_0>0$. One has $\rho=\tfrac{1}{4}e^f\rho_0+\mc{O}(\rho)$ near $\rho=0$.

In the Appendix of \cite{GMP}, the parallel transport $\tau(z_0,z; 0,z')$ is computed for $z'\in\rr^n$ is a neighbourhood of $0$: in the local orthonormal basis $Z_0=z_0\pl_{z_0},Z_i=z_0\pl_{z_i}$ of the bundle ${^0}T\mathbb{U}^{n+1}$, near $\nu$,
the matrix of $\tau(z_0,z;z'):=\tau(z_0,z;0,z')$ is given by 
\[
\begin{gathered}
\tau_{00}=1-2P(z_0,z;z')\frac{|z-z'|^2}{z_0}, \quad  \tau_{0i}=-\tau_{i0}=-2z_0(z_i-z_i')\frac{P(z_0,z;z')}{z_0}, \\
\tau_{ij}=\delta_{ij}-2P(z_0,z;z')\frac{(z_i-z_i').(z_j-z_j')}{z_0}.\end{gathered}
\] 
In particular we see that $\tau(z_0,z;z)$ is the identity matrix in the basis $(Z_i)_i$ and 
thus $\tau(\theta(z_0,z),z)$ as well.
We denote $(Z_j^*)_j$ the dual basis to $(Z_j)_j$ as before.

Now, we use the correspondence between symmetric tensors and homogeneous polynomials to facilitate computations, as explained in Section \ref{symtens}. To $\mc{S}(Z^*_I)$, we associate the polynomial on $\rr^n$ given by
$$
P_I(x)=\mc{S}(Z_I^*)\bigg(\sum_{i=1}^nx_iZ_I,\dots,\sum_{i=1}^nx_iZ_I\bigg)=x_I
$$
where $x_I=\prod_{k=1}^m x_{i_k}$ if $I=(i_1,\dots,i_m)$.  
We denote by $\Pol^m(\rr^n)$ the space of homogeneous polynomials of degree $m$ on $\rr^n$ and 
$\Pol_0^m(\rr^n)$ those which are harmonic (thus corresponding to trace free symmetric tensors in $E^{(m)}_0$).
Then we can write $w=\sum_{\alpha}w_\alpha p_\alpha(x)$ for some 
$w_\alpha\in \mc{D'}(\rr^n)$ supported near $0$ and $p_\alpha(x)\in {\rm Pol}^m_0(\rr^n)$. Each 
$p_\alpha(x)$ composed with the linear map $\tau(z';z_0,z)|_{Z_0^\perp}$ becomes the homogeneous polynomial in $x$
\[p_{\alpha}\big(x-2 (z-z')\cjg z-z',x\cjd. \tfrac{P(z_0,z;z')}{z_0}\big)\]
where $\cjg \cdot,\cdot \cjd$ just denotes the Euclidean scalar product.
To prove the desired asymptotic expansion, it suffices to take $\varphi\in \CI_0([0,\infty)_{z_0}\x \rr^n)$ and to  
analyze the following homogeneous polynomial in $x$ as $z_0\to 0$ 
\begin{equation}
  \label{expressiontofourier}
\begin{gathered} 
\int_{\rr^n} \sum_{\alpha} \Big\cjg e^{(n+\la)f}w_\alpha , \varphi(z_0,z)
P(z_0,z;\cdot)^{n+\la}p_\alpha\big(x-2 (z-\cdot)\cjg z-\cdot,x\cjd. \tfrac{P(z_0,z;\cdot)}{z_0}\big)\Big\cjd dz
\end{gathered}
\end{equation}
where the bracket $\cjg w_\alpha, \cdot\cjd$ means the distributional pairing coming from pairing with respect to the canonical measure $dS$ on $\sph^n$, which in $\rr^n$ becomes the measure $4^ne^{-nf}dz$, and so the $e^{nf}$ in \eqref{expressiontofourier} cancels out if one works with the Euclidean measure $dz$, which we do now. 
We remark a convolution kernel in $z$ and thus apply Fourier transform in $z$ (denoted $\mc{F}$): denoting $P(z_0;|z-z'|)$ for $P(z_0,z;z')$, the integral \eqref{expressiontofourier} becomes (up to non-zero multiplicative constant)
\[
I(z_0,x):=\sum_\alpha \Big\cjg \mc{F}^{-1}(e^{\la f}w_\alpha), \mc{F}(\varphi).
\mc{F}_{\zeta\to \cdot}\Big(P(z_0;|\zeta|)^{n+\la}p_\alpha\big(x-2\tfrac{\zeta\cjg \zeta,x\cjd}{z_0} P(z_0;|\zeta|)\big)\Big)\Big\cjd_{\rr^n}
\]
We can expand $p_\alpha\big(x-2\frac{\zeta\cjg \zeta,x\cjd}{z_0} P(z_0;|\zeta|)\big)$ so that 
\[
P(z_0;|\zeta|)^{n+\la}p_\alpha\big(x-2\tfrac{\zeta\cjg \zeta,x\cjd}{z_0} P(z_0;|\zeta|)\big)=\sum_{r=0}^{m}Q_{r,\alpha}(\zeta,x)
z_0^{-r}2^rP(z_0;|\zeta|)^{n+\la+r}
\]
where $Q_{r,\alpha}(\zeta)$ is homogeneous of degree $m$ in $x$ and $2r$ in $\zeta$.
Now we have (for some $C\not=0$ independent of $\la,r,\alpha$)
\[\begin{gathered}
\frac{2^r}{z_0^r}\mc{F}_{\zeta\to \xi}(P^{n+\la+r}(z_0;|\zeta|)Q_{r,\alpha}(\zeta,x))=\\
\frac{C2^{-\la}z_0^{-\la}}{\Gamma(\la+n+r)}
[Q_{r,\alpha}(i\pl_\zeta,x)(|\zeta|^{\la+\ndemi+r}K_{\la+\ndemi+r}(|\zeta|))]_{\zeta=z_0\xi}\\
\end{gathered}\]
where $K_\nu(\cdot)$ is the modified Bessel function (see \cite[Chapter~9]{AbSt}) defined by 
\begin{equation}\label{besseldef}
\begin{gathered}
K_\nu(z):=\frac{\pi}{2}\frac{(I_{-\nu}(z)-I_{\nu}(z))}{\sin(\nu\pi)}\,\, \textrm{ if } I_{\nu}(z):= \sum_{\ell=0}^\infty \frac{1}{\ell!\Gamma(\ell+\nu+1)}\Big(\frac{z}{2}\Big)^{2\ell+\nu}
\end{gathered} 
\end{equation} 
satisfying  that $|K_\nu(z)|=\mc{O}(\frac{e^{-z}}{\sqrt{z}})$ as $z\to \infty $, and for $s\notin\nn_0$
\[
\mc{F}((1+|z|^2)^{-s})(\xi)= \frac{2^{-s+1}(2\pi)^{n/2}}{\Gamma(s)}|\xi|^{s-n/2}K_{s-n/2}(|\xi|).
\]
When $\la\notin (-\ndemi+ \zz) \cup (-n-\demi\nn_0)$, we have 
\begin{equation}
  \label{expansion}
\begin{gathered}
2^{-\la}z_0^{-\la}Q_{r,\alpha}(i\pl_{\zeta},x)(|\zeta|^{\la+\ndemi+r}K_{\la+\ndemi+r}(|\zeta|))|_{\zeta=z_0\xi}= 
\frac{ 2^{r+\ndemi}\pi z_0^{-\la} }{2\sin(\pi(\la+\ndemi+r))}\\
 \x \, \Big( \sum_{\ell=0}^\infty \frac{z_0^{2(\ell-r)}Q_{r,\alpha}(i\pl_\xi,x)(|\tfrac{1}{2}\xi|^{2\ell})}{\ell!\Gamma(\ell-\la-\ndemi-r+1)}
 -z_0^{2\la+n}\sum_{\ell=0}^\infty \frac{z_0^{2\ell}Q_{r,\alpha}(i\pl_\xi,x)(|\tfrac{1}{2}\xi|^{2(\la+r+\ell)+n})}{\ell!\Gamma(\ell+\la+\ndemi+r+1)} \Big).
\end{gathered}
\end{equation}
Here the powers of $|\xi|$ are homogeneous distributions 
(note that for $\lambda\not\in\mathcal R_m$, the exceptional
powers $|\xi|^{-n-j}$, $j\in\mathbb N_0$, do not appear)
and the pairing of \eqref{expansion} with $\mc{F}^{-1}(e^{\la f}w_\alpha)\mc{F}(\varphi)$ makes sense since this distribution is Schwartz as $w_\alpha$ has compact support.
We deduce from this expansion that for any $w_\alpha\in \mc{D}'(\rr^n)$ supported near $0$ and 
$\varphi\in \CI_0(\rr^n)$, when $\la\notin (-\ndemi+\zz)\cup\mathcal (-n-\demi\nn_0)$
\[
I(z_0,x)=z_0^{-\la}F_-(z_0,x)+
z_0^{n+\la}F_+(z_0,x)
\]
for some smooth function $F_\pm\in \mc{C}^\infty([0,\eps)\x \rr^n)$ homogeneous of degree $m$ in $x$. 
We need to analyze $F_-(0,x)$, which is obtained by computing the term of order $0$ in $\xi$ in the expansion \eqref{expansion}
(that is, the terms with $\ell=r$ in the first sum; note that the
terms with $\ell<r$ in this sum are zero): we obtain for some universal constant $C\not=0$ 
\[
F_-(0,x)=C\sum_{\alpha}\cjg e^{\la f}w_\alpha,\varphi\cjd_{\rr^n} \sum_{r=0}^m 
\frac{(-1)^{r} 2^{-r}\Gamma(\la+\ndemi)}{r!\Gamma(\la+n+r)}Q_{r,\alpha}(i\pl_\xi,x)(|\xi|^{2r})
\]
where we have used the inversion formula $\Gamma(1-z)\Gamma(z)=\pi/\sin(\pi z)$
and $Q_{r,\alpha}(i\pl_\xi,x)(|\xi|^{2r})$ is constant in $\xi$. Using Fourier transform, we notice that 
\[
Q_{r,\alpha}(i\pl_\xi,x)(|\xi|^{2r})=\Delta_\zeta^{r}Q_{r,\alpha}(\zeta,x)|_{\zeta=0}= \Delta_\zeta^r( p_\alpha(x- \zeta \cjg \zeta,x\cjd))|_{\zeta=0}\]
We use Lemma \ref{littlecomput} to deduce that 
\[F_-(0,x)=C\sum_{\alpha}\cjg e^{\la f}w_\alpha,\varphi \cjd_{\rr^n} p_{\alpha}(x)m!\frac{\Gamma(\la+\ndemi)}{\Gamma(\la+n+m)}\sum_{r=0}^m 
\frac{(-1)^{r}\Gamma(\la+n+m)}{(m-r)!\Gamma(\la+n+r)}.
\]
The sum over $r$ is a non-zero polynomial of order $m$ in $\la$, and using the binomial formula, we see that its roots are 
$\la=-n-m+2,\dots,-n+1$, therefore we deduce that
\[
F_-(0,x)=C\cjg e^{\la f}w,\varphi\cjd_{\rr^n} \frac{\Gamma(\la+\ndemi)}{(\la+n+m-1)\Gamma(\la+n-1)}.
\]
We obtain the claimed  result except for $\la\in -\ndemi+\nn$ by using that the volume measure 
on $\sph^n$ is $4^{-n}e^{nf}$.

Now assume that $\la=-n/2+j$ with $j\in\nn$. The Bessel function satisfies for $j\in\nn$:  
\[|\xi|^{j}K_{j}(|\xi|)=-\sum_{\ell=0}^{j-1}\frac{(-1)^\ell 2^{j-1-2\ell}(j-\ell-1)!}{\ell !}|\xi|^{2\ell}+|\xi|^{2j}(\log(|\xi|)L_{j}(|\xi|)
+H_j(|\xi|))\] 
for some function $L_{j},H_j\in \CI(\rr^+)\cap L^2(\rr^+)$ with $L_j(0)\not=0$. Then we apply the same arguments as before and this implies the desired statement. 
\end{proof}
%%%%%%%%%%%%%%%%%%%%%%%%%%%%%%%%%%%%%%%%%%%%%%%%%%%%%%%%%%%%%%%%%%%%%%%%%%%%%%%%

We obtain as a corollary:
%%%%%%%%%%%%%%%%%%%%%%%%%%%%%%%%%%%%%%%%%%%%%%%%%%%%%%%%%%%%%%%%%%%%%%%%%%%%%%%%
\begin{corr} 
  \label{l:injdone}
For $m\in\nn_0$ and $\la\notin \mc{R}_m$, the operator 
$\mathscr P^-_\lambda :\mc{D}'(\mathbb S^n;\otimes_S^m(T^*\mathbb S^n)\cap \ker\mc{T})\to 
\mc{C}^\infty(\hh^{n+1}; \otimes_S^m(T^*\hh^{n+1}))$ is injective.
\end{corr}
%%%%%%%%%%%%%%%%%%%%%%%%%%%%%%%%%%%%%%%%%%%%%%%%%%%%%%%%%%%%%%%%%%%%%%%%%%%%%%%%
This corollary immediately implies the injectivity part of Theorem~\ref{t:laplacian} in Section~\ref{s:rr-2}.

%%%%%%%%%%%%%%%%%%%%%%%%%%%%%%%%%%%%%%%%%%%%%%%%%%%%%%%%%%%%%%%%%%%%%%%%%%%%%%%%
%                                  SECTION 7                                   %
%%%%%%%%%%%%%%%%%%%%%%%%%%%%%%%%%%%%%%%%%%%%%%%%%%%%%%%%%%%%%%%%%%%%%%%%%%%%%%%%
\section{Expansions of eigenstates of the Laplacian}
\label{s:poisson-is}

In this section, we show the surjectivity of the Poisson operator $\mathscr P^-_\lambda$
(see Theorem~\ref{t:laplacian} in Section~\ref{s:rr-2}). For that,
we take an eigenstate $u$ of the Laplacian on $M$ and lift it
to $\mathbb H^{n+1}$. The resulting tensor is tempered
and thus expected to have a weak asymptotic expansion at the conformal boundary $\mathbb S^n$;
a precise form of this expansion is obtained by
a careful analysis of both the Laplacian and the divergence-free condition.
We then show that
$u=\mathscr P^-_\lambda w$, where $w$ is some constant times the coefficient of
$\rho^{-\lambda}$ in the expansion of $u$ (compare with Lemma~\ref{injectiv}).

%%%%%%%%%%%%%%%%%%%%%%%%%%%%%%%%%%%%%%%%%%%%%%%%%%%%%%%%%%%%%%%%%%%%%%%%%%%%%%%%
\subsection{Indicial calculus and general weak expansion}

Recall the bundle $E^{(m)}$ defined in~\eqref{e:e-m-def}.
The operator $\Delta$ acting on $\mc{C}^\infty(\overline{\mathbb{B}^{n+1}}; E^{(m)})$ is an elliptic differential operator of order $2$ which lies in the 0-calculus of Mazzeo--Melrose~\cite{MM}, which essentially means that it is an elliptic polynomial in elements of the Lie algebra $\mc{V}_0(\overline{\mathbb{B}^{n+1}})$ of smooth vector fields vanishing at the boundary of the closed unit ball $\overline{\mathbb{B}^{n+1}}$. 
Let $\rho\in \CI(\overline{\mathbb{B}^{n+1}})$ be a smooth geodesic boundary defining function
(see the paragraph preceding~\eqref{collarpsi}).
The theory developped by Mazzeo~\cite{Ma} shows that solutions of  $\Delta u=su$ which are in $\rho^{-N}L^2(\mathbb{B}^{n+1};E^{(m)})$ for some $N$ have weak asymptotic expansions at the boundary $\sph^n=\partial \overline{\mathbb{B}^{n+1}}$ 
where $\rho$ is any geodesic boundary defining function.
To make this more precise, we introduce the \emph{indicial family} of $\Delta$: if $\la\in\cc, \nu\in\sph^n$, then there exists 
a family $I_{\la,\nu}(\Delta)\in {\rm End}(E^{(m)}(\nu))$ depending smoothly on 
$\nu\in \sph^n$ and holomorphically on $\la$ so that for all $u\in \mc{C}^\infty(\overline{\mathbb{B}^{n+1}}; E^{(m)})$, 
\[
t^{-\lambda}\Delta (\rho^{\la} u)(\theta(t,\nu))=I_{\la,\nu}(\Delta)u(\theta(0,\nu))+ \mc{O}(t)
\]
near $\sph^n$, where the remainder is estimated with respect to the metric $g_H$.
Notice that $I_{\la,\nu}(\Delta)$ is independent of the choice of boundary defining function $\rho$.

For $\sigma\in\cc$, 
the \emph{indicial set} ${\rm spec}_b(\Delta-\sigma;\nu)$ at $\nu\in\mathbb S^n$ 
of $\Delta-\sigma$ is the set 
\[
{\rm spec}_b(\Delta-\sigma;\nu):=\{ \la\in\cc\mid I_{\la,\nu}(\Delta)-\sigma\Id\textrm{ is not invertible}\}.
\] 
Then \cite[Theorem~7.3]{Ma} gives the following%
\footnote{The full power of~\cite{Ma} is not needed for this lemma. In fact,
it can be proved in a direct way by viewing the equation $(\Delta-\sigma)u=0$
as an ordinary differential equation in the variable $\log\rho$. The indicial operator
gives the constant coefficient principal part and the remaining terms are exponentially
decaying; an iterative argument shows the needed asymptotics.}
%%%%%%%%%%%%%%%%%%%%%%%%%%%%%%%%%%%%%%%%%%%%%%%%%%%%%%%%%%%%%%%%%%%%%%%%%%%%%%%%
\begin{lemm}
  \label{mazzeo}
Fix $\sigma$ and assume that $\mathrm{spec}_b(\Delta-\sigma;\nu)$ is independent of $\nu\in \sph^n$.
If $u\in \rho^{\delta}L^2(\overline{\mathbb{B}^{n+1}}; E^{(m)})$ with respect to the Euclidean measure for some $\delta\in\rr$, and $(\Delta -\sigma)u=0$, then $u$ has a weak asymptotic expansion at 
$\sph^n=\{\rho=0\}$ of the form 
\[u =\sum_{\la\in {\rm spec}_b(\Delta-\sigma)\atop {\rm Re}(\la)>\delta-1/2}
\sum_{\ell\in\nn_0,\atop 
{\rm Re}(\la)+\ell<\delta-1/2+N} \sum_{p=0}^{k_{\la,\ell}}
\rho^{\la+\ell}(\log\rho)^{p}w_{\la,\ell,p}+\mc{O}(\rho^{\delta+N-\demi-\eps})\]
for all $N\in\nn$ and all $\eps>0$ small, where $k_{\la,\ell}\in\nn_0$, and $w_{\la,\ell,p}$ are in the Sobolev spaces 
\[w_{\la,\ell,p}\in H^{-{\rm Re}(\la)-\ell+\delta-\demi}(\sph^n; E^{(m)}).\] 
Here the weak asymptotic means that for any $\varphi\in \mc{C}^{\infty}(\sph^n)$, as $t\to 0$
\begin{equation}\label{weakas}
\begin{split} 
\int_{\sph^n} u(\theta(t,\nu))\varphi(\nu)dS_\rho(\nu)= & \sum_{\la\in {\rm spec}_b(\Delta-\sigma)\atop {\rm Re}(\la)>\delta-1/2}
\sum_{\ell\in \nn_0\atop 
{\rm Re}(\la)+\ell<\delta-1/2+N} \sum_{p=0}^{k_{\la,\ell}}
t^{\la+\ell}\log(t)^{p}\cjg w_{\la,\ell,p},\varphi\cjd\\
& + \mc{O}(t^{\delta+N-\demi-\eps})\end{split}
\end{equation} 
where $dS_\rho$ is measure on $\sph^n$ induced by the metric $(\rho^2g_H)|_{\sph^n}$ and the distributional pairing is with respect to this measure. Moreover the remainder $\mc{O}(t^{\delta+N-\demi-\eps})$ is conormal in the sense that it remains an $\mc{O}(t^{\delta+N-\demi-\eps})$ after 
applying any finite number of times the operator $t\pl_t$, and it depends on some Sobolev norm of $\varphi$.
\end{lemm}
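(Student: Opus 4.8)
The plan is to treat the equation $(\Delta-\sigma)u=0$ as a regular-singular ODE in the variable $s=-\log\rho$ along the flow $\theta_t$ of the gradient $\partial_\rho$, with values in the bundle $E^{(m)}$ restricted to the collar $[0,\eps)\times\sph^n$. First I would use the product collar decomposition \eqref{collarpsi} to write $\Delta$ near $\sph^n$ as a polynomial in $\rho\partial_\rho$ and the vector fields $\rho\partial_{\theta_i}$ tangent to the level sets of $\rho$, with coefficients smooth up to $\{\rho=0\}$; since $\rho$ is a geodesic boundary defining function, the leading (indicial) part in $\rho\partial_\rho$ has constant coefficients up to the boundary, which is exactly the indicial family $I_{\la,\nu}(\Delta)$. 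Thus $(\Delta-\sigma)u=0$ becomes $P(\rho\partial_\rho)u = \rho\,E u$ where $P(\la)=I_{\la,\nu}(\Delta)-\sigma$ is the indicial polynomial (acting fiberwise, assumed $\nu$-independent in its spectrum by hypothesis) and $E$ is a 0-differential operator with coefficients smooth up to the boundary.

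Next I would set up the iteration. The hypothesis $u\in\rho^\delta L^2$ gives an a priori bound; pairing with a test function $\varphi$ and using ellipticity of $\Delta$ in the 0-calculus (elliptic regularity for the Mazzeo 0-calculus, or directly energy estimates) upgrades this to conormality, i.e. $t\partial_t$ applied any number of times preserves the $\mc O(t^{\delta-\demi-\eps})$ bound on $\int_{\sph^n}u(\theta(t,\nu))\varphi(\nu)\,dS_\rho$. Then I would run a standard bootstrap: writing the solution as a finite sum of terms $\rho^{\la+\ell}(\log\rho)^p w_{\la,\ell,p}$ plus a remainder in $\rho^{\delta+N-\demi-\eps}L^2$, I plug this ansatz into $P(\rho\partial_\rho)u=\rho Eu$; each term $\rho^{\la}$ on the right produces a source at order $\rho^{\la+1}$, which one inverts by $P(\la+1)^{-1}$ when $\la+1\notin\mathrm{spec}_b(\Delta-\sigma)$, and when $\la+1$ does hit the indicial set one gains a log power (the rank of the generalized kernel of $P(\la+1)$ bounding $k_{\la,\ell}$). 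The Sobolev orders $w_{\la,\ell,p}\in H^{-\Re\la-\ell+\delta-\demi}$ track the loss coming from the tangential derivatives in $E$, exactly as in elliptic boundary regularity. This is precisely the content of \cite[Theorem~7.3]{Ma}, so in the write-up I would simply invoke it, but noting (as the footnote does) that the ODE argument above gives a self-contained proof.

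The main obstacle — and the only genuinely non-routine point — is verifying that the hypotheses of Mazzeo's theorem apply: namely that $\Delta$ on $E^{(m)}$ genuinely lies in the 0-calculus with the stated indicial structure, and that $\mathrm{spec}_b(\Delta-\sigma;\nu)$ is independent of $\nu$. The first follows from the explicit formulas of Section~\ref{s:laplacian-half}, since $\Delta$ is built out of $\nabla$ which is a 0-differential operator (the Koszul formulas \eqref{koszul}--\eqref{computenabla} express $\nabla$ in the bases $Z_i,Z_i^*$ with smooth coefficients), and the $\nu$-independence of the indicial set follows because $G$ acts transitively on $\sph^n$ by isometries commuting with $\Delta$, so $I_{\la,\nu}(\Delta)$ is conjugate to $I_{\la,\nu'}(\Delta)$ for any $\nu,\nu'$. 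In the actual application $\sigma$ will be of the form $-\la(n+\la)+m$ and the relevant indicial roots will turn out to be (shifts of) $-\la$ and $n+\la$, matching the two families of exponents in \eqref{weakas}; computing these roots explicitly from Lemmas~\ref{pi0pi1} and~\ref{divergenceproj} is the bookkeeping that the subsequent subsections carry out, but for the present lemma it suffices that the indicial set is discrete, $\nu$-independent, and contained in a left half-plane shifted copy of $\frac12\zz$, which is immediate from the structure of $P(\la)$.
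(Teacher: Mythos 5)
Your proposal is correct and follows essentially the same route as the paper: the paper simply invokes Mazzeo's \cite[Theorem~7.3]{Ma}, noting in a footnote that a self-contained proof can be given by treating $(\Delta-\sigma)u=0$ as a regular-singular ODE in $\log\rho$ with constant-coefficient indicial part plus exponentially decaying error and iterating, which is exactly your bootstrap. Your verification that $\Delta$ on $E^{(m)}$ lies in the 0-calculus (via the Koszul formulas) and that the indicial set is $\nu$-independent by $G$-equivariance is consistent with how the paper handles these points (the latter is confirmed by the explicit computation in Lemma~\ref{l:indic-family}).
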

%%%%%%%%%%%%%%%%%%%%%%%%%%%%%%%%%%%%%%%%%%%%%%%%%%%%%%%%%%%%%%%%%%%%%%%%%%%%%%%%

\noindent\textbf{Remark.} The existence of the expansion \eqref{weakas} proved by Mazzeo in \cite[Theorem~7.3]{Ma} is independent of the choice of $\rho$, but the coefficients in the expansion depend on the choice of $\rho$. 
Let $\la_0\in {\rm spec}_b(\Delta-\sigma)$ with ${\rm Re}(\la_0)>\delta-1/2$ be an element in the indicial set and assume that $k_{\la_0,0}=0$, which means that the exponent $\rho^{\la_0}$ in the weak expansion 
$\eqref{weakas}$ has no log term. Assume also that there is no element $\la\in 
{\rm spec}_b(\Delta-\sigma)$ with ${\rm Re}(\la_0)>{\rm Re}(\la)>\delta-1/2$ such that $\la\in \la_0-\nn$.
Then it is direct to see from the weak expansion that for a fixed function $\chi\in 
\mc{C}^\infty(\mathbb{B}^{n+1})$ equal to $1$ near $\sph^n$ and supported close to $\sph^n$ and for 
each $\varphi\in \mc{C}^\infty(\mathbb{B}^{n+1})$, the Mellin transform
\[
h(\zeta):= \int_{\mathbb{B}^{n+1}}\rho(y)^\zeta\chi(y)\varphi(y)u(y)\, {\rm dvol}_{g_H}(y),\quad
\Re\zeta>n+{1\over 2}-\delta,
\]
(with values in $E^m$) has a meromorphic extension to $\zeta\in\cc$ with a simple pole at $\zeta=n-\la_0$ and residue 
\begin{equation}\label{residue}
{\rm Res}_{\zeta=n-\la_0}h(\zeta)=\cjg w_{\la_0,0,0},\varphi|_{\sph^n}\cjd.
\end{equation}
As an application, if $\rho'$ is another geodesic boundary defining function, one has 
$\rho=e^{f}\rho'+\mc{O}(\rho')$ for some $f\in \mc{C}^\infty(\sph^n)$ and we deduce that if 
$w'_{\la_0,0,0}$ is the coefficient of $(\rho')^{\la_0}$ in the weak expansion of $u$ using $\rho'$, then  as distribution on $\sph^n$
\begin{equation}\label{relationleadingterm} 
w'_{\la_0,0,0}=e^{\la_0f}w_{\la_0,0,0}
\end{equation}
In particular, under the assumption above for $\la_0$
(this assumption can similarly be seen to be independent of the choice of $\rho$), if one knows the exponents of the asymptotic expansion, then proving that the coefficient of $\rho^{\la_0}$ term is nonzero can be done locally near any point of $\sph^n$ and with any choice of geodesic boundary defining function.

Finally, if $w_{\la_0,0,0}$ is the coefficient of $\rho_0^{\la_0}$ in the weak expansion with boundary defining function 
$\rho_0$ defined in~\eqref{e:rho} and if $\gamma^*u=u$ for some 
hyperbolic isometry $\gamma\in G$, we can use that $\rho_0\circ \gamma=N_\gamma^{-1}\cdot \rho_0+\mc{O}(\rho_0^2)$ 
near $\sph^n$, together with \eqref{residue} to get 
\begin{equation}\label{equivarianceu} 
L_{\gamma}^*w_{\la_0,0,0}= N_\gamma^{\la_0} w_{\la_0,0,0} \in \mc{D}'(\sph^n; E^{(m)})
\end{equation}
as distributions on $\sph^n$ (with respect to the canonical measure on $\sph^n$) with values in $E^{(m)}$.
Here $N_\gamma$, $L_\gamma$ are defined in Section~\ref{s:actionG}.
If we view 
$w_{\la_0,0,0}$ as a distribution with values in $\otimes_S^mT^*\sph^n$, the covariance becomes 
\begin{equation}\label{equivarianceubis} 
L_{\gamma}^*w_{\la_0,0,0}= N_\gamma^{\la_0-m} w_{\la_0,0,0} \in \mc{D}'(\sph^n; \otimes_S^mT^*\sph^n).
\end{equation}

Using the calculations of Section~\ref{s:laplacian-half}, we will compute the indicial family of the Laplacian on $E^{(m)}$:
%%%%%%%%%%%%%%%%%%%%%%%%%%%%%%%%%%%%%%%%%%%%%%%%%%%%%%%%%%%%%%%%%%%%%%%%%%%%%%%%
\begin{lemm}
  \label{l:indic-family}
Let $\Delta$ be the Laplacian on sections of $E^{(m)}$. Then the indicial set $\mathrm{spec}_b(\Delta-\sigma,\nu)$ does
not depend on $\nu\in\mathbb S^n$ and is equal to%
\footnote{Our argument in the next section does not actually
use the precise indicial roots, as long as they are independent of $\nu$
and form a discrete set.}
$$
\begin{gathered}
\bigcup_{k=0}^{\lfloor {m\over 2}\rfloor} \{\lambda\mid -\lambda^2+n\lambda+m+2k(2m+n-2k-2)=\sigma\}\\\cup
\bigcup_{k=0}^{\lfloor {m-1\over 2}\rfloor} \{\lambda\mid -\lambda^2+n\lambda+n+3(m-1)+2k(n+2m-2k-4)=\sigma\}.
\end{gathered}
$$  
\end{lemm}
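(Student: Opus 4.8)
The plan is to reduce the computation of $I_{\lambda,\nu}(\Delta)$ to the model computation at a single point of $\mathbb S^n$, using the $G$-invariance of $\Delta$ and the fact that $G$ acts transitively on $\mathbb S^n$ (so that the indicial set is automatically $\nu$-independent); thus I may take $\nu=\partial_1$ and work in the half-space model $\mathbb U^{n+1}$ with the geodesic boundary defining function $\rho=z_0$, exactly as in the proof of Lemma~\ref{injectiv}. In these coordinates the boundary is $\{z_0=0\}$ and $\theta(t,\nu)$ is just $z_0=t$, so computing the indicial family amounts to applying $\Delta$ to $z_0^\lambda u$ for $u\in E^{(m)}$ with constant coefficients in the frame $Z_I^*$, extracting the leading power $z_0^\lambda$, and reading off the resulting endomorphism of $E^{(m)}(\nu)$.

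First I would use the decomposition $E^{(m)}=\bigoplus_{k=0}^m E^{(m)}_k$ together with the trace-free relations~\eqref{u'r}, which show that a trace-free symmetric tensor is completely determined by its $u_0\in E^{(m)}_0$ and $u_1\in E^{(m)}_1$ components. So it suffices to compute the action of the indicial operator of $\Delta$ on the pair $(u_0,u_1)$, i.e.\ on $E^{(m)}_0\oplus E^{(m)}_1$, which is precisely what Lemma~\ref{pi0pi1} provides via~\eqref{equationpi0} and~\eqref{pi1deltaT}. Plugging $f_I=z_0^\lambda c_I$, $g_J=z_0^\lambda d_J$ with constant $c_I,d_J$, all the $d_z$-terms in~\eqref{equationpi0}--\eqref{pi1deltaT} (which involve $Z_i$-derivatives, $i\geq 1$) are lower order and drop out of the indicial operator, and one is left with the scalar action of $\Delta$ on powers of $z_0$, namely $\Delta z_0^\lambda=(-\lambda^2+n\lambda)z_0^\lambda$, plus the algebraic shifts $+m$ on $E^{(m)}_0$ and $+n+3(m-1)$ on $E^{(m)}_1$ coming from the zeroth-order terms in~\eqref{calculfinaltracefree} and~\eqref{Deltau1}. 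This identifies the indicial operator on $E^{(m)}_0\oplus E^{(m)}_1$ as block upper-triangular with diagonal blocks $-\lambda^2+n\lambda+m$ on $E^{(m)}_0$ and $-\lambda^2+n\lambda+n+3(m-1)$ on $E^{(m)}_1$; since the off-diagonal coupling is lower order it does not affect invertibility. However, $E^{(m)}_0$ and $E^{(m)}_1$ are themselves not irreducible under the relevant structure: decomposing each into trace-free pieces as in~\eqref{decompositionoftensors}, viewing $u_0$ and $u_1$ as symmetric cotensors on the $n$-dimensional bundle $(Z_0)^\perp$ and applying $\mathcal T,\mathcal I$ (which commute with $\Delta$ by~\eqref{commuteDelta}), one picks up the extra shifts $2k(2m+n-2k-2)$ for the degree-$(m-2k)$ trace-free part of $u_0$ and $2k(n+2m-2k-4)$ for the degree-$(m-1-2k)$ trace-free part of $u_1$. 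Gathering these gives exactly the two unions in the statement, with $k$ ranging over $0,\dots,\lfloor m/2\rfloor$ and $0,\dots,\lfloor (m-1)/2\rfloor$ respectively, and the indicial roots are the $\lambda$ for which one of these diagonal scalars equals $\sigma$.

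The main obstacle I anticipate is bookkeeping the trace decomposition correctly: one has to track how the shift $+m$ (resp.\ $+n+3(m-1)$) interacts with the operators $\mathcal I^k$ built from the $n$-dimensional metric $z_0^{-2}h$, and verify that each application of $\mathcal I$ contributes the claimed increment $2k(2m+n-2k-2)$ — this is essentially the identity~\eqref{e:trart} applied iteratively, combined with the fact that on $E^{(m)}_1$ the ``effective dimension'' and degree are shifted by one. I would double-check the increments by testing $m=1$ (where $E^{(m)}_1=E^{(0)}_1$ is just functions, the root set should reduce to $-\lambda^2+n\lambda+1=\sigma$ and $-\lambda^2+n\lambda+n=\sigma$, matching the Hodge-Laplacian-on-coclosed-1-forms picture of Appendix~\ref{s:m1}) and $m=2$. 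Since, as the footnote notes, the argument in Section~\ref{s:poisson-is} only needs $\nu$-independence and discreteness of the indicial set, a slightly coarser computation would already suffice, but I would carry the full calculation through to get the precise roots as stated.
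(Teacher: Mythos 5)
Your proposal is correct and follows essentially the same route as the paper: pass to the half-space model with $\rho=z_0$ at a chosen boundary point, reduce trace-free tensors to their $(u_0,u_1)$ components via~\eqref{u'r}, apply Lemma~\ref{pi0pi1} with coefficients constant in $z$ so that all $d_z$-terms drop out, and then diagonalize the remaining trace terms through the $\mathcal I^k$ decomposition~\eqref{decompositionoftensors} and the identity~\eqref{e:trart}, yielding exactly the stated roots. The only cosmetic difference is that at the indicial level the coupling between $E^{(m)}_0$ and $E^{(m)}_1$ vanishes entirely (the indicial operator is block diagonal, not merely block upper-triangular as you say), which the paper notes explicitly but which does not change the conclusion.
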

%%%%%%%%%%%%%%%%%%%%%%%%%%%%%%%%%%%%%%%%%%%%%%%%%%%%%%%%%%%%%%%%%%%%%%%%%%%%%%%%
\begin{proof}
We consider an isometry mapping the ball model $\mathbb B^{n+1}$ to the half-plane model $\mathbb U^{n+1}$
which also maps $\nu$ to $0$ and do all the calculations in $\mathbb U^{n+1}$ with the geodesic boundary defining function $z_0$ near $0$.
By~\eqref{u'r},
each tensor $u\in E^{(m)}$ is determined uniquely by its $E^{(m)}_0$ and $E^{(m)}_1$ components,
which are denoted $u_0$ and $u_1$; therefore, it suffices to understand how the corresponding components of
$I_{\lambda,\nu}(\Delta)u$ are determined by $u_0,u_1$.
 We can use the geodesic boundary defining function $\rho=z_0$;
note that $\Delta z_0^\lambda=\lambda(n-\lambda)z_0^\lambda$ for all $\lambda\in\mathbb C$.

Assume first that $u$ satisfies $u_1=0$ and $u_0$ is constant in the frame $\mathcal S(Z_I^*)$. Then by Lemma~\ref{pi0pi1},
$$
\begin{gathered}
\pi_0(z_0^{-\lambda}\Delta(z_0^\lambda u))=R_0u_0=(\lambda(n-\lambda)+m) u_0+m(m-1)\mathcal S(z_0^{-2}h\otimes \mathcal T(u_0)),\\
\pi_1(z_0^{-\lambda}\Delta(z_0^\lambda u))=0.
\end{gathered}
$$
Assume now that $u$ satisfies $u_0=0$ and $u_1$ is constant in the frame $\mathcal S(Z_0^*\otimes Z_J^*)$.
Then by Lemma~\ref{pi0pi1},
$$
\begin{gathered}
\pi_0(z_0^{-\lambda}\Delta(z_0^\lambda u))=0,\\
\pi_1(z_0^{-\lambda}\Delta(z_0^\lambda u))=R_1u_1=(\lambda(n-\lambda)+n+3(m-1))u_1\\
+(m-1)(m-2)\mathcal S(Z_0^*\otimes z_0^{-2}h\otimes \mathcal T(u'_1)).
\end{gathered}
$$
We see that the indicial operator does not intertwine the $u_0$ and $u_1$ components and it remains to
understand for which $\lambda$ the number $s$ is a root of $R_0$ or $R_1$.

Next, we consider the decomposition~\eqref{decompositionoftensors}, where for $u\in E^{(m)}_0$,
we define $\mathcal I(u)={(m+2)(m+1)\over 2}\mathcal S(z_0^{-2}h\otimes u)$:
$$
u_0=\sum_{k=0}^{\lfloor {m\over 2}\rfloor} \mathcal I^k(\otimes u_0^k),\quad
u_1=\sum_{k=0}^{\lfloor {m-1\over 2}\rfloor} \mathcal S(Z_0^*\otimes \mathcal I^k(u_1^k)),
$$
where $u_0^k\in E^{(m-2k)}_0, u_1^k\in E^{(m-2k-1)}_0$ are trace-free tensors.
Using~\eqref{e:trart}, we calculate
$$
\begin{gathered}
R_0 (\mathcal I^k(u_0^k))=(\lambda(n-\lambda)+m)\mathcal I^k(u_0^k)+2\mathcal I(\mathcal T(\mathcal I^k(u_0^k)))\\
=\big(-\lambda^2+n\lambda+m+2k(2m+n-2k-2)\big) \mathcal I^k(u_0^k),\\
R_1(\mathcal S(Z_0^*\otimes \mathcal I^k(u_1^k)))\\
=(\lambda(n-\lambda)+n+3(m-1))\mathcal S(Z_0^*\otimes \mathcal I^k(u_1^k))
+2\mathcal S(Z_0^*\otimes \mathcal I(\mathcal T(\mathcal I^k(u_1^k))))\\
=\big(-\lambda^2+n\lambda+n+3(m-1)+2k(n+2m-2k-4)\big)\mathcal S(Z_0^*\otimes \mathcal I^k(u_1^k)),
\end{gathered}
$$
which finishes the proof of the lemma.
\end{proof}
%%%%%%%%%%%%%%%%%%%%%%%%%%%%%%%%%%%%%%%%%%%%%%%%%%%%%%%%%%%%%%%%%%%%%%%%%%%%%%%%

%%%%%%%%%%%%%%%%%%%%%%%%%%%%%%%%%%%%%%%%%%%%%%%%%%%%%%%%%%%%%%%%%%%%%%%%%%%%%%%%
\subsection{Weak expansions in the divergence-free case}

By Lemma~\ref{mazzeo}, we now know that solutions of $\Delta u=\sigma u$ 
which are trace-free symmetric tensors of order $m$ in some weighted $L^2$ space have weak asymptotic 
expansions at the boundary of $\bbar{\mathbb{B}^{n+1}}$ with exponents obtained from the indicial set of Lemma  \ref{l:indic-family}. 
In fact we can be more precise about the exponents which really appear 
in the weak asymptotic expansion if we ask that $u$ also be divergence-free:
%%%%%%%%%%%%%%%%%%%%%%%%%%%%%%%%%%%%%%%%%%%%%%%%%%%%%%%%%%%%%%%%%%%%%%%%%%%%%%%%
\begin{lemm}
  \label{asympexp}
Let $u\in \rho^{\delta}L^2(\bbar{\mathbb{B}^{n+1}}; E^{(m)})$ be a trace-free symmetric $m$-cotensor
with  $\rho$ a geodesic boundary defining function and $\delta\in(-\infty,\demi)$, where the measure is the Euclidean Lebesgue measure on the ball. Assume that $u$ is a nonzero divergence-free eigentensor for the Laplacian on hyperbolic space: 
\begin{equation}
  \label{e:u-eq}
\Delta u=\sigma u,\quad \nabla^*u=0
\end{equation}
for some $\sigma=m+\frac{n^2}{4}-\mu^2$ with ${\rm Re}(\mu)\in[0,\frac{n+1}{2}-\delta)$ and $\mu\not=0$.
Then the following weak expansion holds: for all $r\in[0, m]$, $N>0$, and $\eps>0$ small
\begin{equation}
  \label{e:asympexp}
\begin{split} 
(\iota_{\rho\partial_\rho})^ru&= \sum_{\ell\in\nn_0\atop {\rm Re}(-\mu)+\ell<N-\eps} 
\rho^{\ndemi-\mu +r+\ell} w^r_{-\mu,\ell} \\&+\sum_{\ell\in\nn_0\atop {\rm Re}(\mu)+\ell<N-\eps} 
\sum_{p=0}^{k_{\mu,\ell}}\rho^{\ndemi+\mu +r+\ell}\log(\rho)^{p}w^r_{\mu,\ell,p}
+\mc{O}(\rho^{\ndemi+N+r-\eps})
\end{split}
\end{equation}
with $w^r_{-\mu,\ell}\in H^{-\frac{n}{2}+ {\rm Re}(\mu)-r-\ell+\delta-\frac{1}{2}}(\sph^n; E^{(m-r)})$,
$w^r_{\mu,\ell,p}\in H^{-\frac{n}{2}- {\rm Re}(\mu)-r-\ell+\delta-\frac{1}{2}}(\sph^n; E^{(m-r)})$.
Moreover, if $\mu\notin{1\over 2}\mathbb N_0$, then $k_{\mu,\ell}=0$.
\end{lemm}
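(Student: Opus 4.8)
The plan is to bootstrap from the weak expansion of Lemma~\ref{mazzeo} (applied to the eigenequation $(\Delta-\sigma)u=0$) by using the divergence-free condition to kill all the exponents coming from the ``$E_1$-part'' of the indicial set and half of the exponents coming from the ``$E_0$-part.'' Recall from Lemma~\ref{l:indic-family} that $\mathrm{spec}_b(\Delta-\sigma)$ is the union, over $0\le k\le\lfloor m/2\rfloor$, of the roots of $-\lambda^2+n\lambda+m+2k(2m+n-2k-2)=\sigma$ together with, over $0\le k\le\lfloor(m-1)/2\rfloor$, the roots of $-\lambda^2+n\lambda+n+3(m-1)+2k(n+2m-2k-4)=\sigma$. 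Writing $\sigma=m+\tfrac{n^2}{4}-\mu^2$, the first family of equations has roots $\lambda=\tfrac n2\pm\mu_k$ with $\mu_k^2=\mu^2+2k(2k+2-2m-n)$, and in particular the $k=0$ term gives precisely $\lambda=\tfrac n2\pm\mu$; the second family gives roots bounded away from $\tfrac n2\pm\mu$ generically. So a priori Lemma~\ref{mazzeo} produces an expansion of $u$ in powers $\rho^{\frac n2\pm\mu_k+\ell}$ and $\rho^{(\text{second family})+\ell}$ with log terms, and the goal~\eqref{e:asympexp} asserts that only $\rho^{\frac n2\pm\mu+\ell}$ (times $\log$ for the $+\mu$ branch) survive, with no $\log$ if $\mu\notin\tfrac12\mathbb N_0$, and moreover that the $r$-fold contraction $(\iota_{\rho\partial_\rho})^r u$ has leading exponent shifted by $+r$ and takes values in $E^{(m-r)}$.

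First I would set up the recursion. Work in the half-space model near a boundary point with geodesic defining function $\rho=z_0$, and decompose $u=\sum_{i=0}^m u_i$ with $u_i=\mathcal S((Z_0^*)^{\otimes i}\otimes u_i')$ as in~\eqref{e:uuform}; the trace-free condition~\eqref{u'r} reduces everything to $u_0$ and $u_1$. Plug a formal series $u\sim\sum \rho^{s}(\log\rho)^p w_{s,p}$ into the two scalar equations obtained by projecting $(\Delta-\sigma)u=0$ onto $E_0^{(m)}$ and $E_1^{(m)}$ via Lemma~\ref{pi0pi1}, and onto $E_0^{(m-1)}\oplus E_1^{(m-1)}$ via Lemma~\ref{divergenceproj} for the equation $\nabla^* u=0$. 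The key structural point, visible from~\eqref{equationpi0}, \eqref{pi1deltaT}, \eqref{divofT}, \eqref{divofT2}: the leading indicial operator is block-diagonal between the $u_0$ and $u_1$ components (this was already observed in the proof of Lemma~\ref{l:indic-family}), while the subleading terms couple $d_z f_I$ and $Z_0(g_J)$ across blocks and lower the $E_0^{(\cdot)}$-degree by one via the interior products $\iota_{d_z f_I}$. The divergence equation, projected onto $E_1^{(m-1)}$, reads schematically (from~\eqref{divofT2}) $\big(Z_0-(m+n-1)\big)f_I=(\text{terms in }g)$; combined with the $E_1$-projection of the Laplace equation this forces the indicial exponent of the $u_1$-block to be tied to that of the $u_0$-block, eliminating the independent ``second family'' roots. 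I would make this precise by a standard indicial-equation argument: if $\rho^s$ with $s$ in the second family (or $s=\tfrac n2\pm\mu_k$ with $k\ge1$) appeared with a nonzero coefficient as the \emph{lowest} exponent not already of the allowed form $\tfrac n2\pm\mu+\ell$, then plugging into the coupled system and reading off the coefficient of $\rho^s$ (resp.\ $\rho^{s-2}$, $\rho^{s-1}$ in the neighbouring blocks) yields a homogeneous linear system whose only solution is the trivial one, because the relevant indicial determinant is nonzero there — using $\mathrm{Re}(\mu)\in[0,\tfrac{n+1}2-\delta)$ and $\mu\ne0$ to exclude resonant coincidences, and $\mu\notin\tfrac12\mathbb N_0$ to exclude the log.

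Next I would handle the contracted expansion for general $r$. Since $\iota_{\rho\partial_\rho}=\iota_{Z_0}/$(unit) in these coordinates and $\iota_{Z_0}$ maps $E^{(m)}\to E^{(m-1)}$ lowering the $(Z_0^*)$-degree, one has $\iota_{\rho\partial_\rho}u = u_1' + (\text{higher})$, and more generally $(\iota_{\rho\partial_\rho})^r u$ is governed by $u_r',\dots,u_m'$. The cleanest route: show by downward induction on $r$ (from $r=m$, where the statement is immediate because $u_m'$ is a multiple of $\mathcal T^{\lfloor m/2\rfloor}$ applied to $u_0$ or $u_1$ and one uses~\eqref{u'r}) that each $(\iota_{\rho\partial_\rho})^r u$ solves a \emph{perturbed} eigenequation on $E^{(m-r)}$ obtained by commuting $\Delta$ and $\nabla^*$ past $\iota_{Z_0}$ — the commutators are computed from~\eqref{koszul}, \eqref{computenabla} and only contribute lower-order and degree-shifted terms — so Lemma~\ref{mazzeo} applies again and the leading exponent is pushed up by exactly $1$ per contraction, giving the $\rho^{\frac n2-\mu+r+\ell}$ and $\rho^{\frac n2+\mu+r+\ell}$ exponents and the asserted Sobolev orders (which simply track the weight loss $\delta\mapsto\delta$ and the extra derivative count in Lemma~\ref{mazzeo}). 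The no-log claim for $\mu\notin\tfrac12\mathbb N_0$ propagates through this induction since log terms in Lemma~\ref{mazzeo} only arise from indicial roots differing by a positive integer, which cannot happen between $\tfrac n2-\mu$ and $\tfrac n2+\mu$ when $2\mu\notin\mathbb N_0$.

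The main obstacle I anticipate is the bookkeeping in eliminating the ``spurious'' exponents: one must show that \emph{none} of the extra indicial roots $\tfrac n2\pm\mu_k$ ($k\ge1$) and none of the second-family roots actually carry nonzero coefficients, and this requires combining the Laplace and divergence equations \emph{simultaneously} at each order — a single equation does not suffice. Concretely, the danger is a ``resonance'' where some $\tfrac n2+\mu_k+\ell$ or a second-family exponent coincides with $\tfrac n2\pm\mu$, or where the coupling between $u_0$- and $u_1$-blocks at subleading order conspires to reintroduce a forbidden power. Controlling this is where the hypotheses $\mathrm{Re}(\mu)<\tfrac{n+1}2-\delta$, $\mu\ne0$, and $\mu\notin\tfrac12\mathbb N_0$ get used, and it is essentially an explicit (if tedious) verification that the indicial matrix of the coupled $(\Delta-\sigma,\nabla^*)$ system on $E^{(m)}$, after the reduction~\eqref{u'r}, has the claimed kernel exactly along $\lambda=\tfrac n2\pm\mu$. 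I would isolate this as a linear-algebra sublemma about that indicial matrix and treat the rest as routine propagation through Mazzeo's machinery.
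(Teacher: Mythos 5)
Your overall strategy — Mazzeo's a priori expansion plus an indicial analysis of the coupled system $(\Delta-\sigma,\nabla^*)$ — is the same one the paper uses, and your elimination of the spurious indicial roots in the expansion of $u$ itself (the $r=0$ level) is essentially workable as sketched. The genuine gap is in your step 3, which is where the real content of the lemma lies: the shift by $r$, i.e.\ that the components of $Z_0^*$-degree $r$ (equivalently $(\iota_{\rho\partial_\rho})^r u$) carry only exponents in $\tfrac{n}{2}+r\pm\mu+\mathbb{N}_0$. Applying Lemma~\ref{mazzeo} to $(\iota_{\rho\partial_\rho})^r u$ cannot "push the leading exponent up by exactly $1$ per contraction": that lemma only lists admissible exponents above the threshold $\delta-\tfrac12$, it never raises the leading one, and since $(\iota_{\rho\partial_\rho})^r u$ is a priori only in $\rho^\delta L^2$ you would again get expansions starting at $\tfrac n2\pm\mu$. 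Moreover the commutators $[\Delta,\iota_{Z_0}]$ and $[\nabla^*,\iota_{Z_0}]$ computed from \eqref{koszul}, \eqref{computenabla} are $0$-differential operators of order one: they gain no power of $\rho$ and couple the different contraction levels, so your "perturbed eigenequation on $E^{(m-r)}$" is really a coupled system whose indicial structure is exactly what must be computed — as stated, the step is circular. Your base case $r=m$ is also not "immediate from \eqref{u'r}": the iterated traces of $u$ a priori have the same exponents as $u_0,u_1$, and the fact that they start at $\rho^{\frac n2+m\pm\mu}$ is part of what is being claimed. The paper supplies precisely this missing computation: after Fourier transforming in $z$ and splitting $\hat u_0,\hat u_1$ into harmonic blocks, it substitutes the divergence relations \eqref{relationdediv} (in the form \eqref{eqdivnul}, \eqref{eqdivnul2}) into the projected Laplace equations to obtain, for each block $r$, the equation \eqref{e:pila} whose indicial polynomial is $-(\lambda-\tfrac n2-r)^2+\mu^2$ and whose coupling goes only to the block $r+1$ with an explicit factor $z_0$ (plus $z_0^2$ tangential terms); the downward induction on $r$ then yields simultaneously the shift by $r$, the Sobolev orders, and the log statement. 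Some such uncoupling argument is unavoidable and is absent from your proposal.

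Two smaller points. In eliminating spurious roots you assert that the joint indicial determinant is nonzero at every root not of the allowed form; but the only available components of the indicial divergence, \eqref{divofT} and \eqref{divofT2}, have coefficients $(n+m-1-\lambda)$ and $(m-1)(\lambda-(n+m-1))$, both vanishing at $\lambda=n+m-1$, so at that value the joint kernel can meet the $k\ge1$ blocks. One can check (using $\mu_k^2=\mu^2+2k(2m+n-2k-2)$ — note the sign, which you reversed) that whenever $n+m-1$ is an indicial root of a $k\ge1$ block one has $\mu=|m+\tfrac n2-1-2k|$ (resp.\ $|m-2+\tfrac n2-2k|$ for the $u_1$ blocks), so the exponent is automatically of the allowed form and the conclusion survives; but this degenerate case must be verified, not asserted. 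Similarly, the no-log claim requires the block-by-block induction: logs can a priori be generated at any integer coincidence between exponents carrying nonzero coefficients, and $\mu\neq0$ enters through the simplicity of the roots of $-(\lambda-\tfrac n2-r)^2+\mu^2$, not merely through "excluding resonant coincidences."
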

%%%%%%%%%%%%%%%%%%%%%%%%%%%%%%%%%%%%%%%%%%%%%%%%%%%%%%%%%%%%%%%%%%%%%%%%%%%%%%%%

\noindent\textbf{Remarks}. (i)
If $u$ is the lift to $\hh^{n+1}$ of an eigentensor on a compact quotient $M=\Gamma\backslash\hh^{n+1}$, then $u\in L^\infty(\mathbb{B}^{n+1};E^{(m)})$ and so for all $\eps>0$ the following regularity holds
\[
w_{-\mu,0}\in H^{-\frac{n}{2}+ {\rm Re}(\mu)-\eps}(\sph^n; E^{(m)}) , \quad w_{\mu,0,0}\in 
H^{-\frac{n}{2}-{\rm Re}(\mu)-\eps}(\sph^n; E^{(m)}).
\]

(ii) The existence of the expansion~\eqref{e:asympexp} does not depend on the choice of $\rho$.
For $r=0$, this follows from analysing the Mellin transform of $u$ as in the remark following Lemma~\ref{mazzeo}.
For $r>0$, we additionally use that if $\rho'$ is another geodesic boundary defining function,
then $\rho\partial_\rho-\rho'\partial_{\rho'}\in \rho \cdot {}^0T\overline{\mathbb B^{n+1}}$
(indeed, the dual covector by the metric
is $\rho^{-1}d\rho-(\rho')^{-1}d\rho'$ and we have
$\rho'=e^f\rho$ for some smooth function $f$ on $\overline{\mathbb B^{n+1}}$). Therefore, $(\iota_{\rho'\partial_{\rho'}})^r u$ is a linear combination
of contractions with $0$-vector fields of $\rho^{r-r'}(\iota_{\rho\partial_\rho})^{r'} u$
for $0\leq r'\leq r$, which have the desired asymptotic expansion.
Moreover, as follows from~\eqref{relationleadingterm}, for each $r\in [0,m]$,
the condition that $w^{r'}_{-\mu,0}=0$ for all $r'\in [0,r]$ also does not depend
on the choice of $\rho$, and same can be said about $w^{r'}_{\mu,0,0}$
when $\mu\notin{1\over 2}\mathbb N_0$.

%%%%%%%%%%%%%%%%%%%%%%%%%%%%%%%%%%%%%%%%%%%%%%%%%%%%%%%%%%%%%%%%%%%%%%%%%%%%%%%%
\begin{proof}
It suffices to describe the weak asymptotic expansion of $u$ near any point 
$\nu\in \sph^n$. For that, we work in the half-space model $\mathbb{U}^{n+1}$ by sending $-\nu$ to $\infty$ and $\nu$ to $0$ as we did before
(composing a rotation of the ball model with
the map~\eqref{e:udiffeo}). Since the choice of geodesic boundary defining function does not change the nature of the weak asymptotic expansion (but only the coefficients), we can take the geodesic boundary defining function $\rho$ to be equal to $\rho(z_0,z)=z_0$ inside $|z|+z_0<1$ (which corresponds to a neighbourhood of 
$\nu$ in the ball model).
Considering the weak asymptotic \eqref{weakas} of $u$ near $0$ amounts to taking $\varphi$ supported near $\nu$ in $\sph^n$ in \eqref{weakas}: for instance, if we work in the half-space model we shall consider $\varphi(z)$ 
supported in $|z|<1$ in the boundary of $\mathbb{U}^{n+1}$. 

We decompose $u=\sum_{k=0}^mu_k$ with 
$u_k\in \rho^{\delta}L^2(\mathbb{U}^{n+1}; E^{(m)}_k)$ and we write $u_k=\mc{S}((Z_0^*)^{\otimes k}\otimes u'_k)$ for some 
$u'_k\in \rho^{\delta}L^2(\mathbb{U}^{n+1}; E^{(m-k)}_0)$ following what we did in \eqref{e:uuform}.
Now, since $u\in\rho^{\delta}L^2(\bbar{\mathbb{B}^{n+1}})=\rho_0^{\delta}L^2(\bbar{\mathbb{B}^{n+1}})$ 
satisfies $\Delta u=\sigma u$, we deduce from the form of the Laplacian near $\rho=0$ that 
$u\in \rho_0^{\delta-2k}H^{2k}(\bbar{\mathbb{B}^{n+1}}; E^{(m)})$ for all $k\in\nn$ where $H^k$ denotes the 
Sobolev space of order $k$ associated to the Euclidean Laplacian on the closed unit ball. 
Then by Sobolev embedding one has that for each  $t>0$, 
$u|_{z_0=t}$ belongs to $(1+|z|)^{N}L^2(\rr_z^n; E^{(m)})$ for some $N\in\nn$ and we can consider its Fourier transform in $z$, as a tempered distribution.%
\footnote{Unlike Lemma~\ref{injectiv}, we only use Fourier analysis here for convenience
of notation~-- all the calculations below
could be done with differential operators in $z$ instead.}
Then Fourier transforming the equation $(\pi_0+\pi_1)(\Delta u-\sigma u)=0$ in the $z$-variable (recall that $\pi_i$ is the orthogonal projection on $E^{(m)}_i$), 
and writing the Fourier variable $\xi$ as $\xi=\sum_{i=1}^n\xi_i dz_i=\sum_{i=1}^nz_0\xi_i Z_i^*$, 
with the notations of Lemma~\ref{pi0pi1}, we get 
\begin{equation}\label{fourier1}
\begin{gathered}
 \sum_{I\in\mathscr{A}^m}((-(Z_0)^2+nZ_0+z_0^2|\xi|^2+m-\sigma) \hat{f}_I)\mc{S}(Z^*_{I})
+2i\sum_{J\in\mathscr{A}^{m-1}} \hat{g}_J\mc{S}(\xi \otimes Z_J^*)\\
+m(m-1)\sum_{I}\hat{f}_I \mc{S}(z_0^{-2}h\otimes \mc{T}(\mc{S}(Z_I^*)))=0.\end{gathered}
\end{equation}
and
\begin{equation}\label{pi1deltaTF}
\begin{gathered}
\sum_{J\in\mathscr A^{m-1}}((-(Z_0)^2+nZ_0+z_0^2|\xi|^2+n+3(m-1)-\sigma) \hat g_J)\mathcal S(Z^*_J)\\
-2im\sum_{I\in\mathscr A^m} \hat f_I \iota_\xi \mathcal S(Z_I^*)-2im(m-1)\sum_{I\in\mathscr A^m}\hat f_I
\mathcal S(\xi\otimes \mathcal T(\mathcal S(Z_I^*)))\\
+(m-1)(m-2)\sum_{J\in\mathscr A^{m-1}}\hat g_J \mathcal S(z_0^{-2}h\otimes \mathcal T(\mathcal S(Z_J^*)))=0.
\end{gathered}
\end{equation}
where hat denotes Fourier transform in $z$ and $\iota_\xi$ means $\sum_{j=1}^nz_0\xi_j\iota_{Z_j}$.
Similarly we Fourier transform in $z$ the equation $(\pi_0+\pi_1)(\nabla^*u)=0$ using Lemma~\ref{divergenceproj}
to obtain 
\begin{equation}\label{fourier2}
\begin{gathered}
\sum_{I\in\mathscr{A}^{m}}i\hat{f}_I\iota_{\xi}\mc{S}(Z_I^*)
=\frac{1}{m}\sum_{J\in\mathscr{A}^{m-1}}((n+m-1)\hat{g}_J -Z_0(\hat{g}_J))\mc{S}(Z_J^*),\\
\sum_{I\in\mathscr{A}^{m}} (Z_0\hat{f}_I-(n+m-1)\hat{f}_I)\mc{T}(\mc{S}(Z_I^*))=\frac{1}{m}
\sum_{J\in\mathscr{A}^{m-1}}i\hat{g}_J\iota_{\xi}\mc{S}(Z_J^*).
\end{gathered}
\end{equation}

Now, we use the correspondence between symmetric tensors and homogeneous polynomials to facilitate computations, as explained in Section \ref{symtens} and in the proof of Lemma \ref{injectiv}; that is,
to $\mc{S}(Z^*_I)$, we associate the polynomial $x_I$ on $\rr^n$.
If $\xi\in\rr^n$ is a fixed element and $u\in {\rm Pol}^m(\rr^n)$, 
we write $\pl_\xi u=du.\xi \in   {\rm Pol}^{m-1}(\rr^n)$ for the derivative of $u$ in the direction of $\xi$ and 
$\xi^* u$ for the element $\cjg \xi,\cdot\cjd_{\rr^n} u\in  {\rm Pol}^{m+1}(\rr^n)$.
The trace map $\mc{T}$ becomes  $-\tfrac{1}{(m(m-1))}\Delta_x$. 
We define $\hat{u}_0 :=\sum_{I\in \mathscr{A}^m}\hat{f}_Ix_I$ and 
$\hat{u}_1=\sum_{J\in \mathscr{A}^{m-1}}\hat{g}_Jx_J$. The elements $\hat{f}_I(z_0,\xi),
\hat{g}_I(z_0,\xi)$ belong to the space $\mc{C}^\infty(\rr^+_{z_0}; \mathscr{S}'(\rr^n_\xi))$. We 
decompose them as
\begin{equation}
\label{e:hatdec}
\hat{u}_0 =\sum_{j=0}^{\lfloor {m\over 2}\rfloor}|x|^{2j}\hat{u}_0^{2j},\quad \hat{u}_1=\sum_{j=0}^{\lfloor{m-1\over 2}\rfloor}|x|^{2j}
\hat{u}_1^{2j}
\end{equation}
for some $\hat{u}_i^{2j}\in {\rm Pol}_0^{m-i-2j}(\rr^n)$ (harmonic in $x$, that is trace-free).\\

Using the homogeneous polynomial description of $u_0$, equation \eqref{fourier1} becomes 
\begin{equation}\label{T0T1}
\begin{gathered}
(-(Z_0)^2+nZ_0+z_0^2|\xi|^2+m-\sigma) \hat{u}_0
+2iz_0\xi^* \hat{u}_1
-|x|^2\Delta_x\hat{u}_0=0.
\end{gathered}\end{equation}
First, if $W$ is a harmonic homogeneous polynomial in $x$ of degree $j$, one has 
$\Delta_x(\xi^* W)=-2\pl_{\xi}W$ and $\Delta^2_x(\xi^* W)=0$, thus one can write 
\begin{equation}
\label{zetaW} 
\xi^* W=\Big(\xi^* W-\frac{\pl_{\xi}W}{n+2(j-1)}|x|^2\Big)+\frac{\pl_{\xi}W}{n+2(j-1)}|x|^2
\end{equation}
for the decomposition \eqref{decompositionoftensors} of $\xi^* W$.
In particular, one can write the decomposition  \eqref{decompositionoftensors} of 
$\xi^* \hat{u}_1$ as
\[
\xi^* \hat{u}_1 =\sum_{j=0}^{\lfloor{m-1\over 2}\rfloor}|x|^{2j}\Big(\xi^* \hat{u}^{2j}_1-\frac{\pl_{\xi} \hat{u}^{2j}_1}
{n+2(m-2-2j)}|x|^2+\frac{\pl_{\xi} \hat{u}^{2(j-1)}_1}{n+2(m-2j)}\Big)
\]
We can write $\Delta_x \hat{u}_0=\sum_{j=0}^{\lfloor m/2\rfloor}\la_j |x|^{2j-2}\hat{u}^{2j}_0$ for  
$\la_j=-2j(n+2(m-j-1))$.
Thus \eqref{T0T1} gives for $j\leq \lfloor m/2\rfloor$
\begin{equation}
\label{relation1}
\begin{gathered}
(-(Z_0)^2+nZ_0+z_0^2|\xi|^2+m-\sigma-\la_j) \hat{u}^{2j}_0\\
+2iz_0 \Big(\xi^* \hat{u}^{2j}_1-\frac{|x|^2\pl_{\xi} \hat{u}^{2j}_1}
{n+2(m-2-2j)}+\frac{\pl_{\xi} \hat{u}^{2(j-1)}_1}{n+2(m-2j)}\Big)=0.
\end{gathered}
\end{equation}
Notice that $\iota_{\xi}(\mc{S}(Z^*_I))$ corresponds to the polynomial $\frac{z_0}{m}dx_I.\xi=
\frac{z_0}{m}\pl_\xi.x_I$
if $I\in \mathscr{A}^m$. From \eqref{fourier2} we thus have for $c_m:=n+m-1$
\begin{equation}\label{relationdediv}
\begin{split}
-iz_0\pl_\xi \hat{u}_0=&  (Z_0-c_m)\hat{u}_1, \\
-iz_0\pl_\xi \hat{u}_1= & (Z_0-c_m)\Delta_x\hat{u}_0.
\end{split}\end{equation}
Next, \eqref{pi1deltaTF} implies
$$
\begin{gathered}
(-(Z_0)^2+nZ_0+z_0^2|\xi|^2+n+3(m-1)-\sigma)\hat u_1-2iz_0\partial_\xi\hat u_0\\
+2iz_0\xi^* \Delta_x\hat u_0-|x|^2\Delta_x \hat u_1=0.
\end{gathered}
$$
Using \eqref{relationdediv}, this can be rewritten as 
\begin{equation}\label{pi1deltaTFbis}
\begin{gathered}
(-(Z_0)^2+(n+2)Z_0+z_0^2|\xi|^2-n+m-1-\sigma) \hat{u}_1\\
+2iz_0\xi^* \Delta_x\hat{u}_0
-|x|^2\Delta_x\hat{u}_1=0.
\end{gathered}
\end{equation}
We can write $\Delta_x \hat{u}_1=\sum_{j=0}^{[(m-1)/2]}\la'_j |x|^{2j-2}\hat{u}^{2j}_1$ for  
$\la'_j=-2j(n+2(m-j-2))$. We get from \eqref{pi1deltaTFbis}
\begin{equation}\label{relation2}
\begin{gathered}
\Big(-(Z_0)^2+(n+2)Z_0+z_0^2|\xi|^2-n+m-1-\sigma-\la'_j\Big) \hat{u}^{2j}_1\\
+2iz_0\Big(\la_{j+1}\xi^* \hat{u}^{2(j+1)}_0-\frac{\la_{j+1}\pl_{\xi} \hat{u}^{2(j+1)}_0}
{n+2(m-3-2j)}|x|^2+\frac{\la_j\pl_{\xi} \hat{u}^{2j}_0}{n+2(m-1-2j)}\Big)=0.
\end{gathered}
\end{equation}
\smallskip

We shall now partially uncouple the system of equations for $\hat{u}_0^{2j}$ and $\hat{u}_1^{2j}$.
Using \eqref{zetaW} and applying the  decomposition \eqref{decompositionoftensors}, we have
\[
\pl_\xi (|x|^{2j}\hat{u}_0^{2j})=|x|^{2j}\pl_\xi \hat{u}^{2j}_0\frac{n+2(m-j-1)}{n+2(m-2j-1)}+
2j|x|^{2j-2}\Big(\xi^* \hat{u}_0^{2j}-\frac{|x|^2\pl_{\xi}\hat{u}_0^{2j}}{n+2(m-2j-1)}\Big)
\]
\[
\pl_\xi (|x|^{2j}\hat{u}_1^{2j})=|x|^{2j}\pl_\xi \hat{u}^{2j}_1\frac{n+2(m-j-2)}{n+2(m-2j-2)}+
2j|x|^{2j-2}\Big(\xi^* \hat{u}_1^{2j}-\frac{|x|^2\pl_{\xi}\hat{u}_1^{2j}}{n+2(m-2j-2)}\Big)
\]
and from \eqref{relationdediv}, this implies that for $j\geq 0$
\begin{equation}\label{eqdivnul}
\begin{gathered}
(Z_0-c_m)\hat{u}_1^{2j}=\\
-iz_0\Big(\pl_\xi \hat{u}_0^{2j}\frac{n+2(m-j-1)}{n+2(m-2j-1)}+2(j+1)
\big(\xi^* \hat{u}_0^{2(j+1)}-\frac{|x|^2\pl_{\xi}\hat{u}_0^{2(j+1)}}{n+2(m-2j-3)}\big)\Big),\end{gathered}
\end{equation}
and for $j>0$
\begin{equation}
\label{eqdivnul2}
\begin{gathered}
(Z_0-c_m)\hat{u}_0^{2j}=\\
iz_0\Big(\frac{\pl_\xi \hat{u}_1^{2(j-1)}}{2j(n+2(m-2j))}+\frac{1}{n+2(m-j-1)}
\big(\xi^* \hat{u}_1^{2j}-\frac{|x|^2\pl_{\xi}\hat{u}_1^{2j}}{n+2(m-2j-2)}\big)\Big).
\end{gathered}
\end{equation}
Combining with \eqref{relation1} and~\eqref{relation2} we get for $j\geq 0$
\begin{equation}\label{relation1bis}
\begin{gathered}
(-(Z_0)^2+(n+4j)Z_0+z_0^2|\xi|^2+m-\sigma-\la_j -4jc_m) \hat{u}^{2j}_0\\
+2iz_0\frac{n+2(m-2j-1)}{n+2(m-j-1)} 
\Big(\xi^* \hat{u}^{2j}_1-\frac{|x|^2\pl_{\xi} \hat{u}^{2j}_1}
{n+2(m-2-2j)}\Big)=0,
\end{gathered}\end{equation}
\begin{equation}\label{relation2bis}
\begin{gathered}
(-(Z_0)^2+(n+2+4j)Z_0+z_0^2|\xi|^2-n+m-1-\sigma-\la_j'-4jc_m) \hat{u}^{2j}_1\\
+2iz_0(\la_{j+1}+4j(j+1))\Big(\xi^* \hat{u}^{2(j+1)}_0-\frac{|x|^2\pl_{\xi} \hat{u}^{2(j+1)}_0}
{n+2(m-3-2j)}\Big)=0,
\end{gathered}
\end{equation}
\begin{equation}\label{relation3}
\begin{gathered}
(-(Z_0)^2+(n+2-\tfrac{\la_{j+1}}{j+1})Z_0+z_0^2|\xi|^2-n+m-1-\sigma+\tfrac{\la_{j+1}}{j+1}(c_m-j)) \hat{u}^{2j}_1\\
+2iz_0\frac{(n+2(m-j-1))(n+2(m-2j-2))}{n+2(m-2j-1)}\pl_\xi \hat{u}_0^{2j}=0
\end{gathered}
\end{equation}
and for $j>0$
\begin{equation}
\label{relation4}
\begin{gathered}
(-Z_0^2+(n-\tfrac{\la_j}{j})Z_0+z_0^2|\xi|^2+m-\sigma+\tfrac{\la_j}{j}(c_m-j)) \hat{u}^{2j}_0\\
-iz_0\frac{2(m-1-2j)+n}{j(n+2(m-2j))}\pl_\xi \hat{u}_1^{2(j-1)}=0.
\end{gathered}
\end{equation}
\smallskip

To prove the lemma, we will show the following weak asymptotic expansion for $i=0,1$:
\begin{equation}
  \label{expansionhatu1}
\begin{gathered}
\cjg \hat{u}_i^{2j}(z_0,\cdot), \hat{\varphi}\cjd= %& 
\sum_{\ell\in\nn_0,\atop {\rm Re}(-\mu)+\ell<N-\eps} 
z_0^{\ndemi-\mu +2j+i+\ell}\cjg \widetilde w^{2j}_{i;-\mu,\ell},\varphi\cjd
\\
%&
+\sum_{\ell\in\nn_0,\atop {\rm Re}(\mu)+\ell<N-\eps} 
\sum_{p=0}^{k_{\mu,\ell}}z_0^{\ndemi+\mu+2j +i+\ell}\log(z_0)^{p}\cjg \widetilde w^{2j}_{i;\mu,\ell,p},\varphi\cjd
+\mc{O}(z_0^{\ndemi+2j+i+N-\eps}), 
\end{gathered}
\end{equation}
where $\widetilde w^{2j}_{i;-\mu,\ell}$ and $\widetilde w^{2j}_{i;\mu;\ell,p}$ are distributions in some Sobolev spaces in $\{|z|<1\}\subset \rr^n$
and for $\mu\notin {1\over 2}\mathbb N_0$, we have $k_{\mu,\ell}=0$.

Define for $0\leq r\leq m$ and $\varphi\in \mc{C}_0^\infty(\rr^n)$ supported in $\{|z|<1\}$,
$$
F^r(\varphi)(z_0):=
\begin{cases}
\cjg \hat{u}_0^{r}(z_0,\cdot),\hat{\varphi}\cjd,&\text{$r$ is even};\\
\cjg \hat{u}_1^{r-1}(z_0,\cdot),\hat{\varphi}\cjd,&\text{$r$ is odd}.\\
\end{cases}
$$
Since $\hat{u}_i^{r-i}$ is the Fourier transform in $z$ of iterated traces 
of $u_i$, Lemma~\ref{mazzeo} gives that the function 
$F^r(\varphi)(z_0)$ satisfies  for all $N\in\nn$, $\eps>0$
\begin{equation}
\label{expansionum} 
F^r(\varphi)(z_0) = \sum_{\la\in {\rm spec}_b(\Delta-\sigma)\atop {\rm Re}(\la)>\delta-1/2}
\sum_{\ell\in\nn_0,\atop 
{\rm Re}(\la)+\ell<N-\eps} \sum_{p=0}^{k_{\la,\ell}^r}
z_0^{\la+\ell}\log(z_0)^{p}\cjg w_{\la,\ell,p}^r,\varphi\cjd
+\mc{O}(z_0^{N-\eps})
\end{equation}
as $z_0\to 0$, and some $w_{\la,\ell,p}^r$ in some Sobolev space on $\{|z|<1\}$. 
We pair~\eqref{relation1bis}, \eqref{relation2bis} with 
$\hat{\varphi}$, and it is direct to see that we obtain a differential equation in $z_0$ of the form 
\begin{equation}
  \label{e:pila}
P^r(Z_0)F^r(\varphi)(z_0)=-z_0^2 F^r(\Delta \varphi)(z_0)+z_0F^{r+1}(Q^r\varphi)(z_0)
\end{equation}
for $Z_0=z_0\partial_{z_0}$,
$$
P^r(\lambda):=-\lambda^2+(n+2r)\lambda-r(n+r)-{n^2\over 4}+\mu^2
=-\Big(\lambda-{n\over 2}-r\Big)^2+\mu^2,
$$
and $Q^r$ some differential operator of order 1 with values in homomorphisms on the space
of polynomials in $x$. Here we denote $F^{m+1}=0$.

We now show the expansion~\eqref{expansionhatu1} by induction on $r=2j+i=m,m-1,\dots, 0$.
By plugging the expansion \eqref{expansionum} in the equation~\eqref{e:pila} and using
\begin{equation}\label{eqind}
\begin{gathered}
P^r(Z_0)z_0^{\lambda}\log(z_0)^p=z_0^\lambda\big(P_0^r(\lambda)(\log z_0)^p+
p\partial_\lambda P_0^r(\lambda)(\log z_0)^{p-1}\\+\mc{O}((\log z_0)^{p-2})\big)
\end{gathered} \end{equation}
we see that if for some $p$, $z_0^{\lambda}(\log z_0)^p$ is featured in the asymptotic expansion of
$F^r(\varphi)(z_0)$, then
either $\lambda\in n/2+r-\mu+\mathbb N_0$,
or $\lambda\in n/2+r+\mu+\mathbb N_0$,
or $z_0^{\lambda-2}(\log z_0)^p$ is featured in the expansion of $F^r(\Delta\varphi)(z_0)$.
Moreover, if $p>0$ and $\lambda\notin \{n/2+r\pm \mu\}$, then either $z_0^{\lambda}(\log z_0)^{p'}$ is featured in $F^r(\varphi)(z_0)$ for some $p'>p$,
or $z_0^{\lambda-2}(\log z_0)^p$ is featured in $F^r(\Delta\varphi)(z_0)$, or $z_0^{\lambda-1}(\log z_0)^p$
is featured in $F^{r+1}(Q^r\varphi)(z_0)$.
If $p>0$ and $\lambda= n/2+r\pm \mu$, then
(since $\mu\neq 0$ and thus $\partial_\lambda P_0^r(\lambda)\neq 0$)
either $z_0^{\lambda}(\log z_0)^{p'}$ is featured in $F^r(\varphi)(z_0)$ for some $p'>p$,
or $z_0^{\lambda-2}(\log z_0)^{p-1}$ is featured in $F^r(\Delta\varphi)(z_0)$, or $z_0^{\lambda-1}(\log z_0)^{p-1}$
is featured in $F^{r+1}(Q^r\varphi)(z_0)$, however the latter two cases are only possible 
when $\lambda=n/2+r+\mu$ and $\mu\in{1\over 2}\mathbb N_0$.
Together these facts (applied to $\varphi$
as well as its images under combinations of $\Delta$ and $Q^r$) imply that the weak expansion
of $u^{2j}_i$ has the form~\eqref{expansionhatu1}.

The asymptotic expansions~\eqref{e:asympexp} now follow from~\eqref{expansionhatu1}
since $\rho\partial_\rho=Z_0$ for our choice of $\rho$
and for each $r\in [0,m]$, by~\eqref{u'r} and \eqref{e:hatdec} we see that
(identifying symmetric tensors with homogeneous polynomials in $(x_0,x)$)
\begin{equation}
  \label{e:polly}
(\iota_{Z_0})^r u(x_0,x)=\sum_{r'=r}^m \sum_{s\geq 0\atop
r'+2s\leq m}
c_{m,r,r',s}\,x_0^{r'-r}|x|^{2s} u_{r'-2\lfloor r'/2\rfloor}^{2\lfloor r'/2\rfloor+2s}(x)
\end{equation}
for some constants $c_{m,r,r',s}$; for later use, we also note that $c_{m,r,r,0}\neq 0$.
\end{proof}

%%%%%%%%%%%%%%%%%%%%%%%%%%%%%%%%%%%%%%%%%%%%%%%%%%%%%%%%%%%%%%%%%%%%%%%%%%%%%%%%
\subsection{Surjectivity of the Poisson operator}

In this section, we prove the surjectivity part of Theorem~\ref{t:laplacian} in Section~\ref{s:rr-2}
(together with the injectivity part established in Corollary~\ref{l:injdone}, this
finishes the proof of that theorem). The remaining essential component of the proof
is showing that unless $u\equiv 0$,
a certain term in the asymptotic expansion of Lemma~\ref{asympexp} is nonzero
(in particular we will see
that $u$ cannot be vanishing to infinite order on $\mathbb S^n$ in the weak sense).
We start with
%%%%%%%%%%%%%%%%%%%%%%%%%%%%%%%%%%%%%%%%%%%%%%%%%%%%%%%%%%%%%%%%%%%%%%%%%%%%%%%%
\begin{lemm}
\label{l:vanish1}
Take some $u$ satisfying~\eqref{e:u-eq}.
Assume that for all $r\in [0,m]$, the coefficient $w^r_{-\mu,0}$
of the weak expansion~\eqref{e:asympexp} is zero.
(By Remark (ii) following Lemma~\ref{asympexp}, this condition is independent of the choice of $\rho$.)
Then $u\equiv 0$.
If $\mu\notin {1\over 2}\mathbb N_0$, then we can replace $w^r_{-\mu,0}$
by $w^r_{\mu,0,0}$ in the assumption above.
\end{lemm}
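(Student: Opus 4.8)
The plan is to exploit the divergence-free condition together with the structure of the weak expansion to propagate the vanishing hypothesis $w^r_{-\mu,0}=0$ (for all $r\in[0,m]$) to a statement that $u$ vanishes to infinite order at $\sph^n$ in the weak sense, and then conclude $u\equiv 0$ by a unique continuation / self-adjointness argument. Working near an arbitrary boundary point $\nu\in\sph^n$ in the half-space model with $\rho=z_0$, as in the proof of Lemma~\ref{asympexp}, I would use the uncoupled system~\eqref{relation1bis}--\eqref{relation4} together with the divergence relations~\eqref{eqdivnul}, \eqref{eqdivnul2}, and the indicial equation $P^r(Z_0)F^r(\varphi)(z_0)=-z_0^2F^r(\Delta\varphi)(z_0)+z_0F^{r+1}(Q^r\varphi)(z_0)$ from~\eqref{e:pila}. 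The leading exponent $\rho^{n/2-\mu+r}$ has coefficient $w^r_{-\mu,0}$; the hypothesis kills these leading coefficients simultaneously for every $r$.

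The key step is a downward induction on $r=m,m-1,\dots,0$ (equivalently on the pair $(2j,i)$ with $2j+i=r$), showing that if all the $w^{r'}_{-\mu,\ell}$ vanish for $\ell\le L$ and all $r'$, then they vanish for $\ell=L+1$ as well. The mechanism: plugging the weak expansion into~\eqref{e:pila} and using~\eqref{eqind}, a coefficient of $z_0^{n/2-\mu+r+\ell}$ in $F^r(\varphi)$ satisfies a relation where the "source" terms on the right-hand side are coefficients of $z_0^{n/2-\mu+r+\ell-2}$ in $F^r(\Delta\varphi)$ and of $z_0^{n/2-\mu+r+\ell-1}$ — which is $z_0^{n/2-\mu+(r+1)+(\ell-2)}$ — in $F^{r+1}(Q^r\varphi)$; crucially $P^r(n/2-\mu+r+\ell)=\mu^2-(\ell-\mu)^2=\ell(2\mu-\ell)\ne 0$ for $\ell\ge 1$ precisely because $\mu\notin\tfrac12\nn_0$ (in the branch where we argue with $w^r_{\mu,0,0}$) or more simply because $-\mu$-exponents are separated from each other by integers and $2\Re\mu$ is not one of them in the relevant range — so the indicial operator is invertible on these exponents and the coefficient is determined by strictly lower-order data that vanish by induction. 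One must also handle the divergence equations~\eqref{eqdivnul}, \eqref{eqdivnul2}: these are first-order in $Z_0$ with a factor $(Z_0-c_m)$, $c_m=n+m-1$; on an exponent $z_0^{n/2-\mu+r+\ell}$ this acts by multiplication by $n/2-\mu+r+\ell-c_m$, which I must check is nonzero in the relevant range (it is, since $\Re(n/2-\mu+r)\le n/2<n+m-1$ for $m\ge1$, or it forces $\mu$ into an excluded set), so the divergence relations express $\hat u_1$-coefficients in terms of $\hat u_0$-coefficients without introducing new free data. This propagates vanishing through the whole system.

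Having shown all coefficients $w^r_{-\mu,\ell}$ (and, by the indicial analysis, all $w^r_{\mu,\ell,p}$ as well, since those exponents would have to be seeded by the $-\mu$ tower or by the source terms, all now zero) vanish, Lemma~\ref{asympexp} gives that $\langle u(\theta(t,\cdot)),\varphi\rangle=\mathcal O(t^{N})$ for every $N$ and every test $\varphi$; combined with the conormality of the remainder, $u$ vanishes to infinite order at $\sph^n$ in the weak (hence, by elliptic regularity of $\Delta-\sigma$ in the $0$-calculus, the strong) sense. Since $u$ is a lift from the compact quotient $M$, where $\Delta$ acting on trace-free symmetric $m$-tensors is self-adjoint with discrete spectrum, an eigentensor cannot vanish to infinite order at a point of $\sph^n$ without being identically zero — one can see this either by a Carleman / unique-continuation estimate for $\Delta-\sigma$ near $\sph^n$, or more elementarily by noting that on $M$ the eigentensor is smooth and if its lift vanishes to infinite order along an entire horosphere family it must vanish on an open set, forcing $u\equiv 0$.

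\textbf{Main obstacle.} The bookkeeping in the induction is where the real work is: one must track, across the coupled pair $(\hat u_0^{2j},\hat u_1^{2j})$ and the adjacent traces, exactly which exponents $z_0^{n/2-\mu+r+\ell}$ can carry coefficients, verify that $P^r$ and the divergence factors $(Z_0-c_m)$ are invertible on all of them in the weighted range dictated by $\delta$ and $\Re\mu\in[0,\tfrac{n+1}{2}-\delta)$, and confirm that the polynomial operators $Q^r$ and the trace maps $\mathcal T$, $\mathcal I$ do not resurrect a vanished coefficient. The potential pitfalls are the coincidences $\mu\in\tfrac12\nn_0$ (log terms, handled separately using the hypothesis on $w^r_{\mu,0,0}$ instead) and the genuinely exceptional $\lambda\in\mathcal R_m$, which the statement of Theorem~\ref{t:laplacian} already excludes.
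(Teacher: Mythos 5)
There is a genuine gap, and it is at the heart of your argument. Your induction can at best propagate vanishing \emph{within} the $-\mu$ tower of exponents $\rho^{n/2-\mu+r+\ell}$: when $P^r$ is nonresonant, the coefficient at $n/2-\mu+r+\ell$, $\ell\geq 1$, is determined by lower-order data, fine. But the $+\mu$ tower is \emph{not} ``seeded'' by the $-\mu$ tower or by the source terms: at the exponent $n/2+\mu+r$ one has $P^r(n/2+\mu+r)=0$, so the coefficient $w^r_{\mu,0,0}$ is free data of the indicial recursion, completely undetermined by the formal expansion. The hypothesis of the lemma kills only the leading coefficient of \emph{one} of the two indicial families, so no amount of formal manipulation of \eqref{e:pila}, \eqref{eqdivnul}, \eqref{eqdivnul2} can yield vanishing to infinite order. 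The clearest sanity check is the scalar case $m=0$: there the lemma asserts that a tempered eigenfunction of $\Delta_{\hh^{n+1}}$ whose $\rho^{n/2-\mu}$ coefficient vanishes is identically zero, which is exactly the global boundary-value isomorphism of Oshima--Sekiguchi and Van den Ban--Schlichtkrull (formally, a solution supported on the $+\mu$ tower exists perfectly well). Since your route, specialized to $m=0$, would ``prove'' this by local indicial bookkeeping, it cannot be correct. Your endgame is also affected: the unique-continuation step never receives its hypothesis (infinite-order vanishing is not established), and the ``more elementary'' claim that infinite-order vanishing along a horosphere family forces vanishing on an open set is not a proof --- one would need a genuine unique continuation theorem at infinity (Carleman-type), which for a coupled tensor system is itself nontrivial.

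For comparison, the paper avoids all of this by importing the global input from the known scalar theory. One contracts $u$ with $\otimes^m\partial_{z_0}$, i.e.\ sets $f=c\,(z_0^{-1}\iota_{Z_0})^m u$ (which is $z_0^{-m}u_0^{2m}$ or $z_0^{-m}u_1^{2m-1}$ according to parity); the equations \eqref{relation1bis}, \eqref{relation2bis} show $(\Delta_{\hh^{n+1}}-n^2/4+\mu^2)f=0$, and your hypothesis (all $w^r_{-\mu,0}=0$, via \eqref{e:polly} and \eqref{e:lili}) kills the $\rho_0^{n/2-\mu}$ coefficient of $f$. The scalar isomorphism then gives $f\equiv 0$, and varying the boundary point $\nu$ so that \eqref{e:lili} points in any prescribed direction $\eta$ at any $y$ gives $\langle u,\otimes^m\eta\rangle=0$ everywhere, hence $u\equiv 0$ by symmetry of $u$. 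If you want to salvage your approach, the missing ingredient is precisely such a global statement (absence of ``purely decaying'' solutions), and you would still have to prove it for the tensor system rather than assume it.
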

%%%%%%%%%%%%%%%%%%%%%%%%%%%%%%%%%%%%%%%%%%%%%%%%%%%%%%%%%%%%%%%%%%%%%%%%%%%%%%%%
\begin{proof}
We choose some $\nu\in\mathbb S^n$ and transform $\mathbb B^{n+1}$ to the half-space model
as explained in the proof of Lemma~\ref{asympexp}, and use the notation of that proof.
Define the function $f\in \CI(\mathbb B^{n+1})$ in the half-space model as follows:
$$
f=\begin{cases}
z_0^{-m}u^{2m}_0&\quad\text{if $m$ is even};\\
z_0^{-m}u^{2m-1}_1&\quad\text{if $m$ is odd}.
\end{cases}
$$
Here $u^{2j}_0$, $u^{2j}_1$ are obtained by taking the inverse Fourier transform of
$\hat u^{2j}_0,\hat u^{2j}_1$.
By~\eqref{relation1bis}, \eqref{relation2bis} (see also~\eqref{e:pila})
we have
\begin{equation}
  \label{e:Feq}
(\Delta_{\hh^{n+1}}-n^2/4+\mu^2)f= 0.
\end{equation}
Denote by $\mc{C}_{\rm temp}^\infty(\mathbb{B}^{n+1})$ the set of smooth functions $f$ in $\mathbb{B}^{n+1}$ which are tempered in the sense that there exists $N\in \rr$ such that $\rho_0^{N}f\in L^2(\mathbb{B}^{n+1})$.
Set $\la:=-n/2+\mu$; it is proved in \cite{VdBSc, OsSe} (see also \cite{GrOt} for a simpler presentation in the case $|{\rm Re}(\la)+n/2|<n/2$) that the Poisson operator acting on distributions on hyperbolic space is an isomorphism 
\[
\mathscr P^-_\lambda: \mc{D}'(\sph^n)\to \ker(\Delta_{\hh^{n+1}}+\la(n+\la))\cap \mc{C}_{\rm temp}^\infty(\mathbb{B}^{n+1})
\]
for $\la\notin -n-\nn_0$, and if ${\rm Re}(\la)\geq -n/2$ with $\la\not=0$ any element 
$v\in \mc{C}_{\rm temp}^\infty(\mathbb{B}^{n+1})$ with $(\Delta_{\hh^{n+1}}+\la(n+\la))v=0$ and $v\not\equiv 0$
satisfies a weak expansion for any $N\in\nn$
\[
v=\mathscr P^-_\lambda(v_{-\mu,\ell})=\sum_{\ell=0}^N\Big(\rho_0^{n/2-\mu+\ell}v_{-\mu,\ell}+\sum_{p=1}^{k_{\mu,\ell}}\rho_0^{n/2+\mu+\ell}\log(\rho_0)^pv_{\mu,\ell,p}\Big)+\mc{O}(\rho_0^{n/2-\mu+N})
\]
with $v_{-\mu,0}\not\equiv0$; moreover $k_{\mu,\ell}=0$  if $\la\notin-\ndemi+\demi\nn_0$, and 
$v_{\mu,0,0}\not=0$ for such $\la$ (here $v_{-\mu,\ell},v_{\mu,\ell,p}$ are distributions on $\sph^n$ as before).%
\footnote{The existence of the weak expansion with known coefficients for elements in the image of $\mathscr P^-_\lambda$ is directly
related to the special case $m=0$ of Lemma~\ref{injectiv} and
the existence of a weak expansion for scalar eigenfunctions of the Laplacian
follows from the $m=0$ case of Lemma~\ref{asympexp}.
However, neither the surjectivity of the scalar Poisson operator nor
the fact that eigenfunctions have nontrivial terms in their weak expansions
follows from these statements.}

Next, by~\eqref{e:polly}, for some nonzero constant $c$ we have
$$
f=c (z_0^{-1}\iota_{Z_0})^m u=c\langle u,\otimes^m\partial_{z_0}\rangle.
$$
A calculation using~\eqref{e:udiffeo} shows that in the ball model, using
the geodesic boundary defining function $\rho_0$ from~\eqref{e:rho},
\begin{equation}
  \label{e:lili}
\partial_{z_0}=-\bigg({1-|y|^2\over 2} \,\nu+(1+y\cdot\nu)\,y\bigg)\partial_y
\end{equation}
is a $\CI(\overline{\mathbb B^{n+1}})$-linear combination
of $\partial_{\rho_0}$ and a 0-vector field. It follows from the form
of the expansion~\eqref{e:asympexp} and the assumption of this lemma
that the coefficient of $\rho_0^{{n\over 2}-\mu}$ of the weak expansion of $f$ is zero.
(If $\mu\notin {1\over 2}\mathbb N_0$, then we can also consider instead the coefficient of $\rho_0^{{n\over 2}+\mu}$.)

By~\eqref{e:Feq} and the surjectivity of the scalar Poisson kernel discussed above, we now see that $f\equiv 0$.
Now, for each fixed $y\in\mathbb B^{n+1}$ and each
$\eta\in T_y\mathbb B^{n+1}$, we can choose
$\nu$ such that $\eta$ is a multiple of~\eqref{e:lili} at $y$;
in fact, it suffices to take $\nu$ so that the geodesic $\varphi_t(y,\eta)$ converges
to $-\nu$ as $t\to +\infty$. Therefore, for each $y,\eta$, we have
$\langle u,\otimes^m \eta\rangle=0$ at $y$. Since $u$ is a symmetric tensor, this implies
$u\equiv 0$.
\end{proof}
%%%%%%%%%%%%%%%%%%%%%%%%%%%%%%%%%%%%%%%%%%%%%%%%%%%%%%%%%%%%%%%%%%%%%%%%%%%%%%%%
We now relax the assumptions of Lemma~\ref{l:vanish1} to only include the term with $r=0$:
%%%%%%%%%%%%%%%%%%%%%%%%%%%%%%%%%%%%%%%%%%%%%%%%%%%%%%%%%%%%%%%%%%%%%%%%%%%%%%%%
\begin{lemm}
\label{l:vanish2}
Take some $u$ satisfying~\eqref{e:u-eq}.
If $n=1$ and $m>0$, then we additionally assume that $\mu\neq {1\over 2}$.
Assume that the coefficient $w^0_{-\mu,0}$
of the weak expansion~\eqref{e:asympexp} is zero.
(By Remark (ii) following Lemma~\ref{asympexp}, this condition is independent of the choice of $\rho$.)
Then $u\equiv 0$.
If $\mu\notin {1\over 2}\mathbb N_0$, then we can replace $w^0_{-\mu,0}$
by $w^0_{\mu,0,0}$ in our assumption.
\end{lemm}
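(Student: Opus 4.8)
The plan is to deduce Lemma~\ref{l:vanish2} from Lemma~\ref{l:vanish1} by a descent/bootstrap argument on the index $r$: assuming only $w^0_{-\mu,0}=0$, I want to show that in fact $w^r_{-\mu,0}=0$ for all $r\in[0,m]$, which then triggers Lemma~\ref{l:vanish1} and forces $u\equiv0$. The mechanism that propagates vanishing from $r$ to $r+1$ is the coupling between the components $\hat u_0^{2j}$ and $\hat u_1^{2j}$ exhibited in equations~\eqref{relation1bis}--\eqref{relation4}: the divergence-free condition, encoded in~\eqref{relationdediv} and its refinements~\eqref{eqdivnul}, \eqref{eqdivnul2}, lets one express the ``next'' component (roughly, differentiate one more time with $\iota_{Z_0}$, i.e.\ pass from $r$ to $r+1$) in terms of the ``previous'' ones up to lower-order corrections. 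Concretely, I would work in the half-plane model as in the proof of Lemma~\ref{asympexp}, use the weak expansions~\eqref{expansionhatu1} already established there, and track the leading coefficient (the $z_0^{n/2-\mu+2j+i}$ term, no log since we are at the top exponent $-\mu$). The relation~\eqref{relationdediv}, rewritten at the level of leading coefficients, reads schematically $(Z_0-c_m)\hat u_1 = -iz_0\partial_\xi\hat u_0$ and $(Z_0-c_m)\Delta_x\hat u_0=-iz_0\partial_\xi\hat u_1$; applying $Z_0=z_0\partial_{z_0}$ to a monomial $z_0^{n/2-\mu}\widetilde w$ gives $(n/2-\mu-c_m)z_0^{n/2-\mu}\widetilde w$, and the point is that the prefactor $n/2-\mu-c_m = -\mu-n/2-m+1$ is nonzero precisely in the range of $\mu$ allowed (this is where the hypothesis $\mu\ne\frac12$ when $n=1,m>0$ enters, and more generally where one checks $\mu\notin\{n/2+m-1-\ell\}$ using $\mathrm{Re}\,\mu<(n+1)/2-\delta$ together with Lemma~\ref{bottomsp}, exactly as in the proof of Lemma~\ref{l:better} via~\eqref{e:bohoho}).

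So the key steps, in order, are: (1) set up the half-plane picture and recall from the proof of Lemma~\ref{asympexp} that all the components $\hat u_i^{2j}$ have weak expansions~\eqref{expansionhatu1} with leading exponent $n/2-\mu+2j+i$; (2) observe that the hypothesis $w^0_{-\mu,0}=0$ says precisely that the leading coefficient $\widetilde w^{0}_{0;-\mu,0}$ of $\hat u_0^0$ vanishes; (3) use the divergence relations~\eqref{relationdediv}, \eqref{eqdivnul}, \eqref{eqdivnul2} --- evaluated on leading coefficients, where the operator $Z_0-c_m$ acts as the nonzero scalar $n/2-\mu-c_m$ --- to propagate the vanishing, first to $\widetilde w^{0}_{1;-\mu,0}$ (the leading coefficient of $\hat u_1^0$) and then inductively to all $\widetilde w^{2j}_{i;-\mu,0}$, i.e.\ to the leading coefficients of every component $\hat u_i^{2j}$; (4) translate back via~\eqref{e:polly}/\eqref{u'r}: since $(\iota_{Z_0})^r u$ is, up to the invertible constant $c_{m,r,r,0}$, built from $u^{2\lfloor r'/2\rfloor+2s}_{r'-2\lfloor r'/2\rfloor}$ with $r'\ge r$, the vanishing of all these component leading coefficients forces $w^r_{-\mu,0}=0$ for every $r$; (5) invoke Lemma~\ref{l:vanish1} to conclude $u\equiv0$. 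For the variant with $\mu\notin\frac12\mathbb N_0$, one runs the identical argument at the exponent $n/2+\mu$ instead (there is no log term and the scalar prefactor $n/2+\mu-c_m$ is again checked nonzero), concluding via the corresponding clause of Lemma~\ref{l:vanish1}.

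The main obstacle I anticipate is step~(3): making the ``propagation of vanishing'' bookkeeping precise. The relations~\eqref{relation1bis}--\eqref{relation4} couple $\hat u_0^{2j}$ with $\hat u_1^{2j}$, $\hat u_0^{2(j+1)}$, and $\hat u_1^{2(j-1)}$ through first-order differential operators in $\xi$ (the operators denoted $Q^r$ in~\eqref{e:pila}), so at the level of the $z_0^{n/2-\mu+\cdots}$ coefficients one gets a finite linear system among the $\widetilde w^{2j}_{i;-\mu,0}$ rather than a single clean recursion. I would argue that this system is triangular: the $r=2j+i$ increases by one at each coupling that lowers the $z_0$-power count appropriately, and the diagonal entries are exactly the nonzero scalars $n/2-\mu+r-c_m$ (or products of the $P^r(n/2-\mu+r)$-type quantities from~\eqref{e:pila} away from their roots), so the only solution with $\widetilde w^0_{0;-\mu,0}=0$ is the trivial one. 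Verifying that all the relevant diagonal/indicial factors are nonzero in the stated $\mu$-range --- and isolating the single bad case $n=1,m>0,\mu=\frac12$ which must be excluded --- is the delicate part; it reduces, as in the proof of Lemma~\ref{l:better}, to the inequality~\eqref{e:bohoho} coming from Lemma~\ref{bottomsp}, so I expect it to go through but it requires care to line up the exponent arithmetic with the indicial roots from Lemma~\ref{l:indic-family}.
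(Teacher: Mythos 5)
Your proposal is correct and follows the same skeleton as the paper's proof: reduce via~\eqref{e:polly} to showing that vanishing of the leading coefficient $\widetilde w^0_{0;\mp\mu,0}$ of $\hat u_0^0$ forces all $\widetilde w^{2j}_{i;\mp\mu,0}$ to vanish, induct on $r=2j+i$, and then invoke Lemma~\ref{l:vanish1}. The one genuine difference is the engine of the induction step: you propose to run every step through the first-order divergence relations \eqref{relationdediv}, \eqref{eqdivnul}, \eqref{eqdivnul2}, whose leading-order scalar factors are $\tfrac n2\mp\mu+r-c_m$, whereas the paper uses the second-order relations \eqref{relation3}, \eqref{relation4} for the generic step (with the nonvanishing constants \eqref{e:coffee1}, \eqref{e:coffee2}) and falls back on \eqref{eqdivnul2}, \eqref{eqdivnul} only in the degenerate cases $m=2j$, $m=2j+1$. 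Your uniform first-order route does work: the power counting shows the system is triangular at the top exponent, since in \eqref{eqdivnul} the term $z_0\,\xi^*\hat u_0^{2(j+1)}$ enters two orders of $z_0$ higher than the leading one and so does not pollute the coefficient of $z_0^{n/2\mp\mu+2j+1}$, and similarly for \eqref{eqdivnul2}; the diagonal factors $\tfrac n2\mp\mu+r-c_m$ vanish only when $\mu=1-\tfrac n2$ (at $r=m$, in the $-\mu$ case) or $\mu\in\{\tfrac n2-1,\dots,m+\tfrac n2-2\}$ (in the $+\mu$ case), which is exactly ruled out by $\Re\mu\geq0$, $\mu\neq 0$, the hypothesis $\mu\neq\tfrac12$ when $n=1$, $m>0$, and $\mu\notin\tfrac12\mathbb N_0$ in the $+\mu$ variant — the same arithmetic the paper performs for its degenerate cases. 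So your version is, if anything, a slight streamlining, treating generic and degenerate steps uniformly. One caveat: your parenthetical fallback, using ``products of the $P^r(\tfrac n2-\mu+r)$-type quantities from~\eqref{e:pila} away from their roots'' as nonzero diagonal entries, cannot work, because $\tfrac n2\mp\mu+r$ are precisely the indicial roots of $P^r$, so $P^r(\tfrac n2\mp\mu+r)=0$; this is exactly why the paper switches to the mixed relations \eqref{relation3}, \eqref{relation4}, whose constants \eqref{e:coffee1}, \eqref{e:coffee2} are generically nonzero. Since your primary first-order mechanism suffices on its own, this slip does not affect the validity of the proposed proof.
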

%%%%%%%%%%%%%%%%%%%%%%%%%%%%%%%%%%%%%%%%%%%%%%%%%%%%%%%%%%%%%%%%%%%%%%%%%%%%%%%%
\begin{proof}
Assume that $w^0_{\pm\mu,0}=0$; here we consider the case of
$w^0_{\mu,0}:=w^0_{\mu,0,0}$ only when $\mu\notin{1\over 2}\mathbb N_0$.
By Lemma~\ref{l:vanish1},
it suffices to prove that $w^r_{\pm\mu,0}=0$ for $r=0,\dots,m$.
This is a local statement and we use the half-plane model and the notation
of the proof of Lemma~\ref{asympexp}. By~\eqref{e:polly}, it then suffices to show that
if $\widetilde w^0_{0;\pm\mu,0}=0$ in the expansion~\eqref{expansionhatu1},
then $\widetilde w^{2j}_{i;\pm\mu,0}=0$ for all $i,j$.

We argue by induction on $r=2j+i=0,\dots,m$. Assume first that $i=0$, $j>0$, and $\widetilde w^{2(j-1)}_{1;\pm\mu,0}=0$. Then
we plug~\eqref{expansionhatu1} into~\eqref{relation4} and consider the coefficient next to~$z_0^{{n\over 2}\pm\mu+2j}$;
this gives $\widetilde w^{2j}_{0;\pm\mu,0}=0$ if
for $\lambda={n\over 2}\pm\mu+2j$, the following constant is nonzero:
\begin{equation}
  \label{e:coffee1}
\begin{gathered}
-\lambda^2+\Big(n-{\lambda_j\over j}\Big)\lambda+m-\sigma+{\lambda_j\over j}(c_m-j)\\
=(n+2m-2-4j)(\pm 2\mu -n-2m+2+4j).
\end{gathered}
\end{equation}
We see immediately that~\eqref{e:coffee1} is nonzero unless $m=2j$.
For the case $m=2j$, we can use~\eqref{eqdivnul2} directly; taking the coefficient next to~$z_0^{{n\over 2}\pm \mu+m}$,
we get $\widetilde w^{2j}_{0;\pm\mu,0}=0$ as long as ${n\over 2}\pm \mu+m\neq c_m$, or equivalently
$\pm\mu\neq {n\over 2}-1$; the latter inequality is immediately true unless $n=1$,
and it is explicitely excluded by the statement of the present lemma when $n=1$.

Similarly, assume that $i=1$, $0\leq 2j<m$, and $\widetilde w^{2j}_{0;\pm\mu,0}=0$. Then we plus~\eqref{expansionhatu1}
into~\eqref{relation3} and consider the coefficient next to~$z_0^{{n\over 2}\pm \mu+2j+1}$;
this gives $\widetilde w^{2j}_{1;\pm\mu,0}=0$ if for $\lambda={n\over 2}\pm \mu+2j+1$, the following constant is nonzero:
\begin{equation}
  \label{e:coffee2}
\begin{gathered}
-\lambda^2+(n+2-\tfrac{\lambda_{j+1}}{j+1})\lambda-n+m-1-\sigma+\tfrac{\lambda_{j+1}}{j+1}(c_m-j)\\
=(n+2m-4-4j)(\pm 2\mu-n-2m+4+4j).
\end{gathered}
\end{equation}
We see immediately that~\eqref{e:coffee2} is nonzero unless $m=2j+1$. For the case $m=2j+1$,
we can use~\eqref{eqdivnul} directly; taking the coefficient next to~$z_0^{{n\over 2}\pm \mu+m}$,
we get $\widetilde w^{2j}_{1;\pm\mu,0}=0$ as long as ${n\over 2}\pm \mu+m\neq c_m$,
which we have already established is true.
\end{proof}
%%%%%%%%%%%%%%%%%%%%%%%%%%%%%%%%%%%%%%%%%%%%%%%%%%%%%%%%%%%%%%%%%%%%%%%%%%%%%%%%
We finish the section by the following statement, which immediately
implies the surjectivity part of Theorem~\ref{t:laplacian}. Note that for the lifts
of elements of $\Eig^m(-\la(n+\la)+m)$, we can take any $\delta<1/2$ below.
The condition $\Re\lambda<{1\over 2}-\delta$ for $m>0$ follows from Lemma~\ref{bottomsp}.
%%%%%%%%%%%%%%%%%%%%%%%%%%%%%%%%%%%%%%%%%%%%%%%%%%%%%%%%%%%%%%%%%%%%%%%%%%%%%%%%
\begin{corr}
  \label{corsurj}
Let $u\in \rho^{\delta}L^2(\bbar{\mathbb{B}^{n+1}}; E^{(m)})$ be a trace-free symmetric $m$-cotensor
with  $\rho$ a geodesic boundary defining function and $\delta\in(-\infty,\demi)$, where the measure is the Euclidean Lebesgue measure on the ball. Assume that $u$ is a nonzero divergence-free eigentensor for the Laplacian on hyperbolic space: 
\begin{equation}
  \label{e:ella}
\Delta u=(-\la(n+\la)+m) u,\quad \nabla^*u=0
\end{equation}
with ${\rm Re}(\la)<\frac{1}{2}-\delta$ and $\la\notin\mathcal R_m$,
with $\mathcal R_m$ defined in~\eqref{e:r-m-def0}.
Then there exists  $w \in H^{{\rm Re}(\la)+\delta-\frac{1}{2}}(\sph^n; 
\otimes_S^mT^*\sph^n)$ such that $u=\mathscr P^-_\lambda (w)$. Moreover if $\gamma^*u=u$ for some $\gamma\in G$,
then $L_\gamma^*w=N_\gamma^{-\la-m}w$. 
\end{corr}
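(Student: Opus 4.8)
The plan is to produce $w$ from the leading term of the weak asymptotic expansion of $u$ at $\sph^n$ furnished by Lemma~\ref{asympexp}, to show that $\mathscr P^-_\lambda(w)$ has the same leading term — using the computation of the leading coefficient of a Poisson integral in Lemma~\ref{injectiv} — and then to apply the vanishing statement of Lemma~\ref{l:vanish2} to the difference. Set $\sigma:=-\la(n+\la)+m$ and $\mu:=\la+\tfrac n2$, so that $\sigma=m+\tfrac{n^2}4-\mu^2$; replacing $\mu$ by $-\mu$ if necessary, assume $\Re\mu\geq0$. The hypotheses $\la\notin\mc R_m$ and $\Re\la<\tfrac12-\delta$, together with the Bochner lower bound $\Delta\geq m+n-1$ of Lemma~\ref{bottomsp} for $m\geq1$ (and $\Delta\geq0$ on functions for $m=0$), guarantee $\mu\neq0$ and $\Re\mu\in[0,\tfrac{n+1}2-\delta)$, so that Lemma~\ref{asympexp} applies to $u$; they also guarantee that $\mu\notin\tfrac12\nn_0$ whenever $\Re\la<-\tfrac n2$, and that $\mu\neq\tfrac12$ when $n=1$, $m>0$, since each excluded value of $\mu$ would force $\la\in\mc R_m$. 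Let $\tilde w$ be the coefficient of $\rho^{-\la}$ in the weak expansion of $u$ (this is $w^0_{-\mu,0}$ when $\Re\la\geq-\tfrac n2$, and the log-free coefficient $w^0_{\mu,0,0}$ when $\Re\la<-\tfrac n2$, where $\mu\notin\tfrac12\nn_0$); under the identification of Section~\ref{s:poisson-basic} it lies in $H^{\Re\la+\delta-\demi}(\sph^n;\otimes^m_ST^*\sph^n\cap\ker\mc T)$.

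Next I would normalise. By Lemma~\ref{injectiv}, combined with~\eqref{relationleadingterm} to pass from the local geodesic boundary defining function used there to the global $\rho_0$ of~\eqref{e:rho} (which absorbs the local factor $e^{\la f}$ up to a power of $4$), the coefficient of $\rho_0^{-\la}$ in the weak expansion of $\mathscr P^-_\lambda(w')$ equals $C_0(\la)\,w'$, where $C_0(\la)=C\,4^{\la}\,\Gamma(\la+\ndemi)/\big((\la+n+m-1)\Gamma(\la+n-1)\big)$ with $C\neq0$ (see~\eqref{Blambda}). The zeros and poles of $C_0(\la)$ sit at $\la\in-\ndemi-\nn_0$, $\la=1-n-m$, and $\la\in 1-n-\nn_0$, and a short check shows each of these lies in $\mc R_m$ once the constraint on $\Re\la$ from the other hypotheses is taken into account; hence $C_0(\la)\neq0$. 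Put $w:=C_0(\la)^{-1}\tilde w\in H^{\Re\la+\delta-\demi}(\sph^n;\otimes^m_ST^*\sph^n\cap\ker\mc T)$. By Lemma~\ref{l:laplacian}, $\tilde u:=\mathscr P^-_\lambda(w)$ is a trace-free divergence-free eigentensor with $\Delta\tilde u=\sigma\tilde u$; it is tempered, its weak expansion starting at the exponent $\rho^{-\la}$ (or its reflection $\rho^{n+\la}$), and by construction the coefficient of $\rho_0^{-\la}$ in its expansion equals $\tilde w$.

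Therefore $v:=u-\tilde u$ satisfies~\eqref{e:ella}, is tempered, and has vanishing $\rho_0^{-\la}$-coefficient in its weak expansion, i.e.\ the coefficient $w^0_{-\mu,0}$ (respectively $w^0_{\mu,0,0}$ when $\Re\la<-\tfrac n2$) of the expansion~\eqref{e:asympexp} of $v$ is zero. Lemma~\ref{l:vanish2} — whose excluded case $n=1$, $m>0$, $\mu=\tfrac12$ is ruled out by $\la\notin\mc R_m$ — then gives $v\equiv0$, that is $u=\mathscr P^-_\lambda(w)$, which is the surjectivity assertion. For the equivariance, assume $\gamma^*u=u$ with $\gamma\in G$. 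Since $w$ is $C_0(\la)^{-1}$ times the leading ($\rho_0^{-\la}$) coefficient of $u$, formula~\eqref{equivarianceubis} applied with $\la_0=-\la$ gives $L_\gamma^*w=N_\gamma^{-\la-m}w$ directly; equivalently, one may use the $G$-equivariance of $\mathscr P^-_\lambda$ (a direct consequence of~\eqref{changeofPhi} and~\eqref{equivarianceA}, in the spirit of Lemma~\ref{l:q-pm-gamma}) together with the injectivity of $\mathscr P^-_\lambda$ from Corollary~\ref{l:injdone}.

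The bulk of this argument is bookkeeping rather than new analysis: deciding which indicial exponent is the leading one (the sign of $\mu$, and the borderline cases $\Re\mu=0$ and $\Re\la=-\tfrac n2$), reconciling the normalisation of Lemma~\ref{injectiv} with that of Lemma~\ref{asympexp} while keeping a single geodesic boundary defining function throughout by means of~\eqref{relationleadingterm}, and checking that the location of the zeros and poles of $C_0(\la)$, together with the side conditions in Lemmas~\ref{asympexp} and~\ref{l:vanish2}, is governed exactly by the set $\mc R_m$ of~\eqref{e:r-m-def0}. The two genuinely hard ingredients — the existence of the weak expansions in the divergence-free case and the vanishing theorem forcing $u\equiv0$ once the leading coefficient vanishes — are already established in Lemmas~\ref{asympexp}, \ref{injectiv}, and~\ref{l:vanish2}.
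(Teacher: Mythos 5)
Your proposal follows essentially the same route as the paper's own proof: extract the coefficient of $\rho_0^{-\la}$ from the weak expansion of $u$ given by Lemma~\ref{asympexp} (with $\mu=\ndemi+\la$ when $\Re\la\geq-\ndemi$ and $\mu=-\ndemi-\la$ otherwise), normalise it by the nonvanishing constant coming from~\eqref{Blambda} via Lemma~\ref{injectiv} and~\eqref{relationleadingterm}, conclude $u=\mathscr P^-_\lambda(w)$ by applying Lemma~\ref{l:vanish2} to the difference, and obtain the equivariance from~\eqref{equivarianceubis}. This matches the paper's argument step for step (including the bookkeeping that the zeros and poles of the normalising factor lie in $\mc R_m$), so the proposal is correct and not a genuinely different proof.
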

%%%%%%%%%%%%%%%%%%%%%%%%%%%%%%%%%%%%%%%%%%%%%%%%%%%%%%%%%%%%%%%%%%%%%%%%%%%%%%%%
\begin{proof} For the case ${\rm Re}(\la)\geq -n/2$ we set $\mu=n/2+\la$ and apply Lemma~\ref{asympexp}: the distribution $w$ will be given by $C(\la)w_{-\mu,0}$ for some constant $C(\la)$ to be chosen, and this has the desired covariance with respect to elements of $G$ by using \eqref{equivarianceubis} from the Remark after Lemma~\ref{mazzeo}.

To see that $u=\mathscr P^-_\lambda (w)$ for a certain $C(\la)$, it suffices to use the weak expansion in 
Lemma~\ref{injectiv} and the identity \eqref{relationleadingterm} from the Remark following Lemma \ref{mazzeo}, 
to deduce that $C(\la)B(\la)w_{-\mu,0}$ appears as the leading coefficient 
of the power $\rho_0^{-\la}$ in the expansion of $u$, where $B(\la)$ is a non-zero constant times 
the factor appearing in \eqref{Blambda}; here $\rho_0$ is defined in~\eqref{e:rho}.
(The factor $B(\lambda)$ does not depend on the point $\nu\in\mathbb S^n$ since
the Poisson operator is equivariant under rotations of $\overline{\mathbb B^{n+1}}$.)
Then choosing $C(\lambda):=B(\la)^{-1}$, we observe that 
$u$ and $\mathscr P^-_\lambda (w)$ both satisfy~\eqref{e:ella}
and have the same asymptotic coefficient of $\rho_0^{-\la}$ in their weak expansion~\eqref{e:asympexp};
thus from Lemma~\ref{l:vanish2} we have $u=\mathscr P^-_\lambda (w)$. Finally, for ${\rm Re}(\la)<-n/2$ 
with $\la\notin -\ndemi-\demi\nn_0$ we do the same thing but setting $\mu:=-n/2-\la$ in Proposition \ref{asympexp}.
\end{proof}
%%%%%%%%%%%%%%%%%%%%%%%%%%%%%%%%%%%%%%%%%%%%%%%%%%%%%%%%%%%%%%%%%%%%%%%%%%%%%%%%

%%%%%%%%%%%%%%%%%%%%%%%%%%%%%%%%%%%%%%%%%%%%%%%%%%%%%%%%%%%%%%%%%%%%%%%%%%%%%%%%
\appendix
%%%%%%%%%%%%%%%%%%%%%%%%%%%%%%%%%%%%%%%%%%%%%%%%%%%%%%%%%%%%%%%%%%%%%%%%%%%%%%%%

%%%%%%%%%%%%%%%%%%%%%%%%%%%%%%%%%%%%%%%%%%%%%%%%%%%%%%%%%%%%%%%%%%%%%%%%%%%%%%%%
%                                  APPENDIX A                                  %
%%%%%%%%%%%%%%%%%%%%%%%%%%%%%%%%%%%%%%%%%%%%%%%%%%%%%%%%%%%%%%%%%%%%%%%%%%%%%%%%
\section{Some technical calculations}
\label{s:technical}

%%%%%%%%%%%%%%%%%%%%%%%%%%%%%%%%%%%%%%%%%%%%%%%%%%%%%%%%%%%%%%%%%%%%%%%%%%%%%%%%
\subsection{Asymptotic expansions for certain integrals}

In this subsection, we prove the following version of Hadamard regularization:
%%%%%%%%%%%%%%%%%%%%%%%%%%%%%%%%%%%%%%%%%%%%%%%%%%%%%%%%%%%%%%%%%%%%%%%%%%%%%%%%
\begin{lemm}
  \label{l:asysa}
Fix $\chi\in \CI_0(\mathbb R)$ and define for $\Re\alpha>0$, $\beta\in\mathbb C$, and $\varepsilon>0$,
$$
F_{\alpha\beta}(\varepsilon):=\int_0^\infty t^{\alpha-1}(1+t)^{-\beta}\chi(\varepsilon t)\,dt.
$$
If $\alpha-\beta\not\in\mathbb N_0$, then $F_{\alpha\beta}(\varepsilon)$ has the following asymptotic expansion
as $\varepsilon\to +0$:
\begin{equation}
  \label{e:asysa}
F_{\alpha\beta}(\varepsilon)= {\Gamma(\alpha)\Gamma(\beta-\alpha)\over\Gamma(\beta)}\chi(0)
+\sum_{0\leq j\leq \Re(\alpha-\beta)} c_j \varepsilon^{\beta-\alpha+j}+o(1),
\end{equation}
for some constants $c_j$ depending on $\chi$.
\end{lemm}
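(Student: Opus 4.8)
\textbf{Proof plan for Lemma~\ref{l:asysa}.}
The plan is to split the integral at $t=1$ and treat the two pieces separately, since the
only singular contributions as $\varepsilon\to 0$ come from the region $t\gtrsim 1/\varepsilon$.
First I would write $F_{\alpha\beta}(\varepsilon)=F_1(\varepsilon)+F_2(\varepsilon)$, where
$F_1$ is the integral over $0<t<1$ and $F_2$ the integral over $t>1$. On $[0,1]$ the
integrand is bounded (using $\Re\alpha>0$ for integrability at $t=0$) and
$\chi(\varepsilon t)\to\chi(0)$ uniformly, so by dominated convergence
$F_1(\varepsilon)=\int_0^1 t^{\alpha-1}(1+t)^{-\beta}\chi(0)\,dt+o(1)$; combined with the
tail of the convergent integral $\int_0^\infty t^{\alpha-1}(1+t)^{-\beta}\,dt=
\Gamma(\alpha)\Gamma(\beta-\alpha)/\Gamma(\beta)$ (valid since $\Re\alpha>0$ and, after the
cancellation below, no poles are hit because $\alpha-\beta\notin\mathbb N_0$), this will
eventually assemble into the stated leading term.

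The main work is in $F_2(\varepsilon)=\int_1^\infty t^{\alpha-1}(1+t)^{-\beta}\chi(\varepsilon t)\,dt$.
Here I would substitute $t=s/\varepsilon$, giving
$F_2(\varepsilon)=\int_\varepsilon^\infty (s/\varepsilon)^{\alpha-1}(1+s/\varepsilon)^{-\beta}
\chi(s)\,\varepsilon^{-1}\,ds=\varepsilon^{\beta-\alpha}\int_\varepsilon^\infty
s^{\alpha-1}(s+\varepsilon)^{-\beta}\chi(s)\,ds$. The point is that
$(s+\varepsilon)^{-\beta}=s^{-\beta}(1+\varepsilon/s)^{-\beta}$ has, for $s$ bounded below, a
convergent Taylor expansion in $\varepsilon/s$; I would expand it to order $N$ with
$N>\Re(\alpha-\beta)$, obtaining
$F_2(\varepsilon)=\varepsilon^{\beta-\alpha}\sum_{j=0}^{N}\binom{-\beta}{j}\varepsilon^j
\int_\varepsilon^\infty s^{\alpha-\beta-j-1}\chi(s)\,ds+\varepsilon^{\beta-\alpha}R_N(\varepsilon)$,
with $R_N(\varepsilon)=O(\varepsilon^{N+1})$ from the remainder estimate. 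For each fixed $j$,
the integral $\int_\varepsilon^\infty s^{\alpha-\beta-j-1}\chi(s)\,ds$ is analyzed by the
standard Mellin/Hadamard argument: near $s=0$, writing $\chi(s)=\sum_{k\geq 0}\chi^{(k)}(0)s^k/k!$,
one gets a convergent integral $\int_0^\infty s^{\alpha-\beta-j-1}\chi(s)\,ds$ (finite since
$\alpha-\beta-j\notin -\mathbb N_0$, which is exactly the hypothesis $\alpha-\beta\notin\mathbb N_0$)
plus terms $\int_0^\varepsilon s^{\alpha-\beta-j+k-1}\,ds\cdot\chi^{(k)}(0)/k!$ which produce
powers $\varepsilon^{\alpha-\beta-j+k}$. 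Multiplying back by $\varepsilon^{\beta-\alpha+j}$, the
convergent integrals yield pure powers $c_j\varepsilon^{\beta-\alpha+j}$ (these are the terms
listed in~\eqref{e:asysa}, and only those with $\Re(\beta-\alpha+j)\le 0$, i.e.
$j\le\Re(\alpha-\beta)$, fail to be $o(1)$), while the boundary terms produce powers
$\varepsilon^{k}$ with $k\ge 0$; the $k=0$ term from $j$ such that... here I need to be careful.

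The one genuinely delicate point—and the step I expect to be the main obstacle—is
bookkeeping the non-negative integer powers of $\varepsilon$ that arise from $F_2$ and
checking that their constant ($\varepsilon^0$) part exactly cancels against the constant part
coming from $F_1$ minus the full integral, leaving precisely the single clean term
$\Gamma(\alpha)\Gamma(\beta-\alpha)/\Gamma(\beta)\,\chi(0)$. Concretely, the $\varepsilon^0$
contributions to $F_2$ come from pairs $(j,k)$ with $\alpha-\beta-j+k=0$, but since
$\alpha-\beta\notin\mathbb N_0$ there are no such pairs, so $F_2$ contributes no $\varepsilon^0$
term at all; meanwhile $F_1(\varepsilon)=\chi(0)\int_0^1+o(1)$ and adding the tail
$\int_1^\infty t^{\alpha-1}(1+t)^{-\beta}\,dt$ (the $\chi(0)$-multiple of which is the
$j=0,k=0$ "convergent" piece extracted from $F_2$... ) reconstitutes
$\chi(0)\int_0^\infty$. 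Thus the only surviving constant is $\Gamma(\alpha)\Gamma(\beta-\alpha)
\Gamma(\beta)^{-1}\chi(0)$, all other terms are either the advertised powers
$c_j\varepsilon^{\beta-\alpha+j}$ with $0\le j\le\Re(\alpha-\beta)$ or are $o(1)$, and absorbing
the higher powers $\varepsilon^{\beta-\alpha+j}$ with $j>\Re(\alpha-\beta)$ and the positive
powers $\varepsilon^k$ into the $o(1)$ completes the proof. I would present this by first
isolating the clean identity $F_{\alpha\beta}(\varepsilon)-\chi(0)\int_0^\infty
t^{\alpha-1}(1+t)^{-\beta}\,dt=\int_0^\infty t^{\alpha-1}(1+t)^{-\beta}(\chi(\varepsilon t)
-\chi(0))\,dt$ and then running the substitution $t=s/\varepsilon$ on this difference, which
makes the cancellation automatic and reduces everything to the expansion of one explicit
Mellin-type integral.
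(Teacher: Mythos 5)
Your strategy (split at $t=1$, rescale $t=s/\varepsilon$, expand $(s+\varepsilon)^{-\beta}$ binomially, Hadamard-regularize) could in principle be pushed through, but as written it breaks down exactly at the point the lemma is about: identifying the $\varepsilon^0$ coefficient. First, the expansion of $(s+\varepsilon)^{-\beta}=s^{-\beta}(1+\varepsilon/s)^{-\beta}$ in powers of $\varepsilon/s$ is applied on all of $[\varepsilon,\infty)$, but near the lower endpoint $\varepsilon/s$ is of order $1$. With the only available bound $|\rho_N(x)|\leq C_N x^{N+1}$ on $[0,1]$ for the Taylor remainder $\rho_N$, the region $\varepsilon\leq s\leq 2\varepsilon$ contributes to $\varepsilon^{\beta-\alpha}R_N(\varepsilon)$ an amount of size $\varepsilon^{\Re(\beta-\alpha)}\cdot\varepsilon^{\Re(\alpha-\beta)-1}\cdot\varepsilon=O(1)$, not $O(\varepsilon^{N+1})$; so your remainder estimate is false and the unexpanded remainder is precisely of the order of the constant term you are trying to pin down. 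Second, the $\varepsilon^0$ bookkeeping in your last paragraph is wrong: the endpoint terms $\int_0^\varepsilon s^{\alpha-\beta-j+k-1}\,ds$ are of size $\varepsilon^{\alpha-\beta-j+k}$, so after multiplying by $\varepsilon^{\beta-\alpha+j}$ they give $\varepsilon^{k}$, and hence \emph{every} $j$ produces a genuine constant at $k=0$, namely $-\binom{-\beta}{j}\chi(0)/(\alpha-\beta-j)$; the condition $\alpha-\beta-j+k=0$ is irrelevant to the $\varepsilon^0$ count (and, incidentally, is not even excluded by the hypothesis, since $\alpha-\beta$ may be a negative integer). So $F_2$ does contribute constants, and to conclude you would have to sum them over all $j$, together with the $O(1)$ endpoint part of the remainder, and show the total reconstructs $\chi(0)\int_1^\infty t^{\alpha-1}(1+t)^{-\beta}\,dt$ (interpreted by analytic continuation when $\Re(\alpha-\beta)\geq0$) — exactly the step you flagged with ``here I need to be careful'' and then resolved incorrectly. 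Your closing ``clean identity'' does not rescue this either: in the interesting range $\Re(\alpha-\beta)\geq 0$ the integral $\int_0^\infty t^{\alpha-1}(1+t)^{-\beta}\,dt$ diverges and $\chi(\varepsilon t)-\chi(0)$ equals $-\chi(0)$ for $t$ large, so the difference integral diverges as well.

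For comparison, the paper avoids the two-scale analysis entirely: integrating by parts gives the exact relation $\varepsilon\partial_\varepsilon F_{\alpha\beta}(\varepsilon)=(\beta-\alpha)F_{\alpha\beta}(\varepsilon)-\beta F_{\alpha,\beta+1}(\varepsilon)$, the expansion with the correct constant is immediate from $\chi(\varepsilon t)=\chi(0)+\mathcal O(\varepsilon t)$ when $\Re(\beta-\alpha)>1$, and one then transfers the expansion from $F_{\alpha,\beta+M}$ (with $M$ large) back to $F_{\alpha\beta}$ by iterating this Euler-type equation, the homogeneous solutions producing the terms $c_j\varepsilon^{\beta-\alpha+j}$. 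If you prefer to keep your route, you must separate scales honestly, e.g.\ expand in $\varepsilon/s$ only on $s\geq\sqrt\varepsilon$ and treat $t\in[1,\varepsilon^{-1/2}]$ in the original variable by Taylor expanding $\chi$, or use a Mellin transform argument; either way the identification of the constant requires an actual computation, not the cancellation you asserted.
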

%%%%%%%%%%%%%%%%%%%%%%%%%%%%%%%%%%%%%%%%%%%%%%%%%%%%%%%%%%%%%%%%%%%%%%%%%%%%%%%%
\begin{proof}
We use the following identity obtained by integrating by parts:
\begin{equation}
  \label{e:asysa-1}
\begin{gathered}
\varepsilon\partial_\varepsilon F_{\alpha\beta}(\varepsilon)
=\int_0^\infty t^\alpha(1+t)^{-\beta}\partial_t(\chi(\varepsilon t))\,dt\\
=(\beta-\alpha)F_{\alpha\beta}(\varepsilon)-\beta F_{\alpha,\beta+1}(\varepsilon).
\end{gathered}
\end{equation}
By using the Taylor expansion of $\chi$ at zero, we also see that
$$
\chi(\varepsilon t)=\chi(0)+\mathcal O(\varepsilon t);
$$
given the following formula obtained by the change of variables $s=(1+t)^{-1}$ and using the beta function,
$$
\int_0^\infty t^{\alpha-1} (1+t)^{-\beta}\,dt={\Gamma(\alpha)\Gamma(\beta-\alpha)\over \Gamma(\beta)},\quad\text{if }\Re\beta>\Re\alpha>0,
$$
we see that
$$
F_{\alpha\beta}(\varepsilon)={\Gamma(\alpha)\Gamma(\beta-\alpha)\over \Gamma(\beta)}\chi(0) +\mathcal O(\varepsilon)\quad\text{if }\Re(\beta-\alpha)>1.
$$
By applying this asymptotic expansion to $F_{\alpha,\beta+M}$ for large integer $M$ and iterating~\eqref{e:asysa-1},
we derive the expansion~\eqref{e:asysa}.
\end{proof}
%%%%%%%%%%%%%%%%%%%%%%%%%%%%%%%%%%%%%%%%%%%%%%%%%%%%%%%%%%%%%%%%%%%%%%%%%%%%%%%%

For the next result, we need the following two calculations (see Section~\ref{symtens} for some of the notation used):
%%%%%%%%%%%%%%%%%%%%%%%%%%%%%%%%%%%%%%%%%%%%%%%%%%%%%%%%%%%%%%%%%%%%%%%%%%%%%%%%
\begin{lemm}
  \label{l:tec}
For each $\ell\geq 0$,
$$
\int_{\mathbb S^{n-1}} (\otimes^{2\ell}\eta)\, dS(\eta)=
{2\pi^{n-1\over 2}\Gamma(\ell+{1\over 2})\over
\Gamma(\ell+{n\over 2})}\mathcal S(\otimes^\ell I),
$$
where $I=\sum_{j=1}^n \partial_j\otimes\partial_j$.
\end{lemm}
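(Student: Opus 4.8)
\textbf{Proof proposal for Lemma~\ref{l:tec}.}

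The plan is to compute the integral $\int_{\mathbb S^{n-1}}(\otimes^{2\ell}\eta)\,dS(\eta)$ by exploiting two facts: first, that the result must be a symmetric $2\ell$-tensor invariant under the orthogonal group $O(n)$, and second, that the only such tensors are multiples of $\mathcal S(\otimes^\ell I)$ where $I=\sum_j\partial_j\otimes\partial_j$ is the identity tensor. The first step is therefore to note that the left-hand side is manifestly symmetric (being built from $\otimes^{2\ell}\eta$) and $O(n)$-invariant (since $dS$ is rotation-invariant and the integrand transforms tensorially), so by classical invariant theory for $O(n)$ it equals $c_{n,\ell}\,\mathcal S(\otimes^\ell I)$ for some scalar $c_{n,\ell}$ to be determined.

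To pin down $c_{n,\ell}$, I would contract both sides with a fixed unit vector, say $\partial_1$, taken $2\ell$ times; equivalently, evaluate both tensors on $(\partial_1,\dots,\partial_1)$. On the left this produces $\int_{\mathbb S^{n-1}}\eta_1^{2\ell}\,dS(\eta)$, a standard spherical moment. On the right, evaluating $\mathcal S(\otimes^\ell I)$ on $2\ell$ copies of $\partial_1$ gives a purely combinatorial factor: the symmetrization of $\otimes^\ell I$ applied to $(\partial_1)^{\otimes 2\ell}$ equals the number of ways to pair up $2\ell$ slots into $\ell$ pairs divided by $(2\ell)!$, i.e. $\frac{(2\ell)!}{2^\ell\ell!}\cdot\frac{1}{(2\ell)!}=\frac{1}{2^\ell\ell!}$ times... more precisely one gets $(\otimes^\ell I)$ contributes $1$ on each pairing and there are $(2\ell)!/(2^\ell \ell!)$ pairings among the $\ell!$-fold structure, so the constant works out to $\frac{(2\ell-1)!!}{\text{(appropriate normalization)}}$; this is a short finite computation. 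So it remains to evaluate the spherical moment
$$
\int_{\mathbb S^{n-1}}\eta_1^{2\ell}\,dS(\eta)={2\pi^{(n-1)/2}\Gamma(\ell+\tfrac12)\over\Gamma(\ell+\tfrac n2)},
$$
which follows from the standard formula $\int_{\mathbb S^{n-1}}\eta_1^{2\ell}\,dS=\mathrm{Vol}(\mathbb S^{n-2})\int_{-1}^1 s^{2\ell}(1-s^2)^{(n-3)/2}\,ds$ together with the beta-function identity $\int_{-1}^1 s^{2\ell}(1-s^2)^{(n-3)/2}\,ds=B(\ell+\tfrac12,\tfrac{n-1}2)=\frac{\Gamma(\ell+1/2)\Gamma((n-1)/2)}{\Gamma(\ell+n/2)}$ and $\mathrm{Vol}(\mathbb S^{n-2})=2\pi^{(n-1)/2}/\Gamma((n-1)/2)$.

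The only mildly delicate point is getting the combinatorial normalization of $\mathcal S(\otimes^\ell I)$ exactly right so that the claimed coefficient $\frac{2\pi^{(n-1)/2}\Gamma(\ell+1/2)}{\Gamma(\ell+n/2)}$ emerges with no spurious factor; this requires matching the convention for $\mathcal S$ fixed in~\eqref{symmet}. I expect this bookkeeping, rather than any genuine difficulty, to be the main obstacle, and it can be checked by induction on $\ell$ or directly by counting pairings. An alternative, perhaps cleaner, route that avoids invariant theory is to use the homogeneous-polynomial correspondence of Section~\ref{symtens}: the tensor $\int_{\mathbb S^{n-1}}(\otimes^{2\ell}\eta)\,dS(\eta)$ corresponds to the polynomial $x\mapsto\int_{\mathbb S^{n-1}}\langle\eta,x\rangle^{2\ell}\,dS(\eta)$, which by rotational symmetry is a function of $|x|^2$ alone, hence equal to $|x|^{2\ell}$ times the constant obtained by setting $x=\partial_1$; then~\eqref{e:morestuff} identifies $|x|^{2\ell}$ with (a multiple of) the polynomial of $\mathcal S(\otimes^\ell I)$, and one reads off the constant from the same spherical moment computation. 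I would present whichever of these two arguments is shorter once the constant is verified.
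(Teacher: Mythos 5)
Your ``alternative, cleaner'' route is essentially the paper's own proof: since both sides are symmetric tensors they are determined by their diagonal evaluations (equivalently, by the associated homogeneous polynomials of Section~\ref{symtens}), so it suffices to prove
$$
\int_{\mathbb S^{n-1}}(x\cdot\eta)^{2\ell}\,dS(\eta)=\frac{2\pi^{\frac{n-1}{2}}\Gamma(\ell+\tfrac12)}{\Gamma(\ell+\tfrac n2)}\,|x|^{2\ell},
$$
and by rotation invariance and homogeneity one reduces to $x=\partial_1$ and a single spherical moment. The paper evaluates that moment by the Gaussian trick, computing $\int_{\mathbb R^n}e^{-|\eta|^2}\eta_1^{2\ell}\,d\eta$ once in polar coordinates and once by Fubini; you use the slice integral and the beta function, which gives the same value (your slice formula presumes $n\geq 2$, but $n=1$ is a trivial direct check, whereas the Gaussian computation covers all $n$ at once). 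Your first route through $O(n)$-invariant theory is fine in principle but invokes the classification of invariant symmetric $2\ell$-tensors without proof; the diagonal-evaluation argument makes it unnecessary.

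The one loose end you leave--and where your tentative bookkeeping is in fact wrong--is the normalization of $\mathcal S(\otimes^\ell I)$. There is no pairing count here: with the convention~\eqref{symmet}, $\mathcal S$ is an \emph{average} over permutations, and permuting arguments that are all equal does nothing, so
$$
\mathcal S(\otimes^\ell I)(\partial_1,\dots,\partial_1)=(\otimes^\ell I)(\partial_1,\dots,\partial_1)=1,
$$
i.e.\ the homogeneous polynomial associated to $\mathcal S(\otimes^\ell I)$ is exactly $|x|^{2\ell}$, with no multiple (consistent with~\eqref{e:defI} and~\eqref{e:morestuff}, where the combinatorial factor $\tfrac{(m+2)(m+1)}{2}$ is placed in $\mathcal I$, not in $\mathcal S(g\otimes\cdot)$). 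The factors $\tfrac{(2\ell)!}{2^\ell\ell!}$, $\tfrac1{2^\ell\ell!}$, or $(2\ell-1)!!$ you float would each throw the final constant off by exactly that amount; such pairing counts arise when contracting $\mathcal S(\otimes^\ell I)$ against tensors with \emph{distinct} slots (as in the proof of Lemma~\ref{l:tec2}), not when evaluating on the diagonal. Once this is settled, your argument is complete and agrees with the paper's.
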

%%%%%%%%%%%%%%%%%%%%%%%%%%%%%%%%%%%%%%%%%%%%%%%%%%%%%%%%%%%%%%%%%%%%%%%%%%%%%%%%
\begin{proof}
Since both sides are symmetric tensors, it suffices to show that for each $x\in\mathbb R^n$,
$$
\int_{\mathbb S^{n-1}}(x\cdot\eta)^{2\ell}\,dS(\eta)={2\pi^{n-1\over 2}\Gamma(\ell+{1\over 2})\over
\Gamma(\ell+{n\over 2})}|x|^{2\ell}.
$$
Without loss of generality (using homogeneity and rotational invariance), we may assume that $x=\partial_1$. Then
using polar coordinates and Fubini's theorem, we have
$$
{\Gamma(\ell+{n\over 2})\over 2}\int_{\mathbb S^{n-1}}\eta_1^{2\ell}\,dS(\eta)=
\int_{\mathbb R^n} e^{-|\eta|^2}\eta_1^{2\ell}\,d\eta=\pi^{n-1\over 2}\Gamma\Big(\ell+{1\over 2}\Big)
$$
finishing the proof.
\end{proof}
%%%%%%%%%%%%%%%%%%%%%%%%%%%%%%%%%%%%%%%%%%%%%%%%%%%%%%%%%%%%%%%%%%%%%%%%%%%%%%%%

%%%%%%%%%%%%%%%%%%%%%%%%%%%%%%%%%%%%%%%%%%%%%%%%%%%%%%%%%%%%%%%%%%%%%%%%%%%%%%%%
\begin{lemm}
  \label{l:tec2}
For each $\eta\in\mathbb R^n$, define the linear map
$\mathscr C_\eta:\mathbb R^n\to\mathbb R^n$ by
$$
\mathscr C_\eta(\tilde\eta)=\tilde\eta-{2\over 1+|\eta|^2}(\tilde\eta\cdot\eta)\eta.
$$
Then for each $A_1,A_2\in \otimes^m_S\mathbb R^n$ with $\mathcal T(A_1)=\mathcal T(A_2)=0$, and each $r\geq 0$, we have
$$
\int_{\mathbb S^{n-1}} \langle (\otimes^m \mathscr C_{r\eta}) A_1,A_2\rangle\,dS(\eta)=2\pi^{n\over 2}
\sum_{\ell=0}^m {m!\over (m-\ell)!\Gamma({n\over 2}+\ell)}\bigg(-{r^2\over 1+r^2}\bigg)^\ell\langle A_1,A_2\rangle.
$$
\end{lemm}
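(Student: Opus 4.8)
The strategy is to reduce everything to Lemma~\ref{l:tec}. First I would expand the left-hand side using the definition of $\mathscr C_{r\eta}$ and multilinearity of $\otimes^m$: since $\mathscr C_{r\eta}(\tilde\eta)=\tilde\eta-\tfrac{2r^2}{1+r^2}(\tilde\eta\cdot\eta)\eta$, the operator $\otimes^m\mathscr C_{r\eta}$ applied to $A_1$ produces $\sum_{\ell=0}^m \binom{m}{\ell}\bigl(-\tfrac{2r^2}{1+r^2}\bigr)^\ell$ times the symmetrization of a tensor in which $\ell$ slots are contracted against $\eta$ and then multiplied back by $\eta$. Concretely, using the polynomial correspondence from Section~\ref{symtens}, writing $P_{A_i}$ for the homogeneous polynomials associated with $A_i$, one has $\langle(\otimes^m\mathscr C_{r\eta})A_1,A_2\rangle$ expressible as a linear combination (in powers of $\tfrac{-2r^2}{1+r^2}$) of expressions of the form $\langle \iota_\eta^\ell A_1 \odot (\text{powers of }\eta), A_2\rangle$, where the contraction $\iota_\eta^\ell$ is paired via $\langle\cdot,\cdot\rangle$ with $A_2$; the crucial point is that only the totally-$\eta$-contracted pieces survive in a way we can integrate.

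Next I would carry out the angular integration over $\eta\in\mathbb S^{n-1}$. After expanding, the integrand is a sum of terms each homogeneous of even degree $2\ell$ in $\eta$ (the odd-degree terms vanish by symmetry), so Lemma~\ref{l:tec} applies: $\int_{\mathbb S^{n-1}}(\otimes^{2\ell}\eta)\,dS(\eta) = \tfrac{2\pi^{(n-1)/2}\Gamma(\ell+\frac12)}{\Gamma(\ell+\frac n2)}\mathcal S(\otimes^\ell I)$ with $I=\sum_j\partial_j\otimes\partial_j$. Substituting $\mathcal S(\otimes^\ell I)$ back and using that $A_1,A_2$ are \emph{trace-free} is what makes the combinatorics collapse: contracting any pair of indices of $A_1$ or $A_2$ against a factor of $I$ yields zero, so only the contractions that pair an index of $A_1$ with an index of $A_2$ through the copies of $I$ contribute, and each such full pairing just reproduces $\langle A_1,A_2\rangle$. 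Counting the number of ways to do this, together with the binomial and the $\Gamma$-factors, should assemble into the claimed coefficient $\tfrac{m!}{(m-\ell)!\Gamma(n/2+\ell)}\bigl(-\tfrac{r^2}{1+r^2}\bigr)^\ell$ after absorbing the powers of $2$ and of $\pi^{1/2}$ (note $\tfrac{\Gamma(\ell+1/2)}{\Gamma(1/2)}$ combines with $2^\ell$ via the duplication formula to cancel against the $2^\ell$ from $(-2r^2/(1+r^2))^\ell$, leaving $\Gamma(n/2+\ell)^{-1}$ in the denominator and an overall $2\pi^{n/2}$).

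The main obstacle is the bookkeeping in the second step: one must verify that when $\mathcal S(\otimes^\ell I)$ is inserted into $\langle\,\cdot\,,A_2\rangle$ against the partially $\eta$-contracted $A_1$, the only surviving Wick-type contractions are those linking $A_1$ to $A_2$ (never $A_1$ to itself or $A_2$ to itself), and that the multiplicity of these is exactly $\tfrac{m!}{(m-\ell)!}$ up to the already-accounted binomial factor. I would handle this cleanly by passing to the polynomial model: represent $A_1\leftrightarrow P_1$, $A_2\leftrightarrow P_2$ harmonic homogeneous of degree $m$, observe that $\langle(\otimes^m\mathscr C_{r\eta})A_1,A_2\rangle = P_2(\nabla_x)\,P_1\bigl(\mathscr C_{r\eta}^T x\bigr)\big|_{x=0}/m!$ (an instance of the Fischer/apolar inner product), expand $P_1(\mathscr C_{r\eta}^Tx) = P_1\bigl(x - \tfrac{2r^2}{1+r^2}(x\cdot\eta)\eta\bigr)$ binomially in $(x\cdot\eta)$, integrate each term $\int_{\mathbb S^{n-1}}(x\cdot\eta)^{2\ell}(\cdots)\,dS(\eta)$ using the scalar form of Lemma~\ref{l:tec}, and then apply $P_2(\nabla_x)$ using harmonicity of $P_1$ and $P_2$ to kill all $|x|^2$-divisible remainders. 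This reduces the whole identity to a finite scalar computation with Gamma functions, which is routine once set up. Finally, I would double-check the $r\to\infty$ and $r=0$ limits as sanity checks: at $r=0$ only $\ell=0$ survives giving $\tfrac{2\pi^{n/2}}{\Gamma(n/2)}\langle A_1,A_2\rangle = |\mathbb S^{n-1}|\langle A_1,A_2\rangle$, as it must.
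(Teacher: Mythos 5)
Your proposal is correct and follows essentially the same route as the paper: binomially expand $\otimes^m(\Id-\tfrac{2r^2}{1+r^2}\,\eta^*\otimes\eta)$, apply Lemma~\ref{l:tec} to the resulting even-degree angular integrals, use trace-freeness to discard all self-contractions so that only full $A_1$--$A_2$ pairings survive, and finish with the duplication formula for the Gamma factors. The only difference is that you organize the contraction bookkeeping through the harmonic-polynomial (Fischer product) model, whereas the paper counts the admissible permutations directly to get the factor $2^\ell(\ell!)^2/(2\ell)!$; both yield the same coefficient, and your $r=0$ sanity check is consistent with the stated formula.
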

%%%%%%%%%%%%%%%%%%%%%%%%%%%%%%%%%%%%%%%%%%%%%%%%%%%%%%%%%%%%%%%%%%%%%%%%%%%%%%%%
\begin{proof}
We have
$$
\mathscr C_{r\eta}=\Id-{2r^2\over 1+r^2}\, \eta^*\otimes\eta,
$$
where $\eta^*\in(\mathbb R^n)^*$ is the dual to $\eta$ by the standard metric.
Then
$$
\int_{\mathbb S^{n-1}}\langle (\otimes^m\mathscr C_{r\eta})A_1,A_2\rangle\,dS(\eta)
=\int_{\mathbb S^{n-1}}\Big\langle \otimes^m\Big(I-{2r^2\over 1+r^2}\,\eta\otimes\eta\Big),\sigma(A_1\otimes A_2)\Big\rangle\,dS(\eta).
$$
where $\sigma$ is the operator defined by
$$
\sigma(\eta_1\otimes \dots\otimes \eta_m\otimes \eta'_1\otimes\dots\otimes \eta'_m)
=\eta_1\otimes \eta'_1\otimes\dots\otimes \eta_m\otimes \eta'_m.
$$
We use Lemma~\ref{l:tec}, a binomial expansion, and the fact that $A_j$ are symmetric, to calculate
$$
\begin{gathered}
\int_{\mathbb S^{n-1}}\Big\langle\otimes^m\Big(I-{2r^2\over 1+r^2}\,\eta\otimes\eta\Big),\sigma(A_1\otimes A_2)\Big\rangle\,dS(\eta)\\
=\sum_{\ell=0}^m {m!\over \ell!(m-\ell)!}\Big(-{2r^2\over 1+r^2}\Big)^\ell\int_{\mathbb S^{n-1}} 
\langle(\otimes^{2\ell}\eta)\otimes(\otimes^{m-\ell}I),\sigma(A_1\otimes A_2)\rangle\,dS(\eta)\\
=2\pi^{n-1\over 2}\sum_{\ell=0}^m {m!\over \ell!(m-\ell)!}\cdot {\Gamma(\ell+{1\over 2})\over\Gamma(\ell+{n\over 2})}
\Big(-{2r^2\over 1+r^2}\Big)^\ell \langle \mathcal S(\otimes^\ell I)\otimes(\otimes^{m-\ell} I),\sigma(A_1\otimes A_2)\rangle.
\end{gathered}
$$
Since $\mathcal T(A_1)=\mathcal T(A_2)=0$, we can compute
$$
\langle \mathcal S(\otimes^\ell I)\otimes(\otimes^{m-\ell} I),\sigma(A_1\otimes A_2)\rangle={2^\ell(\ell!)^2\over (2\ell)!}\langle A_1,A_2\rangle.
$$
Here $2^\ell(\ell!)^2/ (2\ell)!$ is the proportion of all permutations $\tau$ of $2\ell$ elements such that
for each $j$, $\tau(2j-1)+\tau(2j)$ is odd. It remains to calculate
$$
\begin{gathered}
\sum_{\ell=0}^m {m!\over \ell! (m-\ell)!}\cdot {\Gamma(\ell+{1\over 2})\over\Gamma(\ell+{n\over 2})}
\cdot {2^\ell (\ell!)^2\over (2\ell)!}\,t^\ell
=\sum_{\ell=0}^m {\sqrt{\pi}\,m!\over (m-\ell)!\Gamma(\ell+{n\over 2})}(t/2)^\ell.
\end{gathered}
$$
\end{proof}
%%%%%%%%%%%%%%%%%%%%%%%%%%%%%%%%%%%%%%%%%%%%%%%%%%%%%%%%%%%%%%%%%%%%%%%%%%%%%%%%
We can now state the following asymptotic formula, used in the proof of Lemma~\ref{l:pairing-key}:
%%%%%%%%%%%%%%%%%%%%%%%%%%%%%%%%%%%%%%%%%%%%%%%%%%%%%%%%%%%%%%%%%%%%%%%%%%%%%%%%
\begin{lemm}
  \label{l:asysa2}
Let $\chi\in \CI_0(\mathbb R)$ be equal to 1 near 0, and take $A_1,A_2\in \otimes_S^m \mathbb R^n$
satisfying $\mathcal T(A_1)=\mathcal T(A_2)=0$. Then for $\lambda\in \mathbb C$,
$\lambda\not\in -({n\over 2}+\mathbb N_0)$, we have as $\varepsilon\to +0$,
$$
\begin{gathered}
\int_{\mathbb R^n}\chi(\varepsilon|\eta|)(1+|\eta|^2)^{-\lambda-n}\langle(\otimes^m\mathscr C_\eta)A_1,A_2\rangle\,d\eta\\
=\pi^{n\over 2}{\Gamma({n\over 2}+\lambda)\over (n+\lambda+m-1)\Gamma(n-1+\lambda)}\langle A_1,A_2\rangle+\sum_{0\leq j\leq -\Re\lambda-{n\over 2}}c_j\varepsilon^{n+2\lambda+2j}+o(1),
\end{gathered}
$$
for some constants $c_j$.
\end{lemm}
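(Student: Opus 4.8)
The plan is to reduce the $n$-dimensional integral to a one-dimensional Hadamard-type integral by first integrating over the sphere $\mathbb S^{n-1}$ in polar coordinates $\eta=r\omega$, $r>0$, $\omega\in\mathbb S^{n-1}$, and then applying Lemma~\ref{l:asysa}. Writing $d\eta=r^{n-1}\,dr\,dS(\omega)$, the quantity to analyze becomes
\begin{equation*}
\int_0^\infty r^{n-1}\chi(\varepsilon r)(1+r^2)^{-\lambda-n}\Big(\int_{\mathbb S^{n-1}}\langle(\otimes^m\mathscr C_{r\omega})A_1,A_2\rangle\,dS(\omega)\Big)\,dr.
\end{equation*}
The inner integral is computed exactly by Lemma~\ref{l:tec2}: it equals $2\pi^{n/2}\langle A_1,A_2\rangle\sum_{\ell=0}^m\frac{m!}{(m-\ell)!\Gamma(n/2+\ell)}\big(-\frac{r^2}{1+r^2}\big)^\ell$. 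Substituting this in and expanding $\big(-\frac{r^2}{1+r^2}\big)^\ell=(-1)^\ell r^{2\ell}(1+r^2)^{-\ell}$, the whole expression becomes a finite linear combination (with coefficients $2\pi^{n/2}\langle A_1,A_2\rangle\frac{(-1)^\ell m!}{(m-\ell)!\Gamma(n/2+\ell)}$) of integrals of the form
\begin{equation*}
\int_0^\infty r^{n-1+2\ell}(1+r^2)^{-\lambda-n-\ell}\chi(\varepsilon r)\,dr.
\end{equation*}

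Next I would change variables $t=r^2$ (so $dt=2r\,dr$, i.e. $r^{n-1+2\ell}dr=\tfrac12 t^{(n-2)/2+\ell}\,dt$), turning each such integral into $\tfrac12 F_{\alpha_\ell,\beta_\ell}(\tilde\chi_\varepsilon)$-type quantity with $\alpha_\ell=\frac n2+\ell$ and $\beta_\ell=\lambda+n+\ell$, where the cutoff $\chi(\varepsilon r)=\chi(\varepsilon\sqrt t)$ is still an admissible smooth compactly supported function of $t$ equal to $1$ near $0$; note $\alpha_\ell-\beta_\ell=-\lambda-\frac n2\notin\mathbb N_0$ precisely under the hypothesis $\lambda\notin-(\frac n2+\mathbb N_0)$, so Lemma~\ref{l:asysa} applies to each term. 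The constant term produced by Lemma~\ref{l:asysa} for the $\ell$-th term is $\tfrac12\cdot\frac{\Gamma(\alpha_\ell)\Gamma(\beta_\ell-\alpha_\ell)}{\Gamma(\beta_\ell)}=\tfrac12\cdot\frac{\Gamma(n/2+\ell)\Gamma(n/2+\lambda)}{\Gamma(\lambda+n+\ell)}$, and every other term is a power $\varepsilon^{\beta_\ell-\alpha_\ell+j}=\varepsilon^{n+2\lambda+2\ell-2\ell+2j'}$ — one checks these all collapse to powers $\varepsilon^{n+2\lambda+2j}$ with nonnegative $j$ bounded by $-\Re\lambda-\frac n2$, plus $o(1)$, matching the claimed error form. (The precise values of the $c_j$ are irrelevant.)

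It then remains to evaluate the constant term, which is
\begin{equation*}
2\pi^{n/2}\langle A_1,A_2\rangle\sum_{\ell=0}^m\frac{(-1)^\ell m!}{(m-\ell)!\Gamma(n/2+\ell)}\cdot\frac12\cdot\frac{\Gamma(n/2+\ell)\Gamma(n/2+\lambda)}{\Gamma(\lambda+n+\ell)}
=\pi^{n/2}\Gamma\!\Big(\tfrac n2+\lambda\Big)\langle A_1,A_2\rangle\sum_{\ell=0}^m\frac{(-1)^\ell m!}{(m-\ell)!\,\Gamma(\lambda+n+\ell)}.
\end{equation*}
So the final step is the combinatorial identity
\begin{equation*}
\sum_{\ell=0}^m\frac{(-1)^\ell m!}{(m-\ell)!\,\Gamma(\lambda+n+\ell)}=\frac{1}{(n+\lambda+m-1)\Gamma(n+\lambda-1)}.
\end{equation*}
I would prove this by induction on $m$ (or recognize it as a terminating ${}_1F_1$/Gauss-type evaluation): multiply through by $\Gamma(\lambda+n+m)$ to get $\sum_{\ell=0}^m(-1)^\ell\binom m\ell\frac{\Gamma(\lambda+n+m)}{\Gamma(\lambda+n+\ell)}=\sum_{\ell=0}^m(-1)^\ell\binom m\ell(\lambda+n+\ell)_{m-\ell}$, a polynomial in $\lambda$ of degree $m-1$ whose value should be $\frac{\Gamma(\lambda+n+m)}{(n+\lambda+m-1)\Gamma(n+\lambda-1)}=(n+\lambda)_{m-1}$; this is exactly the type of finite-difference identity appearing in the proof of Lemma~\ref{injectiv} (the sum over $r$ there), and the same argument — expressing the sum as an $m$-fold finite difference of a polynomial of degree $m-1$, or checking the $m-1$ roots $\lambda=-n-m+2,\dots,-n+1$ and the leading coefficient — settles it. The main obstacle is bookkeeping rather than conceptual: one must carefully track how the powers $\varepsilon^{\beta_\ell-\alpha_\ell+j}$ from the different $\ell$-terms interleave to confirm the error is genuinely of the stated form $\sum_{0\le j\le-\Re\lambda-n/2}c_j\varepsilon^{n+2\lambda+2j}+o(1)$, and to make sure no spurious constant terms arise from the $j>0$ tail when $\Re\lambda+n/2$ is a nonpositive integer — but that case is excluded by hypothesis, so the exponents $n+2\lambda+2j$ are never zero and the separation is clean.
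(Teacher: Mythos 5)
Your proposal is correct and follows essentially the same route as the paper: reduce to a one-dimensional integral via $\eta=\sqrt t\,\theta$ (your polar coordinates plus $t=r^2$), evaluate the spherical integral with Lemma~\ref{l:tec2}, apply Lemma~\ref{l:asysa} term by term (the hypothesis $\lambda\notin-(\tfrac n2+\mathbb N_0)$ is exactly what makes $\alpha_\ell-\beta_\ell\notin\mathbb N_0$ and keeps the exponents $n+2\lambda+2j$ away from $0$), and then establish the displayed combinatorial identity, which the paper proves by a generating-function/binomial trick instead of your finite-difference argument. Only a bookkeeping slip in your sketch of that last identity: multiplying by $\Gamma(\lambda+n+m)$ gives $\sum_\ell(-1)^\ell\frac{m!}{(m-\ell)!}(\lambda+n+\ell)\cdots(\lambda+n+m-1)$ (the weight is $\frac{m!}{(m-\ell)!}$, not $\binom m\ell$), which is a polynomial of degree $m$ equal to $(\lambda+n-1)(\lambda+n)\cdots(\lambda+n+m-2)$, with the $m$ roots $\lambda=-n+1,\dots,-n-m+2$ and leading coefficient $1$ — with that correction your verification goes through.
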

%%%%%%%%%%%%%%%%%%%%%%%%%%%%%%%%%%%%%%%%%%%%%%%%%%%%%%%%%%%%%%%%%%%%%%%%%%%%%%%%
\begin{proof}
We write, using the change of variables $\eta=\sqrt{t}\theta$, $\theta\in\mathbb S^n$,
and $\chi(s)=\tilde\chi(s^2)$, and by Lemma~\ref{l:tec2}
$$
\begin{gathered}
\int_{\mathbb R^n}\chi(\varepsilon|\eta|)(1+|\eta|^2)^{-\lambda-n}\langle(\otimes^m\mathscr C_\eta)A_1,A_2\rangle\,d\eta\\
={1\over 2}\int_0^\infty \tilde \chi(\varepsilon^2 t)t^{{n\over 2}-1} (1+t)^{-\lambda-n} \int_{\mathbb S^{n-1}}
\langle (\otimes^m\mathscr C_{\sqrt{t}\theta})A_1,A_2\rangle\,dS(\theta) dt\\
=\pi^{n\over 2}\sum_{\ell=0}^m {(-1)^\ell m!\over (m-\ell)!\Gamma({n\over 2}+\ell)}\langle A_1,A_2\rangle
\int_0^\infty \tilde\chi(\varepsilon^2 t)t^{{n\over 2}+\ell-1} (1+t)^{-\lambda-n-\ell} \,dt.
\end{gathered}
$$
We now apply Lemma~\ref{l:asysa} to get the required asymptotic expansion. The constant term in the expansion is $\langle A_1,A_2\rangle$
times
\begin{equation}
  \label{e:lassie}
\begin{gathered}
\pi^{n\over 2}\Gamma\Big({n\over 2}+\lambda\Big)\sum_{\ell=0}^m {(-1)^\ell m!\over (m-\ell)!\Gamma(n+\lambda+\ell)}\\
=\pi^{n\over 2}(-1)^m m!\Gamma\Big({n\over 2}+\lambda\Big)\sum_{\ell=0}^m {(-1)^\ell \over \ell!\Gamma(n+\lambda+m-\ell)}.
\end{gathered}
\end{equation}
We now use the binomial expansion
$$
{(1-t)^{n+\lambda+m-1}\over\Gamma(n+\lambda+m)}=\sum_{\ell=0}^\infty {(-1)^\ell \over \ell!\Gamma(n+\lambda+m-\ell)}\,t^\ell
$$
and the sum in the last line of~\eqref{e:lassie} is the $t^m$ coefficient of
$$
\begin{gathered}
(1-t)^{-1}\cdot {(1-t)^{n+\lambda+m-1}\over\Gamma(n+\lambda+m)}={(1-t)^{n+\lambda+m-2}\over\Gamma(n+\lambda+m)}\\
={1\over n+\lambda+m-1}\sum_{j=0}^\infty {(-1)^j \over j!\Gamma(n+\lambda+m-j-1)}\,t^j;
\end{gathered}
$$
this finishes the proof.
\end{proof}
%%%%%%%%%%%%%%%%%%%%%%%%%%%%%%%%%%%%%%%%%%%%%%%%%%%%%%%%%%%%%%%%%%%%%%%%%%%%%%%%

%%%%%%%%%%%%%%%%%%%%%%%%%%%%%%%%%%%%%%%%%%%%%%%%%%%%%%%%%%%%%%%%%%%%%%%%%%%%%%%%
\subsection{The Jacobian of \texorpdfstring{$\Psi$}{Psi}}
\label{s:J-Psi}

Here we compute the Jacobian of the map $\Psi:\mathcal E\to S^2_\Delta \mathbb H^{n+1}$ appearing in the proof of Lemma~\ref{l:pairing-key},
proving~\eqref{e:Psi-J}.
By the $G$-equivariance of $\Psi$ we may assume that
$x=\partial_0,\xi=\partial_1,\eta=\sqrt{s}\,\partial_2$ for some $s\geq 0$. We then consider the following
volume 1 basis of $T_{(x,\xi,\eta)}\mathcal E$:
$$
\begin{gathered}
X_1=(\partial_1,\partial_0,0),\
X_2=(\partial_2,0,\sqrt{s}\,\partial_0),\
X_3=(0,\partial_2,-\sqrt{s}\,\partial_1),\
X_4=(0,0,\partial_2);
\\
\partial_{x_j},\partial_{\xi_j},\partial_{\eta_j},\quad
3\leq j\leq n+1.
\end{gathered}
$$
We have $\Psi(x,\xi,\eta)=(y,\eta_-,\eta_+)$, where
$$
y=(\sqrt{s+1},0,\sqrt{s},0,\dots,0),\quad
\eta_\pm=\Big(\mp{s\over \sqrt{s+1}},{1\over\sqrt{s+1}},\mp\sqrt{s},0,\dots,0\Big).
$$
Then we can consider the following volume 1 basis for $T_{(y,\eta_-,\eta_+)}S^2_\Delta\mathbb H^{n+1}$:
$$
\begin{gathered}
Y_1=\Big(\partial_1,{y\over\sqrt{s+1}},{y\over\sqrt{s+1}}\Big),\
Y_2=\Big(\sqrt{s}\,\partial_0+\sqrt{s+1}\,\partial_2,{\sqrt{s}\over\sqrt{s+1}}y,-{\sqrt{s}\over\sqrt{s+1}}y\Big),\\
Y_3={(0,\sqrt{s}\,\partial_0-\sqrt{s}\,\partial_1+\sqrt{s+1}\,\partial_2,0)\over \sqrt{s+1}},\
Y_4={(0,0,\sqrt{s}\,\partial_0+\sqrt{s}\,\partial_1+\sqrt{s+1}\,\partial_2)\over \sqrt{s+1}};\\
\partial_{y_j},\partial_{\nu_{-j}},\partial_{\nu_{+j}},\quad
3\leq j\leq n+1.
\end{gathered}
$$
Then the differential $d\Psi{(x,\xi,\eta)}$ maps
$$
\begin{gathered}
X_1\mapsto \sqrt{s+1}\,Y_1-\sqrt s\,Y_3
-\sqrt s\,Y_4,\\
X_2\mapsto Y_2,\\
X_3\mapsto -\sqrt{s}\,Y_1+\sqrt{s+1}\,Y_3+\sqrt{s+1}\,Y_4,\\
X_4\mapsto {1\over\sqrt{s+1}}\,Y_2+{1\over s+1}Y_3-{1\over s+1}Y_4.
\end{gathered}
$$
Moreover, for $3\leq j\leq n+1$, $d\Psi{(x,\xi,\eta)}$ maps linear combinations
of $\partial_{x_j},\partial_{\xi_j},\partial_{\eta_j}$ to linear combinations
of $\partial_{y_j},\partial_{\nu_{-j}},\partial_{\nu_{+j}}$ by the
matrix $A(s)$. The identity~\eqref{e:Psi-J} now follows by a direct calculation.

%%%%%%%%%%%%%%%%%%%%%%%%%%%%%%%%%%%%%%%%%%%%%%%%%%%%%%%%%%%%%%%%%%%%%%%%%%%%%%%%
\subsection{An identity for harmonic polynomials}

We give a technical lemma which is used in the proof of Lemma~\ref{injectiv}
(injectivity of the Poisson kernel).
%%%%%%%%%%%%%%%%%%%%%%%%%%%%%%%%%%%%%%%%%%%%%%%%%%%%%%%%%%%%%%%%%%%%%%%%%%%%%%%%
\begin{lemm}
\label{littlecomput}
Let $P$ be a harmonic homogeneous polynomial of order $m$ in $\rr^n$, then for $r\leq m$, we have for all $x\in\rr^n$
\[\Delta_\zeta^rP(x-\zeta\cjg \zeta,x\cjd)|_{\zeta=0}= 2^r\frac{m! r!}{(m-r)!}P(x).\] 
\end{lemm}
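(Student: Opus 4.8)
The plan is to reduce the claimed identity to a statement about a single monomial generator and then to a purely combinatorial count, exploiting harmonicity of $P$ at the very end. First I would expand $P(x - \zeta \langle \zeta, x\rangle)$ in powers of $\zeta$: writing $Q(\zeta) := \langle \zeta, x\rangle$, which is linear in $\zeta$, and using the Taylor expansion of $P$ around $x$, we get
$$
P(x - \zeta Q(\zeta)) = \sum_{k=0}^m \frac{(-1)^k}{k!} Q(\zeta)^k\, (D^k P)(x)[\zeta,\dots,\zeta],
$$
where $(D^kP)(x)$ is the $k$-th total derivative, a symmetric $k$-tensor, contracted with $k$ copies of $\zeta$. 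Each term is a polynomial in $\zeta$ homogeneous of degree $2k$ (namely $k$ from $Q(\zeta)^k$ and $k$ from the explicit $\zeta$'s), so only the $k = r$ term survives the operation $\Delta_\zeta^r(\cdot)|_{\zeta=0}$, since $\Delta_\zeta^r$ lowers $\zeta$-degree by $2r$ and then we evaluate at $0$. Thus the left-hand side equals $\frac{(-1)^r}{r!}\,\Delta_\zeta^r\big(Q(\zeta)^r (D^rP)(x)[\zeta,\dots,\zeta]\big)\big|_{\zeta=0}$.

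Next I would evaluate this constant. The quantity $\Delta_\zeta^r\big(\langle \zeta, x\rangle^r\,(D^rP)(x)[\zeta^{\otimes r}]\big)|_{\zeta=0}$ is a contraction of the constant symmetric tensors $x^{\otimes r}$ (coming from $Q^r$) and $(D^rP)(x)$ (a symmetric $r$-tensor in the $\zeta$-slots) against the tensor associated to $\Delta_\zeta^r = (-\mathcal T_\zeta)^r$ acting on the degree-$2r$ monomial $\zeta_{i_1}\cdots\zeta_{i_r}\zeta_{j_1}\cdots\zeta_{j_r}$. Concretely, applying the correspondence between homogeneous polynomials and symmetric tensors from Section~\ref{symtens} (in particular~\eqref{Trace=Lap}, which identifies $\Delta_\zeta$ with $-\mathcal T$ up to combinatorial factors), $\Delta_\zeta^r$ applied to a product of two degree-$r$ pieces and evaluated at $0$ produces a full pairing of the two pieces with a combinatorial weight counting how the $2r$ differentiations match one $\zeta$ from $Q^r$ with one $\zeta$ from the $(D^rP)$-slot versus within the same group. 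Because $P$ is harmonic, $(D^rP)(x)$ is a \emph{trace-free} symmetric $r$-tensor (its traces are derivatives of $\Delta P = 0$), so all the "bad" pairings—where $\Delta_\zeta$ would contract two $\zeta$'s from the $(D^rP)$-block with each other, or two from the $Q^r$-block—either vanish or are controlled; the surviving contribution is $2^r r!$ times the complete contraction $\langle x^{\otimes r}, (D^rP)(x)\rangle$, where $2^r r!$ is the number of ways to pair the $r$ $\zeta$'s of one block bijectively with those of the other and order them. Finally, the complete contraction $\langle x^{\otimes r}, (D^rP)(x)\rangle$ is exactly $(D^rP)(x)[x,\dots,x] = \frac{m!}{(m-r)!}P(x)$ by Euler's homogeneity identity applied $r$ times (each application of the radial derivative to a degree-$k$ homogeneous polynomial multiplies by $k$, giving $m(m-1)\cdots(m-r+1) = m!/(m-r)!$). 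Collecting constants: $\frac{(-1)^r}{r!}\cdot 2^r r!\cdot \frac{m!}{(m-r)!}P(x)\cdot(-1)^r = 2^r\frac{m!r!}{(m-r)!}P(x)$ after bookkeeping of the signs from $\Delta_\zeta = -\mathcal T_\zeta$ and from the $(-1)^k$ in the Taylor expansion—one checks these cancel, leaving the stated positive constant.

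The main obstacle I anticipate is the precise combinatorial identification of $\Delta_\zeta^r$ applied to the product $\langle \zeta,x\rangle^r \cdot (D^rP)(x)[\zeta^{\otimes r}]$: one must carefully track which pairs of the $2r$ partial derivatives $\partial_{\zeta}$ hit factors from the $Q^r$-block versus the $(D^rP)$-block, and show that harmonicity kills the contributions where a Laplacian contracts the $(D^rP)$-block against itself (these involve traces of $D^rP$, hence derivatives of $\Delta P$). This is most cleanly done by the substitution $u := D^rP(x)$ viewed as a trace-free symmetric $r$-tensor, $v := x^{\otimes r}$, and computing $\Delta_\zeta^r(P_v(\zeta) P_u(\zeta))|_{\zeta=0}$ via the Leibniz-type formula for iterated Laplacians of products together with~\eqref{Trace=Lap} and~\eqref{e:morestuff}; the cross-terms are governed by $\mathcal T^j u = 0$ for $j \geq 1$. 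Once that lemma is isolated the rest is the homogeneity bookkeeping above, which is routine. An alternative, perhaps slicker, route would be to test the identity on the generating family $P(y) = \langle a, y\rangle^m$ with $a \in \mathbb{C}^n$ isotropic ($\langle a,a\rangle = 0$), since such powers span the harmonic polynomials of degree $m$; then $P(x - \zeta\langle\zeta,x\rangle) = (\langle a,x\rangle - \langle a,\zeta\rangle\langle\zeta,x\rangle)^m$ and $\Delta_\zeta^r$ at $\zeta = 0$ becomes an explicit finite sum one can resum with the multinomial theorem, using $\langle a,a\rangle=0$ to discard most terms. I would likely present the direct tensor computation as the primary argument and mention the isotropic-generator check as a consistency verification if space permits.
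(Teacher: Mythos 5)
Your strategy is genuinely different from the paper's proof, which changes variables to $t=\langle\zeta,x\rangle$, $u=\zeta-tx$, expands $P(tx-u)=\sum_j t^{m-j}P_j(u)$, converts harmonicity into the recursion $\Delta_u P_j=(m-j+2)(m-j+1)P_{j-2}$, and ends with a binomial identity; your Taylor-plus-Leibniz argument, exploiting that $(D^rP)(x)$ is a trace-free symmetric $r$-tensor, is a legitimate and arguably more transparent route. But as written it does not prove the lemma, because the one thing the lemma asserts is the exact constant, and your key combinatorial count is wrong. With $A(\zeta)=\langle\zeta,x\rangle^r$ and $B(\zeta)=(D^rP)(x)[\zeta^{\otimes r}]$, the surviving ``all-split'' Leibniz terms give (say with the Euclidean sign convention)
\[
\textstyle\sum_i\partial_{\zeta_i}^2{}^{\,r}(AB)\big|_{\zeta=0}
=2^r\sum_{i_1,\dots,i_r}\big(\partial_{i_1}\cdots\partial_{i_r}A\big)\big(\partial_{i_1}\cdots\partial_{i_r}B\big)
=2^r(r!)^2\,(D^rP)(x)[x^{\otimes r}],
\]
since $\partial_{i_1}\cdots\partial_{i_r}A=r!\,x_{i_1}\cdots x_{i_r}$ and $\partial_{i_1}\cdots\partial_{i_r}B=r!\,\partial_{i_1}\cdots\partial_{i_r}P(x)$: each degree-$r$ block contributes its own $r!$, so the factor is $2^r(r!)^2$, not your $2^rr!$. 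Dividing by the Taylor prefactor $r!$ then yields exactly $2^r\frac{m!\,r!}{(m-r)!}P(x)$ (the sign from the paper's convention $\Delta_\zeta=-\sum_i\partial_{\zeta_i}^2$, cf.~\eqref{Trace=Lap}, cancels the $(-1)^r$ of the Taylor term). Your ``collecting constants'' line only lands on the stated answer through a second, compensating slip: $\frac{1}{r!}\cdot2^r r!\cdot\frac{m!}{(m-r)!}$ equals $2^r\frac{m!}{(m-r)!}$, not $2^r\frac{m!\,r!}{(m-r)!}$. A concrete check: $P(y)=y_1y_2$, $x=(1,1)$, $m=r=2$ gives $\Delta_\zeta^2P(x-\zeta\langle\zeta,x\rangle)|_{\zeta=0}=16=2^2\frac{2!\,2!}{0!}P(x)$, while $\Delta_\zeta^2\big(\langle\zeta,x\rangle^2(D^2P)(x)[\zeta^{\otimes2}]\big)|_{\zeta=0}=32=2^2(2!)^2\cdot2$, i.e.\ twice your claimed $2^2\cdot2!\cdot2$.

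A second, smaller point: the terms you say ``either vanish or are controlled''---those where a Laplacian pair contracts two $\zeta$'s inside the $\langle\zeta,x\rangle^r$ block---do not vanish individually, since $\langle\zeta,x\rangle^r$ is certainly not trace-free. They vanish only by a degree count: each factor must absorb exactly $r$ of the $2r$ derivatives, so the number of Laplacian pairs landing entirely in the $A$-block equals the number landing entirely in the $B$-block, and the latter is killed by trace-freeness of $(D^rP)(x)$ (which is where harmonicity enters). This needs to be stated explicitly. With the count corrected to $2^r(r!)^2$ and this degree-counting remark added, your argument is complete and is a clean alternative to the paper's computation.
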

%%%%%%%%%%%%%%%%%%%%%%%%%%%%%%%%%%%%%%%%%%%%%%%%%%%%%%%%%%%%%%%%%%%%%%%%%%%%%%%%
\begin{proof}
By homogeneity, it suffices to choose $|x|=1$. We set $t=\cjg\zeta,x\cjd$ and $u=\zeta-tx$ and $P(x-\zeta\cjg \zeta,x\cjd)$ viewed in the $(t,u)$ coordinates is the homogeneous polynomial $(t,u)\mapsto P((1-t^2)x-tu)$. 
Now, we write for all $u\in (\rr x)^\perp$ and $t>0$
\[ P(tx-u)=\sum_{j=0}^m t^{m-j}P_j(u)\] 
where $P_j$ is a homogeneous polynomial of degree $j$ in $u\in (\rr x)^{\perp}$, and since the Laplacian $\Delta_\zeta$ written in the $t,u$ coordinates is $-\pl_t^2+\Delta_u$, the condition $\Delta_xP=0$ can be rewritten 
\[
\Delta_uP_j(u)=(m-j+2)(m-j+1)P_{j-2}(u), \quad \Delta_uP_1(u)=\Delta_u P_0=0,
\]
which gives for all $j$ and $\ell\geq 1$
\[
\Delta_u^\ell P_{2\ell}(u)=m(m-1)\cdots (m-2\ell+1)P_0, \quad \Delta^j P_{2\ell-1}(u)|_{u=0}=0.
\]
We write $\Delta_\zeta^r=\sum_{k=0}^r \tfrac{r!}{k!(r-k)!}(-1)^k\pl_t^{2k}\Delta_u^{r-k}$ and using parity and homogeneity considerations, we have
\[
\begin{split}
\Delta_\zeta^rP(x-\zeta\cjg \zeta,x\cjd)|_{\zeta=0} & = 
\sum_{k=0}^r\frac{(-1)^k r!}{k!(r-k)!}\sum_{2j\leq m}[\pl_t^{2k}((1-t^2)^{m-2j}t^{2j})\Delta_u^{r-k}P_{2j}(u)]|_{(t,u)=0}\\
& =  \sum_{\max(0,r-m/2)\leq k\leq r}\frac{(-1)^k r!}{k!(r-k)!}(\pl_t^{2k}((1-t^2)^{m-2(r-k)}t^{2(r-k)}))|_{t=0}\,
\Delta_u^{r-k}P_{2(r-k)}\\
& =   P_0\cdot  \frac{m! r!}{(m-r)!} \sum_{r/2\leq k\leq r}\frac{(-1)^{k+r} (2k)!}{k!(r-k)! (2k-r)!}=2^r\frac{m! r!}{(m-r)!}P_0
\end{split}
\]
and $P_0$ is the constant given by $P(x)$. Here we used the identity
$$
\begin{gathered}
\sum_{r/2\leq k\leq r}\frac{(-1)^{k+r} (2k)!}{k!(r-k)! (2k-r)!}
=\sum_{0\leq k\leq r/2} (-1)^k{ r!\over k!(r-k)!}\cdot {(2r-2k)!\over r!(r-2k)!}=2^r
\end{gathered}
$$
which holds since both sides are equal to the $t^r$ coefficient of the product
$$
\begin{gathered}
(1-t^2)^r\cdot(1-t)^{-1-r}={(1+t)^r\over 1-t},\\
(1-t)^{-1-r}={1\over r!}d_t^{r}(1-t)^{-1}=
\sum_{j=0}^\infty {(j+r)!\over j!r!}\, t^j;
\end{gathered} 
$$
the $t^r$ coefficient of $(1+t)^r/(1-t)$ equals the sum of the $t^0,t^1,\dots, t^r$ coefficients
of $(1+t)^r$, or simply $(1+1)^r=2^r$.
\end{proof}
%%%%%%%%%%%%%%%%%%%%%%%%%%%%%%%%%%%%%%%%%%%%%%%%%%%%%%%%%%%%%%%%%%%%%%%%%%%%%%%%

%%%%%%%%%%%%%%%%%%%%%%%%%%%%%%%%%%%%%%%%%%%%%%%%%%%%%%%%%%%%%%%%%%%%%%%%%%%%%%%%
%                                  APPENDIX B                                  %
%%%%%%%%%%%%%%%%%%%%%%%%%%%%%%%%%%%%%%%%%%%%%%%%%%%%%%%%%%%%%%%%%%%%%%%%%%%%%%%%
\section{The special case of dimension 2}
\label{s:dim2}

We explain how the argument of Section~\ref{s:o-dim2} fits into the framework
of Sections~\ref{s:geomhyp} and~\ref{s:horror}.
In dimension $2$ it is more standard to use the upper half-plane model
$$
\mathbf H^2:=\{w\in\mathbb C\mid \Im w>0\},
$$
which is related to the half-space model of Section~\ref{s:modeles} by the formula
$w=-z_1+iz_0$.

The group of all isometries of $\mathbf H^2$
is $\PSL(2;\mathbb R)$, the quotient of $\SL(2;\mathbb R)$ by the group generated
by the matrix $-\Id$, and the action of $\PSL(2;\mathbb R)$ on $\mathbf H^2$ is by M\"obius transformations:
$$
\begin{pmatrix} a & b \\ c & d \end{pmatrix} . z
= {az+b\over cz+d},\quad
z\in\mathbf H^2\subset\mathbb C.
$$
Under the identifications~\eqref{defpsi} and~\eqref{e:udiffeo}, this
action corresponds to the action of $\PSO(1,2)$ on $\mathbb H^2\subset\mathbb R^{1,2}$
by the group isomorphism $\PSL(2;\mathbb R)\to\PSO(1,2)$ defined by
\begin{equation}
  \label{e:psl-pso}
\begin{pmatrix} a & b \\ c & d \end{pmatrix} \mapsto
\begin{pmatrix}
{a^2+b^2+c^2+d^2\over 2} & {a^2-b^2+c^2-d^2\over 2} & -ab - cd\\
{a^2+b^2-c^2-d^2\over 2} & {a^2-b^2-c^2+d^2\over 2} & cd - ab\\
-ac - bd & bd - ac & ad + bc
\end{pmatrix}.
\end{equation}
The induced Lie algebra isomorphism maps the vector fields $X,U_-,U_+$
of~\eqref{e:u-pm-dim-2} to the fields $X,U^-_1,U^+_1$ of~\eqref{liealgebraofG}, \eqref{liealgebraofG2}.

The horocyclic operators $\mathcal U_\pm:\mathcal D'(S\mathbb H^2)\to\mathcal D'(S\mathbb H^2;\mathcal E^*)$
of Section~\ref{s:horocycl} (and analogously horocyclic operators of higher orders)
then take the following form:
$$
\mathcal U_\pm u=(U_\pm u)\eta^*,
$$
where $\eta^*$ is the dual to the section $\eta\in \CI(S\mathbb H^2;\mathcal E)$ defined
as follows: for $(x,\xi)\in S\mathbb H^2$, $\eta(x,\xi)$ is the
unique vector in $T_x \mathbb H^2$ such that $(\xi,\eta)$ is a positively
oriented orthonormal frame. Note also that $\eta(x,\xi)=\pm\mathcal A_\pm(x,\xi)\cdot\zeta(B_\pm(x,\xi))$,
where $\mathcal A_\pm(x,\xi)$ is defined in Section~\ref{s:E} and
$\zeta(\nu)\in T_\nu\mathbb S^1$, $\nu\in\mathbb S^1$, is the result of rotating
$\nu$ counterclockwise by $\pi/2$; therefore, if we use $\eta$ and $\zeta$ to trivialize
the relevant vector bundles, then the operators $\mathcal Q_\pm$ of~\eqref{defofQpm}
are simply the pullback operators by $B_\pm$, up to multiplication by $\pm 1$.

%%%%%%%%%%%%%%%%%%%%%%%%%%%%%%%%%%%%%%%%%%%%%%%%%%%%%%%%%%%%%%%%%%%%%%%%%%%%%%%%
%                                  APPENDIX C                                  %
%%%%%%%%%%%%%%%%%%%%%%%%%%%%%%%%%%%%%%%%%%%%%%%%%%%%%%%%%%%%%%%%%%%%%%%%%%%%%%%%
\section{Eigenvalue asymptotics for symmetric tensors}
\label{s:weylie}

%%%%%%%%%%%%%%%%%%%%%%%%%%%%%%%%%%%%%%%%%%%%%%%%%%%%%%%%%%%%%%%%%%%%%%%%%%%%%%%%
\subsection{Weyl law}
\label{s:weyl}

In this section, we prove the following asymptotic of the counting function for trace free divergence free tensors
(see Sections~\ref{symtens} and~\ref{s:laplacian-def} for the notation):
%%%%%%%%%%%%%%%%%%%%%%%%%%%%%%%%%%%%%%%%%%%%%%%%%%%%%%%%%%%%%%%%%%%%%%%%%%%%%%%%
\begin{prop}\label{Weyllaw}
If $(M,g)$ is a compact Riemannian manifold of dimension $n+1$ and constant sectional curvature $-1$, and if 
$$
\Eig^m(\sigma)=\{u\in \CI(M;\otimes^m_S T^*M)\mid \Delta u = \sigma u,\
\nabla^*u = 0,\
\mathcal T(u)=0\},
$$
then the following Weyl law holds as $R\to \infty$
$$
\sum_{\sigma\leq R^2}\dim\Eig^m(\sigma)= c_0(n)(c_1(n,m)-c_1(n,m-2))\Vol(M)
R^{n+1}+\mathcal O(R^n),
$$
where $c_0(n)={(2\sqrt\pi)^{-n-1}\over\Gamma({n+3\over 2})}$
and $c_1(n,m)={(m+n-1)!\over m!(n-1)!}$ is the dimension of the space
of homogeneous polynomials of order $m$ in $n$ variables. (We put
$c_1(n,m):=0$ for $m<0$.)
\end{prop}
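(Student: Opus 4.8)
The plan is to deduce the Weyl law from the standard Weyl asymptotics for the rough Laplacian $\Delta=\nabla^*\nabla$ acting on the full bundle $\otimes_S^m T^*M$ by peeling off, one at a time, the traces and the divergences. First I would recall that $\Delta$ is an elliptic, essentially self-adjoint, non-negative second order operator on $\CI(M;\otimes_S^m T^*M)$ whose principal symbol is scalar (it equals $|\zeta|_g^2$ times the identity on the fibre), so the classical heat-kernel or Avakumovi\'c--H\"ormander Weyl law gives
\begin{equation}
\label{e:weyl-raw}
N_m(R):=\sum_{\sigma\leq R^2}\dim\{u\in\CI(M;\otimes_S^m T^*M)\mid \Delta u=\sigma u\}
= c_0(n)\,\rank(\otimes_S^m T^*M)\,\Vol(M)\,R^{n+1}+\mathcal O(R^n),
\end{equation}
where $c_0(n)=(2\sqrt\pi)^{-n-1}/\Gamma(\tfrac{n+3}{2})$ is the volume of the unit ball in $\mathbb R^{n+1}$ divided by $(2\pi)^{n+1}$, and $\rank(\otimes_S^m T^*M)=c_1(n+1,m)$, the dimension of homogeneous degree-$m$ polynomials in $n+1$ variables. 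The power of $R$ and the error term are the usual ones since the Hamiltonian flow of the scalar principal symbol is the (co-)geodesic flow and the set of periodic geodesics has measure zero on a negatively curved manifold (in fact any smooth compact manifold suffices for the $\mathcal O(R^n)$ bound, but we have constant curvature here anyway).

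Next I would set up the algebraic decomposition that reduces $\otimes_S^m T^*M$ to trace-free pieces and then trace-free divergence-free pieces. By~\eqref{decompositionoftensors}, the bundle splits orthogonally and $\Delta$-equivariantly (using~\eqref{commuteDelta}, i.e. $\Delta\mathcal T=\mathcal T\Delta$ and $\Delta\mathcal I=\mathcal I\Delta$) as $\otimes_S^m T^*M=\bigoplus_{r=0}^{\lfloor m/2\rfloor}\mathcal I^r(E^{(m-2r)})$, where $E^{(k)}=\otimes_S^k T^*M\cap\ker\mathcal T$ is the trace-free part. Since $\mathcal I$ is a bundle isomorphism onto its image commuting with $\Delta$, the eigenvalue counting function of $\Delta$ on $\otimes_S^m T^*M$ equals $\sum_{r=0}^{\lfloor m/2\rfloor}$ of the counting function of $\Delta$ on $E^{(m-2r)}$; telescoping, the counting function $\widetilde N_m(R)$ of $\Delta$ on $E^{(m)}$ satisfies $\widetilde N_m(R)=N_m(R)-N_{m-2}(R)$, which by~\eqref{e:weyl-raw} has leading term $c_0(n)(c_1(n+1,m)-c_1(n+1,m-2))\Vol(M)R^{n+1}+\mathcal O(R^n)$. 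A short binomial identity, $c_1(n+1,m)-c_1(n+1,m-2)=c_1(n,m)+c_1(n,m-1)$, then rewrites this; I would verify it using $\binom{m+n}{n}-\binom{m+n-2}{n}=\binom{m+n-1}{n-1}+\binom{m+n-2}{n-1}=c_1(n,m)+c_1(n,m-1)$.

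Then comes the divergence: on $E^{(m)}$ one has the first order operator $D=\mathcal S\circ\nabla$ into $E^{(m+1)}$ with formal adjoint $\nabla^*=D^*$, and the Bochner--Weitzenb\"ock identity~\eqref{bochner} gives, on trace-free tensors, $\Delta_D=D^*D=\tfrac{1}{m+1}(m\,DD^*+\Delta+m(m+n-1))$. The key structural fact I would use is that $D$ is overdetermined-elliptic and $D^*$ (the divergence) is underdetermined-elliptic on trace-free symmetric tensors, so that one has an orthogonal Hodge-type decomposition $E^{(m)}=\ker D^*\oplus\overline{\Ran D}$ (into closed subspaces, with $\ker D^*=\Eig^m$-type tensors of all eigenvalues) and $D:\,E^{(m-1)}\cap(\ker D)^\perp\to\overline{\Ran D}\subset E^{(m)}$ is an isomorphism intertwining $\Delta_D$ on the source with $\Delta$ on the target up to the zeroth order shift in~\eqref{bochner}; moreover $\ker D$ on $E^{(m-1)}$ is finite-dimensional (it consists of conformal Killing tensors, whose space is finite-dimensional on a compact manifold, indeed trivial in our negatively curved case for $m\ge1$ by~\eqref{bochner2}). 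Consequently the counting function of $\Delta$ restricted to $E^{(m)}$ decomposes, up to $\mathcal O(1)$, as the counting function of $\Delta$ on divergence-free tensors in $E^{(m)}$ plus the counting function of $\Delta_D$ on $E^{(m-1)}$, and since $\Delta_D$ and $\Delta$ differ by a bounded (in fact scalar, zeroth order) operator, their counting functions on $E^{(m-1)}$ agree to leading order with the same $\mathcal O(R^n)$ remainder. Writing $N^{\mathrm{df}}_m(R)$ for the counting function of $\Delta$ on trace-free divergence-free symmetric $m$-tensors, I get the recursion $\widetilde N_m(R)=N^{\mathrm{df}}_m(R)+\widetilde N_{m-1}(R)+\mathcal O(R^n)$, which telescopes to $N^{\mathrm{df}}_m(R)=\widetilde N_m(R)-\widetilde N_{m-1}(R)+\mathcal O(R^n)$. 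Plugging in the formula for $\widetilde N_m$ and using the binomial identity twice, the leading coefficient becomes $c_0(n)\bigl((c_1(n,m)+c_1(n,m-1))-(c_1(n,m-1)+c_1(n,m-2))\bigr)=c_0(n)(c_1(n,m)-c_1(n,m-2))$, which is exactly the claimed constant, with $\mathcal O(R^n)$ error.

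\textbf{Main obstacle.} The routine part is the Weyl law~\eqref{e:weyl-raw} and the binomial bookkeeping; the delicate point is the precise statement and justification of the Hodge-type splitting $E^{(m)}=\ker D^*\oplus\overline{\Ran D}$ together with the claim that $D$ restricted to the orthogonal complement of its (finite-dimensional) kernel is an isomorphism onto $\overline{\Ran D}$ intertwining the relevant Laplacians. This is where one must invoke ellipticity of the operator $D^*D$ (equivalently, that $D$ has injective symbol) on trace-free tensors, finite-dimensionality of $\ker D$ on a compact manifold, and the fact that the zeroth order discrepancy between $\Delta$ and $\Delta_D$ in~\eqref{bochner} does not affect the two-term-free Weyl asymptotics beyond the allowed $\mathcal O(R^n)$. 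An alternative, perhaps cleaner, route that avoids analyzing $D$ directly is to work on the unit tangent bundle: $\dim\Eig^m(\sigma)$ with $\sigma=\nu(n-\nu)+m$ is tied by Lemma~\ref{l:reduced} and Theorem~\ref{t:laplacian} to boundary distributions, but since we only want a Weyl law it is more economical to stay on $M$; thus I would present the Bochner/Hodge argument above, being careful to state the finite-dimensionality and ellipticity inputs explicitly and to note that on a compact hyperbolic manifold $\ker D=0$ for $m\ge1$ by~\eqref{bochner2}, so the error from that term is genuinely $\mathcal O(1)$.
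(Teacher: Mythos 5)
Your overall route is the same as the paper's: start from the Weyl law for $\Delta$ (scalar principal symbol) on trace-free symmetric tensors, then remove the part of the spectrum coming from tensors with nonzero divergence, controlling the defect by the finite-dimensional kernel of the conformal-Killing-type operator $\nabla^*\tilde\pi D$, and finish with the binomial identity $c_1(n+1,m)-c_1(n+1,m-1)=c_1(n,m)$. Your bookkeeping of ranks and the final constant are correct. However, the middle step contains a genuine gap. The statement that $D$ ``intertwines $\Delta_D$ on the source with $\Delta$ on the target up to the zeroth order shift in~\eqref{bochner}'' is not what the Bochner formula gives, and the assertion that ``$\Delta_D$ and $\Delta$ differ by a bounded (in fact scalar, zeroth order) operator'' is false: by~\eqref{bochner}, $\Delta_D-\Delta=\tfrac{m}{m+1}(DD^*-\Delta)+\text{const}$, a genuinely second-order operator; equivalently, $\sigma(\Delta_D)(\xi)u=(\mathcal S(\xi\otimes\cdot))^*\mathcal S(\xi\otimes u)$ is \emph{not} a multiple of the identity (already for $m-1=1$ one gets $\tfrac12(|\xi|^2|u|^2+\langle\xi,u\rangle^2)$), so the Weyl counting function of $\Delta_D$ on $E^{(m-1)}$ has a \emph{different} leading constant from that of $\Delta$. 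If one literally replaced the count of $\Delta$ on the ``exact'' part by the count of $\Delta_D$ on $E^{(m-1)}$ and then by the count of $\Delta$ via your bounded-difference claim, the leading coefficient of the recursion would come out wrong.

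What is actually needed — and what your proposal never establishes — is the constant-curvature commutation identity: on trace-free tensors, $[\Delta,\nabla^*]u=(2-2m-n)\nabla^*u$ and, dually, $\Delta(\tilde\pi_{m} D v)=\tilde\pi_{m} D\big((\Delta+2m+n-2)v\big)$ for trace-free $v$ of order $m-1$ (this is the content of~\eqref{e:ididi-1}--\eqref{e:ididi-2} in the paper, proved by a curvature computation). These identities do two things at once: they show that $\Delta$ preserves the splitting of $E^{(m)}$ into $\ker\nabla^*$ and $\overline{\Ran(\tilde\pi_m D)}$ (which your Hodge-type decomposition needs but which is not automatic for the rough Laplacian on a general manifold), and they show that the eigenvalue correspondence between the exact part of $E^{(m)}$ and $E^{(m-1)}$ modulo $\ker(\tilde\pi_m D)$ is a shift of $\Delta$-eigenvalues by the \emph{constant} $2m+n-2$ — no appearance of $\Delta_D$ at all. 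With that identity in hand, your recursion $\widetilde N_m(R)=N^{\mathrm{df}}_m(R)+\widetilde N_{m-1}(R)+\mathcal O(R^n)$ is justified (the constant eigenvalue shift only moves the cutoff $R^2$ by $\mathcal O(1)$, and the finite-dimensional kernel, whose finite dimensionality follows from ellipticity of $\nabla^*\tilde\pi_{m}D$ as you say, contributes $\mathcal O(1)$), and the rest of your argument goes through; this is precisely how the paper argues, phrased there as surjectivity of $\nabla^*:W^m(\sigma)\to W^{m-1}(\sigma+2-2m-n)$ for all but finitely many $\sigma$. As a minor point, $D=\mathcal S\circ\nabla$ does not map trace-free tensors to trace-free tensors, so the projection $\tilde\pi_{m}$ must be inserted throughout, and the appeal to~\eqref{bochner2} to kill $\ker(\tilde\pi_m D)$ is unnecessary (and not immediate for \emph{conformal} Killing tensors); finite dimensionality suffices.
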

%%%%%%%%%%%%%%%%%%%%%%%%%%%%%%%%%%%%%%%%%%%%%%%%%%%%%%%%%%%%%%%%%%%%%%%%%%%%%%%%
\noindent\textbf{Remark}. The constant $c_2(n,m):=c_1(n,m)-c_1(n,m-2)$ is the dimension
of the space of harmonic homogeneous polynomials of order $m$ in $n$ variables. We have
$$
c_2(n,0)=1,\quad
c_2(n,1)=n.
$$
For $m\geq 2$, we have $c_2(n,m)>0$ if and only if $n>1$.

The proof of Proposition~\ref{Weyllaw} uses the following two technical lemmas:
%%%%%%%%%%%%%%%%%%%%%%%%%%%%%%%%%%%%%%%%%%%%%%%%%%%%%%%%%%%%%%%%%%%%%%%%%%%%%%%%
\begin{lemm}
Take $u\in\mathcal D'(M;\otimes^m_S T^*M)$. Then, denoting $D=\mathcal S\circ \nabla$ as in Section~\ref{s:laplacian-def},
\begin{align}
  \label{e:ididi-1}
[\Delta,\nabla^*]u&=(2-2m-n)\nabla^*u-2(m-1)D (\mathcal T(u)),\\
  \label{e:ididi-2}
[\Delta,D]u&=(2m+n) Du+2m\mathcal S(g\otimes\nabla^* u).
\end{align}
\end{lemm}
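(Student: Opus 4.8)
The plan is to establish the two commutator identities~\eqref{e:ididi-1} and~\eqref{e:ididi-2} by a direct computation in a local orthonormal frame, using the fact that the curvature tensor of $(M,g)$ is that of constant sectional curvature $-1$. First I would fix a point $p\in M$ and a local orthonormal frame $(e_i)_{i=1}^{n+1}$ of $TM$ which is geodesic at $p$ (so that $\nabla e_i(p)=0$), and write the covariant derivatives, the divergence, and the symmetrized derivative in terms of the $\nabla_{e_i}$; the rough Laplacian is $\Delta=\nabla^*\nabla=-\sum_i\nabla_{e_i}\nabla_{e_i}$ modulo a first-order term which vanishes at $p$ in a geodesic frame. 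The commutators $[\nabla_{e_i},\nabla_{e_j}]$ are then given by the curvature operator $R(e_i,e_j)$ acting on tensors, and the constant-curvature hypothesis gives the explicit formula $R(e_i,e_j)e_k=-\delta_{jk}e_i+\delta_{ik}e_j$ (with the corresponding action on $T^*M$ and its tensor powers by derivation). So all the commutator terms that arise are combinations of the identity, of contractions $\mathcal T$, of multiplications $\mathcal S(g\otimes\cdot)$, and of the operators $\nabla^*,D$ themselves, which is exactly the structure of the claimed right-hand sides.

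Concretely, for~\eqref{e:ididi-2} I would expand $[\Delta,D]u = \Delta(\mathcal S\nabla u) - \mathcal S\nabla(\nabla^*\nabla u)$; commuting one $\nabla$ past the Laplacian produces $\sum_{i,j}\mathcal S\big(e_i^*\otimes [\nabla_{e_i}\nabla_{e_i},\nabla_{e_j}]\text{-type terms}\big)$, and each such bracket unfolds via $[\nabla_{e_i},\nabla_{e_j}]=R(e_i,e_j)$ into curvature contractions. After symmetrizing and collecting, the curvature contributions split into a term proportional to $Du$ (coefficient $2m+n$, coming from the $n$ from the Ricci-type trace plus $2m$ from the action of curvature on the $m$ cotensor slots) and a term $2m\,\mathcal S(g\otimes \nabla^*u)$ (coming from reindexing the contraction onto the new slot). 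For~\eqref{e:ididi-1} I would dualize: $\nabla^*=-\mathcal T\circ\nabla$ up to the metric identification, and run the analogous computation, now with the trace $\mathcal T$ in place of the symmetrization $\mathcal S(g\otimes\cdot)$; the curvature contractions yield the $(2-2m-n)\nabla^*u$ piece and the $-2(m-1)D(\mathcal T u)$ piece, the factor $m-1$ reflecting that the trace can hit the distinguished pair of slots or one of the remaining $m-1$ pairs. One can also deduce~\eqref{e:ididi-1} from~\eqref{e:ididi-2} by taking formal $L^2$ adjoints, since $\Delta$ is self-adjoint, $D^*=\nabla^*$, $(\mathcal T)^*$ is a multiple of $\mathcal S(g\otimes\cdot)$, and $\mathcal S(g\otimes\cdot)^*$ is a multiple of $\mathcal T$; this is a useful consistency check and possibly the cleanest way to present the second identity once the first is proved.

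The main obstacle I expect is purely bookkeeping: correctly tracking the combinatorial coefficients when the curvature operator acts on an $m$-fold symmetric cotensor and is then symmetrized against the freshly introduced derivative slot, since several seemingly different reindexings collapse onto the same tensor and must be counted with the right multiplicity. To keep this under control I would use the polynomial model of Section~\ref{symtens} (identifying $\otimes^m_S T^*M$ with degree-$m$ homogeneous polynomials, $\mathcal T$ with a Laplacian and $\mathcal S(g\otimes\cdot)$ with multiplication by $|x|^2$ via~\eqref{Trace=Lap}, \eqref{e:morestuff}), which turns the slot combinatorics into elementary identities for differential operators on polynomials, and cross-check the final coefficients against the already-established Bochner--Weitzenb\"ock relation~\eqref{bochner} on trace-free tensors (where $\mathcal T u=0$ and $\nabla^*u=0$ force consistency of~\eqref{e:ididi-1}--\eqref{e:ididi-2} with~\eqref{bochner}). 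Since the identities are local and the constant-curvature model computation is the same as on $\mathbb H^{n+1}$, no global input is needed beyond the curvature normalization.
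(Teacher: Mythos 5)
Your plan is correct and is essentially the paper's own argument: both rest on the Ricci commutation identity for second covariant derivatives specialized to curvature $-1$ (the paper's $(\Id-\tau_{1\lra 2})\nabla^2 v=\sum_{\ell}(\tau_{1\lra \ell+2}-\tau_{2\lra \ell+2})(g\otimes v)$), careful combinatorial bookkeeping of $\mathcal T$ and $\mathcal S(g\otimes\cdot)$, and the adjoint relation $D^*=\nabla^*$, $\mathcal T^*=\mathcal S(g\otimes\cdot)$ to pass from one identity to the other. The only difference is cosmetic: the paper does the bookkeeping invariantly with index-swap operators $\tau_{j\lra k}$ rather than in a geodesic frame (which, as you note, requires care with the third-derivative terms), and it proves~\eqref{e:ididi-1} first, obtaining~\eqref{e:ididi-2} by taking adjoints.
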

%%%%%%%%%%%%%%%%%%%%%%%%%%%%%%%%%%%%%%%%%%%%%%%%%%%%%%%%%%%%%%%%%%%%%%%%%%%%%%%%
\begin{proof}
We have
$$
\Delta\nabla^*u=\mathcal T^2(\nabla^3 u),\quad
\nabla^*\Delta u=\mathcal T^2(\tau_{1\lra 3}\nabla^3 u).
$$
where $\tau_{j\lra k} v$ denotes the result of swapping $j$th and $k$th indices in a cotensor $v$.
We have
$$
\Id-\tau_{1\lra 3}=
(\Id-\tau_{1\lra 2})
+\tau_{1\lra 2}(\Id-\tau_{2\lra 3})
+\tau_{1\lra 2}\tau_{2\lra 3}(\Id-\tau_{1\lra 2}),
$$
therefore (using that $\mathcal T\tau_{1\lra 2}=\mathcal T$)
$$
[\Delta,\nabla^*]u=\mathcal T^2\big(\nabla(\Id-\tau_{1\lra 2})\nabla^2 u
+\tau_{2\lra 3}(\Id-\tau_{1\lra 2})\nabla^3 u\big)
$$
Since $M$ has sectional curvature $-1$, we have for any
cotensor $v$ of rank $m$,
$$
(\Id-\tau_{1\lra 2})\nabla^2 v=\sum_{\ell=1}^m(\tau_{1\lra \ell+2}-\tau_{2\lra \ell+2})(g\otimes v).
$$
Then we compute  (using that $\mathcal T(\tau_{2\lra 3}\tau_{1\lra 3})=\mathcal T(\tau_{2\lra 3})$)
$$
\begin{gathered}\relax
[\Delta,\nabla^*]u=\mathcal T^2\bigg(
\tau_{2\lra 3}-\Id+
\sum_{\ell=1}^m ((\tau_{2\lra \ell+3}-\tau_{3\lra \ell+3})\tau_{1\lra 3}
+\tau_{2\lra 3}(\tau_{1\lra \ell+3}-\tau_{2\lra \ell+3}))
\bigg)(g\otimes\nabla u).
\end{gathered}
$$
Now,
$$
\begin{gathered}
\mathcal T^2(g\otimes\nabla u)=\mathcal T^2(\tau_{2\lra 4}\tau_{1\lra 3}(g\otimes\nabla u))=
\mathcal T^2(\tau_{2\lra 3}\tau_{1\lra 4}(g\otimes \nabla u))=-(n+1)\nabla^*u,\\
\mathcal T^2(\tau_{2\lra 3}(g\otimes\nabla u))
=\mathcal T^2(\tau_{3\lra 4}\tau_{1\lra 3}(g\otimes\nabla u))=
\mathcal T^2(\tau_{2\lra 3}\tau_{2\lra 4}(g\otimes\nabla u))=
-\nabla^*u,
\end{gathered}
$$
and since $u$ is symmetric, for $1<\ell\leq m$,
$$
\begin{gathered}
\mathcal T^2(\tau_{2\lra \ell+3}\tau_{1\lra 3}(g\otimes\nabla u))=
\mathcal T^2(\tau_{2\lra 3}\tau_{1\lra \ell+3}(g\otimes \nabla u))=-\nabla^*u,\\
\mathcal T^2(\tau_{3\lra \ell+3}\tau_{1\lra 3}(g\otimes\nabla u))=
\mathcal T^2(\tau_{2\lra 3}\tau_{2\lra \ell+3}(g\otimes \nabla u))=\tau_{1\lra \ell-1}\nabla(\mathcal T(u)).
\end{gathered}
$$
We then compute
$$
[\Delta,\nabla^*]u=(2-2m-n)\nabla^*u-2\sum_{\ell=1}^{m-1}\tau_{1\lra \ell}\nabla(\mathcal T(u)),
$$
finishing the proof of~\eqref{e:ididi-1}. The identity~\eqref{e:ididi-2}
follows from~\eqref{e:ididi-1} by taking the adjoint on the space of symmetric tensors.
\end{proof}
%%%%%%%%%%%%%%%%%%%%%%%%%%%%%%%%%%%%%%%%%%%%%%%%%%%%%%%%%%%%%%%%%%%%%%%%%%%%%%%%

%%%%%%%%%%%%%%%%%%%%%%%%%%%%%%%%%%%%%%%%%%%%%%%%%%%%%%%%%%%%%%%%%%%%%%%%%%%%%%%%
\begin{lemm}
Denote by $\tilde\pi_m:\otimes^m_S T^*M\to \otimes^m_S T^*M$ the orthogonal projection onto
the space $\ker \mathcal T$ of trace free tensors.
Then for each $m$,
the space
\begin{equation}
  \label{e:F-m}
F^m:=\{v\in C^\infty(M;\otimes^m_S T^*M)\mid
\mathcal T(v)=0,\
\tilde\pi_{m+1}(Dv)=0\}
\end{equation}
is finite dimensional.
\end{lemm}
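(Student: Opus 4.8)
The plan is to identify $F^m$ with the kernel of an overdetermined but elliptic first-order operator, namely the projected symmetrized covariant derivative $v\mapsto \tilde\pi_{m+1}\circ D$ restricted to trace-free tensors, and then argue finite-dimensionality from ellipticity on a compact manifold. First I would set up the operator $D_0^m:\CI(M;E^{(m)})\to\CI(M;E^{(m+1)})$, where $E^{(m)}=\otimes^m_S T^*M\cap\ker\mathcal T$ is the bundle of trace-free symmetric $m$-cotensors (the compact analog of~\eqref{e:e-m-def}), defined by $D_0^m v:=\tilde\pi_{m+1}(Dv)=\tilde\pi_{m+1}(\mathcal S\nabla v)$; this is the "conformal Killing" or "trace-free Killing" operator on symmetric tensors. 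By definition $F^m=\ker D_0^m$, so it suffices to show $D_0^m$ has finite-dimensional kernel.

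The key step is to show $D_0^m$ is \emph{overdetermined elliptic}, i.e. its principal symbol $\sigma(D_0^m)(x,\zeta):E^{(m)}(x)\to E^{(m+1)}(x)$ is injective for every nonzero covector $\zeta$. The principal symbol is $v\mapsto \tilde\pi_{m+1}(\mathcal S(\zeta\otimes v))$, the trace-free part of the symmetrized product with $\zeta$. To see injectivity, suppose $\tilde\pi_{m+1}(\mathcal S(\zeta\otimes v))=0$ with $\mathcal T(v)=0$; then $\mathcal S(\zeta\otimes v)=\mathcal I(w)$ for some symmetric $(m-1)$-cotensor $w$, by the decomposition~\eqref{decompositionoftensors} applied pointwise. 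Contracting with $\zeta$ (i.e. applying $\iota_{\zeta^\sharp}$) and using that $\iota_{\zeta^\sharp}\mathcal S(\zeta\otimes v)=\tfrac{1}{m+1}(|\zeta|^2 v + m\,\mathcal S(\zeta\otimes\iota_{\zeta^\sharp}v))$ together with $\iota_{\zeta^\sharp}\mathcal I(w)=\mathcal I(\iota_{\zeta^\sharp}w)+\text{(lower trace terms involving }\langle\zeta,\cdot\rangle)$, one obtains a relation forcing $v$ to lie in the image of $\mathcal S(\zeta\otimes\cdot)$, and then iterating (or taking one more contraction and using $\mathcal T(v)=0$) forces $v=0$. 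The cleanest route is the homogeneous-polynomial model of Section~\ref{symtens}: trace-free symmetric $m$-cotensors correspond to harmonic homogeneous degree-$m$ polynomials $P$ on the fiber, and $\sigma(D_0^m)(\zeta)$ corresponds to $P\mapsto$ (harmonic part of $\zeta^\flat\cdot P$) where $\zeta^\flat$ is the linear form dual to $\zeta$; since multiplication by a nonzero linear form is injective on polynomials and the harmonic projection of $\zeta^\flat P$ vanishes only if $\zeta^\flat P$ is divisible by $|x|^2$, which (comparing degrees along the line $\rr\zeta^\sharp$, where $|x|^2\neq 0$) forces $P=0$. This establishes injectivity of the symbol.

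Once $D_0^m$ is overdetermined elliptic, the Laplacian-type operator $(D_0^m)^* D_0^m$ acting on $\CI(M;E^{(m)})$ is a genuine (second-order) elliptic operator on a compact manifold, hence has finite-dimensional kernel; and $\ker (D_0^m)^*D_0^m=\ker D_0^m=F^m$. This finishes the proof. I expect the main obstacle to be the symbol-injectivity computation: one must be careful that the ambient bundle is $E^{(m+1)}$, the trace-free tensors, so the naive statement "$\mathcal S(\zeta\otimes\cdot)$ is injective" is not quite what is used — one needs that $\mathcal S(\zeta\otimes v)$, for $v$ trace-free, cannot be purely a trace term unless $v=0$, which is exactly where the harmonic-polynomial picture and the genericity of $\zeta$ (evaluating on the line spanned by $\zeta^\sharp$, off the null cone of the fiber metric) make the argument transparent. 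Everything else — identifying $F^m$ as a kernel, invoking elliptic theory on a compact manifold — is routine. Alternatively, and even more cheaply, one can avoid ellipticity entirely: the Bochner identity~\eqref{bochner}, rewritten as $\Delta_D=\tfrac{1}{m+1}(mDD^*+\Delta+m(m+n-1))$ on trace-free tensors, combined with the commutator formulas~\eqref{e:ididi-1}--\eqref{e:ididi-2}, shows that on $F^m$ (where $\nabla^*v=0$ automatically by taking a trace of $\tilde\pi_{m+1}(Dv)=0$, and where $Dv$ is pure-trace hence controlled) the operator $\Delta$ acts as multiplication by an explicit scalar, so $F^m$ sits inside a single eigenspace $\Eig^m(\sigma_0)$ of the (elliptic, compact-resolvent) Laplacian on trace-free tensors for an explicit $\sigma_0$ depending on $m,n$, and is therefore finite-dimensional; I would present this second argument as the proof since it reuses machinery already in the paper, relegating the ellipticity remark to a parenthetical.
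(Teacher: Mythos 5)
Your first route is, in substance, the paper's own proof: the paper notes $F^m\subset\ker P_m$ with $P_m:=\nabla^*\tilde\pi_{m+1}D$ acting on trace-free sections and quotes \cite[Lemma 5.2]{DaSh} for ellipticity of $P_m$; since $\nabla^*$ preserves trace-free symmetric tensors, your $(D_0^m)^*D_0^m$ is exactly this $P_m$, so the only difference is that you prove the symbol injectivity instead of citing it. That computation is correct in outline, but the parenthetical you give does not close it: from $\zeta^\flat P=|x|^2Q$ restricted to the line $x=t\zeta^\sharp$ you only get $P(t\zeta^\sharp)=tQ(t\zeta^\sharp)$, which forces nothing. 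The clean finish is that $|x|^2$ is irreducible over $\mathbb R$ in dimension $\geq 2$ and does not divide the linear form $\zeta^\flat$, hence divides $P$; a harmonic polynomial divisible by $|x|^2$ vanishes by uniqueness of the decomposition \eqref{decompositionoftensors}. With that fixed, the ellipticity argument is complete and matches the paper.

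The second argument, which you say you would actually present, has a genuine gap. The step ``$\nabla^*v=0$ automatically by taking a trace of $\tilde\pi_{m+1}(Dv)=0$'' is not valid: $\tilde\pi_{m+1}(Dv)$ is already trace-free, so its trace carries no information. For trace-free $v$ one has $\mathcal T(Dv)=-\tfrac{2}{m+1}\nabla^*v$, and the condition $v\in F^m$ only says that $Dv$ equals its pure-trace part, which is built from $\nabla^*v$; this is the \emph{conformal} Killing equation, not the Killing equation, and conformal Killing tensors need not be divergence-free (on the round sphere, gradients of degree-one spherical harmonics already give conformal Killing $1$-forms with nonvanishing divergence; their absence on hyperbolic quotients is a theorem, not a one-line trace computation). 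Consequently the Bochner identity \eqref{bochner} does not show that $\Delta$ acts as a scalar on $F^m$, and $F^m$ is not exhibited as a subspace of a single eigenspace. Note also that if your trace claim were true you would get $Dv=0$ and $\nabla^*v=0$, whence \eqref{bochner} gives $\Delta v=-m(m+n-1)v$ and thus $F^m=0$ for $m\geq1$ by positivity of $\Delta$ --- a far stronger conclusion than finite-dimensionality, which should itself signal that the shortcut cannot be right. Keep the ellipticity proof as the actual argument.
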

%%%%%%%%%%%%%%%%%%%%%%%%%%%%%%%%%%%%%%%%%%%%%%%%%%%%%%%%%%%%%%%%%%%%%%%%%%%%%%%%
\begin{proof}
The space $F^m$ is contained in the kernel of the operator
$$
P_m:=\nabla^* \tilde\pi_{m+1} D
$$
acting on trace free sections of $\otimes^m_ST^*M$. By~\cite[Lemma 5.2]{DaSh}, the operator
$P_m$ is elliptic; therefore, its kernel is finite dimensional.
\end{proof}
We now prove Proposition~\ref{Weyllaw}.
For each $m\geq 0$ and $s\in\mathbb R$, denote
$$
W^m(\sigma):=\{u\in \mathcal D'(M;\otimes^m_S T^*M)\mid \Delta u=\sigma u,\ \mathcal T(u)=0\}.
$$
The operator $\Delta$ acting on trace free symmetric tensors is elliptic and in fact, its principal
symbol coincides with that of the scalar Laplacian: $p(x,\xi)=|\xi|_g^2$. It follows
that $W^m(\sigma)$ are finite dimensional and consist of smooth sections.
By the general argument of H\"ormander~\cite[Section~17.5]{ho3} (see also \cite[Theorem~10.1]{di-sj}
and \cite[Theorem~6.8]{e-z}; all of these arguments adapt straightforwardly to the case of operators
with diagonal principal symbols acting on vector bundles),
we have the following Weyl law:
\begin{equation}
  \label{e:weyl-base}
\sum_{\sigma\leq R^2} \dim W^m(\sigma)=c_0(n)(c_1(n+1,m)-c_1(n+1,m-2))\Vol(M) R^{n+1}+\mathcal O(R^n);
\end{equation}
here $c_1(n+1,m)-c_1(n+1,m+2)$ is the dimension of the
vector bundle on which we consider the operator $\Delta$.

By~\eqref{e:ididi-1}, for $m\geq 1$ the divergence operator acts
\begin{equation}
  \label{e:divact}
\nabla^*:W^m(\sigma)\to W^{m-1}(\sigma+2-2m-n).
\end{equation}
This operator is surjective except at finitely many points $\sigma$:
%%%%%%%%%%%%%%%%%%%%%%%%%%%%%%%%%%%%%%%%%%%%%%%%%%%%%%%%%%%%%%%%%%%%%%%%%%%%%%%%
\begin{lemm}
Let $C_1=\dim F^{m-1}$, where $F^{m-1}$ is defined in~\eqref{e:F-m}. Then the number of values $\sigma$
such that~\eqref{e:divact} is not surjective does not exceed $C_1$. 
\end{lemm}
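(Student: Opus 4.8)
The plan is to identify the $L^2$-orthogonal complement of the image of $\nabla^*\colon W^m(\sigma)\to W^{m-1}(\sigma')$ inside $W^{m-1}(\sigma')$ (with $\sigma'=\sigma+2-2m-n$ as in~\eqref{e:divact}) with the space $F^{m-1}\cap W^{m-1}(\sigma')$, and then to exploit that $F^{m-1}$ is finite dimensional and $\Delta$-invariant. Since $W^{m-1}(\sigma')$ is finite dimensional, $\nabla^*$ is onto it precisely when this orthogonal complement is trivial, so the number of exceptional $\sigma$ is bounded by the number of eigenvalues $\sigma'$ at which $F^{m-1}$ is nonzero, hence by $\dim F^{m-1}=C_1$.

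First I would compute the cokernel. Using $D=\mathcal S\circ\nabla$ with $D^*=\nabla^*$ and that $\tilde\pi_m$ is the orthogonal projection onto trace-free tensors, for $f\in W^{m-1}(\sigma')$ and $u\in W^m(\sigma)$ one has $\langle f,\nabla^* u\rangle_{L^2}=\langle\mathcal S\nabla f,u\rangle_{L^2}=\langle Df,u\rangle_{L^2}=\langle\tilde\pi_m(Df),u\rangle_{L^2}$, where the second and last equalities use that $u$ is symmetric and trace-free. Hence $f$ lies in the orthogonal complement of $\operatorname{Im}\nabla^*$ iff $\tilde\pi_m(Df)\perp W^m(\sigma)$.

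The key point is then that $\tilde\pi_m(Df)\in W^m(\sigma)$ for \emph{every} $f\in W^{m-1}(\sigma')$. It is trace-free by construction. Moreover $\tilde\pi_m$, being built from the metric (a polynomial in $\mathcal T$ and $\mathcal I$, cf.~\eqref{decompositionoftensors}), is a parallel bundle endomorphism, so it commutes with $\nabla$ and with $\Delta=\nabla^*\nabla$ — this is also immediate from~\eqref{commuteDelta}. Combining this with the Bochner commutator~\eqref{e:ididi-2}, applied in rank $m-1$,
\[
[\Delta,D]f=(2(m-1)+n)Df+2(m-1)\mathcal S(g\otimes\nabla^* f),
\]
whose last summand is pure trace and hence annihilated by $\tilde\pi_m$, we obtain $\Delta\tilde\pi_m(Df)=(\sigma'+2(m-1)+n)\tilde\pi_m(Df)=\sigma\,\tilde\pi_m(Df)$. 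Thus $\tilde\pi_m(Df)\in W^m(\sigma)$, and therefore $\tilde\pi_m(Df)\perp W^m(\sigma)$ forces $\tilde\pi_m(Df)=0$, i.e.\ $f\in F^{m-1}$ (recall $f$ is automatically trace-free and smooth). This proves that the orthogonal complement of $\operatorname{Im}\nabla^*$ in $W^{m-1}(\sigma')$ equals $F^{m-1}\cap W^{m-1}(\sigma')$.

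Finally I would note that the same two ingredients show $\Delta(F^{m-1})\subset F^{m-1}$: if $\mathcal T f=0$ and $\tilde\pi_m(Df)=0$, then $\mathcal T(\Delta f)=0$ and, by the displayed identity, $\tilde\pi_m(D\Delta f)=\Delta\tilde\pi_m(Df)-(2(m-1)+n)\tilde\pi_m(Df)=0$. Since $F^{m-1}$ is finite dimensional and $\Delta$ is self-adjoint, $F^{m-1}=\bigoplus_\sigma\bigl(F^{m-1}\cap W^{m-1}(\sigma')\bigr)$ is a finite orthogonal direct sum over distinct eigenvalues; hence the set of $\sigma$ at which the complement above is nonzero — equivalently, at which $\nabla^*$ fails to be onto — has cardinality at most $\dim F^{m-1}=C_1$. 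The main subtlety is the key point above: because $f$ is assumed only trace-free and not divergence-free, $Df$ is in general neither trace-free nor an eigentensor, and one genuinely needs $\tilde\pi_m$ together with the precise form of the error term in~\eqref{e:ididi-2} to land back in $W^m(\sigma)$.
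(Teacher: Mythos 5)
Your proof is correct, and it differs from the paper's at the key step. The paper never shows that $\tilde\pi_m(Df)$ is itself an eigentensor: given $v\in W^{m-1}(\sigma+2-2m-n)$ orthogonal to $\nabla^*(W^m(\sigma))$, it notes that by~\eqref{e:divact} and the mutual orthogonality of the spaces $W^{m-1}(\cdot)$, $v$ is automatically orthogonal to $\nabla^*(W^m(\tilde\sigma))$ for \emph{every} $\tilde\sigma$, and then uses the density of $\bigoplus_{\tilde\sigma}W^m(\tilde\sigma)$ in the trace-free tensors to conclude $\langle Dv,u\rangle_{L^2}=0$ for all smooth trace-free $u$, i.e.\ $v\in F^{m-1}$; the count is then finished, as in your argument, by finite dimensionality of $F^{m-1}$. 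You replace this soft $L^2$-completeness argument by a computation confined to the fixed pair of eigenspaces: applying~\eqref{e:ididi-2} at rank $m-1$, observing that $\mathcal S(g\otimes\nabla^*f)$ lies in the range of $\mathcal I$ (a multiple of $\mathcal T^*$) and is therefore killed by $\tilde\pi_m$, and using that $\tilde\pi_m$ commutes with $\Delta$ (which is legitimate: $\tilde\pi_m$ is built from $\mathcal T$, $\mathcal I$ and $(\mathcal T\mathcal I)^{-1}$, all commuting with $\Delta$ by~\eqref{commuteDelta}, or one can note it is a parallel bundle endomorphism), you land $\tilde\pi_m(Df)$ back in $W^m(\sigma)$ and thereby identify the cokernel of~\eqref{e:divact} exactly as $F^{m-1}\cap W^{m-1}(\sigma+2-2m-n)$. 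Your route yields slightly more (the exact cokernel and the $\Delta$-invariance of $F^{m-1}$) at the cost of the commutator bookkeeping, while the paper's route only needs the containment of the cokernel in $F^{m-1}$ and avoids~\eqref{e:ididi-2} and the commutation of $\tilde\pi_m$ with $\Delta$ altogether; your final counting via diagonalizing $\Delta$ on the finite-dimensional invariant space $F^{m-1}$ is an equivalent variant of the paper's remark that $F^{m-1}$ can meet at most $\dim F^{m-1}$ of the mutually orthogonal spaces $W^{m-1}(\cdot)$ nontrivially.
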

%%%%%%%%%%%%%%%%%%%%%%%%%%%%%%%%%%%%%%%%%%%%%%%%%%%%%%%%%%%%%%%%%%%%%%%%%%%%%%%%
\begin{proof}
Assume that~\eqref{e:divact} is not surjective for some $\sigma$. Then there exists nonzero
$v\in W^{m-1}(\sigma+2-2m-n)$ which is orthogonal to $\nabla^*(W^m(\sigma))$. Since the spaces
$W^{m-1}(\sigma)$ are mutually orthogonal, we see from~\eqref{e:divact} that $v$ is also orthogonal to
$\nabla^*(W^m(\sigma))$ for all $\sigma\neq\sigma$. It follows that for each $\sigma$
and each $u\in W^m(\sigma)$, we have $\langle Dv,u\rangle_{L^2}=0$.
Since $\bigoplus_\sigma W^m(\sigma)$ is dense in the space of trace free tensors,
we see that for each $u\in C^\infty(M;\otimes^m_ST^*M)$ with $\mathcal T(u)=0$, we have
$\langle Dv, u\rangle_{L^2}=0$, which implies that $v\in F^{m-1}$. It remains
to note that $F^{m-1}$ can have a nontrivial intersection with at most $C_1$ of
the spaces $W^{m-1}(\sigma+2-2m-n)$.
\end{proof}
%%%%%%%%%%%%%%%%%%%%%%%%%%%%%%%%%%%%%%%%%%%%%%%%%%%%%%%%%%%%%%%%%%%%%%%%%%%%%%%%
Since $\Eig^m(\sigma)$ is the kernel of~\eqref{e:divact}, we have
$$
\dim \Eig^m(\sigma)\geq \dim W^m(\sigma)-\dim W^{m-1}(\sigma+2-2m-n),
$$
and this inequality is an equality if~\eqref{e:divact} is surjective. We then see that
for some constant $C_2$ independent of $R$,
$$
\begin{gathered}
\sum_{\sigma\leq R^2}\dim W^m(\sigma)
-\sum_{\sigma\leq R^2+2-2m-n}\dim W^{m-1}(\sigma)\leq 
\sum_{\sigma\leq R^2}\dim\Eig^m(\sigma)\\
\leq C_2+\sum_{\sigma\leq R^2}\dim W^m(\sigma)
-\sum_{\sigma\leq R^2+2-2m-n}\dim W^{m-1}(\sigma)
\end{gathered}
$$
and Proposition~\ref{Weyllaw} now follows from~\eqref{e:weyl-base} and the identity
$c_1(n+1,m)-c_1(n+1,m-1)=c_1(n,m)$.

%%%%%%%%%%%%%%%%%%%%%%%%%%%%%%%%%%%%%%%%%%%%%%%%%%%%%%%%%%%%%%%%%%%%%%%%%%%%%%%%
\subsection{The case $m=1$}
\label{s:m1}

In this section, we describe space $\Eig^1(\sigma)$ in terms of Hodge theory;
see for instance \cite[Section~7.2]{petersen} for the notation used.
Note that symmetric cotensors of order $1$
are exactly differential 1-forms on $M$. Since the operator
$\nabla:C^\infty(M)\to C^\infty(M;T^*M)$ is equal to the operator $d$ on $0$-forms,
we have
$$
\Eig^1(\sigma)=\{u\in\Omega^1(M)\mid \Delta u=\sigma u,\ \delta u=0\}.
$$
Here $\Delta=\nabla^*\nabla$; using that $M$ has sectional curvature $-1$, we write
$\Delta$ in terms of the Hodge Laplacian $\Delta_\Omega:=d\delta+\delta d$
on 1-forms using the following Weitzenb\"ock formula~\cite[Corollary~7.21]{petersen}:
$$
\Delta u=(\Delta_\Omega+n)u,\quad
u\in\Omega^1(M).
$$
We then see that
\begin{equation}
  \label{e:hodge-relation}
\Eig^1(\sigma)=\{u\in\Omega^1(M)\mid \Delta_\Omega u=(\sigma-n)u,\ \delta u=0\}.
\end{equation}
Finally, let us consider the case $n=1$. The Hodge star operator
acts from $\Omega^1(M)$ to itself, and we see that for $\sigma\neq 1$,
\begin{equation}
\begin{gathered}
\Eig^1(\sigma)=\{*u\mid u\in\Omega^1(M),\ \Delta_\Omega u =(\sigma-1)u,\  du = 0\}\\
=\{*(df)\mid  f\in \CI(M),\ \Delta f = (\sigma-1)f \}.
\end{gathered}
\end{equation}
Note that $*(df)$ can be viewed as the Hamiltonian field of $f$ with respect
to the naturally induced symplectic form (that is, volume form) on $M$.

%%%%%%%%%%%%%%%%%%%%%%%%%%%%%%%%%%%%%%%%%%%%%%%%%%%%%%%%%%%%%%%%%%%%%%%%%%%%%%%%
%                               ACKNOWLEDGEMENTS                               %
%%%%%%%%%%%%%%%%%%%%%%%%%%%%%%%%%%%%%%%%%%%%%%%%%%%%%%%%%%%%%%%%%%%%%%%%%%%%%%%%
\smallsection{Acknowledgements} 
We would like to thank Maciej Zworski, Richard Melrose, Steve Zelditch, Rafe Mazzeo, and Kiril Datchev for
many useful discussions and suggestions regarding this project,
an anonymous referee for many useful suggestions,
and Tobias Weich for corrections and references.
The first author would like to thank MSRI (NSF grant 0932078 000,
Fall 2013) where part of this work was done, another part
was supported by the NSF grant DMS-1201417,
and part of this project was completed during the period SD served as a Clay Research Fellow.
CG and FF have been supported by ``Agence Nationale
de la Recherche'' under the grant ANR-13-BS01-0007-01.

%%%%%%%%%%%%%%%%%%%%%%%%%%%%%%%%%%%%%%%%%%%%%%%%%%%%%%%%%%%%%%%%%%%%%%%%%%%%%%%%
%                                 BIBLIOGRAPHY                                 %
%%%%%%%%%%%%%%%%%%%%%%%%%%%%%%%%%%%%%%%%%%%%%%%%%%%%%%%%%%%%%%%%%%%%%%%%%%%%%%%%

% arXiv bibliography macro
\def\arXiv#1{\href{http://arxiv.org/abs/#1}{arXiv:#1}}

\end{document}